\setlist[enumerate,1]{label={(\alph*)}}
\DeclareSymbolFont{largesymbolsstix}{LS2}{stixex}{m}{n}
\DeclareMathDelimiter{\lbrbrak}{\mathopen}{largesymbolsstix}{"EE}{largesymbolsstix}{"14}
\DeclareMathDelimiter{\rbrbrak}{\mathclose}{largesymbolsstix}{"EF}{largesymbolsstix}{"15}
\newcommand{\git}{\mathbin{
  \mathchoice{\mkern-3mu/\mkern-6mu/\mkern-3mu}
    {\mkern-3mu/\mkern-6mu/\mkern-3mu}
    {/\mkern-5mu/}
    {/\mkern-5mu/}}}
\newcommand{\genlegendre}[4]{%
  \genfrac{(}{)}{}{#1}{#3}{#4}%
  \if\relax\detokenize{#2}\relax\else_{\!#2}\fi
}
\let\integral\int\let\int\relax\DeclareMathOperator*{\int}{int}
\newcommand{\what}[1]{{\widehat{#1}}}
\newcommand{\bs}{\backslash}
\newcommand{\lra}{\longrightarrow}
\newcommand{\ra}{\rightarrow}
\newcommand{\bA}{\mathbb{A}}
\newcommand{\bC}{\mathbb{C}}
\newcommand{\bE}{\mathbb{E}} 
\newcommand{\bF}{\mathbb{F}}
\newcommand{\bL}{\mathbb{L}}
\newcommand{\bP}{\mathbb{P}}
\newcommand{\bQ}{\mathbb{Q}}
\newcommand{\bR}{\mathbb{R}}
\newcommand{\bS}{\mathbb{S}}
\newcommand{\bX}{\mathbb{X}}
\newcommand{\bZ}{\mathbb{Z}}
\newcommand{\Qbar}{{\overline{\mathbb{Q}}}}
\newcommand{\cA}{\mathcal{A}}
\newcommand{\cB}{\mathcal{B}}
\newcommand{\cC}{\mathcal{C}}
\newcommand{\cE}{\mathcal{E}}
\newcommand{\cG}{\mathcal{G}}
\newcommand{\cH}{\mathcal{H}}
\newcommand{\cI}{\mathcal{I}}
\newcommand{\cM}{\mathcal{M}}
\newcommand{\cN}{\mathcal{N}}
\newcommand{\cO}{\mathcal{O}}
\newcommand{\cT}{\mathcal{T}}
\newcommand{\cX}{\mathcal{X}}
\newcommand{\cY}{\mathcal{Y}}
\newcommand{\Zhat}{{\widehat{\mathbb{Z}}}}
\newcommand{\la}[1]{\,^{#1}\!} 
\newcommand{\mf}[1]{\mathfrak{#1}} 
\newcommand{\ff}{\mathfrak{f}}
\newcommand{\fA}{\mathfrak{A}}
\newcommand{\fS}{\mathfrak{S}}
\newcommand{\fs}{\!\!\fatslash}
\newcommand{\et}{{\acute{e}t}}
\newcommand{\im}{\operatorname{im }}
\newcommand{\Aut}{\operatorname{Aut}}
\newcommand{\Inn}{\operatorname{Inn}}
\newcommand{\Out}{\operatorname{Out}}
\newcommand{\IOut}{\operatorname{IOut}}
\newcommand{\Ker}{\operatorname{Ker}}
\newcommand{\gen}{{\operatorname{gen}}}
\newcommand{\spmatrix}[4]{\left[\begin{smallmatrix}#1&#2\\#3&#4\end{smallmatrix}\right]}
\newcommand{\ttmatrix}[4]{\left[\begin{array}{rr}#1&#2\\#3&#4\end{array}\right]}
\newcommand{\cAdm}{\mathcal{A}dm}
\newcommand{\Adm}{Adm}
\newcommand{\Frac}{\operatorname{Frac}}
\newcommand{\Gal}{{\operatorname{Gal}}}
\newcommand{\id}{\operatorname{id}}
\newcommand{\Hom}{\operatorname{Hom}}
\newcommand{\Mon}{\operatorname{Mon}}
\newcommand{\Epi}{\operatorname{Epi}}
\newcommand{\GL}{\operatorname{GL}}
\newcommand{\SL}{\operatorname{SL}}
\newcommand{\PSL}{\operatorname{PSL}}
\newcommand{\PSU}{\operatorname{PSU}}
\newcommand{\SU}{\operatorname{SU}}
\newcommand{\U}{\operatorname{U}}
\newcommand{\D}{\operatorname{D}}
\newcommand{\Or}{\operatorname{O}}
\newcommand{\M}{\operatorname{M}}
\newcommand{\J}{\operatorname{J}}
\newcommand{\SO}{\operatorname{SO}}
\newcommand{\Sz}{\operatorname{Sz}}
\newcommand{\Stab}{\operatorname{Stab}}
\newcommand{\Tr}{\operatorname{Tr}}
\newcommand{\ab}{{\operatorname{ab}}}
\newcommand{\meta}{{\textrm{meta}}}
\newcommand{\cyc}{{\operatorname{cyc}}}
\newcommand{\tr}{\operatorname{tr}}
\newcommand{\ord}{\operatorname{ord}}
\newcommand{\rightiso}{\stackrel{\sim}{\longrightarrow}}
\newcommand{\abs}{{\operatorname{abs}}}
\newcommand{\ol}[1]{{\overline{#1}}}
\newcommand{\ul}[1]{{\underline{#1}}}
\newcommand{\an}{\text{an}}
\newcommand{\genus}{\operatorname{genus}}
\newcommand{\Spec}{\operatorname{Spec}}
\newcommand{\pr}{\operatorname{pr}}
\newcommand{\ext}{{\mathrm{ext}}}
\newcommand{\pre}{{\text{pre}}}
\newcommand{\bgs}{{\text{bgs}}}
\newcommand{\ls}[1]{(\!(#1)\!)}
\newcommand{\ps}[1]{[\![#1]\!]}
\newcommand{\Homeo}{\operatorname{Homeo}}
\newcommand{\tp}{{\text{top}}}
\newcommand{\MT}{\operatorname{MT}}
\newcommand{\Burau}{{\text{Burau}}}
\newcommand{\Tate}{\operatorname{Tate}}
\newcommand{\Sch}{\underline{\textbf{Sch}}}
\newcommand{\Sets}{\underline{\textbf{Sets}}}
\newcommand{\FiniteSets}{\underline{\textbf{FiniteSets}}}
\newcommand{\Isom}{\operatorname{Isom}}
\newcommand{\Sym}{\operatorname{Sym}}
\newcommand{\gap}{\vspace{0.4cm}}
\theoremstyle{definition}\newtheorem{defn}{Definition}[section]
\theoremstyle{remark}
\theoremstyle{remark}
\theoremstyle{remark}\newtheorem{remark}[defn]{Remark}
\theoremstyle{plain}\newtheorem{question}[defn]{Question}
\theoremstyle{remark}\newtheorem*{remark*}{Remark}
\theoremstyle{remark}
\theoremstyle{remark}
\theoremstyle{definition}\newtheorem{example}[defn]{Example}
\theoremstyle{definition}
\theoremstyle{remark}
\theoremstyle{definition}
\theoremstyle{plain}
\theoremstyle{plain}
\theoremstyle{plain}\newtheorem{prop}[defn]{Proposition}
\theoremstyle{plain}\newtheorem{thm}[defn]{Theorem}
\theoremstyle{plain}
\theoremstyle{plain}\newtheorem{lemma}[defn]{Lemma}
\theoremstyle{plain}\newtheorem{cor}[defn]{Corollary}
\theoremstyle{plain}\newtheorem{conj}[defn]{Conjecture}
\theoremstyle{plain}\newtheorem*{thm*}{Theorem}
\theoremstyle{plain}\newtheorem*{conj*}{Conjecture}
\theoremstyle{plain}\newtheorem*{prop*}{Proposition}
\theoremstyle{plain}
\newcommand{\note}[1]{{\color{blue} \sf Will: [#1]}}
\title{Noncongruence modular curves as Hurwitz spaces}
\renewcommand{\int}{\operatorname{int}}
\author{William Y. Chen}
\begin{document}
\maketitle

\begin{abstract} In this survey article we give an overview of how noncongruence modular curves can be viewed as Hurwitz moduli spaces of covers of elliptic curves at most branched above the origin. We describe some natural questions that arise, and applications of these ideas to the Inverse Galois Problem, Markoff triples and the arithmetic of Fourier coefficients for noncongruence modular forms.	
\end{abstract}

\tableofcontents

\section{Introduction}
Let $\SL_2(\bZ)$ denote the group of $2\times 2$ matrices with integer coefficients and determinant 1. The group $\SL_2(\bZ)$ acts on the complex upper half plane $\cH$ by m\"{o}bius transformations:
\begin{equation}\label{eq_left_action}
\ttmatrix{a}{b}{c}{d}\tau := \frac{a\tau +b}{c\tau+d}
\end{equation}

To any finite index subgroup $\Gamma\le\SL_2(\bZ)$, the quotient $\Gamma\bs\cH$ is an algebraic curve over $\bC$, called a \emph{modular curve}. Recall that a subgroup $\Gamma\le\SL_2(\bZ)$ is \emph{congruence} if, for some $n\ge 1$, it contains the kernel of the reduction map
$$\Gamma(n) := \Ker(\SL_2(\bZ)\ra\SL_2(\bZ/n))$$
and otherwise it is \emph{noncongruence}. For congruence subgroups $\Gamma\le\SL_2(\bZ)$, the curves $\Gamma\bs\cH$ are defined over number fields, and the study of their arithmetic properties has led to profound breakthroughs in number theory, perhaps most famously the modularity theorem and Fermat's last theorem \cite{CSFLT, DS06, Wiles95}. The theory hints at deep connections between arithmetic geometry, analysis and representation theory, which has blossomed into the Langlands program. By contrast, the study of noncongruence subgroups has received far less attention. The spectacular success of the congruence theory begs the question:
\begin{equation}\label{eq_question}
\parbox{\linewidth-5em}{Could a similarly rich theory exist in the case of \emph{noncongruence subgroups}? More broadly, what is the meaning of these noncongruence subgroups, and how do they fit into the big picture?}
\end{equation}

\subsection{Noncongruence modular forms and the work of Atkin and Swinnerton-Dyer}
The first systematic investigation into such questions was conducted by Atkin and Swinnerton-Dyer (ASD) in \cite{ASD71}. Aided by the fledgling computers of their time, they computed the Fourier expansions for bases of several spaces of modular forms for various noncongruence subgroups $\Gamma\le\SL_2(\bZ)$, and noticed remarkable arithmetic properties, expressed in terms of congruences between Fourier coefficients, which are strikingly similar to those enjoyed by their congruence cousins. At around the same time, it became apparent that while congruence modular forms can always be normalized to have integral coefficients, all computed examples of noncongruence modular forms could not -- they all had ``unbounded denominators''. The discoveries of Atkin and Swinnerton-Dyer triggered a surge of activity \cite{Sch85, Sch88, LLY, HLV, LL11, Kib14, LL14}. Today, both observations of Atkin and Swinnerton-Dyer are now much better understood. In \cite{Katz81} and \cite{Sch85}, Katz and Scholl have shown that these ASD congruences are induced by the action of a crystalline Frobenius on de Rham cohomology, and by a recent breakthrough of Calegari, Dimitrov, and Tang \cite{CDT21}, we now know that the unbounded denominator property completely characterizes noncongruence modular forms.

\subsection{Noncongruence modular curves and nonabelian level structures}\label{ss_level_structures_intro}
In this expository article, we will describe progress towards \eqref{eq_question} which has developed in a somewhat parallel manner to the work described above. While the previous work treated noncongruence subgroups (resp. modular curves) as essentially \emph{arbitrary} subgroups (resp. algebraic curves), in this article we will breathe meaning into noncongruence objects by means of a moduli interpretation \cite{Chen18, Chen21}. To be more precise, we will describe a family of level structures for elliptic curves which simultaneously generalize the classical level structures given in terms of torsion data, and endows every noncongruence modular curve with a moduli interpretation.

The basic idea is as follows. Given an elliptic curve $E$ over $\bC$ with origin $O$, a generalized level structure on $E$ is the isomorphism class of a finite branched covering $\pi : C\ra E$, with branching only above $O$, and unramified elsewhere. If $E^\circ := E - O$ is the punctured elliptic curve, then for any base point $p_0\in E^\circ(\bC)$, $\pi_1(E^\circ(\bC),p_0)$ is a free group of rank 2, and the inclusion $i : E^\circ\hookrightarrow E$ induces a surjection
$$i_* : \pi_1(E^\circ(\bC),p_0)\lra\pi_1(E(\bC),p_0)$$
which identifies $\pi_1(E(\bC),p_0)\cong\bZ^2$ with the abelianization of $\pi_1(E^\circ(\bC),p_0)$. If $x,y$ are generators of $\pi_1(E^\circ(\bC),p_0)$, then their commutator $[x,y]$ is represented by a small loop around the puncture. Monodromy around this loop is then a permutation of the fiber $C_{p_0}$, whose cycles correspond to the ramified points above $O$, the cycle length being equal to the ramification index. It follows that our branched covering $\pi : C\ra E$ is unramified if and only if the monodromy representation
$$\varphi_\pi : \pi_1(E^\circ(\bC),p_0)\lra \Sym(C_{p_0})$$
associated to $\pi$ sends $[x,y]$ to the identity, which is exactly to say that its image, the monodromy group of $\pi$, is abelian. In this case, we say that $\pi$ defines an \emph{abelian level structure}, and the Hurwitz formula implies that $C$ is elliptic and $\pi$ is an isogeny. This shows that abelian level structures recover classical congruence level structures. The ``Hurwitz'' moduli space obtained by deforming the complex structure on $\pi$ (see \S\ref{sss_Teichmuller_uniformization}) is then naturally a congruence modular curve.

When $\pi$ has nonabelian monodromy, it must be ramified, and the associated moduli spaces can be calculated explicitly in terms of the monodromy representation $\varphi_\pi$ (see \S\ref{sss_combinatorial_decomposition}). In this case we say $\pi$ defines a \emph{nonabelian level structure}. By results of Asada \cite{Asa01} and Ellenberg-McReynolds \cite{EM12}, it follows that

\begin{thm}[Asada, Ellenberg-McReynolds, see \S\ref{ss_universality}]\label{thm_ACEM} Every noncongruence modular curve appears as a quotient of the moduli space of an appropriately chosen $\pi$.\footnote{The appropriate cover for a given modular curve can be found explicitly by the results of \cite[\S5]{BER11}. On the other hand, it can be remarkably difficult to recognize when a cover $\pi$ will result in a noncongruence modular curve, see \S\ref{ss_noncongruence_nonabelian}.}
\end{thm}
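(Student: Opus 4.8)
The plan is to recast the theorem as a statement about finite-index subgroups of $\SL_2(\bZ)$ and then invoke the combinatorial/arithmetic input of Asada \cite{Asa01} and Ellenberg--McReynolds \cite{EM12}. First I would use the dictionary from \S\ref{sss_Teichmuller_uniformization} and \S\ref{sss_combinatorial_decomposition}: a branched cover $\pi:C\ra E$ unramified away from $O$ is classified, up to isomorphism, by the conjugacy class of a finite-index subgroup $H_\pi\le\pi_1(E^\circ,p_0)\cong F_2$ (equivalently by $\varphi_\pi$ up to conjugacy), and its Hurwitz moduli space is the modular curve $\Gamma_\pi\bs\cH$, where $\Gamma_\pi\le\SL_2(\bZ)$ is the stabilizer of $[H_\pi]$ under the outer action of the mapping class group $\MCG(\Sigma_{1,1})\cong\SL_2(\bZ)$ on $F_2$. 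Since $F_2$ has only finitely many subgroups of each index, this orbit is finite, so $\Gamma_\pi$ has finite index; and since $\Gamma\bs\cH$ depends only on the image of $\Gamma$ in $\PSL_2(\bZ)$, the whole discussion may be read in $\PSL_2(\bZ)$. The theorem then reduces to: every finite-index $\Gamma\le\SL_2(\bZ)$ contains some $\Gamma_\pi$ -- for then $\Gamma_\pi\bs\cH\ra\Gamma\bs\cH$ is a finite covering, which is the sense in which $\Gamma\bs\cH$ is a \emph{quotient} of the Hurwitz space of $\pi$. Equivalently, I need the family $\{\Gamma_\pi\}$ to be coinitial among the finite-index subgroups of $\SL_2(\bZ)$.

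Next I would isolate the real content of this coinitiality. The $\Gamma_\pi$ are closed under finite intersection (take disjoint unions of covers), so they generate a group topology on $\SL_2(\bZ)$, and coinitiality says exactly that this topology is the full profinite one; as $\SL_2(\bZ)$ is finitely generated it then suffices to handle one finite-index normal subgroup $N\trianglelefteq\SL_2(\bZ)$ and exhibit a $\pi$ with $\Gamma_\pi\le N$. Two observations show this is not formal. Abelian-monodromy covers yield only congruence $\Gamma_\pi$, as recalled above, so one must use genuinely nonabelian covers; and covers classified by a \emph{characteristic} subgroup $H\trianglelefteq F_2$ are useless, because then $\SL_2(\bZ)$ fixes $[H_\pi]$ and $\Gamma_\pi=\SL_2(\bZ)$. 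So $H_\pi$ must be far from characteristic, with $\SL_2(\bZ)$-orbit so large that $Q:=\SL_2(\bZ)/N$ acts freely on it, which is precisely what forces $\Stab([H_\pi])\le N$. To produce such a cover I would follow \cite{EM12}: take the coset action of $\SL_2(\bZ)$ on $\SL_2(\bZ)/\Gamma$ as combinatorial input, build from it a pair of permutations $(\sigma_x,\sigma_y)$ defining a cover $\pi$, and verify a group-theoretic lemma guaranteeing that $\Gamma_\pi$ is exactly $\Gamma$ (up to $\pm I$) and not larger. Alternatively, coinitiality can be extracted from Asada's theorem \cite{Asa01} that the total monodromy of the mapping class group on the tower of finite covers of $E^\circ$ is faithful, pushed far enough to separate every finite quotient of $\SL_2(\bZ)$.

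The main obstacle is this last step: for an \emph{arbitrary} target $\Gamma$, producing a finite-index $H\le F_2$ whose $\SL_2(\bZ)$-orbit is rich enough, i.e.\ controlling the interaction between the subgroup lattice of $F_2$ and the $\SL_2(\bZ)$-action on it. In a survey I would give the reduction of the first two paragraphs in full and then cite \cite{Asa01} and \cite{EM12} for the existence of the cover, noting for concreteness that \cite[\S5]{BER11} gives an algorithm writing down a suitable $\pi$ for a given $\Gamma$. I would also underline the asymmetry flagged in the footnote and in \S\ref{ss_noncongruence_nonabelian}: the argument produces \emph{some} $\pi$ realizing $\Gamma\bs\cH$ as a quotient, but gives no control over the reverse problem of recognizing, for a given $\pi$, whether $\Gamma_\pi$ is congruence.
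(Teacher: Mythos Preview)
Your proposal is correct and takes essentially the same approach as the paper: reduce the statement to the coinitiality of the stabilizer subgroups $\Gamma_\varphi$ among all finite-index subgroups of $\SL_2(\bZ)\cong\Out^+(\Pi)$, and then invoke Asada \cite{Asa01} (as reformulated in \cite{BER11}) for the coinitiality and Ellenberg--McReynolds \cite{EM12} for the sharper ``equality up to $\pm I$'' statement. The paper's treatment in \S\ref{ss_universality} is in fact less detailed than your sketch---it simply states Asada's theorem in the form ``every finite-index $\Gamma$ contains some $\Gamma_\varphi$'' and reads off the corollary---so your unpacking of the reduction (finite orbits, the topology generated by the $\Gamma_\pi$, the obstruction from characteristic subgroups) is a welcome expansion rather than a departure.

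One minor quibble: your parenthetical ``take disjoint unions of covers'' for closure under intersection is not quite the right operation, since a disjoint union is not connected. What you want is the diagonal surjection $(\varphi_1,\varphi_2):\Pi\to G_1\times G_2$ onto its image, whose stabilizer in $\Epi^\ext(\Pi,G)$ lies in $\Gamma_{\varphi_1}\cap\Gamma_{\varphi_2}$; this is the construction implicit in the paper's functoriality discussion \S\ref{ss_functoriality}. This does not affect the argument, since coinitiality is all you need and Asada's theorem gives it directly without passing through the filter base property.
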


In this paper, we will focus on the case where $\pi$ is Galois with Galois group $G$, in which case the isomorphism class of $\pi$ is a \emph{$G$-structure} on $E$. The moduli space of elliptic curves with $G$-structures (see \S\ref{section_nonabelian}) will be denoted $M(G)$, and is a smooth affine curve over $\bZ[1/|G|]$. Over $\bC$, this space is typically disconnected: its components are modular curves, and every modular curve is a quotient of a component of $M(G)_\bC$ for an appropriate finite group $G$. In particular, these moduli interpretations give rise to integral models for noncongruence modular curves. Via the theory of admissible covers \cite{ACV03}, in \S\ref{ss_compactification} we will describe the extension of this moduli interpretation to the compactifications of modular curves.

These moduli interpretations indicate that while congruence modular curves reflect the properties of elliptic curves as abelian varieties, noncongruence modular curves capture their essence as affine hyperbolic curves (after removing the origin)\footnote{Hyperbolic curves are also examples of \emph{anabelian} varieties. This leads to the slogan that congruence subgroups are to elliptic curves as abelian varieties as noncongruence subgroups are to elliptic curves as anabelian curves.}. This dual nature of elliptic curves is also reflected in the group $\SL_2(\bZ)$, which is simultaneously an arithmetic group and a mapping class group. As an arithmetic group, one is naturally led to consider its action on the homology $H_1(E,\bZ)\cong\pi_1(E)\cong\bZ^2$ and the extended action of $\SL_2(\bR)$ on lattices in $\bR^2$, a context where Hecke operators naturally arise. As the mapping class group of a punctured torus $E^\circ$, one is instead led to consider its outer action on the nonabelian fundamental group $\pi_1(E^\circ)$, whence nonabelian covers, whence Hurwitz spaces. In this context, the $\SL_2(\bR)$-action on lattices should be translated into its action on the space of complex structures on $E$ via \emph{affine deformations} (see \S\ref{sss_EM}). Here, Hecke operators are less natural, and are in some sense ineffective: By a result of Berger \cite{Berg94}, the action of the Hecke operators $T_p$ on the space of modular forms for a noncongruence subgroup $\Gamma$ factors through its action on the subspace of forms for the congruence closure $\Gamma^c$ of $\Gamma$.

While the abelian theory has been wildly successful at understanding the arithmetic and geometry of elliptic curves, largely due to the Hecke theory and the connection with modular forms, the nonabelian theory carries more information. For example, by the Grothendieck anabelian conjectures for hyperbolic curves\footnote{Proved by Mochizuki and Tamagawa \cite{Tam97, Moc96}.}, if $E$ is an elliptic curve over a number field $K$, then the (outer) $\Gal(\Qbar/K)$ action on $\pi_1(E^\circ)$ is faithful and completely determines $E$ up to $K$-isomorphism, whereas the action on the abelianization $\pi_1(E)$ is not faithful and only determines $E$ up to $K$-isogeny. Moreover, the universality of the moduli interpretations may lead to novel approaches to solving generalized Fermat equations via Darmon's dictionary \cite[Principle 6]{Dar04}. On the other hand, the meat of such an approach would lie in the determination of rational points on noncongruence modular curves, or equivalently rational nonabelian level structures on elliptic curves; for this, significant additional arithmetic input would be required.

\subsection{Some fundamental questions}
By Belyi's theorem, every smooth proper algebraic curve defined over a number field is isomorphic to the compactification of some noncongruence modular curve. It follows that one should not expect to be able to say much about noncongruence modular curves in general. A main virtue of the moduli interpretations is that it provides a way to systematically split up the family of all modular curves into subfamilies, each of which may individually be amenable to a more productive investigation. For example, the family classifying abelian covers gives rise to the congruence modular curves.

In \S\ref{ss_metabelian} we will explain how the modular curves classifying metabelian covers are also congruence. This shows that the naive guess that nonabelian monodromy should lead to noncongruence moduli spaces fails, and leads to the fundamental and remarkably subtle problem of determining for which covers $\pi : C\ra E$ the associated moduli space is congruence or noncongruence (\S\ref{ss_noncongruence_nonabelian}). Understanding this would also shed light on just how much the congruence theory tells us about the nonabelian \'{e}tale fundamental group of punctured elliptic curves. More generally, one can ask about how the structure of $E$ as an abelian variety interacts with the nonabelian aspects of $E^\circ$. For example, how does $E$ having complex multiplication interact with the arithmetic of nonabelian level structures?

A powerful technique for studying arithmetic is to exploit its interaction with geometry. However, the geometry of $M(G)_\bC$ is locked behind the basic question of determining its connected components. While these components are easy to compute algorithmically for individual examples, it can be quite difficult to prove anything general about them (see \S\ref{sss_geometric_monodromy}). In \S\ref{ss_markoff}, we show how the components of $M(\SL_2(\bF_p))_\bC$, for any prime $p$, can be profitably studied via the arithmetic of character varieties. The problem of understanding connected components is related to the problem of Nielsen equivalence in combinatorial group theory, and can have interesting arithmetic consequences. For example, our results on $M(\SL_2(\bF_p))_\bC$ imply a Diophantine finiteness result on $\SL_2(\bF_p)$-covers for elliptic curves and strong approximation for the Markoff equation.

\subsection{Structure of the paper}

In \S\ref{section_moduli}, we introduce some basic topological properties of elliptic curves, as well as some basic principles for working with stacks, and why one should care about stacks. We will also give constructions of the moduli stack of elliptic curves in two ways; first via framings on homology, which emphasizes the abelian viewpoint, and then via Teichm\"{u}ller theory, which is more compatible with our nonabelian level structures.

In \S\ref{section_nonabelian} we describe nonabelian level structures in two ways: via representations of fundamental groups, and via admissible $G$-covers. In particular, we define our main player $\cM(G)$ : the moduli stack of elliptic curves with $G$-structures, and describe some of its finer features, such as coarse schemes, functoriality, and modular compactifications.

In \S\ref{section_noncongruence}, we describe how the components of $\cM(G)$ give rise to noncongruence modular curves. We explain the results of Asada and Ellenberg-McReynolds. We also describe when the stacks $\cM(G)$ are schemes, and give a group-theoretic framework for understanding the geometry and arithmetic of $\cM(G)$ (and hence noncongruence modular curves).

In \S\ref{section_examples}, we give a tour of the subject via a series of explicit examples of the structure of $\cM(G)$ for various finite groups $G$. We will also describe techniques for computing fields of definition and recognizing when a component of $\cM(G)$ is noncongruence, as well as relations with the Burau representation of braid groups, the Markoff equation, and unbounded denominators for noncongruence modular forms.

\subsection{Conventions and notation}

If $\Pi$ is the fundamental group of a topological space and $\gamma,\delta\in\Pi$, then the product $\gamma\delta$ is to be understood as the homotopy class obtained by first tracing a loop representing $\delta$, followed by tracing a loop representing $\gamma$. In the Galois correspondence, we will typically view monodromy actions as left actions, and Galois actions as right actions.

To keep notation simple, we will denote both the topological fundamental group and \'{e}tale fundamental group by the symbol $\pi_1$. If $X$ is a topological space, then $\pi_1(X)$ will denote the topological fundamental group; if $X$ is a scheme, then $\pi_1(X)$ will be the \'{e}tale fundamental group. An elliptic curve $E$ over $\bC$ will always be considered, by default, as an algebraic variety. Thus $\pi_1(E)$ denotes the \'{e}tale fundamental group, whereas $\pi_1(E(\bC))$ denotes the topological fundamental group of the space of $\bC$-points of $E$.

Let $\fA_n,\fS_n$ respectively denote the alternating and symmetric group on $n$ letters. Let $D_{2n}$ denote the dihedral group of order $2n$. The group $\SU_n(\bF_q)$ consists of the determinant 1 isometries of an $n$-dimensional Hermitian space over $\bF_{q^2}$ relative to the involution $a\mapsto a^q$. $\PSL_n(\bF_q)$ and $\PSU_n(\bF_q)$ are the quotients of $\SL_n(\bF_q),\SU_n(\bF_q)$ by their center.

\section{The moduli of elliptic curves}\label{section_moduli}
In this section we recall the moduli theory of elliptic curves. Much of this section works equally well for analytic spaces, algebraic varieties, and schemes, so we will refer to these collectively as a ``space''.\footnote{We will never need to consider algebraic spaces. In particular, all algebraic stacks under consideration will admit a coarse moduli scheme.} Spaces are typically insufficient for a good theory of moduli, and hence we are motivated to consider the larger category of stacks (or orbifolds), which can be thought of as spaces augmented with extra group structure, and which tautologically solve many moduli problems.

In this article we will treat the theory of stacks as a black box and will limit ourselves to describing the relevant properties of stacks as needed. We refer to \cite{LMB00,Ols16,stacks} for more details regarding stacks.

\subsection{Topological properties of elliptic curves and punctured elliptic curves}\label{ss_topological_properties}
Let $S$ be a torus, by which we mean an oriented closed surface of genus 1. Let $S^\circ$ be the complement of a point inside $S$. Let $s\in S^\circ$ be a base point. The fundamental group $\pi_1(S^\circ,s)$ is a free group of rank 2, and $\pi_1(S,s)$ is a free abelian group of rank 2. If $a,b\in\pi_1(S^\circ,s)$ is a basis with positive intersection number $a\cap b = +1$, then the commutator $[b,a] := bab^{-1}a^{-1}\in\pi_1(S^\circ,s)$ is represented by a small positively oriented loop around the puncture. The map
$$\pi_1(S^\circ,s)\ra\pi_1(S,s)$$
induced by the inclusion $S^\circ\hookrightarrow S$ thus sends $[b,a]$ to the identity, and hence identifies $\pi_1(S,s)\cong H_1(S,\bZ)$ with the abelianization of $\pi_1(S^\circ,s)$.

The \emph{mapping class group} $\Gamma(S)$ of $S$ (resp. $\Gamma(S^\circ)$ of $S^\circ$) is the group of homotopy classes of orientation preserving homeomorphisms (equivalently diffeomorphisms) of $S$ (resp. of $S^\circ$).\footnote{See \cite[\S1.4]{FM11} for a discussion on homotopy vs isotopy, homeomorphisms vs diffeomorphisms.} Since a general homeomorphism may not fix $s$, the mapping class group does not act on $\pi_1(S^\circ,s)$. Nonetheless, since fundamental groups with different base points are canonically isomorphic up to conjugation, we obtain a canonical outer representation
$$\Gamma(S^\circ)\lra\Out(\pi_1(S^\circ,s))$$
Since mapping classes are orientation preserving, the image is in fact contained in the index 2 subgroup $\Out^+(\pi_1(S^\circ,s))$ of outer automorphisms which act with determinant +1 on the abelianization $\pi_1(S^\circ,s)^\ab\cong\bZ^2$. Since $\pi_1(S,s)$ is abelian, in this we case we get an honest action of $\Gamma(S)$ on $\pi_1(S,s)\cong H_1(S,\bZ)$. The following commutative diagram relates these actions.
\begin{equation}\label{eq_MCG_actions}
\begin{tikzcd}
	\Gamma(S^\circ)\ar[r]\ar[d] & \Gamma(S)\ar[d] \\
	\Out^+(\pi_1(S^\circ,s))\ar[r] & \SL(H_1(S,\bZ))
\end{tikzcd}
\end{equation}

A key result that we will use repeatedly is the following:
\begin{thm}\label{thm_MCG_actions} Every map in the diagram \eqref{eq_MCG_actions} is an isomorphism.	
\end{thm}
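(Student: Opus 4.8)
The plan is to establish each map in the diagram \eqref{eq_MCG_actions} is an isomorphism, handling the two mapping class groups, the bottom map, and the two vertical maps in a coordinated way, so that surjectivity/injectivity for one deduces the same for another. First I would dispose of the right-hand vertical map $\Gamma(S)\to\SL(H_1(S,\bZ))$: this is the classical fact that the mapping class group of the torus is $\SL_2(\bZ)$. One proves surjectivity by exhibiting Dehn twists along the two standard curves $a,b$, which act as the elementary matrices $\spmatrix{1}{1}{0}{1}$ and $\spmatrix{1}{0}{-1}{1}$ (or their inverses, depending on orientation conventions), and these generate $\SL_2(\bZ)$; injectivity follows because a homeomorphism of $T^2 = \bR^2/\bZ^2$ acting trivially on $H_1$ lifts to a homeomorphism of $\bR^2$ commuting with the deck action and hence is isotopic to the identity by the straight-line homotopy. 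The bottom map $\Out^+(\pi_1(S^\circ,s))\to\SL(H_1(S,\bZ))$ is also purely algebraic: $\pi_1(S^\circ,s)$ is free of rank $2$, so $\Out(F_2)\cong\GL_2(\bZ)$ by a classical theorem of Nielsen (the key input being that an automorphism of $F_2$ preserves the conjugacy class of the commutator $[a,b]$ up to inversion, which pins down the kernel of $\Out(F_2)\to\GL(F_2^{\ab})$ to be trivial), and the superscript $+$ cuts this down to $\SL_2(\bZ)$.

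Next I would treat the left vertical map $\Gamma(S^\circ)\to\Out^+(\pi_1(S^\circ,s))$. Injectivity is the Dehn--Nielsen--Baer theorem for the once-punctured torus (equivalently, for a surface with one puncture / one boundary component): a mapping class acting trivially on $\pi_1$ up to conjugation is trivial. Surjectivity onto $\Out^+$ is the other half of Dehn--Nielsen--Baer — every (orientation-compatible) outer automorphism of the surface group is geometrically realized; for the once-punctured torus this can be seen very concretely by identifying $S^\circ$ with $T^2$ minus a point and lifting the linear action of $\SL_2(\bZ)$ on $T^2$, adjusted near the puncture. The top map $\Gamma(S^\circ)\to\Gamma(S)$ is the map filling in the puncture; its surjectivity is clear (isotope a homeomorphism of $S$ to fix the marked point, then restrict), and its injectivity/kernel is controlled by the Birman exact sequence, whose kernel here is the image of $\pi_1(S,s)\to\Gamma(S^\circ)$ coming from "point-pushing."

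The crux, then, is the point-pushing computation: I would show the Birman kernel is \emph{trivial} in this case. The point-pushing subgroup is the image of $\pi_1(S,s)\cong\bZ^2$, but on a torus, pushing the puncture around a loop $\gamma$ is isotopic to the identity as a self-homeomorphism of $S^\circ$ — intuitively because a loop on the closed torus can be "unwound" (the torus has a transitive isometry group / is a group itself, so translation along $\gamma$ is isotopic to the identity even after puncturing, since one can drag the whole translation isotopy along). More carefully, the point-pushing map $\pi_1(S)\to\Gamma(S^\circ)$ factors through the action on $\pi_1(S^\circ,s)$ by conjugation by the lift of $\gamma$, which is \emph{inner}, hence trivial in $\Out$; combined with the already-established injectivity of $\Gamma(S^\circ)\to\Out^+$, this forces point-pushing to be trivial, so the top map is injective. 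With all four outer maps of the square shown to be isomorphisms and the square commutative, the proof is complete. I expect the main obstacle to be assembling the Dehn--Nielsen--Baer input for the \emph{punctured} surface cleanly — one must be careful about the distinction between punctures, marked points, and boundary components, and about the orientation ($+$) conventions — rather than any single hard computation; alternatively one can cite \cite{FM11} for the punctured-torus case and only spell out the point-pushing triviality.
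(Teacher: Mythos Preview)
Your outline is essentially correct and amounts to unpacking the citations the paper gives (Nielsen's theorem $\Out(F_2)\cong\GL_2(\bZ)$ from \cite[Corollary N4]{MKS04}, and the torus/punctured-torus mapping class group computations from \cite[\S2.2.4]{FM11}), so the approaches agree.

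One genuine issue: your ``more careful'' argument for the triviality of point-pushing --- that the push along $\gamma$ acts on $\pi_1(S^\circ,s)$ by conjugation by a lift of $\gamma$, hence is inner and dies in $\Out$ --- is not right as written. If that reasoning were valid it would apply verbatim in every genus $g\ge 1$ and would force point-pushing to be trivial there too, contradicting the Birman exact sequence for $g\ge 2$. The standard fact you may be recalling is that the push $P_\gamma$ acts by conjugation on $\pi_1(S)$, the \emph{closed} surface group, not on $\pi_1(S^\circ)$; for $g\ge 2$ the induced outer automorphism of $\pi_1(S^\circ)$ is genuinely nontrivial. What is special to the torus is exactly your ``intuitive'' argument: because $T^2$ is a Lie group, the point-push along $\gamma$ is realized by the isotopy of translations $x\mapsto x+\gamma(t)$, which is an isotopy of $S^\circ$ to the identity (equivalently, the two parallel copies of a nonseparating curve on either side of the puncture are isotopic in $S^\circ$, so the product of Dehn twists $T_{c_1}T_{c_2}^{-1}$ is trivial). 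That argument is correct and genus-specific. Alternatively, note that once you have established that the left vertical, right vertical, and bottom maps are isomorphisms, commutativity of the square forces the top map to be an isomorphism as well, so the Birman discussion is logically redundant and you can simply drop it.
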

\begin{proof} The map $\Out^+(\pi_1(S^\circ,s))\ra \SL(H_1(S,\bZ))$ is an isomorphism for purely group theoretic reasons \cite[Corollary N4]{MKS04}. The other maps are isomorphisms by the discussion in \cite[\S2.2.4]{FM11}.	
\end{proof}

\begin{remark} This result is very special to our situation, and in some sense embodies the idea that an elliptic curve simultaneously determines an abelian variety and a hyperbolic (punctured) curve. One can consider two natural generalizations of this situation: to surfaces of higher genus, and to free groups of higher rank. For surfaces of genus $g\ge 2$, the mapping class group does not act faithfully on homology: its kernel is the Torelli group, which is infinite for $g\ge 2$ \cite[\S6.5]{FM11}. For a free group $F_n$ of rank $n$, the abelianization map $F_n\ra\bZ^n$ always induces a surjection $\Out(F_n)\rightiso\GL_n(\bZ)$, but it is only injective for $n = 1, 2$.\footnote{If $x_1,\ldots,x_n$ are free generators, the map sending $x_1\mapsto x_1[x_2,x_3]$ and $x_i\mapsto x_i$ for $i\ge 2$ is a nontrivial element of the kernel.}
	
\end{remark}

\subsection{Fine moduli, coarse moduli, and the necessity of stacks}\label{ss_necessity}
From the classical theory of elliptic curves, we know that to any elliptic curve $E$ over $\bC$, we may associate its $j$-invariant $j(E)$, which is a complex number, and two elliptic curves are isomorphic if and only if their $j$-invariants agree. The map $j : \cH\rightarrow\bC$ sending $\tau$ to the $j$-invariant of the elliptic curve $E_\tau := \bC/\langle 1,\tau\rangle$ is an example of a modular form of weight 0. It is surjective, holomorphic and invariant under $\SL_2(\bZ)$. Since every elliptic curve over $\bC$ is isomorphic to $E_\tau$ for some $\tau\in\cH$, it follows that there is a bijection between the set of isomorphism classes of elliptic curves over $\bC$ and the modular curve $\cH/\SL_2(\bZ)\cong\bA^1_\bC$. This essentially means that $\cH/\SL_2(\bZ)$ is a \emph{coarse moduli space} for elliptic curves. While sufficient for classifying individual elliptic curves, it is not sufficient for classifying families. By this, we mean to desire a space $M$ and a bijection
\begin{equation}\label{eq_fine_moduli}
\Hom(S,M)\rightiso\{\text{isomorphism classes of families of elliptic curves over $S$}\}	
\end{equation}

which is \emph{natural} in the sense that for any maps $T\ra S$ and $S\ra M$, the family corresponding to the composition $T\ra M$ is isomorphic to the pullback, to $T$, of the family corresponding to $S\ra M$. If it exists, such a space $M$ would be called a \emph{fine moduli space} for elliptic curves. Unfortunately, no such space $M$ can exist; the typical example goes as follows. Let $E$ be an elliptic curve over $\bC$, and $\alpha\in\Aut(E)$ a nontrivial automorphism. Let $\MT(E,\alpha)$ denote the \emph{mapping torus} associated to $(E,\alpha)$; it is the family of elliptic curves over the circle $S^1$ constructed by gluing the two ends of $E\times[0,1]$ along the automorphism $\alpha$. If $M$ is any hypothetical moduli space, then by naturality every $t\in S^1$ must be mapped to the point of $M$ corresponding to the isomorphism class of $\cE_t$. Since the fibers are all isomorphic, both $\cE$ and the trivial family $E\times S^1$ must correspond to the same (constant) map $S^1\ra M$. More generally, this shows that fine moduli spaces cannot exist for objects with nontrivial automorphisms.

\begin{remark} While this example is topological, the same phenomenon occurs in arithmetic geometry. For example, for $d\in\bQ^\times$, let $E_d$ be the elliptic curve over $\bQ$ given by the Weierstrass equation $y^2 = x^3+d$. Then $j(E_d) = 0$, and hence for any $d,d'\in\bQ^\times$, $E_d$ is always isomorphic to $E_{d'}$ over $\Qbar$. However they are isomorphic over $\bQ$ if and only if $d/d'$ is a 6th power in $\bQ^\times$ \cite[Corollary 5.4.1]{Sil09}. This shows that if $M$ is a hypothetical fine moduli scheme for elliptic curves, then the base change map $\Hom(\Spec\bQ,M)\ra\Hom(\Spec\Qbar,M)$ must fail to be injective. This is a contradiction, because $\Spec\Qbar\ra\Spec\bQ$ is an epimorphism of schemes, and hence $\Hom(\Spec\bQ,S)\ra\Hom(\Spec\Qbar,S)$ is injective for any scheme $S$.
\end{remark}

Nonetheless, there is a geometric object that satifies the fine moduli property \eqref{eq_fine_moduli}, called the \emph{moduli stack} (or orbifold) of elliptic curves.

\subsection{The moduli stack of elliptic curves}
As described above, for the purposes of moduli, one must not only understand individual objects, but also families of such objects over varying base spaces $S$. We refer to such families as simply being an ``object over $S$''. In algebraic geometry, the definition is as follows:
\begin{defn} Let $S$ be a scheme. An elliptic curve over $S$ is a smooth proper morphism $f : E\ra S$, equipped with a section $O : S\ra E$, such that the geometric fibers of $f$ are connected schemes of pure dimension 1 and arithmetic genus 1.	
\end{defn}

In this article, the moduli stack of elliptic curves will be denoted $\cM(1)$, and its coarse moduli space will be denoted $M(1)$. Here, the ``1'' stands for ``trivial level structure''. For the most part, it suffices to treat $\cM(1)$ as a geometric object such that for every space $S$, there is a bijection
\begin{equation}\label{eq_elliptic_fmp}
\Hom(S,\cM(1))\rightiso \{\text{isomorphism classes of elliptic curves over $S$}\}	
\end{equation}
which is functorial in the sense described above. Thus $\cM(1)$ satisfies the ``fine moduli'' property, at the cost of not being a space, but a stack. Moreover, $\cM(1)$ has an underlying topological space, and there is a \emph{coarse map} $c : \cM(1)\ra M(1)$ which is a homeomorphism and is initial amongst all maps from $\cM(1)$ to spaces.

A precise definition in the context of algebraic geometry is as follows:

\begin{defn}[See {\cite[\S13.1]{Ols16}}] The moduli stack of elliptic curves $\cM(1)$ is the category fibered over the category of schemes $\Sch$ whose objects are triples $(S,E,O)$, where $(E,O)$ is an elliptic curve over the scheme $S$ with zero section $O$, and a morphism $(S,E,O)\ra (S',E',O')$ is a pair of morphisms $(f,g)$ fitting into a cartesian diagram
$$\begin{tikzcd}
	E\ar[d]\ar[r,"f"] & E'\ar[d] \\
	S\ar[r,"g"] & S'
\end{tikzcd}$$
such that $O'\circ g = f\circ O$.
\end{defn}

\begin{remark} In the context of this definition, the fine moduli property \eqref{eq_elliptic_fmp} can be recovered as follows. First, to any scheme $S$ one associates the category $\Sch/S$ of schemes over $S$. By the 2-categorical Yoneda lemma \cite[0GWH]{stacks}, any morphism $f : \Sch/S\lra \cM(1)$ of categories fibered over $\Sch$ is determined by where it sends $S$ (viewed as an $S$-scheme via the identity $\id_S$). Because $f$ is a morphism of fibered categories, the image of $S$ must be an elliptic curve over $S$. By taking isomorphism classes, one obtains the fine moduli property \eqref{eq_elliptic_fmp}. 	
\end{remark}


To construct $\cM(1)$, the general idea is to first construct a \emph{fine moduli space} for elliptic curves equipped with some rigidifying structure, and then to quotient out by a group action to forget the extra structure. Typically this group action will have fixed points; taking the naive quotient will lead to a coarse moduli space, and taking a stack (or orbifold) quotient will lead to a fine moduli space. To emphasize the dichotomy between the congruence and noncongruence settings, we will sketch two constructions of $\cM(1)$: via framings, and via Teichm\"{u}ller theory.

\subsubsection{Moduli of framed elliptic curves}\label{sss_framings}
Let $E$ be a complex analytic elliptic curve. It is topologically a torus, and its first homology is isomorphic to $\bZ^2$. Let $e_1,e_2\in\bZ^2$ be the canonical basis. A \emph{framing} on $E$ is an isomorphism
$$f : \bZ^2\rightiso H_1(E(\bC),\bZ)$$
such that the intersection number $f(e_1)\cap f(e_2) = +1$. If $E/S$ is a family of elliptic curves over a complex manifold $S$, then the topological local triviality of the family implies that the first homology of the fibers is a local system on $S$. A framing on $E/S$ is a locally constant family of framings $f = \{f_s\}_{s\in S}$ on the elliptic curves $E_s$ for $s\in S$.

 Two framed elliptic curves $(E,f),(E',f')$ are isomorphic if there is an isomorphism $\alpha : E\rightiso E'$ such that $\alpha_*\circ f = f'$. We will show that the category of framed elliptic curves admits a \emph{fine moduli space}, which is made possible by the fact that framed elliptic curves have no nontrivial automorphisms. We follow \cite[\S2]{Hain08}; also see \cite[\S2]{BBCL20}.

The action of $\bZ^2$ on $\cH\times\bC$ given by
$$(\tau,z)\cdot (n,m) = (\tau,z+n\tau+m)\qquad (\tau,z)\in\cH\times\bC,\; (n,m)\in\bZ^2$$
is free, and $\bE := (\cH\times\bC)/\bZ^2$, together with the zero section $\cH\times\{0\}$ is a family of elliptic curves over the upper half plane $\cH$. For each $\tau\in\cH$, let $\delta_1,\delta_\tau$ denote the straight-line paths $0\leadsto 1$ and $0\leadsto \tau$ respectively in $\bC$. Then the family
$$f_{\bE,\tau} : (e_1,e_2)\mapsto (\delta_1,\delta_\tau)\qquad \tau\in\cH$$
is a framing of $\bE_\tau$. Given a family of elliptic curves $E\rightarrow T$ with framing $f$, define the period map
$$\Phi : T\rightarrow\cH\quad\text{sending}\quad t\mapsto \frac{\integral_{f_{t}(e_2)}\omega_t}{\integral_{f_{t}(e_1)}\omega_t}$$
where $\omega_t$ is any nonzero holomorphic differential on the fiber $E_t$. Since any two such differentials differ by a scalar multiple, $\Phi$ is independent of the choice of $\omega_t$. Moreover, the framing $f$ is exactly the pullback, via the period map $\Phi$, of the universal framing $f_\bE$. This shows that $\cH$ is a \emph{fine moduli space of framed elliptic curves}, with universal family $(\bE,f_\bE)$.

Note that for any elliptic curve $E$, $\SL_2(\bZ)$ acts freely and transitively on the set of framings on $E$. This gives a \emph{right} action of $\SL_2(\bZ)$ on $\cH$, which can be made explicit as follows. Consider the elliptic curve $\bE_\tau$ for $\tau\in\cH$, with marking $f_{\bE,\tau}$ sending $e_1,e_2\in\bZ^2$ to the paths $\delta_1 : 0\leadsto 1, \delta_\tau : 0\leadsto\tau$ respectively. If $\gamma = \spmatrix{a}{b}{c}{d}\in\SL_2(\bZ)$, then $f_{\bE,\tau}\circ\gamma$ is given by
$$f_{\bE,\tau}\circ\gamma : \begin{array}{rcccl} e_1 & \mapsto & ae_1 + ce_2 & \mapsto & a\delta_1 + c\delta_\tau \\
 e_2 & \mapsto & be_1 + de_2 & \mapsto & b\delta_1 + d\delta_\tau	
 \end{array}$$
Multiplication by $a+c\tau$ defines an isomorphism of framed elliptic curves
$$\Bigg(\underbrace{\bC/\langle 1,\frac{b+d\tau}{a+c\tau}\rangle}_{\bE_{\frac{b+d\tau}{a+c\tau}}}, f_{\bE, \frac{b+d\tau}{a+c\tau}}\Bigg)\rightiso \Bigg(\underbrace{\bC/\langle a+c\tau, b+d\tau\rangle}_{\bE_\tau}, f_{\bE,\tau}\circ\gamma\Bigg)$$
which shows that the action of $\SL_2(\bZ)$ on framings induces the following \emph{right} action on $\cH$:
\begin{equation}\label{eq_right_action}
\tau\gamma := \frac{b+d\tau}{a+c\tau}\qquad\text{for }\gamma = \spmatrix{a}{b}{c}{d}\in\SL_2(\bZ)	
\end{equation}
\subsubsection{Remarks on mirror-image modular curves}\label{sss_mirror}
We note that the action \eqref{eq_right_action} differs from the more classical left action given by $\spmatrix{a}{b}{c}{d}\tau = \frac{a\tau+b}{c\tau+d}$ described in the introduction. The two actions are related by the formula
$$\tau\gamma = \mf{s}(\gamma)\tau$$
where $\mf{s}$ is given by $\spmatrix{a}{b}{c}{d}\mapsto\spmatrix{d}{b}{c}{a}$. Let $R := \spmatrix{-1}{0}{0}{1}$, then $\mf{s}$ can also be expressed by $\mf{s}(\gamma) = R\gamma^{-1}R$ for $\gamma\in\SL_2(\bZ)$. While the presence of the inverse $\gamma^{-1}$ is a formal necessity when translating between left and right actions, the presence of the antiholomorphic involution $R$ is more substantial. When considering quotients, we will distinguish the two actions by the following conventions:
\begin{itemize}
	\item $\Gamma\bs\cH$ denotes the quotient of $\cH$ by the classical left action of $\Gamma$ described in the introduction \eqref{eq_left_action}.
	\item $\cH/\Gamma$ denotes the quotient of $\cH$ by the right action \eqref{eq_right_action}.
\end{itemize}
Note that $\Gamma\bs\cH\cong\cH/\mf{s}(\Gamma)$, and $\cH/\Gamma\cong\mf{s}(\Gamma)\bs\cH$. We say that the two modular curves $\Gamma\bs\cH$ and $\cH/\Gamma$ are \emph{mirror images}. Note that for any of the common congruence subgroups $\Gamma = \Gamma(n),\Gamma_1(n),\Gamma_0(n)$, we always have $\mf{s}(\Gamma) = \Gamma$. In particular, $\mf{s}(\SL_2(\bZ)) = \SL_2(\bZ)$. For general $\Gamma\le\SL_2(\bZ)$, $\mf{s}(\Gamma)$ and $\Gamma$ are conjugate in $\GL_2(\bZ)$, but are in general not conjugate in $\SL_2(\bZ)$. This implies that the modular curves $\Gamma\bs\cH, \mf{s}(\Gamma)\bs\cH\cong\cH/\Gamma$ are generally not isomorphic as branched covers of $\cH/\SL_2(\bZ)$. They can be related via the \emph{antiholomorphic} involution $r : \cH\ra\cH$ sending $\tau\mapsto -\ol{\tau} = R\ol{\tau}$. This $r$ induces an antiholomorphic homeomorphism $\Gamma\bs\cH \lra\cH/\Gamma$, which fits into the commutative diagram
\[\begin{tikzcd}
	\cH\ar[d]\ar[r,"r"] & \cH\ar[d] \\
	\Gamma\bs\cH\ar[r,"r"]\ar[d] & \cH/\Gamma\ar[d] \\
	\cH/\SL_2(\bZ)\ar[r,"r"] & \cH/\SL_2(\bZ)
\end{tikzcd}\]
Since $r$ fixes the cusp and the two elliptic points of $\cH/\SL_2(\bZ)$, the diagram also implies that the ramification behavior of $\cH/\Gamma$ and $\Gamma\bs\cH$ over $\cH/\SL_2(\bZ)$ are identical. Arithmetically, they are complex conjugates:

\begin{prop}\label{prop_mirror} Let $\Gamma\le\SL_2(\bZ)$ be a finite index subgroup. The mirror images $\Gamma\bs\cH$ and $\cH/\Gamma$ are complex conjugates of each other relative to the $\bQ$-structure on $\cH/\SL_2(\bZ)\cong\bP^1_\bC$ given by the $j$-function.	
\end{prop}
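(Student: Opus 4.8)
The plan is to deduce the statement from the commutative diagram displayed just before Proposition~\ref{prop_mirror}, once we recognize the antiholomorphic involution $r$ of $\cH/\SL_2(\bZ)$ as the ``complex conjugation'' attached to the $\bQ$-structure coming from $j$. The first point is that $j$ has $q$-expansion $j(\tau) = q^{-1} + 744 + \sum_{n\ge 1} a_n q^n$ with all $a_n \in \bZ$; writing $\tau = x+iy$ we get $q(-\ol\tau) = e^{2\pi i(-x+iy)} = \ol{e^{2\pi i(x+iy)}} = \ol{q(\tau)}$, hence $j(-\ol\tau) = \ol{j(\tau)}$. So under the identification of $\cH/\SL_2(\bZ)$ with $\bA^1_\bC$ via $j$ (with closure $\bP^1_\bC$ and $\bQ$-structure $\bP^1_\bQ$, $j$ a coordinate), the bottom arrow $r$ of that diagram becomes the standard conjugation $z\mapsto\ol z$, i.e.\ the canonical antiholomorphic homeomorphism $\bP^1(\bC)\to\ol{\bP^1_\bC}(\bC)=\bP^1(\bC)$ determined by $\bP^1_\bQ$.

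Next I would invoke the formalism of complex-conjugate varieties. For $V := \Gamma\bs\cH$ with its morphism $j\colon V\to\bA^1_\bC$, the conjugate variety $\ol V := V\times_{\Spec\bC,\sigma}\Spec\bC$ carries a tautological antiholomorphic homeomorphism $\iota\colon V(\bC)\to\ol V(\bC)$, and because $\bA^1_\bC$ is base-changed from $\bA^1_\bQ$, the induced map $\ol V\to\bA^1_\bC$ makes $\iota$ lie over $z\mapsto\ol z$ in the $j$-coordinate. The diagram before the proposition supplies a second antiholomorphic homeomorphism $r\colon V(\bC)\to(\cH/\Gamma)(\bC)$, lying over the same $z\mapsto\ol z$. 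Therefore $\psi := r\circ\iota^{-1}\colon\ol V(\bC)\to(\cH/\Gamma)(\bC)$ is a holomorphic homeomorphism --- a composite of two antiholomorphic maps --- and it lies over the identity of $\bA^1_\bC$.

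Finally I would algebraize $\psi$: it extends over the cusps (a homeomorphism holomorphic and bounded near each puncture extends by Riemann's removable-singularity theorem) to a holomorphic homeomorphism between the smooth compactifications of $\ol V$ and $\cH/\Gamma$, which are compact Riemann surfaces finite over $\bP^1_\bC$. A holomorphic homeomorphism of compact Riemann surfaces is biholomorphic, hence an isomorphism of algebraic curves over $\bC$ by GAGA, and it respects the finite maps to $\bP^1_\bC$ by construction. This identifies $\cH/\Gamma$ with $\ol{\Gamma\bs\cH}$ as $\bP^1_\bC$-curves, which is exactly the assertion that the two are complex conjugates relative to the $j$-line over $\bQ$.

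I expect the only real obstacle to be bookkeeping: making sure that $r$ on the base genuinely is the conjugation coming from $\bP^1_\bQ$ (this is precisely the integrality of the Fourier coefficients of $j$), and that $r\colon\Gamma\bs\cH\to\cH/\Gamma$ and the tautological $\iota$ lie over \emph{the same} antiholomorphic self-map of $\bA^1_\bC$, so that $r\circ\iota^{-1}$ really descends to an $\bA^1_\bC$-morphism. With that in hand, the last step is just the standard equivalence between finite branched covers of $\bP^1_\bC$ and finite extensions of $\bC(j)$, so the conclusion follows.
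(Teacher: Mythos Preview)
Your proof is correct and rests on the same key observation as the paper's: the antiholomorphic involution $r:\tau\mapsto-\ol\tau$ intertwines with complex conjugation on the $j$-line because $j$ has integral Fourier coefficients, so $j(-\ol\tau)=\ol{j(\tau)}$. The paper, however, packages this more economically via function fields rather than conjugate varieties and GAGA: it simply observes that $f\mapsto\ol{f\circ r}$ defines a ring isomorphism $\bC(\Gamma\bs\cH)\rightiso\bC(\cH/\Gamma)$ which is $\sigma$-semilinear over $\bC$ and fixes $j$, so that if $f$ satisfies $h(j,F)=0$ with $h\in\bC[j][F]$, then $\ol{f\circ r}$ satisfies $\ol{h}(j,F)=0$. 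This immediately exhibits the function field of $\cH/\Gamma$ as the conjugate of that of $\Gamma\bs\cH$ over $\bQ(j)$, without needing to invoke the tautological $\iota$, removable singularities at the cusps, or GAGA. Your route is perfectly valid and perhaps more conceptually transparent about what ``complex conjugate variety'' means, but the function-field argument is shorter and stays closer to the modular-forms language already in play.
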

\begin{proof} Let $f$ be a meromorphic function on $\cH/\Gamma$. If $f = \sum_{n} a_nq^n$ is the Fourier expansion in $q = e^{2\pi i z/m}$, then $f\circ r = \sum_n a_n \ol{q^n}$, where $\ol{*}$ denotes complex conjugation. The map $f\mapsto \ol{f\circ r}$ is an isomorphism of meromorphic function fields $\bC(\Gamma\bs\cH)\rightiso\bC(\cH/\Gamma)$ which fixes the modular $j$-function $j$, and restricts to complex conjugation on the subfield $\bC$ of constants. Thus, if $f$ satisfies a polynomial $h(j,F)\in\bC[j][F]$, then $\ol{f\circ r}$ satisfies $\ol{h}(j,F)$.
\end{proof}

\begin{remark} The same issue arises in the theory of Teichm\"{u}ller curves, c.f. \cite{HS07} and \cite[Prop 3.2]{Mcm03}.
\end{remark}

\subsubsection{Moduli of elliptic curves with Teichm\"{u}ller markings}\label{sss_Teichmuller}
\begin{defn}[Teichm\"{u}ller space for the torus] Let $S$ be a torus. Let $(E,O)$ be an elliptic curve. A (Teichm\"{u}ller) marking on $E$ is a diffeomorphism $m : S\ra E$. The Teichm\"{u}ller space $\cT(S)$ of $S$ is the set of equivalence classes of marked elliptic curves $(E,m)$, where two pairs $(E,m), (E',m')$ are considered equivalent if $m'\circ m^{-1}$ is homotopic to an isomorphism $E\rightiso E'$. For a pair $(E,m)$, let $[E,m]\in\cT(S)$ denote its equivalence class.\footnote{$\cT(S)$ is homeomorphic to the Teichm\"{u}ller space of a punctured torus. See the discussion in \cite[\S10-11]{FM11} for more details, especially \S10.2 and \S11.4.3. The Teichm\"{u}ller space $\cT(S)$ can also be understood as a \emph{fine moduli space} of marked elliptic curves \cite{AJP16}.}
\end{defn}




Let $f$ be a framing on $S$. Then to any marked elliptic curve $(E,m)$, we obtain a framing $m(f) := m_*\circ f$ on $E$
$$m_*\circ f : \bZ^2\stackrel{f}{\lra} H_1(S,\bZ)\stackrel{m_*}{\lra} H_1(E,\bZ)$$

\begin{prop} For any framing $f$ on $S$, the association $(E,m)\mapsto (E,m(f))$ defines a bijection
\begin{eqnarray*}
\Psi_f : \cT(S) & \lra & \cH \\
{[E,m]} & \lra & [E,m(f)]
\end{eqnarray*}
where we view $\cH$ as a fine moduli space of framed elliptic curves, with universal family $(\bE,f_\bE)$, and $[E,m(f)]$ is the isomorphism class of the framed elliptic curve $(E,m(f))$.
\end{prop}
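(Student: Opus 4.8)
The plan is to check directly that $\Psi_f$ is well-defined, injective, and surjective. The two inputs are: (i) the fact, established in \S\ref{sss_framings}, that $\cH$ is a \emph{fine} moduli space of framed elliptic curves, so that two framed elliptic curves determine the same point of $\cH$ precisely when they are isomorphic as framed curves; and (ii) Theorem \ref{thm_MCG_actions}, which I will use in the form that the natural map $\Gamma(T)\rightiso\SL(H_1(T,\bZ))$ is an isomorphism for any torus $T$ --- applied both to $S$ and to the oriented surface underlying an elliptic curve $E$ (which is again a torus). I will use freely that homotopic maps induce the same map on $H_1$, that an orientation-preserving diffeomorphism preserves intersection numbers, and that markings are orientation-preserving; the last is in fact forced, since an orientation-reversing $m$ would carry the framing $f$ (with $f(e_1)\cap f(e_2)=+1$) to a basis of $H_1(E,\bZ)$ with intersection number $-1$, which is not a framing, so that $[E,m(f)]$ would be undefined.

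For well-definedness, if $(E,m)$ and $(E',m')$ represent the same class of $\cT(S)$, then $m'\circ m^{-1}$ is homotopic to an isomorphism $\alpha:E\rightiso E'$, whence $\alpha_*\circ m(f)=\alpha_*\circ m_*\circ f=(m'\circ m^{-1})_*\circ m_*\circ f=m'_*\circ f=m'(f)$, so $(E,m(f))$ and $(E',m'(f))$ are isomorphic as framed elliptic curves and $\Psi_f([E,m])=\Psi_f([E',m'])$. For injectivity, suppose conversely that there is an isomorphism $\alpha:E\rightiso E'$ with $\alpha_*\circ m_*\circ f=m'_*\circ f$; since $f$ is an isomorphism, $\alpha_*\circ m_*=m'_*$ on $H_1$. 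Then $\alpha^{-1}\circ m'\circ m^{-1}$ is an orientation-preserving diffeomorphism of the torus $E$ acting as the identity on $H_1(E,\bZ)$, hence homotopic to $\id_E$ by the injectivity of the map $\Gamma(E)\rightiso\SL(H_1(E,\bZ))$ of Theorem \ref{thm_MCG_actions}; equivalently $m'\circ m^{-1}$ is homotopic to the isomorphism $\alpha$, so $[E,m]=[E',m']$ in $\cT(S)$.

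For surjectivity, let $(E,g)$ be a framed elliptic curve and choose any orientation-preserving diffeomorphism $m_0:S\rightiso E$ (one exists, since any two oriented closed surfaces of genus $1$ are orientation-preservingly diffeomorphic). Then $(m_0)_*^{-1}\circ g\circ f^{-1}$ is an automorphism of $H_1(S,\bZ)$ that preserves the intersection pairing, hence lies in $\SL(H_1(S,\bZ))$, so by the surjectivity of the map $\Gamma(S)\rightiso\SL(H_1(S,\bZ))$ of Theorem \ref{thm_MCG_actions} it is realized by an orientation-preserving diffeomorphism $\psi:S\rightiso S$. Setting $m:=m_0\circ\psi$ gives $m_*\circ f=g$, so $\Psi_f([E,m])=[E,g]$. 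The argument is essentially bookkeeping once Theorem \ref{thm_MCG_actions} is in hand, and I expect the only real points requiring care to be keeping the orientation and intersection-form normalizations consistent --- so that the relevant homology automorphisms land in $\SL$ rather than merely $\GL$ --- and recalling that $\Gamma(-)$ is here the mapping class group of a \emph{closed} surface, acting only up to homotopy, so that homotopy (not isotopy, and not based homotopy) is the equivalence relation one must invoke.
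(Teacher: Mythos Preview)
Your proof is correct and follows essentially the same approach as the paper: well-definedness and injectivity are checked by reducing to the faithfulness of the mapping class group action on homology (Theorem \ref{thm_MCG_actions}), and surjectivity uses the transitivity of orientation-preserving homeomorphisms on framings. The only cosmetic difference is that for injectivity you apply Theorem \ref{thm_MCG_actions} to the torus $E$ via $\alpha^{-1}\circ m'\circ m^{-1}$, whereas the paper applies it to $S$ via $m'^{-1}\circ\alpha\circ m$; these are conjugate formulations of the same step.
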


We give $\cT(S)$ the structure of a (contractible) complex manifold via $\Psi_f$. This complex structure is in fact independent of $f$, and agrees with the more classical complex structure defined by means of the Bers embedding \cite[Notes on p179]{IT92}.

\begin{proof} If $(E,m)\sim (E',m')$, then $\alpha := m'\circ m^{-1} :E\ra E'$ is homotopic to an isomorphism $\alpha'$. Thus $\alpha'$ sends $m(f)$ to $(m_*' \circ m_*^{-1})\circ(m_* \circ f) = m_*'\circ f$, so $\alpha'$ induces an isomorphism $(E,m(f))\sim(E',m'(f))$. This shows that $\Psi_f$ is well-defined.

Now suppose that we are given $(E,m),(E',m')$ mapping to the same point in $\cH$. This means there is an isomorphism $\alpha : E\ra E'$ such that $\alpha_*\circ m_*\circ f = m'_*\circ f$. This would imply that $m'^{-1}_*\circ\alpha_*\circ m_*\circ f = f$, so $m'^{-1}_*\circ\alpha_*\circ m_* = \id_{H_1(S)}$. Since the mapping class group $\Gamma(S)$ acts faithfully on homology (see Theorem \ref{thm_MCG_actions}), this implies that $m'^{-1}\circ\alpha\circ m$ is homotopic to $\id_{S}$, so $m'\circ m^{-1}$ is homotopic to the isomorphism $\alpha$, which is to say $[E,m] = [E',m']$. This shows that $\Psi_f$ is injective.

Finally, since $\Homeo^+(S)$ acts transitively on the set of all framings, every framing on $E$ can be realized as the pushforward of a framing on $S$. Thus $\Psi_f$ is surjective.
\end{proof}

The mapping class group $\Gamma(S^\circ)\cong\Gamma(S)$ acts freely and transitively on the set of markings of any given elliptic curve $E$. For a framing $f$ of $S$, we have a transport of structure isomorphism
\begin{eqnarray*}
i_f : \Gamma(S^\circ)\cong\Gamma(S) & \rightiso & \SL_2(\bZ) \\
\alpha & \mapsto & f^{-1}\circ\alpha_*\circ f
\end{eqnarray*}
where $\alpha$ denotes the induced action on $H_1(S)$. The right actions of $\Gamma(S)$ on $\cT(S)$ and $\SL_2(\bZ)$ on $\cH$ are related as follows.
\begin{prop} Let $f$ be a framing on a torus $S$. The isomorphism
$$\Psi_f : \cT(S)\lra\cH$$
is equivariant with respect to the $\Gamma(S)$ action on $\cT(S)$, the $\SL_2(\bZ)$-action on $\cH$ (described in \eqref{eq_right_action}), and the isomorphism $i_f : \Gamma(S)\rightiso\SL_2(\bZ)$. In a formula, we have
$$\Psi_f(x\cdot \alpha) = \Psi_f(x)\cdot i_f(\alpha)\qquad\text{for all $x\in\cT(S)$, $\alpha\in\Gamma(S)$}$$
\end{prop}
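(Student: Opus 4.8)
The plan is to prove the identity $\Psi_f(x\cdot\alpha) = \Psi_f(x)\cdot i_f(\alpha)$ by a direct computation, once the three actions in play are unwound. First I would pin down the right action of $\Gamma(S)$ on $\cT(S)$: it is given on representatives by precomposing the marking, $[E,m]\cdot\alpha = [E,m\circ\alpha]$ for (the class of) a diffeomorphism $\alpha$ of $S$. This is well-defined on equivalence classes, since if $m'\circ m^{-1}$ is homotopic to an isomorphism $E\rightiso E'$ then so is $(m'\circ\alpha)\circ(m\circ\alpha)^{-1} = m'\circ m^{-1}$; and it is a genuine right action because $(m\circ\alpha)\circ\beta = m\circ(\alpha\beta)$. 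This is just the concrete form of the statement (already recorded above) that $\Gamma(S)$ acts freely and transitively on the markings of a fixed $E$.

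With this in hand the computation is short. For $x = [E,m]\in\cT(S)$ and $\alpha\in\Gamma(S)$, the definition of $\Psi_f$ together with covariant functoriality of $H_1$ gives
\[
\Psi_f(x\cdot\alpha) = \Psi_f([E,m\circ\alpha]) = [E,(m\circ\alpha)_*\circ f] = [E,m_*\circ\alpha_*\circ f],
\]
where $[E,g]$ on the right now denotes the point of $\cH$ classifying the framed elliptic curve $(E,g)$. On the other side, using the description in \S\ref{sss_framings} of the $\SL_2(\bZ)$-action on $\cH$ as the action induced by precomposing a framing by a matrix, and the definition $i_f(\alpha) = f^{-1}\circ\alpha_*\circ f$, one gets
\[
\Psi_f(x)\cdot i_f(\alpha) = [E,m_*\circ f]\cdot i_f(\alpha) = [E,(m_*\circ f)\circ(f^{-1}\circ\alpha_*\circ f)] = [E,m_*\circ\alpha_*\circ f].
\]
The two expressions coincide, which is exactly the asserted equivariance.

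The only delicate point is bookkeeping with conventions rather than any real mathematical obstacle: one must genuinely use the \emph{right} action \eqref{eq_right_action} on $\cH$ — equivalently, the fact that $(E,g)\mapsto(E,g\circ\gamma)$ realizes $\tau\mapsto\tau\gamma$, which is precisely the isomorphism-of-framed-curves computation (multiplication by $a+c\tau$) carried out in \S\ref{sss_framings} — and one must keep $H_1$ covariant so that $(m\circ\alpha)_* = m_*\circ\alpha_*$ and not the reverse. Note that Theorem \ref{thm_MCG_actions} is not invoked here: faithfulness of the mapping class action on homology was needed to show $\Psi_f$ is well-defined and injective, but the equivariance statement is purely formal — it simply says that $\Psi_f$ is a transport-of-structure isomorphism identifying one incarnation of "the $\SL_2(\bZ)$-action" with another.
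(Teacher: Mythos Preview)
Your proof is correct; the computation is exactly the routine transport-of-structure verification one expects, and your care with right actions and covariant functoriality of $H_1$ is on point. The paper in fact states this proposition without proof, presumably for the same reason you note at the end: once the definitions are unwound the identity is formal, so your write-up fills in precisely what the paper left implicit.
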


\subsubsection{Moduli stack of elliptic curves as a quotient stack}

In the previous sections we have constructed fine moduli spaces of elliptic curves equipped with additional structure: either a framing (with moduli space $\cH$), or a marking (with moduli space $\cT(S)$). To obtain a moduli theory for elliptic curves, we wish to ``forget'' the extra structure, which amounts to taking an appropriate quotient of $\cH$ by $\SL_2(\bZ)$, or $\cT(S)$ by $\Gamma(S)\cong\Gamma(S^\circ)$. However, the mapping torus construction of \S\ref{ss_necessity} shows that the naive topological quotient cannot be a fine moduli space.\footnote{The obstruction to being a fine moduli space can also be understood as coming from the existence of points $\tau\in\cH$ with nontrivial $\SL_2(\bZ)$ stabilizers: If $\gamma\in\SL_2(\bZ)$ is a nonidentity element which fixes $\tau\in\cH$, this is to say that there is a framing $f$ on the elliptic curve $\bE_\tau$ and an automorphism $\alpha\in\Aut(\bE_\tau)$ such that $\alpha_*\circ f = f\circ\gamma\ne f$. This inequality gives another way to see that the mapping torus $\MT(E,\alpha)$ is not isomorphic to the trivial family $E\times S^1$: the latter admits a framing, but the former does not.}

The correct way to forget the framing while maintaining the fine moduli property is to instead consider the stack quotient $[\cH/\SL_2(\bZ)]$, or $[\cT(S)/\Gamma(S)]$ in the Teichm\"{u}ller setting.\footnote{For a precise definition in the setting of algebraic geometry, see \cite[Example 8.1.12]{Ols16}.} This stack quotient fits into the diagram
$$\cH\lra[\cH/\SL_2(\bZ)]\lra\cH/\SL_2(\bZ)$$
where the composition is the topological quotient, and the first map is the canonical quotient map, which is an \emph{(unramified) covering map} of stacks.\footnote{In general, the stack quotient by any discrete group is always a covering map. In particular, the fundamental group of $[\cH/\SL_2(\bZ)]$ is $\SL_2(\bZ)$.} The second map is called the ``coarse map''; it is a homeomorphism which forgets the stacky structure\footnote{One can imagine $[\cH/\SL_2(\bZ)]$ as being obtained by augmenting $\cH/\SL_2(\bZ)$ with additional data, which includes the stabilizer groups of every point.}, and can be viewed as having fractional degree above every point of $\cH/\SL_2(\bZ)$. The map $u_\bE : \cH\ra\cM(1)$ corresponding to the family $\bE$ induces isomorphisms
$$\ol{u_\bE} : [\cH/\SL_2(\bZ)]\rightiso\cM(1),\;\;\text{and}\;\;|\ol{u_\bE}| : \cH/\SL_2(\bZ)\rightiso M(1)$$
\begin{remark} To make sense of this construction in algebraic geometry, instead of framings of elliptic curves $E$, one can instead consider bases of \'{e}tale cohomology (or homology). If one takes cohomology with coefficients in $\bZ/n$, such a basis amounts to a classical ``full level $n$ structure'', corresponding to a basis for the $n$-torsion subgroup $E[n]$. The (analytification of the) associated moduli space is $\cH/\Gamma(n)$. For $n\ge 3$, $\cH/\Gamma(n)$ is a fine moduli space, and one can recover the moduli stack of elliptic curves by taking the stack quotient by $\SL_2(\bZ/n)$.

\end{remark}

\subsection{Subgroups as coverings: the Galois correspondence and the Riemann existence theorem}\label{ss_subgroups}

In the setting of moduli interpretations for noncongruence modular curves, there is another reason to consider stack quotients: there exist finite index subgroups $\Gamma_1,\Gamma_2$ such that $\Gamma_1$ is congruence, $\Gamma_2$ is noncongruence, and yet $\cH/\Gamma_1 = \cH/\Gamma_2$, where here equality is in the sense that the action images of $\Gamma_1,\Gamma_2$ in $\Aut(\cH)\cong\PSL_2(\bR)$ are identical \cite{KSV11}. In other words, a modular curve can be simultaneously congruence and noncongruence at the same time! This does not happen if we consider the stack quotient $[\cH/\Gamma]$. 

\begin{defn} For a finite index subgroup $\Gamma\le\SL_2(\bZ)$, we call $[\cH/\Gamma]$ the \emph{modular stack} associated to $\Gamma$. It is congruence if $\Gamma$ is congruence, and otherwise it is noncongruence. We say that a connected finite \'{e}tale cover $\cM\ra\cM(1)_\bC$ is congruence if its analytification is isomorphic to a congruence modular stack relative to the uniformization given by $\bE/\cH$. 
\end{defn}

We recall some basic properties of $[\cH/\Gamma]$ here:

\begin{enumerate}
\item Every space we consider can be viewed as a stack in the appropriate category (e.g., analytic spaces, schemes, topological spaces,...). In particular, the category of spaces is a full subcategory of the category of stacks.
\item There is a homeomorphism $c : [\cH/\Gamma]\rightarrow\cH/\Gamma$, initial amongst all maps from $[\cH/\Gamma]$ to spaces. The modular curve $\cH/\Gamma$ is called the ``coarse space'' of $[\cH/\Gamma]$, and $c$ is the ``coarse map''.
\item For any $\Gamma$-invariant open subset $U\subset\cH$, $c|_{U/\Gamma} : [U/\Gamma]\ra U/\Gamma$ is an isomorphism if and only if $\Gamma$ acts freely on $U$. In particular, $c$ is an isomorphism if and only if $\Gamma$ acts freely on $\cH$, or equivalently, if $\Gamma$ is torsion-free.
\item For any subgroup $\Gamma'\le\Gamma$, there is a natural map $[\cH/\Gamma']\ra[\cH/\Gamma]$ which is an unramified covering map.
\end{enumerate}	


\begin{thm}[Galois correspondence {\cite[Theorem 18.19]{Noo05}}]\label{noohi_topological_galois} Let $\cX$ be a connected topological stack which is locally path-connected and semilocally simply connected, and $x_0\in\cX$ a point. Let $\Sets$ denote the category of sets, and $\cC_\cX$ be the category of covering maps $\cY\ra\cX$, where $\cY$ is a topological stack. Let $F_{x_0}$ be the functor
$$F_{x_0} : \cC_\cX\ra\Sets$$
which sends any $\cY\in\cC_\cX$ to its fiber $\cY_{x_0}$. Then there is a fundamental group $\pi_1(\cX,x_0)$ equipped with a functorial action, called the \emph{monodromy representation}, on $F_{x_0}(\cY)$ for $\cY\in\cC_\cX$. With this action, $F_{x_0}$ defines an equivalence of categories
$$F_{x_0} : \cC_{\cX}\lra\Sets_{\pi_1(\cX,x_0)}$$
where $\Sets_{\pi_1(\cX,x_0)}$ denotes the category of sets with $\pi_1(\cX,x_0)$-action.
\end{thm}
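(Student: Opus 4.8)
The plan is to reduce to the classical topological Galois correspondence for sufficiently nice \emph{spaces} by replacing $\cX$ with a genuine topological space carrying the same covering theory, namely its classifying space. First I would choose an atlas $p : X_0 \to \cX$ by a topological space; since $\cX$ is a topological stack this exists, and $X_1 := X_0 \times_\cX X_0$ is again a space, so $\cX$ is presented by the topological groupoid $[X_1 \rightrightarrows X_0]$ with nerve $N_\bullet[X_1 \rightrightarrows X_0]$. Refining the atlas if necessary (using local path-connectedness and semilocal simple connectedness), I would arrange that the spaces in the nerve are disjoint unions of well-behaved spaces, let $B\cX$ be the geometric realization $|N_\bullet[X_1\rightrightarrows X_0]|$, and choose a point $b_0\in B\cX$ over $x_0$. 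There is a canonical map of stacks $\varpi : B\cX\to\cX$, and one \emph{defines} $\pi_1(\cX,x_0) := \pi_1(B\cX, b_0)$; equivalently, once the universal cover is in hand, $\pi_1(\cX,x_0)=\Aut(\widetilde\cX/\cX)$. (Independence of the atlas, up to canonical isomorphism, is part of the theory of homotopy types of topological stacks, and for $\cX = [\cH/\Gamma]$ with $\Gamma\le\SL_2(\bZ)$ this recovers $\pi_1(\cX) = \Gamma$ since $\cH$ is contractible.)

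Next I would prove that a morphism $f:\cY\to\cX$ is a covering map if and only if it is representable and its pullback $\cY\times_\cX X_0\to X_0$ to one (equivalently any) atlas is a covering map of spaces. This identifies $\cC_\cX$ with the category of covering spaces of $X_0$ equipped with descent data along $[X_1\rightrightarrows X_0]$ (a covering $U_0\to X_0$ together with an isomorphism of its two pullbacks to $X_1$ satisfying the cocycle condition over $X_2$). I would then prove the parallel statement for the space $B\cX$: covering spaces of a geometric realization correspond to locally constant functors on the fundamental groupoid of the simplicial space, hence again to the same descent category. Thus the pullback functor $\varpi^* : \cC_\cX\to\cC_{B\cX}$ is an equivalence, compatible with the fiber functors $F_{x_0}$ and $F_{b_0}$.

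Finally I would invoke the classical correspondence for the space $B\cX$, which is connected, locally path-connected and semilocally simply connected (these transfer from $\cX$ via the atlas), obtaining an equivalence $F_{b_0}:\cC_{B\cX}\rightiso\Sets_{\pi_1(B\cX,b_0)}$; composing with the previous step gives the desired $F_{x_0}:\cC_\cX\rightiso\Sets_{\pi_1(\cX,x_0)}$. The monodromy action is transported from the classical one, and unwinding the definitions it is computed by the unique lifting of loops through $\varpi$ (which admits local sections, so honest paths $[0,1]\to\cX$ can be lifted); functoriality in $\cY$ is immediate from functoriality of $\varpi^*$ and of the classical construction.

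The hard part will be the equivalence $\varpi^*:\cC_\cX\rightiso\cC_{B\cX}$, i.e. showing that $\cX$ and its classifying space have the same covering theory. This splits into a clean ``descent along an atlas'' description of covering maps of a topological stack --- where one must check that representability is automatic and that the stacky automorphism groups cause no difficulty (a covering map only embeds, rather than preserves, the automorphism group of a point) --- and the homotopy-theoretic input that covering spaces of $|N_\bullet[X_1\rightrightarrows X_0]|$ depend only on the fundamental groupoid of the simplicial space, i.e. that $\varpi$ is a $1$-equivalence; the point-set topology around atlases and realizations is what makes this laborious, while conceptually it just says covering theory is a $1$-truncated invariant. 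An alternative, equally valid route mimics the classical argument directly inside stacks --- define paths and homotopies as maps out of the honest spaces $[0,1]$ and $[0,1]^2$, prove homotopy lifting for covering maps by reducing to spaces via an atlas and gluing using discreteness of fibers, and construct the universal cover $\widetilde\cX$ as a stack of homotopy classes of paths from $x_0$ (using semilocal simple connectedness exactly as classically), whence essential surjectivity via $\widetilde\cX\times_{\pi_1}S$ for a $\pi_1$-set $S$; here the main obstacle is the same in disguise, namely constructing $\widetilde\cX$ and verifying its universal property in the presence of automorphism groups.
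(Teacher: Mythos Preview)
The paper does not prove this theorem; it is stated with a citation to Noohi \cite[Theorem 18.19]{Noo05} and used as a black box. There is therefore no ``paper's own proof'' to compare against.

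That said, your proposal is a reasonable sketch of how one could establish the result. Of your two routes, the second --- developing path-lifting and homotopy-lifting for covering maps of topological stacks directly, constructing the universal cover $\widetilde{\cX}$ as a stack of homotopy classes of paths, and deducing the equivalence from its universal property --- is closer in spirit to what Noohi actually does in \cite{Noo05}. Your first route via the classifying space $B\cX$ is also viable (and Noohi develops the homotopy type of a topological stack elsewhere), but it front-loads more machinery than is strictly needed for the covering-space statement: one must know that $B\cX$ is well-defined up to weak equivalence, that it inherits the relevant local niceness, and that $\varpi:B\cX\to\cX$ induces an equivalence on covering categories, which is essentially the theorem itself in disguise. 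The direct path-lifting argument is more self-contained. Either way, the genuine work you correctly identify --- handling representability and automorphism groups when lifting the classical argument to stacks --- is exactly where the content lies.
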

If $\cX$ is a topological space, then any covering $\cY$ of $\cX$ is also a topological space. In this case we recall that the monodromy action is given as follows: For an element of $\pi_1(\cX,x_0)$ represented by a loop $\gamma$ based at $x_0$, the monodromy action of $\gamma$ on the fiber $p^{-1}(x_0)$ sends any $y_0\in p^{-1}(x_0)$ to the endpoint of the unique path lifting $\gamma$ starting at $y_0$.

The theorem in particular applies to the stacks $[\cH/\Gamma]$. We recall some familiar consequences:
\begin{enumerate}
\item Since $\cH$ is simply connected, for any subgroup $\Gamma\le\SL_2(\bZ)$, $\pi_1([\cH/\Gamma],x_0) \cong \Gamma$.\footnote{In this way, the modular stack $[\cH/\Gamma]$ ``remembers'' the subgroup $\Gamma$, whereas the modular curve $\cH/\Gamma$ only remembers the image of $\Gamma$ in $\Aut(\cH)\cong\PSL_2(\bR)$.}
\item If $\cY\ra[\cH/\Gamma]$ is a covering, then the connected components of $\cY$ are in bijection with the orbits of $\pi_1([\cH/\Gamma],x_0)$ on $\cY_{y_0}$.
\item\label{part_every_covering} Every connected covering of $[\cH/\SL_2(\bZ)]$ is isomorphic to $[\cH/\Gamma]\ra[\cH/\SL_2(\bZ)]$ for some subgroup $\Gamma\le\SL_2(\bZ)$. The degree of the covering is the index $[\SL_2(\bZ):\Gamma]$.
\item If $\cY\ra[\cH/\Gamma]$ is a connected covering, and $y_0\in\cY$ is a point lying above $x_0$, then $\pi_1(\cY,y_0)\cong\Stab_{\pi_1([\cH/\Gamma],x_0)}(y_0)$. Two subgroups $\Gamma,\Gamma'\le\SL_2(\bZ)$ give rise to isomorphic coverings of $[\cH/\SL_2(\bZ)]$ if and only if they are conjugate in $\SL_2(\bZ)$. The covering is Galois if and only if the stabilizer is normal, in which case the Galois group is isomorphic to the quotient $\Gamma/\Stab_{\pi_1([\cH/\Gamma],x_0)}(y_0)$.
\end{enumerate}

To describe arithmetic models of noncongruence modular curves, we will also need a Galois correspondence for algebraic stacks. For schemes, this is due to Grothendieck in \cite[\S4-5]{SGA1}, via the machinery of Galois categories. The analogous theory for algebraic stacks was worked out by Noohi:
\begin{thm}[Galois correspondence {\cite[Theorem 4.2]{Noo04}}]\label{thm_Galois} Let $\cX$ be a connected algebraic stack, and $x_0\in\cX$ a geometric point. Let $\FiniteSets$ denote the category of finite sets. Let $\cC_\cX$ denote the category of finite \'{e}tale morphisms $\cY\ra\cX$, where $\cY$ is an algebraic stack\footnote{To be precise, stacks finite \'{e}tale over $\cX$ forms a (2,1)-category, in the sense that morphisms between any two given stacks finite \'{e}tale over $\cX$ form a \emph{groupoid}. Thus between any two morphisms there is a set of 2-isomorphisms between them. By $\cC_\cX$ we mean the category obtained from this (2,1)-category by declaring all 2-isomorphisms to be the identity \cite[\S4]{Noo04}.}, and let $F_{x_0}$ denote the ``fiber'' functor
$$F_{x_0} : \cC_{\cX}\lra\ul{\textbf{FiniteSets}}$$
which sends $\cY\in\cC_\cX$ to the geometric fiber $\cY_{x_0}$. The \'{e}tale fundamental group of $(\cX,x_0)$, denoted $\pi_1(\cX,x_0)$, is by definition the automorphism group of the functor $F_{x_0}$. This is a profinite group, and is equipped with a (tautological) functorial action on the finite sets $F_{x_0}(\cY)$ for any $\cY\in\cC_\cX$. Then $F_{x_0}$ defines an equivalence of categories
$$F_{x_0} : \cC_\cX\lra\ul{\textbf{FiniteSets}}_{\pi_1(\cX,x_0)}$$
where $\ul{\textbf{FiniteSets}}_{\pi_1(\cX,x_0)}$ denotes the category of finite sets equipped with a continuous action of $\pi_1(\cX,x_0)$. Under $F_{x_0}$, connected components correspond to $\pi_1(\cX,x_0)$-orbits; in particular, connected objects map to transitive $\pi_1(\cX,x_0)$-sets.
\end{thm}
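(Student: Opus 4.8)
The plan is to verify that the pair $(\cC_\cX, F_{x_0})$ satisfies Grothendieck's axioms for a Galois category (\cite[Exp.~V]{SGA1}). Once this is done, the reconstruction theorem for Galois categories supplies, formally, that $\pi_1(\cX,x_0) = \Aut(F_{x_0})$ is profinite, that $F_{x_0}$ induces an equivalence $\cC_\cX \rightiso \ul{\textbf{FiniteSets}}_{\pi_1(\cX,x_0)}$, and that connected objects correspond to transitive sets. So all of the content lies in checking the axioms, and the main structural point is that essentially everything reduces to the case of schemes via an atlas.

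The key foundational input is that finite \'{e}tale morphisms of algebraic stacks are representable; consequently a finite \'{e}tale morphism $\cY \ra \cX$ pulls back, along any morphism $T \ra \cX$ from a scheme, to a finite \'{e}tale morphism of schemes (equivalently, algebraic spaces) $\cY_T \ra T$. Together with smooth descent — a morphism $\cY \ra \cX$ is finite \'{e}tale if and only if its base change along one smooth surjection $X \ra \cX$ from a scheme is finite \'{e}tale — this shows that $\cC_\cX$ is really a category of ``finite \'{e}tale sheaves'' on the big site of $\cX$, so the whole discussion takes place over an atlas $X \ra \cX$ and its simplicial nerve. (It is essential \emph{not} to pass to a coarse moduli space: stacky points contribute genuinely, e.g.\ $\pi_1([\cH/\SL_2(\bZ)]) \cong \SL_2(\bZ)$ whereas the coarse space $\bA^1_\bC$ is simply connected.) Granting this, the axioms are checked as follows: $\id_\cX$ is a final object, and fibre products of finite \'{e}tale morphisms exist and are finite \'{e}tale; finite coproducts are disjoint unions of stacks; the quotient of $\cY \in \cC_\cX$ by a finite group $G \le \Aut_\cX(\cY)$ is the stack quotient $[\cY/G]$, which by representability base-changes, over an atlas, to the quotient of a finite \'{e}tale $X$-scheme by a finite group acting over $X$, hence is finite \'{e}tale over $\cX$ by descent; and every morphism in $\cC_\cX$ factors through its (open and closed) image. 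For the fibre functor, representability shows that $\cY_{x_0}$ is a finite \'{e}tale scheme over the separably closed residue field of $x_0$, hence a finite set; and the exactness of $F_{x_0}$ (preservation of the final object and fibre products, of finite coproducts and of quotients by finite groups, and sending strict epimorphisms to surjections) reduces, along an atlas through $x_0$, to the corresponding statements for finite \'{e}tale morphisms of schemes in \cite{SGA1}.

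The step I expect to be the main obstacle — and the one genuinely particular to the stacky setting — is twofold: first, the $(2,1)$-categorical bookkeeping needed to present $\cC_\cX$ as an honest $1$-category (as flagged in the footnote to the statement: finite \'{e}tale stacks over $\cX$ form a $(2,1)$-category, and one must pass to its truncation by collapsing $2$-isomorphisms and check this is well behaved); and second, that $F_{x_0}$ reflects isomorphisms. For the latter: a morphism $f : \cY \ra \cY'$ in $\cC_\cX$ is itself finite \'{e}tale, because a morphism between algebraic stacks finite \'{e}tale over $\cX$ is finite \'{e}tale — the graph of $f$ is a clopen immersion (since $\cY'/\cX$ is separated and $\cY \times_\cX \cY' \ra \cY'$ is finite \'{e}tale), and composing with the projection gives the claim, which again descends to schemes along an atlas. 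Hence the image of $f$ is open and closed in $\cY'$; since $\cX$ is connected, every connected component of $\cY'$ surjects onto $\cX$ and therefore meets the fibre over $x_0$, so if $F_{x_0}(f)$ is bijective then $f$ is surjective, of degree $1$ on each component, hence an isomorphism. This is precisely where connectedness of $\cX$ is used. With all the axioms in hand, Grothendieck's theorem yields the profinite group $\pi_1(\cX,x_0)$ and the asserted equivalence; the detailed verification is carried out by Noohi in \cite[Theorem~4.2]{Noo04}.
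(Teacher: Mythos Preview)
The paper does not actually give a proof of this statement: it is stated as a black box, attributed to Noohi \cite[Theorem 4.2]{Noo04}, and used freely thereafter. Your outline is essentially the standard route (and the one Noohi takes): verify that $(\cC_\cX,F_{x_0})$ is a Galois category in the sense of \cite[Exp.~V]{SGA1} by reducing each axiom to the scheme case via an atlas and descent, and then invoke Grothendieck's reconstruction theorem. So your proposal is correct and aligned with the cited source; there is simply nothing in the paper itself to compare it against.
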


The main tool that allows us to pass between the topological and algebraic settings is the following:

\begin{thm}[Riemann existence theorem {\cite[Theorem 20.4]{Noo05}}] Let $\cX$ be a connected algebraic stack that is locally of finite type over $\bC$, and let $\cX^\tp$ be the associated topological stack (see \cite{Noo05} for more details). The functor $\cY\mapsto \cY^\tp$ defines an equivalence of categories between the category of finite \'{e}tale maps $\cY\ra\cX$ and the category of finite covering stacks of $\cX^\tp$.
\end{thm}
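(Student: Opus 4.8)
The plan is to bootstrap from the classical Riemann existence theorem for schemes of finite type over $\bC$ (Grothendieck, \cite[Exp.~XII]{SGA1}) by presenting $\cX$ as a quotient of a scheme and comparing descent data on the two sides. The two structural inputs are: the assignment $\cZ\mapsto\FEt(\cZ)$ (finite \'etale maps to $\cZ$) is a stack for the smooth topology on algebraic stacks — this is fppf descent for finite \'etale morphisms — and the assignment sending a topological stack $T$ to its category of finite covering stacks is a stack for the topology generated by open continuous surjections admitting local sections. It therefore suffices to identify these descent categories over a common atlas. Since $\cX$ is algebraic and locally of finite type over $\bC$, there is a representable smooth surjection $U_0\to\cX$ with $U_0$ a scheme locally of finite type over $\bC$ (take a smooth surjection from an algebraic space and postcompose an \'etale scheme cover). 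Let $U_\bullet$ be its \v{C}ech nerve, so each $U_n = U_0\times_\cX\cdots\times_\cX U_0$ is again a scheme locally of finite type over $\bC$; by smooth descent $\FEt(\cX)$ is the $2$-limit of the cosimplicial diagram $\FEt(U_\bullet)$, and by the stack condition this limit is computed from $U_0,U_1,U_2$ alone.

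Next I would check that topologization is compatible with this presentation. The functor $(-)^\tp$ preserves fiber products of schemes of finite type over $\bC$, and by Noohi's construction it carries the groupoid $[\,U_1\rightrightarrows U_0\,]$ presenting $\cX$ to a groupoid $[\,U_1^\tp\rightrightarrows U_0^\tp\,]$ presenting $\cX^\tp$, with $U_0^\tp\to\cX^\tp$ an open surjection admitting local sections — a smooth morphism of schemes is, analytically locally on the source, a projection $\Delta^n\times V\to V$. Hence $U_\bullet^\tp$ is the \v{C}ech nerve of $U_0^\tp\to\cX^\tp$, and the category of finite covering stacks of $\cX^\tp$ is the $2$-limit of the finite-covering-space categories of the $U_n^\tp$, again using only $n\le 2$.

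Now apply the classical Riemann existence theorem to the schemes $U_0,U_1,U_2$ to get equivalences $\FEt(U_n)\simeq\{\text{finite covering spaces of }U_n^\tp\}$, compatibly with the cosimplicial structure maps, because those maps are morphisms of schemes, $(-)^\tp$ is functorial, and the topologization of a pullback of a finite \'etale cover is the pullback of the corresponding covering space. Passing to the $2$-limit over the truncated diagram yields an equivalence $\FEt(\cX)\simeq\{\text{finite covering stacks of }\cX^\tp\}$; tracing through the descent datum $\cY\mapsto(\cY\times_\cX U_0,\ \text{gluing})$ shows that this equivalence is implemented by $\cY\mapsto\cY^\tp$, as required.

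The main obstacle is the second step: one must know that $(-)^\tp$ genuinely commutes with the $2$-colimit defining $\cX^\tp$, so that $\cX^\tp$ really is presented by $U_\bullet^\tp$, and that $U_0^\tp\to\cX^\tp$ is \emph{of effective descent} for finite covering stacks, i.e.\ that finite covering stacks form a stack for the relevant topology on topological stacks. Both are part of the foundational package developed in \cite{Noo05}; granting them, everything else is formal descent layered on top of the scheme-level theorem. Equivalently, one could run the argument through fundamental groups: the Galois correspondences of Theorems~\ref{noohi_topological_galois} and~\ref{thm_Galois} reduce the statement to the comparison $\pi_1(\cX,x_0)\cong\widehat{\pi_1(\cX^\tp,x_0)}$ between the \'etale fundamental group and the profinite completion of the topological one, which in turn follows from the scheme case by a van~Kampen argument along the same atlas.
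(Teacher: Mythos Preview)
The paper does not give its own proof of this statement: it is stated as a black box, with a citation to Noohi \cite[Theorem 20.4]{Noo05}, and used thereafter without further justification. So there is no argument in the paper to compare against.

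That said, your sketch is the right shape and matches the standard strategy one would use (and, in outline, the one Noohi uses): reduce to the scheme case via a smooth atlas, invoke the classical Riemann existence theorem of \cite[Exp.~XII]{SGA1} on the levels of the \v{C}ech nerve, and reassemble by descent. You have correctly isolated the two nontrivial ingredients beyond the scheme case --- that $(-)^{\tp}$ is compatible with the groupoid presentation, and that finite covering stacks form a stack for the relevant topology on topological stacks --- and correctly flagged that these are foundational facts from \cite{Noo05} rather than things one proves on the spot. One small point: your alternative route through fundamental groups and the comparison $\pi_1(\cX,x_0)\cong\what{\pi_1(\cX^{\tp},x_0)}$ is logically backwards relative to the paper, which \emph{deduces} that comparison (Theorem~\ref{thm_comparison}) from the Riemann existence theorem rather than the other way around; so if you want a self-contained argument, stick with the descent approach.
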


The \'{e}tale fundamental group of $\cX$ is isomorphic to the inverse limit of the automorphism groups of finite Galois covers of $\cX$ \cite[Corollary 5.4.8]{Sza09}. By taking associated topological covers, this defines a map
$$\pi_1(\cX,x_0)\lra\pi_1(\cX^\tp,x_0)$$
The Riemann existence theorem then implies:
\begin{thm}[Comparison theorem]\label{thm_comparison} Let $\cX$ be a connected algebraic stack locally of finite type over $\bC$. Assume that $\cX^\tp$ is locally path connected and semilocally simply connected. Then the map $\pi_1(\cX^\tp,x_0)\ra\pi_1(\cX,x_0)$ factors through an isomorphism
$$\pi_1(\cX^\tp,x_0)^\wedge\rightiso \pi_1(\cX,x_0)$$
where $\cdot^\wedge$ denotes profinite completion.
\end{thm}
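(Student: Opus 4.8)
The plan is to deduce the statement from the Riemann existence theorem together with the two Galois correspondences (Theorems~\ref{noohi_topological_galois} and~\ref{thm_Galois}), and then to invoke the reconstruction of a profinite group from its category of finite continuous actions, as in Grothendieck's comparison theorem for schemes \cite{SGA1}. Note first that, $\cX$ being connected, $\cX^\tp$ is connected as well, so that both fundamental groups are defined.

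First I would take the geometric point $x_0$ to be a $\bC$-point of $\cX$; it then also determines a point of $\cX^\tp$, and for any finite \'{e}tale $\cY\ra\cX$ the geometric fiber $\cY_{x_0}$ and the topological fiber $\cY^\tp_{x_0}$ coincide as finite sets. The Riemann existence theorem gives an equivalence $\cY\mapsto\cY^\tp$ between $\cC_\cX$ and the full subcategory $\cC^{\mathrm{fin}}_{\cX^\tp}\subseteq\cC_{\cX^\tp}$ of \emph{finite} covering stacks of $\cX^\tp$, compatible with the fiber functors at $x_0$. On the topological side, Theorem~\ref{noohi_topological_galois} identifies $\cC_{\cX^\tp}$ with $\Sets_{\pi_1(\cX^\tp,x_0)}$ compatibly with $F_{x_0}$; restricting to finite covers identifies $\cC^{\mathrm{fin}}_{\cX^\tp}$ with the category of finite $\pi_1(\cX^\tp,x_0)$-sets, and since a finite set with $\pi_1(\cX^\tp,x_0)$-action is the same as a finite set with continuous $\pi_1(\cX^\tp,x_0)^\wedge$-action (the universal property of profinite completion), one obtains $\cC^{\mathrm{fin}}_{\cX^\tp}\simeq\FiniteSets_{\pi_1(\cX^\tp,x_0)^\wedge}$, compatibly with the forgetful functor to $\FiniteSets$. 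On the algebraic side, Theorem~\ref{thm_Galois} identifies $\cC_\cX$ with $\FiniteSets_{\pi_1(\cX,x_0)}$, again compatibly with the forgetful functor. Composing the three equivalences yields an equivalence
$$\FiniteSets_{\pi_1(\cX^\tp,x_0)^\wedge}\;\simeq\;\FiniteSets_{\pi_1(\cX,x_0)}$$
compatible with the two forgetful functors to $\FiniteSets$. Taking automorphism groups of these forgetful functors then gives a canonical isomorphism $\pi_1(\cX^\tp,x_0)^\wedge\rightiso\pi_1(\cX,x_0)$: for $\pi_1(\cX,x_0)$ this is precisely its definition in Theorem~\ref{thm_Galois}, and for $\pi_1(\cX^\tp,x_0)^\wedge$ it is the standard fact that a profinite group is recovered as the automorphism group of the forgetful functor on its category of finite continuous sets.

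It then remains to check that this isomorphism is the factorization of the natural map $\pi_1(\cX^\tp,x_0)\ra\pi_1(\cX,x_0)$. Recalling that $\pi_1(\cX,x_0)$ is the inverse limit of the finite groups $\Aut(\cY/\cX)$ over finite Galois covers $\cY\ra\cX$, the natural map is characterized by requiring that its composite with each projection $\pi_1(\cX,x_0)\ra\Aut(\cY/\cX)$ be the deck-transformation action of $\pi_1(\cX^\tp,x_0)$ on $\cY^\tp_{x_0}$. Under the displayed equivalence this action on $\cY^\tp_{x_0}\cong\cY_{x_0}$ is exactly the $\pi_1(\cX,x_0)$-action restricted along $\pi_1(\cX^\tp,x_0)\ra\pi_1(\cX^\tp,x_0)^\wedge\rightiso\pi_1(\cX,x_0)$; since $\pi_1(\cX^\tp,x_0)$ is dense in its profinite completion and $\pi_1(\cX,x_0)$ is Hausdorff, this forces the natural map to factor through the isomorphism $\pi_1(\cX^\tp,x_0)^\wedge\rightiso\pi_1(\cX,x_0)$, as desired.

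I expect the only genuine subtlety --- and the point where the hypotheses that $\cX^\tp$ be locally path connected and semilocally simply connected actually get used --- to be the bookkeeping around the word ``finite'': that $\cY\mapsto\cY^\tp$ really takes values in, and is essentially surjective onto, the category of \emph{finite} covering stacks of $\cX^\tp$, and that under Theorem~\ref{noohi_topological_galois} these finite covering stacks correspond exactly to the finite $\pi_1(\cX^\tp,x_0)$-sets. Once these two descriptions of the category of finite covers are matched up correctly, everything else is the formal Galois-category machinery of the preceding paragraphs together with a density argument.
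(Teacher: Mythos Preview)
Your argument is correct and is exactly the approach the paper has in mind: the paper does not give a detailed proof but simply records that the comparison theorem follows from the Riemann existence theorem together with the two Galois correspondences, and your proposal is a careful unpacking of precisely that implication via the standard Galois-category machinery.
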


\begin{remark} The Riemann existence theorem shows that ``finite \'{e}tale'' is a good algebraic analog of ``finite covering map''. By contrast, the exponential map $\exp : \bC\ra\bC^\times$ is an (infinite degree) topological covering which is not algebraic; this partially explains the restriction to finite covers in the algebraic Galois correspondence.\footnote{In \cite{BS15}, Bhatt and Scholze have developed the \emph{pro-\'{e}tale} fundamental group, which classifies \emph{geometric covers}. These are locally constant sheaves in the pro-\'{e}tale topology, which includes all finite \'{e}tale covers. A recent preprint \cite{YZ22} shows that for a scheme $X$ locally of finite type over $\bC$, the geometric covers of $X$ are exactly the algebraizable covers of $X^\an$; moreover they give examples which show that the pro-\'{e}tale fundamental group is not determined by the topological fundamental group.}
\end{remark}

Let $u_\bE : \cH\ra\cM(1)_\bC^\tp$ be the map given by the universal framed family $\bE/\cH$. Let $S$ be a torus, and $f$ a framing on $S$. If $\ff : \cM\ra\cM(1)_\bC$ is any connected finite \'{e}tale covering, then by analytification and pullback we obtain finite index subgroups $\Gamma\le\SL_2(\bZ)$, $\Delta\le\Gamma(S)$, and topological covers $[\cH/\Gamma]$ and $[\cT(S)/\Delta]$ of $[\cH/\SL_2(\bZ)]$ and $[\cT(S)/\Gamma(S)]$ respectively. These objects fit into the commutative diagram

\usetikzlibrary{decorations.pathmorphing}
\[\begin{tikzcd}
	{[\cT(S)/\Delta]}\ar[r,"\ol{\Psi_f}"]\ar[d,"\pr_{\cT(S)}"] & {[\cH/\Gamma]}\ar[d,"\pr_\cH"]\ar[r,"\ol{u_\bE}"] & \cM^\tp\ar[d,"\ff^\tp"] & \cM_\bC\ar[l,rightsquigarrow,"\tp"]\ar[d,"\ff"]\\
{[\cT(S)/\Gamma(S)]}\ar[r,"\ol{\Psi_f}"] & {[\cH/\SL_2(\bZ)]}\ar[r,"\ol{u_\bE}"] & \cM(1)_\bC^\tp & \cM(1)_\bC\ar[l,rightsquigarrow,"\tp"]
\end{tikzcd}\]
where all straight horizontal maps are isomorphisms, and $\ol{\Psi_f}$ (resp. $\ol{u_\bE}$) is induced by $\Psi_f$ (resp. $u_\bE$). Moreover, the subgroups $\Delta,\Gamma$ are determined up to conjugation by the diagram. The Riemann existence theorem can be understood as saying that no information is lost in passing between the various columns of the diagram.

\section{Nonabelian level structures and branched coverings of elliptic curves}\label{section_nonabelian}
In this section we describe the generalized notion of level structure, and explain how it gives a moduli interpretation to every noncongruence modular curve. The goal is to produce, for any finite group $G$, a moduli stack of elliptic curves equipped with a $G$-Galois cover only ramified above the origin, where the cover should be treated as a generalized ``level structure''; in particular, isomorphisms in this stack should be determined by isomorphisms of elliptic curves, and the forgetul map to $\cM(1)$ should be finite \'{e}tale. The most geometric thing to consider is the moduli stack of admissible $G$-covers $\cAdm(G)$ \cite{ACV03}, but this isn't ideal because covers can have automorphisms which do not come from automorphisms of the base elliptic curve, and accordingly the forgetful map fails to be finite. In \S\ref{ss_combinatorial_structures}, we obtain a finite \'{e}tale forgetful map by replacing covers by representations of the fundamental group, leading to the notion of a nonabelian level structure as first considered by Deligne and Mumford for unramified covers \cite{DM69}. The corresponding stacks $\cM(G)$ are finite \'{e}tale over $\cM(1)$ and are direct generalizations of the classical congruence moduli problems. However they are not very geometric; for example, a $\bQ$-point does not in general correspond to a $G$-cover over an elliptic curve over $\bQ$. Next, in \S\ref{ss_AdmG}, we will describe the geometric approach via the moduli of smooth admissible covers $\cAdm^0(G)$, and will describe their relation to $\cM(G)$. The stacks $\cAdm^0(G)$ moreover have a natural compactification $\cAdm(G)$ (see \S\ref{ss_compactification}), which can be used to obtain compactifications of $\cM(G)$.

In the remainder of this section, $G$ will always denote a finite group, and we will always make the following tameness asumption:
\begin{equation}\label{eq_tame}
\text{All schemes are over $\bS := \Spec\bZ[1/|G|]$}
\end{equation}
This means that when we say ``let $S$ be a scheme'', we mean: ``let $S$ be a scheme on which $|G|$ is invertible''. Similarly, $\cM(1)$ denotes the moduli stack of elliptic curves over $\bS$-schemes. Our moduli stacks are essentially ``Hurwitz stacks'', classifying branched covers of elliptic curves. We begin by recalling some basic properties of branched covers of curves.

\subsection{Branched covers over an algebraically closed field $k$} \label{ss_admissible_covers_over_k}
Let $D$ be a smooth projective curve\footnote{By this, we mean a connected smooth projective $k$-scheme of pure dimension 1.} over an algebraically closed field $k$. A \emph{branched cover} of $D$ is a finite generically \'{e}tale map $\pi : C\ra D$, where $C$ is connected, smooth, and projective over $k$. Such a map is always flat \cite[00R4]{stacks}, and in characteristic 0, the generic \'{e}taleness is also automatic. If $\pi$ is not \'{e}tale at $x\in C$, then we say $x$ is a \emph{ramified point}, and $\pi(x)$ is a \emph{branch point}. The \emph{ramification index} of $x$ is the ramification index of the extension of discrete valuation rings $\cO_{C,x}/\cO_{D,\pi(x)}$ \cite[09E4]{stacks}; we say $\pi$ is ramified at $x$ if the ramification index is $> 1$.

If $\pi$ induces a Galois extension of function fields, and is moreover equipped with a $D$-linear action of a finite group $G$ on $C$ which induces an isomorphism $G\cong\Gal(k(C)/k(D))$, then $\pi$ induces an isomorphism $C/G\cong D$, and we say that $\pi : C\ra D$ is a \emph{branched $G$-cover} of $D$. For $x\in C$, our tameness assumption implies that the stabilizer $G_x := \Stab_G(x)$ is cyclic, and its order is equal to the ramification index $e_x$ of $\pi$ at $x$; thus the ramified points are exactly those which have nontrivial $G$-stabilizers. The stabilizer $G_x$ acts on the cotangent space $T_x^*$ via a faithful \emph{local representation}:
$$\chi_x : G_x\lra \GL(T_x^*) = k^\times$$
If $x'\in C$ is a point in the same $G$-orbit (equivalently, lying over the same point of $E$), then the characters $\chi_x,\chi_{x'}$ are related by an inner automorphism of $G$. The collection of conjugacy classes of characters, one for each of the finitely many non-free $G$-orbits, is called a \emph{Hurwitz data} (or ramification data) for the branched $G$-cover $\pi$ \cite[\S2.2]{BR11}.

\begin{defn} Let $\Sigma\subset D$ be a finite set of closed points. An \emph{admissible cover} (resp. admissible $G$-cover) of the marked curve $(D,\Sigma)$ is a branched cover (resp. branched $G$-cover) $\pi : C\ra D$ whose branch points are contained in $\Sigma$. A morphism of admissible $G$-covers of $(D,\Sigma)$ is a $G$-equivariant morphism commuting with the projection maps to $D$. If $D$ is an elliptic curve (resp. a pointed torus), the divisor $\Sigma$ will be taken to be the origin (resp. the marked point); thus an admissible $G$-cover of an elliptic curve (resp. pointed torus) is only allowed to be branched above the origin (resp. the marked point).
\end{defn}

\begin{prop}\label{prop_automorphism_for_smooth_covers} Let $\pi : C\ra D$ be a (smooth) admissible $G$-cover. Then its automorphism group as a $G$-cover is $Z(G)$.	
\end{prop}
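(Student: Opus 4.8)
The plan is to translate the statement into the Galois theory of function fields. Write $K := k(D)$ and $L := k(C)$. Since $C$ and $D$ are connected smooth projective curves over $k$, $C$ is the unique such curve with function field $L$ and $\pi$ is the morphism of models determined by the inclusion $K\hookrightarrow L$. Moreover a $D$-morphism $C\to C$ is automatically dominant (a constant morphism cannot commute with $\pi$), hence corresponds to a $K$-algebra endomorphism of $L$, and every such endomorphism is an automorphism because $L/K$ is finite. Thus passing to generic points gives a bijection between $\operatorname{Aut}_D(C)$ and $\operatorname{Aut}_K(L) = \operatorname{Gal}(L/K)$ (reversing composition, since $\operatorname{Spec}$ is contravariant). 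By the definition of a branched $G$-cover, the given $D$-linear $G$-action on $C$ supplies an isomorphism (or anti-isomorphism, according to whether the action is written on the left or the right — this plays no role below) $\iota\colon G\xrightarrow{\sim}\operatorname{Gal}(L/K)$, and under the identification above the $G$-action on $C$ goes over to the tautological action of $\operatorname{Gal}(L/K)$ on $L$.

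Next I would single out the $G$-equivariant automorphisms. An $f\in\operatorname{Aut}_D(C)$ is an automorphism of $\pi$ \emph{as a $G$-cover} precisely when it commutes with the action of every element of $G$; on function fields this says that the corresponding $\varphi\in\operatorname{Gal}(L/K)$ commutes with $\iota(g)$ for all $g\in G$, and since $\iota$ is a bijection onto $\operatorname{Gal}(L/K)$ this means exactly $\varphi\in Z(\operatorname{Gal}(L/K))$. As $\iota$ carries centre to centre — whether it is an isomorphism or an anti-isomorphism — the automorphism group of $\pi$ as a $G$-cover is $Z(\operatorname{Gal}(L/K))\cong Z(G)$, which is the assertion. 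I do not anticipate a genuine obstacle; the only things to watch are that ``automorphism as a $G$-cover'' must be read as commuting with $\pi$ \emph{and} with the $G$-action (commuting with $\pi$ alone would recover all of $G$), that the contravariance of $C\mapsto k(C)$ reverses composition so one should confirm the centraliser is taken on the correct side (the two reversals cancel, and the honest centre $Z(G)$ survives), and that the connectedness of $C$ — part of the definition of a branched cover — is exactly what makes $L$ a field, so that $\operatorname{Gal}(L/K)$ is meaningful and of order $[L:K]=|G|$.

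Alternatively, and perhaps more in the spirit of the rest of the paper, one may restrict $\pi$ to $D^\circ := D\setminus\Sigma$, where it becomes a connected \'{e}tale $G$-torsor $C^\circ\to D^\circ$; since $C$ is the normalisation of $D$ in $k(C^\circ) = k(C)$, an automorphism of $\pi$ as a $G$-cover is the same thing as an automorphism of $C^\circ\to D^\circ$ as a $G$-torsor (it is determined by and extends its restriction, $C$ being normal and separated). One is then reduced to the standard computation that the automorphism group of a connected $G$-torsor is $Z(G)$: choosing a point in a fibre trivialises it as a right $G$-set and turns monodromy into a surjection $\pi_1(D^\circ)\twoheadrightarrow G$, and a torsor automorphism is left translation by an element of $G$ centralising the image of monodromy, i.e.\ by an element of $Z(G)$.
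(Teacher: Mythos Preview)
Your argument is correct and is essentially the same as the paper's, just spelled out in more detail: the paper's one-line proof (``an automorphism of the cover $\pi$ realized by $g\in G$ is $G$-equivariant if and only if $g\in Z(G)$'') tacitly uses that every $D$-automorphism of $C$ arises from some $g\in G$, which you justify via the Galois correspondence for function fields (or, in your alternative, via the torsor description over $D^\circ$). Both routes collapse to the same observation that $\operatorname{Aut}_D(C)=G$ and the $G$-equivariant elements form the centre.
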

\begin{proof} An automorphism of the cover $\pi$ realized by $g\in G$ is $G$-equivariant if and only if $g\in Z(G)$.	
\end{proof}

Suppose now that $(E,O)$ is an elliptic curve over $k$, and $\pi : C\ra E$ is an admissible $G$-cover relative to the divisor $\Sigma = O\in E$. Then $\pi^{-1}(E^\circ)\ra E^\circ$ is an \'{e}tale $G$-cover. Since there is at most one $G$-orbit of ramified points, all ramified points have the same ramification index, which we simply call the ramification index of $\pi$. If $x\in C$ lies over $O$ and $\zeta_e$ is a primitive $e$th root of unity, then the conjugacy class of $\chi_x^{-1}(\zeta_e)$ is called the \emph{Higman invariant} of $\pi$ relative to $\zeta_e$.

If $k = \bC$, $x_0\in E^\circ$ is a base point and $a,b\in\pi_1(E^\circ,x_0)$ is a basis for its fundamental group with intersection number $a\cap b = +1$, then the commutator $[b,a] := bab^{-1}a^{-1}$ is represented by a positively oriented loop in $E^\circ(\bC)$ winding once around the puncture. We say that $[b,a]$ is a \emph{generator of inertia} at $O\in E$. For any $y_0\in C$ lying over $x_0$, the monodromy action of $\pi_1(E^\circ(\bC),x_0)$ on the fiber $\pi^{-1}(x_0)$, described in \S\ref{ss_subgroups}, gives a surjective representation
$$\rho_{\pi,y_0} : \pi_1(E^\circ(\bC),x_0)\lra G$$
where $\gamma\in \pi_1(E^\circ(\bC),x_0)$ is mapped to the unique $g_{\gamma,y_0}\in G$ satisfying $g_{\gamma,y_0}(y_0) = \gamma\cdot y_0$, where $\gamma\cdot y_0$ denotes the monodromy action. Different choices of $y_0\in\pi^{-1}(x_0)$ lead to $G$-conjugate monodromy representations. Since $\pi$ can be recovered from $\pi|_{\pi^{-1}(E^\circ)}$ as the normalization of $E$ inside $\pi^{-1}(E^\circ)$, we get bijections
\begin{eqnarray*}
\{\text{admissible $G$-covers of $E$}\}/\cong & \stackrel{\text{restrict}}{\lra} & \{\text{unramified $G$-covers of $E^\circ$}\}/\cong \\
 & \stackrel{\text{topologize}}{\lra} & \{\text{unramified $G$-covers of $E^\circ(\bC)$}\}/\cong \\
 & \stackrel{\text{monodromy}}{\lra} & \Epi^\ext(\pi_1(E^\circ(\bC),x_0),G) \\
 & \stackrel{\text{comparison}}{\lra} & \Epi^\ext(\pi_1(E^\circ,x_0),G)
\end{eqnarray*}
where the last map is given by the comparison theorem \ref{thm_comparison}. Since $\pi_1(E^\circ(\bC),x_0)$ is free of rank 2, we have

\begin{thm}\label{thm_2_generation} Let $E$ be an elliptic curve over an algebraically closed field $k$, and $G$ a finite group (of order invertible in $k$). There exist admissible $G$-covers of $E$ if and only if $G$ is generated by two elements.
\end{thm}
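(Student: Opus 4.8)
\emph{Plan.} The idea is to push the statement through the dictionary already in place: first replace admissible $G$-covers of $E$ by surjections out of the \'etale fundamental group of $E^\circ$, then identify that group (or the part of it that sees $G$), and finally invoke an elementary fact about profinite free groups of rank $2$. \emph{Step 1 (reduction).} Because $|G|$ is invertible in $k$ by our standing assumption \eqref{eq_tame}, any $G$-action on a finite \'etale cover of $E^\circ$ is tame, and an admissible $G$-cover $\pi:C\to E$ branched only over $O$ is the same data as its restriction to a connected finite \'etale $G$-cover of $E^\circ$, the original cover being recovered as the normalization of $E$ inside it. Applying the Galois correspondence to $E^\circ$ --- over $\bC$ this is the chain of bijections displayed just before the theorem; over a general algebraically closed $k$ one uses the algebraic Galois correspondence (Theorem~\ref{thm_Galois}) directly --- isomorphism classes of admissible $G$-covers of $E$ correspond bijectively to $\Epi^\ext(\pi_1(E^\circ,\bar x_0),G):=\Epi(\pi_1(E^\circ,\bar x_0),G)/\Inn(G)$, where surjectivity of the monodromy is exactly what forces $C$ to be connected. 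So it suffices to show that $G$ is a continuous quotient of $\pi_1(E^\circ,\bar x_0)$ if and only if $G$ is $2$-generated.

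\emph{Step 2 (the fundamental group of $E^\circ$).} If $\ch k=0$, choose a finitely generated field of definition for $E$, embed it into $\bC$, and combine the comparison theorem (Theorem~\ref{thm_comparison}) with the invariance of $\pi_1$ under extension of an algebraically closed base field in characteristic $0$; since $\pi_1(E^\circ(\bC),x_0)$ is free of rank $2$ (\S\ref{ss_topological_properties}), this gives $\pi_1(E^\circ,\bar x_0)\cong\Fhat_2$, the profinite completion of the free group $F_2$ on two generators. If $\ch k=p>0$, then since $|G|$ is prime to $p$ only the maximal prime-to-$p$ quotient of $\pi_1(E^\circ,\bar x_0)$ is relevant, and Grothendieck's computation of the tame fundamental group gives $\pi_1(E^\circ,\bar x_0)^{(p')}\cong(\Fhat_2)^{(p')}$: lift $E$ to an elliptic curve over a complete discrete valuation ring $R$ with residue field $k$ and $\Frac(R)$ of characteristic $0$, and feed the characteristic-$0$ case into the specialization isomorphism for prime-to-$p$ fundamental groups (SGA~1, Exp.\ XIII) applied to $\cE^\circ=\cE\setminus O$. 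In either case $\Epi(\pi_1(E^\circ,\bar x_0),G)=\Epi(\Fhat_2,G)$, using in the positive-characteristic case that a surjection onto a finite group of order prime to $p$ factors through the prime-to-$p$ quotient.

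\emph{Step 3 (group theory and conclusion).} A continuous homomorphism $\Fhat_2\to G$ is determined by the images $g_1,g_2\in G$ of two free topological generators, and its image is the closure of $\langle g_1,g_2\rangle$, which equals $\langle g_1,g_2\rangle$ since $G$ is finite; hence $\Epi(\Fhat_2,G)\ne\varnothing$ exactly when $G=\langle g_1,g_2\rangle$ for some $g_1,g_2$, i.e.\ when $G$ is $2$-generated. With Step 1 this proves the theorem. The only genuinely nontrivial ingredient is the characteristic-$p$ identification of $\pi_1(E^\circ)^{(p')}$ in Step 2 (the lifting-and-specialization argument); everything else is a routine unwinding of the established correspondence between admissible $G$-covers of $E$, \'etale $G$-covers of $E^\circ$, and surjections out of $\pi_1(E^\circ)$.
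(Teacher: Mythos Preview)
Your proof is correct and follows essentially the same route as the paper: reduce to surjections out of $\pi_1(E^\circ)$ via the Galois correspondence, identify the relevant (pro-$\bL$) quotient of that group as coming from a free group of rank $2$ using comparison over $\bC$ and specialization in positive characteristic, and conclude by the elementary observation that $\Epi(\Fhat_2,G)\ne\varnothing$ iff $G$ is $2$-generated. The paper compresses your Steps~2--3 into a forward reference to Proposition~\ref{prop_geometric_pi1}, but the content is the same.
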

\begin{proof} If $k = \bC$, this follows from the above. The general case follows from the theory of specialization, see Proposition \ref{prop_geometric_pi1} below.
\end{proof}

The monodromy element $\rho_{\pi,y_0}([b,a])$ associated to the generator of inertia $[b,a]$ is related to the local representation $\chi_{y_0}$ via
$$\rho_{\pi,y_0}([b,a]) = \chi_{y_0}^{-1}(\zeta_e)$$
where here we take $\zeta_e := \exp(2\pi i/n)$. Different choices of $y_0$ lead to conjugate monodromy elements, and hence in the analytic setting the Higman invariant relative to $\zeta_e := \exp(2\pi i/n)$ is exactly the monodromy element associated to a small positively oriented loop around $O$. This description relates ramification to $G$ being nonabelian:

\begin{prop} Let $E$ be an elliptic curve, and $\pi : C\ra E$ an admissible $G$-cover. Then the following are equivalent:
\begin{enumerate}
\item\label{part_pi_unramified} $\pi$ is unramified.
\item\label{part_G_abelian} $G$ is abelian.
\item\label{part_C_genus_1} $C$ is a curve of genus 1.	
\end{enumerate}
\end{prop}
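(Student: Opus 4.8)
The plan is to establish the cyclic chain of implications $\pref{part_pi_unramified}\Rightarrow\pref{part_G_abelian}\Rightarrow\pref{part_C_genus_1}\Rightarrow\pref{part_pi_unramified}$, working over $\bC$ without loss of generality (the general case follows from the specialization results invoked in Theorem \ref{thm_2_generation}, via Proposition \ref{prop_geometric_pi1}). The key input throughout is the monodromy dictionary set up just above: an admissible $G$-cover $\pi:C\ra E$ corresponds, after restricting to $E^\circ$, to a surjection $\rho := \rho_{\pi,y_0}:\pi_1(E^\circ(\bC),x_0)\twoheadrightarrow G$, and $\pi$ is unramified at the points over $O$ precisely when $\rho$ kills a generator of inertia $[b,a]$, where $a,b$ is a basis of the free group $\pi_1(E^\circ(\bC),x_0)$ with $a\cap b = +1$.

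For $\pref{part_pi_unramified}\Rightarrow\pref{part_G_abelian}$: if $\pi$ is unramified, then the ramification index $e$ equals $1$, so $\rho([b,a]) = \chi_{y_0}^{-1}(\zeta_e) = 1$; hence $\rho$ factors through the abelianization $\pi_1(E^\circ(\bC),x_0)^\ab \cong \bZ^2 \cong \pi_1(E(\bC),x_0)$. Since $\rho$ is surjective, $G$ is a quotient of $\bZ^2$, hence abelian. (Equivalently: the image of $\rho$ is the monodromy group of $\pi$, and killing $[b,a]$ forces that image to be abelian.) For $\pref{part_G_abelian}\Rightarrow\pref{part_pi_unramified}$: if $G$ is abelian, then $\rho([b,a]) = \rho(b)\rho(a)\rho(b)^{-1}\rho(a)^{-1} = 1$, so $\pi$ is unramified over $O$; since $\pi$ is an admissible $G$-cover it is branched only above $O$, so $\pi$ is unramified everywhere. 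This already gives $\pref{part_pi_unramified}\Leftrightarrow\pref{part_G_abelian}$, so it remains only to tie in $\pref{part_C_genus_1}$.

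For $\pref{part_pi_unramified}\Rightarrow\pref{part_C_genus_1}$ and the converse, the tool is the Riemann–Hurwitz (Hurwitz) formula applied to $\pi:C\ra E$, using that $E$ has genus $1$. Writing $n = |G|$, the formula reads $2g(C) - 2 = n(2\cdot 1 - 2) + \sum_{x}(e_x - 1) = \sum_{x}(e_x - 1)$, where the sum is over ramified points, all lying over $O$. If $\pi$ is unramified the right-hand side is $0$, so $g(C) = 1$, giving $\pref{part_C_genus_1}$. Conversely, if $g(C) = 1$ then $\sum_x (e_x - 1) = 0$, forcing every $e_x = 1$, i.e. $\pi$ is unramified, giving $\pref{part_pi_unramified}$. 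This closes the cycle and proves all three conditions equivalent.

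There is no real obstacle here: each implication is a one-line consequence of either the monodromy description of inertia or Riemann–Hurwitz. The only point requiring a word of care is the reduction to $k = \bC$ for the equivalence $\pref{part_pi_unramified}\Leftrightarrow\pref{part_G_abelian}$ in positive characteristic — but $\pref{part_G_abelian}\Leftrightarrow\pref{part_pi_unramified}$ and $\pref{part_pi_unramified}\Leftrightarrow\pref{part_C_genus_1}$ are purely algebraic statements (abelianness of the monodromy/Galois group and genus of $C$ are insensitive to base change among algebraically closed fields once $|G|$ is invertible), so tameness suffices and no characteristic-zero hypothesis is actually needed once one knows such covers exist (Theorem \ref{thm_2_generation}).
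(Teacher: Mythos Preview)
Your proof is correct and follows essentially the same approach as the paper: Riemann--Hurwitz for \pref{part_pi_unramified}$\Leftrightarrow$\pref{part_C_genus_1}, and the commutator/monodromy argument for \pref{part_pi_unramified}$\Leftrightarrow$\pref{part_G_abelian}. Your added remark about insensitivity to the base field is a minor elaboration the paper leaves implicit, but otherwise the arguments coincide.
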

\begin{proof} By the Hurwitz formula \cite[\S II, Theorem 5.9]{Sil09}, a tamely ramified branched cover of an elliptic curve has genus $\ge 1$, with equality if and only if the cover is unramified. Thus our tameness assumption \eqref{eq_tame} implies that \ref{part_pi_unramified} is equivalent to \ref{part_C_genus_1}. Since $C$ is connected, the monodromy representation $\rho_\pi : \pi_1(E^\circ(\bC),x_0)\ra G$ associated to the unramified cover $\pi^{-1}(E^\circ(\bC))\ra E^\circ(\bC)$ is surjective. If $a,b\in\pi_1(E^\circ,x_0)$ is a basis with $a\cap b = +1$, then $G$ is generated by the images of $a$ and $b$, and hence is abelian if and only if their images commute, or equivalently if the monodromy image of $[b,a]$ is 1, or equivalently if $\pi$ is unramified.	
\end{proof}

\subsection{$G$-structures via representations of fundamental groups}\label{ss_combinatorial_structures}

In this section we define $G$-structures on elliptic curves, where $G$ is a finite group, which recovers the classical congruence level structures when $G$ is abelian. Such level structures were first considered by Deligne and Mumford for \'{e}tale covers of proper curves of genus $g\ge 2$ \cite[\S5]{DM69}. We begin with a quick geometric definition, which we then reinterpret combinatorially using Galois theory, leading to an explicit algorithm for computing the geometry of the associated moduli spaces.

Let $\cT_G^\pre : \cM(1)\ra\Sets$ be the presheaf (i.e., functor), which associates to any object $E/S$ of $\cM(1)$ the set of isomorphism classes of $G$-torsors over the punctured elliptic curve $E^\circ := E - O$ which have geometrically connected fibers over $S$.\footnote{A $G$-torsor $f : X\ra Y$ is a finite \'{e}tale map equipped with a $Y$-linear action of $G$ which acts freely and transitively on every geometric fiber.} Let $\cT_G$ denote the sheafification of $\cT_G^\pre$ relative to the \'{e}tale topology on $\cM(1)$ inherited from $(\Sch/\bS)_\et$. This means that a covering of $E/S$ in this topology consists of an \'{e}tale covering $\{S_i\ra S\}_{i\in I}$ of $S$, together with the corresponding \'{e}tale covering $\{E_{S_i}\ra E\}_{i\in I}$ of $E$.

\begin{defn}[$G$-structures, see {\cite[\S2]{Chen18}}, {\cite[\S2.5]{Chen21}}] A $G$-structure (or Teichm\"{u}ller structure of level $G$) on an elliptic curve $E/S$ is by definition a section of $\cT_G(E/S)$. Let $\cM(G)$ denote the category whose objects are pairs $(E/S,\alpha)$, where $E/S$ is an elliptic curve and $\alpha\in\cT_G(E/S)$ is a $G$-structure. Morphisms in $\cM(G)$ are morphisms in $\cM(1)$ which respect the $G$-structure. There is a natural forgetful map
$$\pi : \cM(G)\lra\cM(1)$$
obtained by forgetting the level structure.
\end{defn}

\begin{remark} A $G$-structure $\alpha$ on $E/S$ is thus given by an \'{e}tale covering $\{S_i\ra S\}_{i\in I}$ of $S$ and a collection of $G$-covers $\{X_i\ra E_{S_i}^\circ\}_{i\in I}$, such that for every $i,j$ the restrictions of the covers $X_i\ra E_{S_i}$ and $X_j\ra E_{S_j}$ to the ``overlap'' $S_{ij} := S_i\times_S S_j$ are isomorphic. However, since there is no requirement that these isomorphisms over the various $S_{ij}$'s satisfy a cocycle condition, the $G$-structure $\alpha$ might not come from an actual $G$-cover of $E^\circ/S$. When $S = \Spec\bQ$, a $G$-structure $\alpha$ on $E/\bQ$ is given by a $G$-cover $f : X\ra E^\circ_{\Qbar}$ whose \emph{field of moduli} if $\bQ$. It comes from an actual $G$-cover of $E^\circ$ if and only if $\bQ$ is also a field of definition of $f$. The field of moduli is the intersection of all fields of definition, but it may not be a field of definition \cite[2.6-2.7]{CH85}. If $Z(G) = 1$, then $G$-covers have no nontrivial automorphisms, so any cocycle condition is automatic. In this case we would have $\cT_G^\pre = \cT(G)$ and if $S = \Spec\bQ$, then in this case the field of moduli is equal to the minimal field of definition. See \cite[Remark 2.5.1]{Chen21}	
\end{remark}

The connection between $\cM(G)$ and noncongruence modular curves stems from the following key result:

\begin{thm}\label{thm_fet} The forgetful map $\ff : \cM(G)\lra\cM(1)$ is finite \'{e}tale. In particular, $\cM(G)$ is a Deligne-Mumford stack smooth over $\bS := \bZ[1/|G|]$.
\end{thm}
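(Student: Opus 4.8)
The plan is to establish the two properties separately: first that $\ff$ is finite, and then that it is étale; the Deligne--Mumford and smoothness claims then follow formally from the corresponding properties of $\cM(1)$ and standard permanence results. The key technical point is that $\cT_G$, being the étale sheafification of the presheaf $\cT_G^\pre$ of isomorphism classes of $G$-torsors on punctured elliptic curves, is represented by $\cM(G)$ over $\cM(1)$, so the entire question reduces to understanding the local structure of the sheaf $\cT_G$.

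\emph{Finiteness.} First I would reduce to the case where the base is (the spectrum of) a strictly henselian local ring, or even an algebraically closed field, by the local nature of finiteness for separated morphisms of finite type. Over such a base, an elliptic curve $E/S$ has $E^\circ$ with geometric fundamental group the profinite free group of rank $2$ (by the discussion preceding Theorem~\ref{thm_2_generation}, together with the tameness hypothesis \eqref{eq_tame} forcing us to consider only the prime-to-$|G|$ quotient as far as $G$-torsors are concerned). Thus the fiber of $\ff$ over a geometric point is identified with $\Epi^\ext(\pi_1(E^\circ,x_0),G) = \Epi(\pi_1(E^\circ,x_0),G)/\Inn(G)$, a \emph{finite} set since $G$ is finite and $\pi_1(E^\circ)$ is topologically finitely generated. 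To upgrade pointwise finiteness to finiteness of the morphism, I would invoke that $\ff$ is of finite type (the torsor data is cut out by finitely many equations after passing to an étale cover trivializing it), separated, and quasi-finite, and then either cite properness of the admissible-cover compactification $\cAdm(G)$ alluded to in \S\ref{ss_AdmG}--\S\ref{ss_compactification}, or argue directly via the valuative criterion using the fact that a $G$-torsor on $E^\circ$ over the generic point of a trait extends (after the canonical compactification by normalization) and such extensions are unique once the ramification/Higman data is fixed by the special fiber. The cleanest route is: quasi-finite $+$ proper $\Rightarrow$ finite, borrowing properness from $\cAdm(G)\to\ol{\cM(1)}$.

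\emph{Étaleness.} This is the heart of the matter and where I expect the main obstacle. The statement amounts to formal smoothness plus unramifiedness of $\ff$, i.e.\ the infinitesimal lifting criterion: given a square-zero thickening $S_0\hookrightarrow S$ of $\bS$-schemes, an elliptic curve $E/S$, and a $G$-structure $\alpha_0$ on $E_{S_0}^\circ$, there is a \emph{unique} $G$-structure $\alpha$ on $E^\circ$ restricting to $\alpha_0$. Working étale-locally on $S$ we may assume $\alpha_0$ comes from an honest $G$-torsor $X_0\to E_{S_0}^\circ$. Now $E^\circ$ is smooth over $S$, so $E_{S_0}^\circ\hookrightarrow E^\circ$ is a square-zero thickening of schemes, and the deformation theory of finite étale covers (equivalently, of $G$-torsors) says the obstruction to lifting $X_0$ lives in $H^2$ and the lifts, when they exist, form a torsor under $H^1$ of the relevant sheaf on the \emph{étale} site --- but for finite étale morphisms the relevant cotangent complex vanishes (a finite étale cover deforms uniquely along any square-zero thickening, by topological invariance of the étale site, \cite[04DZ]{stacks} or the invariance of étale topology under nilpotent thickenings). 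This gives existence and uniqueness of the lift $X\to E^\circ$, hence of $\alpha$. The subtlety is that $\cT_G$ is a sheafification, so $\alpha_0$ need not globally come from a torsor and one must check the gluing/cocycle data also lifts uniquely; but by the same topological invariance applied over each $S_{ij}$, the isomorphisms between the restricted torsors lift uniquely as well, so the section of the sheaf lifts uniquely. I would phrase this as: $\cT_G$ is a \emph{formally étale} sheaf over $\cM(1)$ because the functor ``$G$-torsors on $E^\circ$'' is insensitive to nilpotent thickenings of $E^\circ$, which is in turn insensitive to nilpotent thickenings of $S$ since $E^\circ/S$ is smooth.

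\emph{Conclusion.} Having shown $\ff$ is representable, finite, and étale, $\cM(G)$ inherits from $\cM(1)$ the property of being a Deligne--Mumford stack (finite étale morphisms to DM stacks have DM source) and, since $\cM(1)$ is smooth over $\bS$ and étale morphisms preserve smoothness, $\cM(G)$ is smooth over $\bS = \Spec\bZ[1/|G|]$. The one genuine difficulty is packaging the sheafification carefully in the infinitesimal-lifting step --- making sure that ``unique lift of a local section'' is not confused with ``unique lift of a torsor'' --- but this is handled uniformly by the invariance of the small étale site under square-zero (indeed nilpotent) thickenings.
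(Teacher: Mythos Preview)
Your approach is essentially correct but takes a genuinely different route from the paper. The paper proves Theorem~\ref{thm_fet} by directly establishing (Proposition~\ref{prop_local_structure_of_TG}) that $\cT_G$ is \'etale-locally a \emph{constant} sheaf on a finite set: after reducing to the case where $E^\circ/S$ has a section (using the $E[p]$ trick of Proposition~\ref{prop_HES_exact}), one uses the splitting of the homotopy exact sequence \eqref{eq_HES} and finite generation of $\ol{\Pi}$ to find a finite \'etale $S'\to S$ over which every geometrically connected $G$-torsor on $E^\circ_{S'}$ has trivial $\psi$-component, so that $\cT_G^\pre|_{E_{S'}/S'}$ is already the constant sheaf $\Epi^\ext(\ol{\Pi},G)$. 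This yields finiteness and \'etaleness simultaneously, and --- crucially --- gives the explicit identification of the fiber with monodromy action via $\rho_{E/S}$, which is the real content and is used throughout the rest of the paper (e.g.\ \S\ref{sss_combinatorial_decomposition}, \S\ref{ss_arithmetic_geometry}).

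Your \'etaleness argument via topological invariance of the \'etale site is correct and arguably more conceptual. Your finiteness argument, however, has a gap: borrowing properness from $\cAdm(G)\to\ol{\cM(1)}$ is out of logical order in the paper (that result is cited from \cite{ACV03} only in \S\ref{ss_AdmG}, after the present theorem, and moreover concerns a different stack related to $\cM(G)$ by rigidification), and it imports the substantially heavier machinery of the admissible-covers compactification. Your alternative via the valuative criterion would work --- one extends a tame $G$-torsor on $E^\circ_K$ over a DVR using purity/specialization, invoking the tameness hypothesis \eqref{eq_tame} --- but you do not carry it out. The paper's route sidesteps all of this: showing that a sheaf is \'etale-locally constant with finite stalks is precisely what it means to be finite \'etale, and the homotopy exact sequence gives this directly while also computing the monodromy.
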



\begin{remark} While the definitions of $\cT_G,\cM(G)$ make sense over $\bZ$, this theorem is not true over $\bZ$. Indeed, the non-finite generation of the \'{e}tale fundamental groups of affine curves in positive characteristic \cite{HOPS18} implies that $\ff$ cannot even be finite when considered as a map of $\bZ$-stacks.
\end{remark}

In the remainder of this section we prove Theorem \ref{thm_fet} by giving an explicit \'{e}tale-local description of the map $\ff$. 

Let $S$ be a connected scheme, let $(E/S,O)$ be an elliptic curve, and let $E^\circ := E - O$. Let $\ol{s}\in S$ be a geometric point. Let $\bL$ denote the set of all primes invertible on $S$, and let $M$ be the smallest (closed) subgroup of $\pi_1(E^\circ,g(\ol{s}))$ such that $K := \Ker(\pi_1(E^\circ,g(\ol{s}))\ra\pi_1(S,\ol{s}))$ is pro-$\bL$.\footnote{A pro-$\bL$ group is a profinite group all of whose finite quotients have orders only divisible by primes in $\bL$.} Define
$$\ol{\Pi} := \pi_1(E^\circ_{\ol{s}},g(\ol{s}))^{\bL}\qquad \Pi := \pi_1(E^\circ,g(\ol{s}))/M\qquad\Delta := \pi_1(S,\ol{s})$$
where $\cdot^{\bL}$ denotes the maximal pro-$\bL$ quotient. Note that by the comparison theorem \ref{thm_comparison} and the specialization theorem \cite[Theorem 5.7.10]{Sza09}, we have
\begin{prop}\label{prop_geometric_pi1} $\ol{\Pi}$ is the pro-$\bL$ completion of a free group of rank 2. In particular, $\ol{\Pi}$ is finitely generated.
\end{prop}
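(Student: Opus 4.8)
The plan is to compute $\pi_1(E^\circ_{\ol s}, g(\ol s))$ --- the \'etale fundamental group of the once-punctured elliptic curve $E^\circ_{\ol s}$ over the algebraically closed field $k := \kappa(\ol s)$ --- and then read off its maximal pro-$\bL$ quotient $\ol\Pi$. The one place the hypotheses enter is the observation I would record first: $\ch k \notin \bL$. Indeed, $S$ is connected, so if some point of $S$ had positive characteristic $p$ then $p$ could not be a unit on all of $S$, whence $p \notin \bL$; and if $\ch k = 0$ there is nothing to check. This is exactly what guarantees that the pro-$\bL$ covers of $E^\circ_{\ol s}$ are tame, so that the pro-$\bL$ quotient is computed by classical methods.

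If $\ch k = 0$, I would first use the invariance of the \'etale fundamental group under extension of the algebraically closed base field \cite{SGA1} to reduce to $k = \bC$, and then apply the comparison theorem \ref{thm_comparison} together with the description in \S\ref{ss_topological_properties}, giving $\pi_1(E^\circ_\bC, g(\ol s)) \cong \pi_1(E^\circ(\bC))^\wedge \cong F_2^\wedge$, where $F_2$ is the free group of rank $2$. Passing to the maximal pro-$\bL$ quotient yields $\ol\Pi \cong F_2^{\bL}$, the pro-$\bL$ completion of $F_2$. If $\ch k = p > 0$, then $p \notin \bL$, and instead of the comparison theorem I would invoke Grothendieck's specialization theorem for the prime-to-$p$ (tame) fundamental group of an affine curve, in the form of \cite[Theorem 5.7.10]{Sza09} (see also \cite{SGA1}): removing one point from a smooth proper genus-$1$ curve over $k$ produces a curve whose prime-to-$p$ fundamental group is the prime-to-$p$ completion of the free group on $2\cdot 1 + 1 - 1 = 2$ generators. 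Since every prime in $\bL$ differs from $p$, the maximal pro-$\bL$ quotient of $\pi_1(E^\circ_{\ol s})$ already factors through this prime-to-$p$ quotient, so again $\ol\Pi \cong F_2^{\bL}$.

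In both cases $\ol\Pi \cong F_2^{\bL}$ is topologically generated by the images of the two free generators of $F_2$, which gives the ``in particular'' clause at once. I do not anticipate a genuine obstacle: the argument is essentially bookkeeping on top of the comparison and specialization theorems already recalled in the text, and the only real care needed is the verification that $\ch k \notin \bL$, which both keeps the covers tame and ensures the positive-characteristic specialization theorem applies without interference from wild covers.
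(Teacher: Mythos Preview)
Your proof is correct and follows exactly the approach the paper indicates: invoke the comparison theorem \ref{thm_comparison} in characteristic $0$ and the specialization theorem \cite[Theorem 5.7.10]{Sza09} in positive characteristic, after noting that $\ch k\notin\bL$. The paper states this in a single sentence preceding the proposition, and your write-up simply unpacks that sentence carefully.
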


\begin{cor}\label{cor_nonempty} $\cM(G)$ is nonempty if and only if $G$ can be generated by two elements. In this case $\ff : \cM(G)\ra\cM(1)$ is surjective (and finite \'{e}tale).	
\end{cor}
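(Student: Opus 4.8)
The plan is to reduce the statement entirely to Theorem~\ref{thm_fet} and Proposition~\ref{prop_geometric_pi1}. First I would record a soft fact: since $\ff\colon\cM(G)\to\cM(1)$ is finite \'etale by Theorem~\ref{thm_fet}, its image is open (flatness plus local finite presentation) and closed (finite morphisms are universally closed), hence a union of connected components of $\cM(1)$. Since $\cM(1)$ is connected --- e.g.\ it is smooth over $\bS=\Spec\bZ[1/|G|]$ with geometrically connected fibres and $\bS$ is connected, or equivalently its coarse space $\bA^1_{\bS}$ is connected --- the image of $\ff$ is empty or all of $\cM(1)$. Thus $\ff$ is surjective precisely when $\cM(G)\ne\emptyset$, and $\ff$ is finite \'etale unconditionally. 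This reduces the whole corollary to the equivalence $\cM(G)\ne\emptyset \iff G$ is generated by two elements.

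For ``$2$-generated $\Rightarrow$ nonempty'' I would exhibit a single object. Take $k=\bC$ and any elliptic curve $E/k$. By Theorem~\ref{thm_2_generation} there is an admissible $G$-cover $\pi\colon C\to E$ branched only over $O$; restricting to $E^\circ$ gives a connected $G$-torsor, i.e.\ an element of $\cT_G^{\pre}(E/k)$. Over the geometric point $\Spec k$ \'etale sheafification does nothing, so this is a genuine $G$-structure $\alpha$ and $(E/k,\alpha)$ is an object of $\cM(G)$. (In the language of Proposition~\ref{prop_geometric_pi1} one could instead note that $\ol\Pi$ is the pro-$\bL$ completion of a free group of rank $2$, so any pair of topological generators mapping onto a generating pair of $G$ gives a surjection $\ol\Pi\twoheadrightarrow G$, hence the required torsor.)

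For ``nonempty $\Rightarrow$ $2$-generated'' I would take any object $(E/S,\alpha)$ of $\cM(G)$ and a geometric point $\ol s\to S$. Pulling back, $\alpha$ restricts to a $G$-structure on $E_{\ol s}$ over the algebraically closed field $k(\ol s)$, which as above is an honest connected $G$-torsor over $E^\circ_{\ol s}$, hence by the Galois correspondence a continuous surjection $\pi_1(E^\circ_{\ol s})\twoheadrightarrow G$. Since $|G|$ is invertible on $S$, every prime dividing $|G|$ lies in $\bL$, so this surjection factors through $\ol\Pi=\pi_1(E^\circ_{\ol s})^{\bL}$; by Proposition~\ref{prop_geometric_pi1} the group $\ol\Pi$ is topologically $2$-generated, hence so is its continuous finite quotient $G$.

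The whole argument is bookkeeping once Theorems~\ref{thm_fet} and~\ref{thm_2_generation} and Proposition~\ref{prop_geometric_pi1} are in hand, so I do not expect a genuine obstacle. The two spots that warrant a second look are the reduction step --- one must invoke that $\ff$ is \emph{finite} \'etale, not merely \'etale, to get closedness of the image --- and, in the forward direction, the observation that invertibility of $|G|$ is precisely what allows passing from $\pi_1(E^\circ_{\ol s})$ to its pro-$\bL$ quotient $\ol\Pi$ without losing the quotient $G$.
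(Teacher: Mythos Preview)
Your argument is correct and follows the same approach as the paper: the paper's proof only explicitly addresses surjectivity (finite \'etale $\Rightarrow$ open and closed image $\Rightarrow$ a component of the connected $\cM(1)$), leaving the equivalence ``nonempty $\iff$ $2$-generated'' implicit from Theorem~\ref{thm_2_generation} and Proposition~\ref{prop_geometric_pi1}, which is exactly what you spell out. Your extra care with sheafification over geometric points and the passage to the pro-$\bL$ quotient is sound and simply makes explicit what the paper takes for granted.
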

\begin{proof} Surjectivity follows from the fact that $\ff$ is finite \'{e}tale, and hence open and closed, so its image is a connected component of $\cM(1)$, hence the entirety of $\cM(1)$ as long as $\cM(G)$ is nonempty.	
\end{proof}

The maps $E^\circ_{\ol{s}}\ra E^\circ\ra S$ induce a sequence
\begin{equation}\label{eq_HES}
\begin{tikzcd}
	1\ar[r] & \ol{\Pi}\ar[r] & \Pi\ar[r] & \Delta\ar[r] & 1
\end{tikzcd}
\end{equation}
\begin{prop}\label{prop_HES_exact} If $G$ is not the trivial group, then the sequence \eqref{eq_HES} is exact.
\end{prop}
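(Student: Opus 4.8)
The plan is to identify \eqref{eq_HES} with the pro-$\bL$ homotopy exact sequence of the smooth affine relative curve $E^\circ/S$ and to verify its three exactness conditions: surjectivity of $\Pi\to\Delta$, the equality $\mathrm{im}(\ol{\Pi}\to\Pi)=\Ker(\Pi\to\Delta)$, and injectivity of $\ol{\Pi}\to\Pi$. The first is elementary; the other two are the real content of the homotopy exact sequence, and I would obtain them from the known statement for the complement of a relative normal-crossings divisor applied to the compactification $E^\circ\hookrightarrow E$, together with Proposition~\ref{prop_geometric_pi1}.

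\emph{Exactness at $\Delta$.} The morphism $f\colon E^\circ\to S$ is the restriction of the smooth proper curve $E\to S$ to the complement of the zero section $O$; hence it is smooth of finite type --- in particular flat and universally open --- surjective, with geometrically connected fibers (punctured smooth genus-$1$ curves). For any connected finite \'{e}tale $S'\to S$ the base change $E^\circ\times_S S'\to S'$ again has geometrically connected fibers over the connected scheme $S'$ and is open, so a nontrivial idempotent of $\cO(E^\circ\times_S S')$ would split $S'$ into two disjoint nonempty open sets, a contradiction; thus $E^\circ\times_S S'$ is connected. Hence $\pi_1(E^\circ)\to\pi_1(S)$ is surjective, so $\pi_1(E^\circ)/K\cong\pi_1(S)=\Delta$, and since $M\subseteq K$ the quotient $\Pi=\pi_1(E^\circ)/M$ surjects onto $\Delta$ with kernel $K/M$. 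Moreover the composite $\pi_1(E^\circ_{\ol{s}})\to\pi_1(E^\circ)\to\pi_1(S)$ is trivial, as $E^\circ_{\ol{s}}\to S$ factors through $\Spec$ of a separably closed field; so the (closed) image $I$ of $\pi_1(E^\circ_{\ol{s}})$ in $\pi_1(E^\circ)$ is contained in $K$, its image in $\Pi$ lies in $K/M$, and $K/M$ is the maximal pro-$\bL$ quotient of $K$, hence pro-$\bL$. Consequently $\pi_1(E^\circ_{\ol{s}})\to\Pi$ factors through $\ol{\Pi}=\pi_1(E^\circ_{\ol{s}})^{\bL}$ and $\mathrm{im}(\ol{\Pi}\to\Pi)\subseteq\Ker(\Pi\to\Delta)=K/M$.

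\emph{The two remaining inclusions, and the main obstacle.} It remains to prove (a) $\mathrm{im}(\ol{\Pi}\to\Pi)=K/M$, i.e.\ that $I$ has dense image in $K^{\bL}=K/M$; and (b) injectivity of $\ol{\Pi}\to\Pi$, i.e.\ that no nontrivial finite pro-$\bL$ quotient of $\pi_1(E^\circ_{\ol{s}})$ dies in $\pi_1(E^\circ)/M$ --- equivalently, that every connected finite \'{e}tale cover of the geometric fiber $E^\circ_{\ol{s}}$ of degree prime to the residue characteristics is the restriction of a finite \'{e}tale cover of $E^\circ$. Part (a) is the formal ``$\Ker\subseteq\ol{\mathrm{im}}$'' half of the homotopy exact sequence; part (b) is the serious point, and is precisely where the tameness hypothesis is indispensable, since in positive residue characteristic a wild cover of the fiber need not deform over $S$. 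I would prove both by compactifying: the section $O$ exhibits $\{O\}\subset E$ as a relative effective Cartier divisor which is smooth over $S$, so $E^\circ=E\setminus O$ is the complement of a relative normal-crossings divisor, and one may invoke the homotopy exact sequence for the tame fundamental group in that setting (see \cite[Exp.~XIII]{SGA1} and the specialization results of \cite[\S5.7]{Sza09}); feeding in Proposition~\ref{prop_geometric_pi1}, which identifies $\ol{\Pi}$ with the free pro-$\bL$ group of rank $2$, then yields both inclusions. The hypothesis that $G$ is nontrivial is used only to guarantee that $\bL$ is nonempty (it contains every prime dividing $|G|$), so that these pro-$\bL$ assertions are not vacuous. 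I expect the deformation step in (b) --- extending tame covers of the special fiber to covers of $E^\circ$ --- to be the crux; the rest is formal once the homotopy exact sequence for relative curves is available.
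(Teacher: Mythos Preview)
Your approach aligns with the paper's---both invoke SGA1 Exp.~XIII for the homotopy exact sequence---but you are vague exactly where the paper is specific. The paper cites \cite[Exp.~XIII, Prop.~4.1]{SGA1} for right exactness (your exactness at $\Delta$ and the inclusion $\mathrm{im}(\ol{\Pi}\to\Pi)=K/M$), which you essentially reprove by hand. The crux is injectivity of $\ol{\Pi}\to\Pi$, and here your treatment has a gap: the results in SGA1 XIII that actually yield left exactness for open curves (Prop.~4.3 and Exemples~4.4) require that $E^\circ\to S$ admit a \emph{section}, which it need not. You gesture at ``the homotopy exact sequence for the tame fundamental group'' and at specialization (\cite[\S5.7]{Sza09}), but neither reference gives left exactness without a section; specialization compares fibers over different points of $S$, not the fiber to the total space.

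The paper's fix is concrete and uses the hypothesis $G\neq 1$ more substantively than you do. You invoke nontriviality only to ensure $\bL\neq\emptyset$ so the statement is nonvacuous; the paper uses it to \emph{manufacture a section after finite \'{e}tale base change}. Namely, pick any prime $p\in\bL$ (which exists since $|G|>1$ is invertible on $S$), so that the group scheme $E[p]\to S$ is finite \'{e}tale; let $H\to S$ be the Galois closure of a non-identity component. Over $H$ there is a rational non-origin $p$-torsion point, hence a section of $E^\circ_H\to H$, and now \cite[Exp.~XIII, Prop.~4.3, Exemples~4.4]{SGA1} gives injectivity over $H$; since $H\to S$ is finite \'{e}tale, this descends to $S$. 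This base-change trick is the missing idea in your argument, and it is precisely what the ``deformation step'' you flagged as the crux amounts to in practice.
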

\begin{proof} This is \cite[Proposition 2.1.4]{Chen18}. We sketch the argument here: By \cite[Expos\'{e} XIII, Proposition 4.1]{SGA1}, the sequence $\ol{\Pi}\ra\Pi\ra\Delta\ra 1$ is exact, so it suffices to check injectivity of $\ol{\Pi}\ra\Pi$. If $E^\circ/S$ admits a section, then this injectivity is \cite[Expos\'{e} XIII, Proposition 4.3, Exemples 4.4]{SGA1}. If $G$ is not trivial, then our tameness assumption \eqref{eq_tame} that $|G|$ is invertible on $S$ implies that $\bL$ is nonempty, so there is a prime $p\in\bL$. We can then reduce to the split case by base changing along the map $H\ra S$, where $H$ is the Galois closure of any non-identity component of the finite \'{e}tale $S$-group $E[p]$.
\end{proof}

Thus, if $G$ is nontrivial, then the sequence \eqref{eq_HES} defines an outer representation:
$$\rho_{E/S} : \Delta\lra\Out(\ol{\Pi})$$
Let $\Epi^\ext(\ol{\Pi},G) := \Epi(\ol{\Pi},G)/\Inn(G)$ denote the set of conjugacy classes of surjective homomorphisms $\ol{\Pi}\ra G$. Then $\rho_{E/S}$ defines an action of $\Delta$ on $\Epi^\ext(\ol{\Pi},G)$. Theorem \ref{thm_fet} follows from the following more precise result.
\begin{prop}\label{prop_local_structure_of_TG} Assume $G$ is not the trivial group. Let $S$ be a connected scheme and $\ol{s}\in S$ a geometric point. Let $(E/S,O)$ an elliptic curve, with $E^\circ := E - O$ and corresponding map $S\ra\cM(1)$. Let $\cT_{G,E/S} := \cM(G)\times_{\cM(1)} S$. Then $\cT_{G,E/S}$ is finite \'{e}tale over $S$. Its geometric fiber is also the geometric fiber of $\ff : \cM(G)\ra\cM(1)$, and is in bijection with
\begin{equation}\label{eq_fiber}
\cM(G)_\ol{s} = \cT_{G,E/S,\ol{s}} \rightiso \Epi^\ext(\ol{\Pi},G)
\end{equation}
Relative to this bijection, the monodromy action of $\Delta := \pi_1(S,\ol{s})$ on $\Epi^\ext(\ol{\Pi},G)$ is given by $\rho_{E/S}$. In other words, $\cT_G|_{E/S}$ is represented by the finite \'{e}tale $S$-scheme $\cT_{G,E/S}$, and
$$\cT_G(E/S) \rightiso\{\varphi\in\Epi^\ext(\ol{\Pi},G)\;|\; \la{\delta}\varphi = \varphi\text{ for all $\delta\in\Delta$}\}$$
\end{prop}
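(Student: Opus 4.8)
The plan is to establish the bijection \eqref{eq_fiber} first at a geometric point and then upgrade it to a description of $\cT_{G,E/S}$ as a finite \'{e}tale $S$-scheme, using the homotopy exact sequence \eqref{eq_HES}. The key input is the Galois correspondence for schemes (Theorem \ref{thm_Galois}) applied to $E^\circ$: since $\cT_G^\pre(E/S)$ classifies $G$-torsors over $E^\circ$ with geometrically connected fibers over $S$, and the restriction of such a torsor to the geometric fiber $E^\circ_{\ol{s}}$ is a connected \'{e}tale $G$-torsor over $E^\circ_{\ol{s}}$, we get that the geometric fiber of $\cT_G^\pre$ over $\ol{s}$ is identified with $\Epi(\pi_1(E^\circ_{\ol{s}},g(\ol{s})),G)/\Inn(G)$. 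Because $|G|$ is invertible on $S$, a $G$-torsor over $E^\circ_{\ol{s}}$ factors through the maximal pro-$\bL$ quotient $\ol{\Pi} = \pi_1(E^\circ_{\ol{s}})^\bL$, so this set is exactly $\Epi^\ext(\ol{\Pi},G)$. First I would make this identification precise, noting that sheafification does not change geometric fibers (stalks of the \'{e}tale sheaf $\cT_G$ agree with those of the presheaf $\cT_G^\pre$ at geometric points, since the relevant fiber is already "as connected as possible").

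Next I would identify the monodromy action. The point is that $\cT_{G,E/S} = \cM(G)\times_{\cM(1)}S$ is a priori only a sheaf on $(\Sch/S)_\et$, but by the Galois correspondence a finite \'{e}tale $S$-scheme is the same as a finite continuous $\Delta$-set; so it suffices to check that the $\Delta = \pi_1(S,\ol{s})$-action on the geometric fiber $\Epi^\ext(\ol{\Pi},G)$ coincides with the action $\rho_{E/S}$ coming from \eqref{eq_HES}. Here is where Proposition \ref{prop_HES_exact} is essential: the exactness of $1\to\ol{\Pi}\to\Pi\to\Delta\to 1$ (valid since $G$ is nontrivial, forcing $\bL\neq\emptyset$) gives a well-defined outer action $\Delta\to\Out(\ol{\Pi})$, hence an action on $\Epi^\ext(\ol{\Pi},G)$. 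Concretely, a $G$-torsor over $E^\circ$ corresponds to a conjugacy class of surjections $\Pi\to G$; restricting to $\ol{\Pi}$ gives an element of $\Epi^\ext(\ol{\Pi},G)$, and the $\Delta$-action by "conjugation in $\Pi$" is precisely the monodromy action on fibers in the Galois correspondence. I would make this matching explicit by choosing a lift and tracking how $\gamma\in\Pi$ acts. This also simultaneously shows that $\cT_G(E/S)$ — the sections, i.e. the actual $G$-structures — is the $\Delta$-invariant part of $\Epi^\ext(\ol{\Pi},G)$, since a section of a finite \'{e}tale sheaf over connected $S$ is a $\Delta$-fixed point of the corresponding $\Delta$-set.

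Two auxiliary points need care. First, finiteness: $\Epi^\ext(\ol{\Pi},G)$ is finite because $\ol{\Pi}$ is topologically finitely generated (Proposition \ref{prop_geometric_pi1}), so there are only finitely many continuous homomorphisms $\ol{\Pi}\to G$; hence $\cT_{G,E/S}$ is finite \'{e}tale over $S$. Second, I should confirm that the representability of $\cT_G|_{E/S}$ by a scheme (not just a sheaf) follows from the Galois correspondence: a locally constant \'{e}tale sheaf with finite stalks on a connected scheme with a geometric point is represented by the finite \'{e}tale scheme corresponding to the associated finite $\pi_1(S,\ol{s})$-set — this is exactly Theorem \ref{thm_Galois}. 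Descent along the \'{e}tale topology on $\cM(1)$ then glues these local descriptions to show $\ff:\cM(G)\to\cM(1)$ is finite \'{e}tale, giving Theorem \ref{thm_fet}; smoothness over $\bS$ follows since $\cM(1)$ is smooth over $\bS$ and finite \'{e}tale maps preserve smoothness.

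I expect the main obstacle to be the bookkeeping in the second step: verifying that the two a priori different $\Delta$-actions on $\Epi^\ext(\ol{\Pi},G)$ — the monodromy action from viewing $\cT_{G,E/S}$ as a finite \'{e}tale $S$-scheme via Theorem \ref{thm_Galois}, and the action $\rho_{E/S}$ defined via the outer representation from \eqref{eq_HES} — literally agree, and doing so in a way that respects the chosen base points and the left-versus-right conventions for monodromy and Galois actions fixed in \S\ref{ss_subgroups}. This is conceptually routine (it is really the functoriality of the Galois correspondence applied to the map $E^\circ\to S$) but is the place where one must be scrupulous, and it is essentially the content of \cite[Proposition 2.1.4]{Chen18} and the surrounding discussion, which I would cite for the details not spelled out here.
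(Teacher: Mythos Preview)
Your outline is broadly correct and follows the same strategy as the paper (homotopy exact sequence plus Galois correspondence), but you have misidentified where the real work lies. You treat the main obstacle as matching the two $\Delta$-actions, and you assert that ``a locally constant \'{e}tale sheaf with finite stalks \ldots\ is represented by the finite \'{e}tale scheme corresponding to the associated finite $\pi_1(S,\ol{s})$-set''. That statement is fine, but you never establish that $\cT_G|_{E/S}$ \emph{is} locally constant. Equivalently, your stalk computation is incomplete: sheafification preserves stalks, yes, but the stalk of the presheaf $\cT_G^\pre$ at $\ol{s}$ is the colimit $\varinjlim_{(U,\ol{u})}\cT_G^\pre(E_U/U)$, and identifying this with $\Epi^\ext(\ol{\Pi},G)$ requires knowing that every $G$-torsor on $E^\circ_{\ol{s}}$ spreads out to some \'{e}tale neighborhood. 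This spreading-out is exactly the content of local constancy, so your argument is circular at that point.

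The paper closes this gap by a concrete maneuver you omit: first reduce to the case where $E^\circ/S$ admits a section $g$ (using the $p$-torsion trick, as in Proposition~\ref{prop_HES_exact}), so that the sequence \eqref{eq_HES} splits and a $G$-torsor on $E^\circ$ corresponds to a pair $(\varphi,\psi)$ with $\varphi\in\Epi(\ol{\Pi},G)$ and $\psi:\Delta\to G$ satisfying the equivariance relation $\varphi(\la{\delta}\gamma)=\psi(\delta)\varphi(\gamma)\psi(\delta)^{-1}$. Since $\ol{\Pi}$ is finitely generated, there is a finite \'{e}tale cover $S'\to S$ killing all the relevant $\psi$'s; over $S'$ the presheaf $\cT_G^\pre$ is the \emph{constant} sheaf on $\Epi^\ext(\ol{\Pi},G)$, hence coincides with its sheafification. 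Local constancy and the stalk identification then follow by Galois descent, and the monodromy comparison you worry about drops out of the equivariance relation. Your proposal would be correct once supplemented with this step; without it, the passage from ``finite stalks'' to ``finite \'{e}tale'' is unjustified.
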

\begin{proof} The main idea is to first reduce to the case where $E^\circ$ admits a section $g : S\ra E^\circ$, which can be done by the same trick as in \ref{prop_HES_exact}, using the existence of a prime $p$ invertible on $S$. In this case, the induced map $g_* : \Delta\ra\Pi$ splits the sequence \eqref{eq_HES}, and the outer representation $\rho_{E/S}$ is induced by the honest action of $\Delta$ on $\ol{\Pi}$ defined by $g_*$. By the Galois correspondence, $G$-torsors on $E^\circ$ correspond to $G$-conjugacy classes of homomorphisms $\rho : \Pi\lra G$, which is surjective if and only if the corresponding torsor is connected. Any such homomorphism is in turn determined by the pair $(\varphi := \rho|_{\ol{\Pi}}, \psi := \rho|_{g(\Delta)})$ satisfying
\begin{equation}\label{eq_equivariance}
\varphi(\la{\delta}\gamma) = \psi(\delta)\varphi(\gamma)\psi(\delta)^{-1}\qquad\text{for all $\gamma\in\ol{\Pi},\delta\in\Delta$}	
\end{equation}
Moreover, $\rho$ corresponds to a covering with geometrically connected $S$-fibers if and only if $\varphi := \rho|_{\ol{\Pi}}$ is surjective. Because $\ol{\Pi}$ is finitely generated, there is a finite \'{e}tale map $S'\ra S$ such that \emph{every} $G$-torsor on $E^\circ_{S'}$ with geometrically connected $S'$-fibers has a monodromy representation with $\psi$ the trivial homomorphism. This implies that $\cT_G^\pre|_{E_{S'}/S'}$ is the \emph{constant sheaf}, and hence agrees with $\cT_G|_{E_{S'}/S'}$. The structure of $\cT_{G,E/S}$ then follows from Galois descent. See \cite[\S2-3.1]{Chen18} for more details.	
\end{proof}

\subsubsection{Comparison with classical congruence level structures}\label{sss_comparison_with_congruence}
Recall from Proposition \ref{prop_geometric_pi1} that $\ol{\Pi}$ is the maximal pro-$\bL$ quotient of a profinite free group of rank 2. It's abelianization $\ol{\Pi}^\ab$ is thus the maximal pro-$\bL$ quotient of $\Zhat^2$, which is moreover equipped, via the exact sequence \eqref{eq_HES}, with an action of $\Delta = \pi_1(S,\ol{s})$. Via the Galois correspondence, $\ol{\Pi}^\ab$ can be identified, as a $\Delta$-module, with the fiber over $\ol{s}$ of the pro-object $\{E[n]\}_{n\ge 1}$ whose terms range over $n$-torsion subgroup schemes $E[n]\subset E$. When $S = \Spec K$ for a field $K$, $\ol{s}$ corresponds to an algebraic closure $\ol{K}$ of $K$, $\Delta$ is the absolute Galois group $\Gal(\ol{K}/K)$, and $\ol{\Pi}^\ab$ is just the pro-$\bL$ Tate module $\prod_{\ell\in\bL}T_\ell(E)$ of $E$ \cite[\S III.7]{Sil09}

If $G = (\bZ/n)^2$, then any $\Delta$-invariant surjection $\varphi : \ol{\Pi}\ra G$ factors uniquely through the maximal abelian $n$-torsion quotient $\ol{\Pi}^{\ab,n}$ of $\ol{\Pi}$, which is isomorphic to $(\bZ/n)^2$, but is equipped with the typically nontrivial $\Delta$-action coming from its structure as the fiber over $\ol{s}$ of the finite \'{e}tale group scheme $E[n]_S$. Thus $\varphi$ induces a $\Delta$-invariant isomorphism
$$\ol{\Pi}^{\ab,n}\rightiso G = (\bZ/n)^2$$
which corresponds, via the Galois correspondence, to an isomorphism of group schemes $E[n]_S\cong(\bZ/n)^2_S$. Taking the preimages of the canonical basis of $(\bZ/n)^2_S$ gives a basis of $E[n]_S$, which is exactly a classical ``full level $n$ structure'', associated to the principal congruence subgroup $\Gamma(n)\subset\SL_2(\bZ)$ \cite[\S3.1]{KM85}. Over $\bZ[1/n]$, the moduli stack
$$\cM(n) := \cM((\bZ/n)^2)$$
is finite \'{e}tale over $\cM(1)$, and splits into $\phi(n)$ isomorphic components over $\bZ[1/n,\zeta_n]$, classified by the Weil pairing; it is a scheme if and only if $n\ge 3$ \cite[\S4.7]{KM85}.

If $G = \bZ/n$, then again a $\Delta$-invariant surjection $\varphi : \ol{\Pi}\ra G$ factors uniquely through a surjection
$$\ol{\Pi}^{\ab,n}\lra G = \bZ/n$$
corresponding to a surjection of finite \'{e}tale group schemes
$$E[n]_S\lra (\bZ/n)_S$$
Taking Cartier duals and using autoduality of $E[n]_S$, one obtains an injection $\mu_{n,S}\hookrightarrow E[n]_S$, which is not quite the same as a $\Gamma_1(n)$-structure \cite[\S3.2]{KM85}. To realize $\Gamma_1(n)$-structures in our setting, one should instead consider $G$-structures where $G$ is the finite \'{e}tale $S$-group scheme $\mu_n$. The discussion above remains valid, with the appropriate adjustments; in particular, in Proposition \ref{prop_local_structure_of_TG}, one should replace the $\Delta$-invariance condition in the description of $\cT_G(E/S)$ with $\Delta$-equivariance. Having done this, one finds that $\mu_{n,S}$-structures are equivalent to classical $\Gamma_1(n)$-structures. See \cite[Proposition 2.2.12]{Chen18} for more details. The moduli stack $\cM(\bZ/n)$ is a scheme if and only if $n\ge 4$ \cite[Cor 2.7.3]{KM85}.

\subsection{$G$-structures via smooth admissible $G$-covers}\label{ss_AdmG}
In this section we describe the moduli stack of admissible $G$-covers \cite{ACV03}, and relate it to $\cM(G)$. We first discuss the theory for smooth curves. The nodal case will be treated in \S\ref{ss_compactification}.


\begin{defn} Let $(E/S,O)$ be an elliptic curve. An admissible $G$-cover of $(E/S,O)$ is a finite map $\pi : C\ra E$ equipped with an action of $G$ leaving $\pi$ invariant, such that:
\begin{enumerate}
\item\label{part_gcf} $C\ra S$ is a smooth proper curve with geometrically connected fibers,
\item $\pi$ induces an isomorphism $C/G\rightiso E$, and
\item\label{part_G_torsor} $\pi$ is \'{e}tale over $E^\circ := E - O$.
\end{enumerate}	
\end{defn}


Thus, for any geometric point $\ol{s}\in S$, the map $\pi_\ol{s} : C_\ol{s}\ra E_\ol{s}$ is an admissible $G$-cover of $(E_\ol{s},O)$ in the sense of \S\ref{ss_admissible_covers_over_k}. The \'{e}tale local description of families of admissible covers is the same as a constant family, see \cite[Proposition 6.1.4(a)]{Chen21}.


\begin{defn}\label{def_admG} Let $\cAdm^0(G)$ denote the moduli stack of admissible $G$-covers of elliptic curves: The objects are admissible $G$-covers of elliptic curves, and morphisms are diagrams
\[\begin{tikzcd}
	C'\ar[r]\ar[d] & C\ar[d] \\
	E'\ar[r]\ar[d] & E\ar[d] \\
	S'\ar[r] & S
\end{tikzcd}\]
where each square is cartesian. In particular, to give a map $S\ra\cAdm^0(G)$ is exactly to give an admissible $G$-cover $C\ra E\ra S$. We note that each of the three horizontal arrows in the diagram is included in the data of the morphism.\footnote{See \cite[\S4.1, \S4.3]{ACV03} for a general definition, including the non-equivariant and nonsmooth cases.}
\end{defn}

Forgetting the cover yields a map
$$\ff : \cAdm^0(G)\lra\cM(1)\quad\text{sending}\quad (C\ra E)\mapsto E$$
\begin{thm} The forgetful map $\ff$ is proper, \'{e}tale, and quasi-finite. In particular, $\cAdm^0(G)$ is a smooth Deligne-Mumford stack of relative dimension 1 over $\bZ[1/|G|]$. It is surjective if and only if $G$ is 2-generated.
\end{thm}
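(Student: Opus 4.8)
The plan is to factor $\ff$ through the finite \'etale forgetful map $\ff_{\cM(G)} : \cM(G)\ra\cM(1)$ of Theorem \ref{thm_fet} and reduce every assertion to a study of the resulting morphism into $\cM(G)$. First I would construct a morphism $q : \cAdm^0(G)\lra\cM(G)$: given an admissible $G$-cover $\pi : C\ra E/S$, its restriction $\pi^{-1}(E^\circ)\ra E^\circ$ is a $G$-torsor with geometrically connected fibres over $S$ (removing finitely many points from a smooth connected curve leaves it connected), so it defines a section of $\cT_G^\pre$, hence of $\cT_G$, i.e.\ a $G$-structure on $E/S$; this is functorial in $S$ and compatible with base change. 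By construction $\ff = \ff_{\cM(G)}\circ q$, and since finite \'etale morphisms are proper, quasi-finite and \'etale, it suffices to prove that $q$ has these three properties.

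Then I would show that $q$ is a gerbe banded by the constant finite group scheme $\underline{Z(G)}$ (note $|Z(G)|$ divides $|G|$, hence is invertible on $\bS$). Two inputs are needed. \emph{(i) Local lifting and reconstruction:} by definition of the sheafification $\cT_G$, a $G$-structure on $E/S$ is, \'etale-locally on $S$, represented by an honest $G$-torsor $X\ra E^\circ_S$ with geometrically connected fibres; the relative normalization of $E$ in the function field of $X$ is a finite map $C\ra E$ restricting to $X$ over $E^\circ$, and the tameness hypothesis \eqref{eq_tame}, via purity of the branch locus / Abhyankar's lemma applied along the smooth relative divisor $O\subset E$, guarantees that $C\ra S$ is again a smooth proper curve with geometrically connected fibres, i.e.\ an admissible $G$-cover lifting the given $G$-structure; moreover any admissible $G$-cover inducing the same $G$-structure is canonically isomorphic to this $C$, since an admissible $G$-cover is recovered as the relative normalization of $E$ inside its own restriction to $E^\circ$ (cf.\ \S\ref{ss_admissible_covers_over_k} and \cite[Prop.\ 6.1.4(a)]{Chen21}). \emph{(ii) Automorphisms:} by Proposition \ref{prop_automorphism_for_smooth_covers}, the automorphisms of an admissible $G$-cover lying over $\id_E$ form exactly $Z(G)$. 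Together, (i) and (ii) say that the fibre of $q$ over any object of $\cM(G)$ is \'etale-locally the classifying stack $B\underline{Z(G)}$, so $q$ is a $\underline{Z(G)}$-gerbe. Such a gerbe is \'etale-locally the map $B\underline{Z(G)}\ra(\text{base})$, which is proper with finite geometric fibres and \'etale (it is smooth of relative dimension $0$; equivalently the torsor $(\text{base})\ra B\underline{Z(G)}$ is finite \'etale surjective and composes with $B\underline{Z(G)}\ra(\text{base})$ to the identity, so the latter is \'etale by descent). Composing with $\ff_{\cM(G)}$, we conclude that $\ff$ is proper, quasi-finite and \'etale. Since $\cM(1)$ is a smooth Deligne--Mumford stack of relative dimension $1$ over $\bS=\Spec\bZ[1/|G|]$, and \'etale morphisms preserve this property, $\cAdm^0(G)$ is one too.

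For the last statement: $\ff$ is \'etale, hence open, and proper, hence closed, so its image is a clopen substack of $\cM(1)$. As $\cM(1)$ is connected (its geometric fibres over the connected scheme $\Spec\bZ[1/|G|]$ are connected), $\ff$ is surjective if and only if $\cAdm^0(G)$ is nonempty. Since the gerbe $q$ is an epimorphism, $\cAdm^0(G)$ is nonempty if and only if $\cM(G)$ is, which by Corollary \ref{cor_nonempty} holds if and only if $G$ is generated by two elements.

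I expect the main obstacle to be the precise verification in point (i) that the relative normalization of a tamely ramified $G$-torsor on $E^\circ$ extends to an \emph{admissible} $G$-cover, i.e.\ that the total space stays smooth and proper with geometrically connected fibres over the arbitrary base $S$ rather than merely over a geometric point: this is exactly where invertibility of $|G|$ is essential, and where one must invoke purity of the branch locus in the relative setting. One can instead cite the \'etale-local structure of families of admissible covers directly, as in \cite[Prop.\ 6.1.4(a)]{Chen21}, which packages precisely this input.
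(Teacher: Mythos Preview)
Your argument is correct but takes a genuinely different route from the paper. The paper proves each property of $\ff$ directly by citation: properness from \cite[Corollary 3.0.5]{ACV03}, \'etaleness via deformation theory as in \cite[Theorem 5.1.5]{BR11} and \cite[Proposition 2.5.3]{Chen21}, and quasi-finiteness as a formal consequence of proper plus \'etale. You instead factor $\ff$ as $\ff_{\cM(G)}\circ q$ and reduce everything to showing that $q$ is a $\underline{Z(G)}$-gerbe, which the paper only establishes \emph{after} this theorem (in the subsequent result identifying $\cAdm^0(G)\fs Z(G)\cong\cM(G)$). Your approach has the virtue of being internal to the paper's framework and of making the relationship between $\cAdm^0(G)$ and $\cM(G)$ transparent from the outset; the paper's approach is more modular and leans on the existing Hurwitz-stack literature. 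Note, however, that the substantive content has not really moved: your step (i), that the relative normalization of a tame $G$-torsor on $E^\circ$ yields a family of admissible covers smooth over $S$, is precisely where the deformation-theoretic input of \cite{BR11} or the \'etale-local structure result \cite[Prop.\ 6.1.4(a)]{Chen21} enters, as you yourself flag. So the two proofs ultimately rest on the same hard fact, packaged differently.
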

\begin{proof} By Corollary \ref{cor_nonempty}, $\cAdm^0(G)$ is nonempty if and only if $G$ is 2-generated. The properness of $\ff$ is \cite[Corollary 3.0.5]{ACV03}. The \'{e}taleness of $\ff$ can be proved via deformation theory, see \cite[Theorem 5.1.5]{BR11} and \cite[Proposition 2.5.3]{Chen21}. Quasi-finiteness is a consequence of $\ff$ being proper and \'{e}tale.
\end{proof}

While a map of schemes is proper and \'{e}tale if and only if it is finite \'{e}tale, for morphisms of stacks, the definition of ``finite'' requires that the map be \emph{representable} \cite[0CHT]{stacks}, which implies that it should induce injections on automorphism groups. If $G$ has nontrivial center, then any nontrivial $g\in Z(G)$ is a nontrivial automorphism of $\pi$, as an object of $\cAdm^0(G)$, which maps to the identity automorphism of $E$ in $\cM(1)$. Thus, if $G$ has nontrivial center, the morphism $\ff$ is not representable, and hence not finite. This non-representability implies that if $\ol{x} : \Spec\Omega\ra\cM(1)$ is a geometric point, then the fiber $\cAdm^0(G)_{\ol{x}}$ is not just a finite set, but a stack (albeit with a finite topological space). This makes it more complicated to set up a Galois correspondence, and hence more complicated to understand the relation between $\cAdm^0(G)$ and noncongruence modular stacks.

For smooth admissible $G$-covers, the only automorphisms come from the center, and hence we can resolve the nonrepresentability of $\ff$ by considering all morphisms in $\cAdm^0(G)$ as being ``up to $Z(G)$''. This process produces a``quotient'' $\cAdm^0(G)\ra \cAdm^0(G)\fs Z(G)$. The general procedure is described in \cite[\S5]{ACV03} and \cite[\S5]{Rom05}, where it is called ``rigidification''. There is a map $\cAdm^0(G)\ra\cM(G)$ sending the admissible cover $\pi : C\ra E$ to the isomorphism class of the $G$-torsor of $E^\circ$ obtained by restriction. In \cite[\S2.5.2]{Chen21} we show:

\begin{thm} The forgetful map $\ff : \cAdm^0(G)\ra\cM(G)$ factors as
$$\cAdm^0(G)\lra \cAdm^0(G)\fs Z(G)\rightiso\cM(G)$$
where the first map is a homeomorphism inducing an isomorphism on coarse schemes.
\end{thm}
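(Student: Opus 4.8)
The plan is to realize $\ff$ as the rigidification map $q\colon \cAdm^0(G)\to\cAdm^0(G)\fs Z(G)$ followed by an isomorphism $\ol r\colon\cAdm^0(G)\fs Z(G)\rightiso\cM(G)$, and to read off the topological assertions about $q$ from the general theory of rigidification. First I would record that, by Proposition~\ref{prop_automorphism_for_smooth_covers}, every admissible $G$-cover $\pi\colon C\to E$ has $Z(G)$ sitting canonically inside its automorphism group, with $g\in Z(G)$ acting as the deck transformation of $C$ determined by $g$. Since the $G$-action on $C$ is normalized by every morphism of $G$-covers, this copy of $Z(G)$ is \emph{central} in the inertia; it is plainly compatible with base change; and, as $|G|$ is invertible on $\bS$, it is an \emph{\'{e}tale} subgroup scheme $Z(G)\hookrightarrow I_{\cAdm^0(G)}$. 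The rigidification construction \cite[\S5]{ACV03},~\cite[\S5]{Rom05} then produces a Deligne--Mumford stack $\cAdm^0(G)\fs Z(G)$ and a morphism $q$ which is an \'{e}tale $Z(G)$-gerbe: it is proper and quasi-finite, it is a homeomorphism on underlying topological spaces, it induces an isomorphism on coarse moduli schemes, and on automorphism groups it is the quotient $\Aut(\pi)\twoheadrightarrow\Aut(\pi)/Z(G)$. This already yields all the stated properties of the ``first map''.

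Next I would construct $\ol r$. The restriction morphism $r\colon\cAdm^0(G)\to\cM(G)$ sends a family $C\to E/S$ to the pair $(E/S,\alpha_\pi)$, where $\alpha_\pi\in\cT_G(E/S)$ is the image under $\cT_G^\pre\to\cT_G$ of the isomorphism class of the $G$-torsor $\pi^{-1}(E^\circ)\to E^\circ$ (which has geometrically connected fibres because $C/S$ does); $r$ is functorial and lies over $\cM(1)$. As $Z(G)$ acts trivially on $E$, it acts trivially on $(E,\alpha_\pi)$, so $r$ kills the chosen central subgroup of the inertia, and by the universal property of rigidification it factors uniquely as $q$ followed by a morphism $\ol r\colon\cAdm^0(G)\fs Z(G)\to\cM(G)$ over $\cM(1)$. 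The kernel of $\Aut_{\cAdm^0(G)}(\pi)\to\Aut_{\cM(1)}(E)$ is exactly $Z(G)$ by Proposition~\ref{prop_automorphism_for_smooth_covers}, and $\ff\colon\cM(G)\to\cM(1)$ is representable (Theorem~\ref{thm_fet}); it follows that $\ol r$ is injective on automorphism groups, hence representable, and therefore so is $\cAdm^0(G)\fs Z(G)\to\cM(1)$. Inheriting properness, quasi-finiteness and \'{e}taleness from $\cAdm^0(G)\to\cM(1)$ through the \'{e}tale surjection $q$, this map is representable finite \'{e}tale, exactly like $\ff\colon\cM(G)\to\cM(1)$.

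It then remains to prove that $\ol r$ is an isomorphism. Since $\ol r$ is a morphism over $\cM(1)$ between two stacks representable finite \'{e}tale over $\cM(1)$, it is itself finite \'{e}tale, so it suffices to check that it is bijective on the geometric fibre over each geometric point $\ol x\colon\Spec\Omega\to\cM(1)$, corresponding to an elliptic curve $(E_\Omega,O)$ over an algebraically closed field with $|G|\in\Omega^\times$. By Proposition~\ref{prop_local_structure_of_TG} with $S=\Spec\Omega$ (so $\Delta=1$), the fibre of $\cM(G)$ over $\ol x$ is $\Epi^\ext(\ol{\Pi},G)$, where $\ol{\Pi}=\pi_1(E_\Omega^\circ)^{\bL}$. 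The fibre of $\cAdm^0(G)\fs Z(G)$ over $\ol x$ is the \emph{set} of isomorphism classes of admissible $G$-covers of $(E_\Omega,O)$ --- rigidification turns the fibre groupoid of $\cAdm^0(G)$, whose objects all have automorphism group $Z(G)$, into its set of isomorphism classes --- and by the chain of bijections in \S\ref{ss_admissible_covers_over_k} this set is again $\Epi^\ext(\ol{\Pi},G)$. Under both identifications $\ol r$ sends the class of a cover to the conjugacy class of its monodromy representation, hence is the identity; so $\ol r$ is bijective on geometric fibres and thus an isomorphism, and $\ff=\ol r\circ q$ is the asserted factorization.

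The routine ingredients here are the representability and finite-\'{e}taleness bookkeeping and the reduction ``bijective on geometric fibres $\Rightarrow$ isomorphism''. The point requiring real care is the interface between the three descriptions of the fibre: verifying that $Z(G)$ is a central \'{e}tale subgroup of $I_{\cAdm^0(G)}$ in the precise $2$-categorical sense demanded by the rigidification formalism (this is where Proposition~\ref{prop_automorphism_for_smooth_covers} must be applied functorially, not merely pointwise), and identifying the geometric fibre of $\cAdm^0(G)\fs Z(G)$ with $\Epi^\ext(\ol{\Pi},G)$ while correctly accounting for the sheafification $\cT_G^\pre\to\cT_G$ built into the definition of $\cM(G)$.
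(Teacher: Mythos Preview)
Your proposal is correct and follows the natural line of argument. The paper itself does not give a proof of this theorem in the text; it simply records the statement with the parenthetical ``In \cite[\S2.5.2]{Chen21} we show'', deferring all details to that reference. Your outline---verifying via Proposition~\ref{prop_automorphism_for_smooth_covers} that $Z(G)$ sits centrally in the inertia, invoking the rigidification formalism of \cite[\S5]{ACV03} and \cite[\S5]{Rom05} to obtain the gerbe $q$, factoring $r$ through $q$ by the universal property, and then checking $\ol r$ is an isomorphism by comparing geometric fibres to $\Epi^{\ext}(\ol\Pi,G)$ on both sides---is exactly the argument one finds upon unpacking that citation, and your identification of the delicate point (the $2$-categorical verification that $Z(G)\hookrightarrow I_{\cAdm^0(G)}$ satisfies the hypotheses of rigidification, and the compatibility with the sheafification defining $\cT_G$) is apt.
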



\subsection{Functoriality}\label{ss_functoriality}
If $E/S$ is an elliptic curve and $C\ra E$ an admissible $G$-cover, then for any surjection $q : G\twoheadrightarrow H$, $C^\circ/\ker(q)$ is an $H$-torsor of $E^\circ$. This defines a morphism of sheaves $\cT_G\ra\cT_H$, whence a morphism of stacks finite \'{e}tale over $\cM(1)$
$$\cM(q) : \cM(G)\lra\cM(H)$$
At the level of geometric fibers above a fixed elliptic curve $E_0/\bC$ with fundamental group $\Pi := \pi_1(E^\circ,x_0)$, this corresponds to the $\pi_1(\cM(1),E_0)$-equivariant map
\begin{eqnarray*}
q_* : \Epi^\ext(\Pi,G) & \lra & \Epi^\ext(\Pi,H) \\	
\varphi & \mapsto & q\circ\varphi
\end{eqnarray*}
where the monodromy action of $\pi_1(\cM(1),E_0)$ on these sets is through its action on $\Pi$ (Proposition \ref{prop_local_structure_of_TG}). A group theoretic lemma of Gasch\"{u}tz \cite[Proposition 2.5.4]{RZ10} implies that $q_*$ is surjective. Via the Galois correspondence, this has the following geometric consequence:

\begin{prop}[Gasch\"{u}tz] For any surjection of groups $q : G\ra H$, the map $\cM(q) : \cM(G)\lra \cM(H)$ is \emph{surjective}. Two surjections $q,q' : G\ra H$ induce the same map $\cM(G)\ra\cM(H)$ if and only if $q,q'$ differ by conjugation.
\end{prop}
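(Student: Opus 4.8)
The plan is to translate the proposition, via the Galois correspondence (Theorem~\ref{thm_Galois}) together with the explicit \'{e}tale-local description of $\ff : \cM(G)\ra\cM(1)$ from Proposition~\ref{prop_local_structure_of_TG}, into two purely group-theoretic facts about the map
$$q_\ast:\Epi^\ext(\Pi,G)\lra\Epi^\ext(\Pi,H),\qquad [\varphi]\mapsto[q\circ\varphi],$$
where $\Pi=\pi_1(E^\circ,x_0)$ for a fixed elliptic curve $E_0$ over $\bC$; recall from Proposition~\ref{prop_geometric_pi1} that the finite quotients of $\Pi$ factor through a free pro-$\bL$ group of rank $2$. Throughout I would assume $G$ is generated by two elements, for otherwise $\cM(G)$ is empty (Corollary~\ref{cor_nonempty}) and both assertions are vacuous; note that $H$, being a quotient of $G$, is then also $2$-generated. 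By Proposition~\ref{prop_local_structure_of_TG} the restriction of $\cM(q)$ to the geometric fibre of $\cM(1)$ over any point is $\pi_1(\cM(1),E_0)$-equivariantly identified with $q_\ast$, and since the Galois correspondence is an equivalence of categories, $\cM(q)$ and $\cM(q')$ coincide as morphisms over $\cM(1)$ if and only if $q_\ast=q'_\ast$.

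For surjectivity I would first note that $\cM(q)$ is a morphism between stacks finite \'{e}tale over $\cM(1)$ (Theorem~\ref{thm_fet}), hence is itself finite \'{e}tale; thus its image is open (\'{e}taleness) and closed (properness), so it is a union of connected components of $\cM(H)$. Because $\cM(H)\ra\cM(1)$ is surjective (Corollary~\ref{cor_nonempty}), every component of $\cM(H)$ meets the geometric fibre over a fixed point of $\cM(1)$, so it suffices to check that $\cM(q)$ is surjective on that fibre, i.e. that $q_\ast$ is surjective. This is precisely the content of Gasch\"{u}tz's lemma \cite[Proposition~2.5.4]{RZ10}: given $\psi\in\Epi(\Pi,H)$, choose topological generators $x_1,x_2$ of $\Pi$; then $(\psi(x_1),\psi(x_2))$ generates $H$, and since $G$ is $2$-generated the lemma yields a generating pair $(g_1,g_2)$ of $G$ with $q(g_i)=\psi(x_i)$. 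Sending $x_i\mapsto g_i$ defines $\varphi\in\Epi(\Pi,G)$ with $q\circ\varphi=\psi$, so $q_\ast[\varphi]=[\psi]$, and $q_\ast$ (hence $\cM(q)$) is surjective.

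For the equality criterion I would argue as follows, writing $c_h$ for conjugation by $h$. If $q'=c_h\circ q$ for some $h\in H$ --- equivalently $q'=q\circ c_g$ for some $g\in G$, since $q$ is surjective --- then for every $\varphi$ the homomorphism $q'\circ\varphi=c_h\circ(q\circ\varphi)$ is $\Inn(H)$-conjugate to $q\circ\varphi$, so $q'_\ast=q_\ast$ and hence $\cM(q)=\cM(q')$. Conversely, suppose $q_\ast=q'_\ast$ and fix any $\varphi\in\Epi(\Pi,G)$, which exists as $G$ is $2$-generated. Then $[q\circ\varphi]=[q'\circ\varphi]$ in $\Epi^\ext(\Pi,H)$, so $q'\circ\varphi=c_h\circ q\circ\varphi$ for some $h\in H$; cancelling the epimorphism $\varphi$ gives $q'=c_h\circ q$, so $q$ and $q'$ differ by conjugation.

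Every step is short once the translation is set up, and the only substantive external input is Gasch\"{u}tz's lemma. I expect the point requiring the most care to be the reductions in the first two paragraphs: verifying that $\cM(q)$ is finite \'{e}tale so that its image is a union of components of $\cM(H)$, that surjectivity of $\cM(H)\ra\cM(1)$ ensures no component is missed when one restricts to a single geometric fibre, and that Proposition~\ref{prop_local_structure_of_TG} correctly identifies that fibre map with $q_\ast$ --- but I do not anticipate any genuine difficulty beyond this bookkeeping.
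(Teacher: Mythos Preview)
Your proposal is correct and follows essentially the same approach as the paper: translate $\cM(q)$ to the fibrewise map $q_\ast$ via Proposition~\ref{prop_local_structure_of_TG} and the Galois correspondence, then invoke Gasch\"{u}tz's lemma \cite[Proposition~2.5.4]{RZ10} for surjectivity of $q_\ast$. Your open-and-closed argument for passing from fibrewise to global surjectivity, and your cancellation argument for the equality criterion, spell out details the paper leaves implicit, but the underlying strategy is the same.
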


\begin{cor} Let $\cC$ denote the category whose objects are finite 2-generated groups and whose morphisms are surjections up to conjugation. Then $\cM(*) : G\mapsto \cM(G)_\bQ$ defines a faithful and epimorphism-preserving functor from $\cC$ to the category of stacks finite \'{e}tale over $\cM(1)_\bQ$.	
\end{cor}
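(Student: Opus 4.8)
The plan is to check three things: that $\cM(*)$ is a well-defined functor into the category $\cD$ of stacks finite \'{e}tale over $\cM(1)_\bQ$, that it is faithful, and that it carries epimorphisms to epimorphisms. \textbf{Functoriality.} On objects, $G\mapsto\cM(G)_\bQ$, which is finite \'{e}tale over $\cM(1)_\bQ$ by Theorem~\ref{thm_fet}; on morphisms, a surjection $q\colon G\twoheadrightarrow H$ goes to $\cM(q)$, the morphism induced by the map of sheaves $\cT_G\to\cT_H$ that quotients a $G$-torsor $X\to E^\circ$ by $\ker q$. First I would note that this depends only on the class of $q$ in $\cC$: replacing $q$ by $c_h\circ q$ leaves $\ker q$ unchanged and only twists the $H$-action on $X/\ker q$ by an inner automorphism, giving a canonically isomorphic $H$-torsor (this is already recorded in the Gasch\"{u}tz proposition above). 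For composability, given $G\xrightarrow{\,q\,}H\xrightarrow{\,q'\,}K$ the second isomorphism theorem furnishes, functorially in $X$, a canonical identification $(X/\ker q)/\ker\overline{q'}\cong X/\ker(q'\circ q)$, where $\overline{q'}$ is $q'$ transported through $G/\ker q\cong H$; hence $\cM(q')\circ\cM(q)\cong\cM(q'\circ q)$ and $\cM(\id_G)=\id_{\cM(G)_\bQ}$. Finally, any morphism over $\cM(1)_\bQ$ between two objects of $\cD$ is itself finite \'{e}tale, so $\cM(*)$ does land in $\cD$.

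\textbf{Faithfulness.} Suppose $q,q'\colon G\twoheadrightarrow H$ induce $2$-isomorphic maps $\cM(G)_\bQ\to\cM(H)_\bQ$. Restricting to the geometric fibre over some $\overline{x}\colon\Spec\overline{\bQ}\to\cM(1)_\bQ$ and applying Proposition~\ref{prop_local_structure_of_TG}, which identifies this fibre with $\Epi^\ext(\ol\Pi,G)$ and realizes $\cM(q)$ on fibres (functorially in $G$) as $\varphi\mapsto q\circ\varphi$, we find that $q\circ\varphi$ and $q'\circ\varphi$ are $H$-conjugate for every surjection $\varphi\colon\ol\Pi\twoheadrightarrow G$; such $\varphi$ exist since $\ol\Pi$ is pro-$\bL$ free of rank $2$ by Proposition~\ref{prop_geometric_pi1} and $G$ is $2$-generated. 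Fixing one such $\varphi$, surjectivity forces $q=c_h\circ q'$ for some $h\in H$, i.e. $q=q'$ in $\cC$. This recovers the implication ``$\cM(q)=\cM(q')\Rightarrow q\sim q'$'' of the Gasch\"{u}tz proposition.

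\textbf{Epimorphisms.} Every morphism of $\cC$ is an epimorphism: if $q$ is surjective and $f\circ q$ is $K$-conjugate to $g\circ q$, then by surjectivity of $q$ the maps $f$ and $g$ are conjugate. So it suffices to show that each $\cM(q)$ is an epimorphism in $\cD$. By the Gasch\"{u}tz proposition $\cM(q)$ is \emph{surjective}. Fixing a geometric point $\overline{x}$ of the connected stack $\cM(1)_\bQ$, the Galois correspondence (Theorem~\ref{thm_Galois}) identifies $\cD$ with the category of finite continuous $\pi_1(\cM(1)_\bQ,\overline{x})$-sets and sends $\cM(q)$ to the surjection $q_*\colon\Epi^\ext(\ol\Pi,G)\to\Epi^\ext(\ol\Pi,H)$; a surjection of $\pi_1$-sets is an epimorphism since it is already one on underlying sets, and equivalences of categories preserve epimorphisms. (Alternatively: $\cM(q)$ is finite \'{e}tale and surjective, hence faithfully flat and quasi-compact, hence an epimorphism of stacks, which persists in the full subcategory $\cD$.) Essentially all of this is immediate from the Gasch\"{u}tz proposition and the Galois correspondence; the only step requiring genuine (if routine) care is the compatibility bookkeeping in the functoriality claim, which is where I would expect the bulk of the work to lie, though no real obstacle arises.
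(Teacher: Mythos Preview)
Your proof is correct and follows the same approach as the paper, which presents this corollary without proof as an immediate consequence of the Gasch\"{u}tz proposition: faithfulness is exactly the ``only if'' direction of that proposition, and epimorphism-preservation follows from surjectivity of $\cM(q)$ via the Galois correspondence, just as you argue. You have simply spelled out the routine functoriality bookkeeping and the passage from surjectivity to epimorphism that the paper leaves implicit.
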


An immediate consequence of this is the following. Recall from \S\ref{sss_comparison_with_congruence} that the moduli stacks $\cM(n)_\bC := \cM((\bZ/n)^2)_\bC$ are disjoint unions of the congruence modular stacks $[\cH/\Gamma(n)]$. Since every finite 2-generated abelian group is a quotient of $(\bZ/n)^2$ for some $n$, we find that

\begin{prop}[Abelian groups are congruence]\label{prop_abelian_congruence} Let $G$ be a finite 2-generated abelian group which is killed by $n$. Then we have a surjection $(\bZ/n)^2\ra G$, inducing a surjection $\cM(n)\ra\cM(G)$. In particular, every component of $\cM(G)_\bC$ is isomorphic to $[\cH/\Gamma]$ for some congruence subgroup $\Gamma$ of level $n$.	
\end{prop}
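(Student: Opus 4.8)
The plan is to combine the classification of finite abelian groups with the functoriality established above (the Gasch\"{u}tz proposition and its corollary) and with the Galois correspondence for topological stacks (Theorem~\ref{noohi_topological_galois} and the consequences listed after it).

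First I would produce the surjection $(\bZ/n)^2\twoheadrightarrow G$. Since $G$ is finite abelian and $2$-generated, the structure theorem gives $G\cong\bZ/d_1\times\bZ/d_2$ with $d_1\mid d_2$, and the hypothesis that $G$ is killed by $n$ forces $d_2\mid n$; then reduction modulo $d_i$ in each factor defines a surjection $q:(\bZ/n)^2\to G$ (taking $d_1=1$ in the cyclic case). By the Gasch\"{u}tz proposition, the induced map $\cM(q):\cM(n)=\cM((\bZ/n)^2)\to\cM(G)$ is surjective, and by Theorem~\ref{thm_fet} both stacks are finite \'{e}tale over $\cM(1)$, so $\cM(q)$ is a morphism of finite \'{e}tale covers of $\cM(1)$.

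Next I would pass to $\bC$ and argue componentwise. By \S\ref{sss_comparison_with_congruence} (and the sentence preceding the proposition), $\cM(n)_\bC$ is a disjoint union of copies of $[\cH/\Gamma(n)]$, each a connected finite cover of $\cM(1)_\bC\cong[\cH/\SL_2(\bZ)]$. Let $\cW$ be an arbitrary connected component of $\cM(G)_\bC$. Pick a point of $\cW$; by surjectivity of $\cM(q)_\bC$ on points it lies in the image of some component $\cZ\cong[\cH/\Gamma(n)]$ of $\cM(n)_\bC$, and since $\cZ$ is connected its entire image lies in a single component of $\cM(G)_\bC$, necessarily $\cW$. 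Thus $\cM(q)_\bC$ restricts to a morphism $\cZ\to\cW$ of connected finite covers of $\cM(1)_\bC$ (the triangle over $\cM(1)_\bC$ commutes because $\cM(q)$ is a morphism over $\cM(1)$). Under the Galois correspondence this is a morphism of transitive $\SL_2(\bZ)$-sets $\SL_2(\bZ)/\Gamma(n)\to\SL_2(\bZ)/\Gamma$, where $\cW\cong[\cH/\Gamma]$; any such morphism is automatically surjective and, after conjugating, corresponds to an inclusion $\Gamma(n)\subseteq\Gamma$ of the point stabilizers. Since $\Gamma(n)$ is normal in $\SL_2(\bZ)$ the conjugation is harmless, so $\Gamma(n)\subseteq\Gamma$, i.e. $\Gamma$ is congruence of level $n$ and $\cW\cong[\cH/\Gamma]$, as claimed.

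The argument is essentially bookkeeping once the functoriality and the description of $\cM(n)_\bC$ are in hand, so there is no serious obstacle; the one point deserving a little care is the dictionary ``morphism of connected finite covers of $\cM(1)_\bC$ $\longleftrightarrow$ containment (up to conjugacy) of the associated finite-index subgroups of $\SL_2(\bZ)$'', together with the observation that a morphism of transitive $\SL_2(\bZ)$-sets is automatically surjective, which is what lets us upgrade the pointwise surjectivity of $\cM(q)_\bC$ to the statement that \emph{each} component of $\cM(G)_\bC$ is the image of a single component $[\cH/\Gamma(n)]$ of $\cM(n)_\bC$.
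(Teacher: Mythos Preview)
Your argument is correct and is exactly the approach the paper has in mind: the proposition is stated as an ``immediate consequence'' of the functoriality (Gasch\"{u}tz) and the identification of $\cM(n)_\bC$ with copies of $[\cH/\Gamma(n)]$, and you have simply written out those steps carefully. The only detail you add beyond what the paper sketches is the explicit translation via the Galois correspondence from ``$[\cH/\Gamma(n)]$ surjects onto $\cW$ over $\cM(1)_\bC$'' to ``$\Gamma(n)\subseteq\Gamma$ up to conjugacy'', which is routine.
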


\subsubsection{Moduli interpretation of the quotient $\cM(G)^\abs := \cM(G)/\Out(G)$}\label{sss_absolute_moduli}
The free action of $\Out(G)$ on $\Epi^\ext(\Pi,G)$ defines a free action of $\Out(G)$ on $\cM(G)$, and the quotient $\cM(G)/\Out(G)$ is also finite \'{e}tale over $\cM(1)$. The geometric points of $\cM(G)/\Out(G)$ are in bijection with Galois covers of elliptic curves only branched above the origin whose Galois group is isomorphic to $G$, but without a particular choice of the isomorphism. Following \cite[\S1.2]{FV91}, we will write $\cM(G)^\abs := \cM(G)/\Out(G)$. An object of $\cM(G)^\abs$ is an elliptic curve equipped with an ``absolute'' $G$-structure.

\subsubsection{Remarks on functoriality}\label{sss_unfull}
The functor $G\mapsto\cM(G)$ is not full. For example, if $\D_6$ is the dihedral group of order 6, then there is an isomorphism $\cM(\bZ/2)\cong\cM(\D_6)$, but of course no surjection $\bZ/2\ra \D_6$ (see \S\ref{ss_dihedral}). Interestingly, for finite simple $G$, the functor is, empirically speaking, not too far from being full: The stacks $\cM(G)^\abs$ have a total of 860 components as $G$ ranges over the 36 smallest nonabelian finite simple groups (ending with the Janko group $J_1$ of order 175560). Of these, 719 components are primitive covers of $\cM(1)$, in the sense that they admit no nontrivial intermediate covers. We note that two of these components, coming from $\cM(\PSU_3(\bF_4))$ and $\cM(\PSU_3(\bF_5))$, have degree 1 over $\cM(1)$, see \S\ref{ss_burau}. Of these 719, each has monodromy group which is either alternating or symmetric.\footnote{Note that of these 36 groups, 17 are of type $\PSL_2(\bF_p)$, for which the monodromy groups are proven to be predominantly alternating or symmetric \cite[Theorem 1.11]{MP18}.} The remaining 141 non-primitive covers $\cM$ fall into one of three types:

\begin{enumerate}
	\item 25 of these admits a unique (nontrivial) intermediate cover. That cover is isomorphic to $\cM(\bZ/2)\cong[\cH/\Gamma_1(2)]$. These covers are components of $\cM(G)^\abs$, where $G$ is the alternating group $\fA_7,\fA_8$, the orthogonal group $\Or_5(\bF_3)$, or the Mathieu group $\M_{12}$.
	
This implies that from such $G$-structures, one can functorially produce a point of order 2. It would be interesting to explain such a construction geometrically.
	
	\item 115 of these components admits an involution $\alpha$ such that $\cM/\langle\alpha\rangle$ is the unique intermediate cover. Such $\cM$ are components of $\cM(G)^\abs$ where $G = \fA_7,\PSL_3(\bF_3),\PSU_3(\bF_3), \M_{11},\fA_8, \PSL_3(\bF_4), \Or_5(\bF_3)$, the Suzuki group $\Sz(8), \PSU_3(\bF_4), \M_{12}, \PSU_3(\bF_5)$, and the Janko group $\J_1$.

Let $E$ be an elliptic curve over $\bQ$ with \'{e}tale fundamental group $\pi_1(E^\circ_\Qbar)$, containing the topological fundamental group $\pi_1(E^\circ(\bC))$ as a dense subgroup. In these cases, the involution $\alpha$ corresponds to a permutation of $\Epi^\ext(\pi_1(E^\circ_\Qbar),G)$ which centralizes the permutation image of $\Out^+(\pi_1(E^\circ(\bC))$ (this image is the monodromy group of $\cM/\cM(1)$). In some, but not all, cases, one can check by computer that $\alpha$ comes from an element of $\Out^+(\pi_1(E^\circ(\bC))$ of determinant $-1$. In the other cases, one might guess that it comes from an element of $\Gal(\Qbar/\bQ)$ acting on $\pi_1(E^\circ_\Qbar)$ (see \S\ref{sss_arithmetic}). It would be interesting to explain this involution $\alpha$.
 
	\item The remaining cover $\cM$ is the only one to admit two nontrivial intermediate covers $\cM_1,\cM_2$. This cover $\cM$ is the unique component of $\cM(\PSU_3(\bF_5))^\abs$ of degree 40; the admissible covers it parametrizes have ramification index 5. The intermediate covers and their degrees are given as follows:
$$\cM(\PSU_3(\bF_5))^\abs\supset\cM\stackrel{2}{\lra}\cM_1\stackrel{2}{\lra}\cM_2\stackrel{10}{\lra}\cM(1)$$
The monodromy group of $\cM/\cM_2$ is the dihedral group $\D_8$ (of order 8).
\end{enumerate}

\subsection{Compactifying $\cM(G)$ via admissible $G$-covers}\label{ss_compactification}

In this paper we will mainly focus on the stacks $\cAdm^0(G)$ and $\cM(G)$. However for the sake of completeness, in this section we briefly describe how the moduli stacks $\cAdm^0(G),\cM(G)$ can be compactified by considering admissible covers of stable curves. We refer to \cite[\S2]{Chen21}, \cite[\S4]{ACV03}, \cite[\S4]{BR11} for more details.

\begin{defn}[Stable curves] A prestable curve is a flat proper and finitely presented morphism $f : C\ra S$ whose geometric fibers are connected schemes of pure dimension 1 whose only singularities are ordinary double points. The arithmetic genus of its fibers is locally constant on $S$; if it is constant with value $g$, we say that $C/S$ has genus $g$. A prestable $n$-pointed curve is a prestable curve equipped with $n$ disjoint sections $\sigma_1,\ldots,\sigma_n : S\ra C$ whose images are contained in the smooth locus of $f$. The sections $\sigma_1,\ldots,\sigma_n$ are also called \emph{markings}.

A prestable $n$-pointed curve $(C/S,\{\sigma_i\}_{i=1,\ldots,n})$ is \emph{stable} if for every geometric point $\ol{s} : \Spec k\ra S$, the geometric fiber $C_{\ol{s}}$ satisfies any of the following equivalent conditions \cite[\S5, Lemma 1.2.1]{Manin99}
\begin{itemize}
\item $C_\ol{s}$ has only finitely many $k$-automorphisms which fix the sections $\sigma_1,\ldots,\sigma_n$.
\item For any irreducible component $Z\subset C_\ol{s}$ with normalization $Z'$, if $Z'$ has genus 0, then it must contain at least three special points, and if $Z'$ has genus 1, then it must contain at least one special point. Here a point is special if it is the preimage of a node or the image of a section.
\item $\omega_{C_\ol{s}}(\sum_i\sigma_i)$ is ample, where $\omega_{C_\ol{s}}$ is the dualizing sheaf.	
\end{itemize}
\end{defn}

\begin{defn}[1-generalized elliptic curves] A 1-generalized elliptic curve is a stable 1-pointed curve of genus 1. The section will typically be denoted $O$.
\end{defn}

\begin{defn}[Balanced actions] Let $C$ be a prestable curve over an algebraically closed field $k$, equipped with a $k$-linear action of a finite group $G$. For any node $p\in C$, the normalization map $C'\ra C$ induces a decomposition of the cotangent space $T^*_{C,p}$ into a sum of two 1-dimensional subspaces (the branches of the node). The $G$-action is \emph{balanced at $p$} if the action of $G_p := \Stab_G(p)$ preserves this decomposition and acts faithfully via mutually inverse characters on each summand.
\end{defn}

\begin{defn}[Admissible $G$-covers] Let $(E/S,O)$ be a 1-generalized elliptic curve. Its generic locus $E_\gen$ is the complement of all nodes and markings. An admissible $G$-cover of $(E/S,O)$ consists of a finite map $\pi : C\ra E$ equipped with an action of $G$ on $C$ leaving $\pi$ invariant, such that
\begin{enumerate}
\item $C\ra S$ is a prestable curve,
\item $\pi$ induces an isomorphism $C/G\rightiso E$,
\item $\pi$ is \'{e}tale over $E_\gen$, and
\item the $G$-action is balanced at every node of every geometric fiber.
\end{enumerate}
\end{defn}

\begin{remark} This definition differs from that of \cite[Definition 2.1.4]{Chen21}. The equivalence of the two definitions is described in \cite[\S2.4]{Chen21}, using \cite[6.1.4]{Chen21}. In particular, it is shown there that the balanced condition implies that nodes of $C$ must map to nodes of $E$, and that $(C,\pi^{-1}(O))$ is a stable marked curve.
\end{remark}

Let $\cAdm(G)$ denote the moduli stack of admissible $G$-covers, where morphisms are diagrams as in Definition \ref{def_admG}. Let $\ol{\cM(1)}$ denote the moduli stack of 1-generalized elliptic curves, with coarse scheme $\ol{M(1)}$. As in the smooth case, we have a forgetful map
$$\ff : \cAdm(G)\lra\ol{\cM(1)}$$
sending an admissible cover $\pi : C\ra E$ to the 1-generalized elliptic curve $E$.

\begin{thm} Properties of $\cAdm(G)$ and $\ff$.
\begin{enumerate}
\item $\cAdm(G)$ is a smooth proper Deligne-Mumford stack of pure dimension 1 over $\bZ[1/|G|]$, and contains $\cAdm^0(G)$ as a dense open substack.
\item The map $\ff$ is flat, proper, and quasi-finite.
\item $\cAdm(G)$ admits a coarse scheme $\Adm(G)$, and the map on coarse schemes $\Adm(G)\ra\ol{M(1)}$ induced by $\ff$ is finite.
\end{enumerate}
\end{thm}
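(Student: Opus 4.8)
The plan is to derive all three assertions from the general theory of admissible $G$-covers of stable curves \cite{ACV03,BR11}, specialized to base curves of genus $1$ carrying a single marked point, together with the smooth case recorded in \S\ref{ss_AdmG}; the passage to coarse schemes is then purely formal. Since a $1$-generalized elliptic curve is by definition a stable $1$-pointed curve of genus $1$, the stack $\ol{\cM(1)}$ is the stack $\ol{\cM}_{1,1}$ of such curves, and $\cAdm(G)$ is exactly the Hurwitz stack of admissible $G$-covers of its objects; the cited theory realizes it as a proper algebraic stack of finite type over $\bS$. It is Deligne--Mumford because the automorphism group of an admissible $G$-cover is finite and, being a subquotient of $G$ of order dividing $|G|$ which is invertible on $\bS$, is moreover \'etale; properness is the valuative criterion, which is precisely stable reduction for tamely ramified covers and is the technical core of \cite[Corollary 3.0.5]{ACV03}.

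For the smoothness and the relative dimension in part (a), I would use the deformation theory of admissible $G$-covers. Under the tameness hypothesis \eqref{eq_tame}, the cover of a given $1$-generalized elliptic curve is infinitesimally rigid, so the deformation theory of $(C\ra E)$ coincides with that of the underlying twisted $1$-pointed genus-$1$ curve; the latter is smooth of pure relative dimension $1$ over $\bZ$, whence $\cAdm(G)$ is smooth of pure relative dimension $1$ over $\bS$ (see \cite[Theorem 5.1.5]{BR11}). The locus in any family of prestable curves where the fibres are smooth is open, so $\cAdm^0(G)\subset\cAdm(G)$ is open; density follows from the same local comparison, since within the family of twisted curves the smooth locus is dense --- concretely, every admissible $G$-cover of a nodal $1$-generalized elliptic curve can be smoothed.

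Part (b) is then a formal consequence. The map $\ff$ is proper because $\cAdm(G)$ is proper over $\bS$ while $\ol{\cM(1)}$ is separated over $\bS$, and a morphism from a proper stack to a separated stack over the same base is proper (factor through the graph, a closed immersion). It is quasi-finite because, over any geometric point of $\ol{\cM(1)}$, the fibre classifies admissible $G$-covers of a fixed $1$-generalized elliptic curve; restricted to the generic locus these correspond to finitely many homomorphisms out of a finitely generated group modulo finitely much gluing data, so the fibre is finite. Finally $\ff$ is flat by miracle flatness \cite[00R4]{stacks}: $\cAdm(G)$ is smooth, hence Cohen--Macaulay, of pure relative dimension $1$ over $\bS$; $\ol{\cM(1)}$ is regular of pure relative dimension $1$ over $\bS$; and $\ff$ is quasi-finite, so its fibres have the expected dimension $1-1=0$. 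One applies the criterion \'etale-locally, reducing to the finite case via Zariski's main theorem.

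For part (c), applying the Keel--Mori theorem to the separated, finite-type Deligne--Mumford stack $\cAdm(G)$ yields a coarse space $c\colon\cAdm(G)\ra\Adm(G)$ with $\Adm(G)$ a proper algebraic space over $\bS$ and $c$ proper and surjective; by the universal property of coarse spaces, $\ff$ descends to a morphism $\ol{\ff}\colon\Adm(G)\ra\ol{M(1)}$. Since $\cAdm(G)\ra\ol{M(1)}$ is proper and $c$ is proper and surjective, $\ol{\ff}$ is proper; since forming coarse spaces preserves the dimension of fibres and $\ff$ is quasi-finite, $\ol{\ff}$ is quasi-finite; hence $\ol{\ff}$ is finite. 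As $\ol{M(1)}$ is a scheme and a finite morphism is affine, $\Adm(G)$ is a scheme as well. The main obstacle is entirely contained in part (a): the properness and smoothness of $\cAdm(G)$ are the substantive inputs --- resting respectively on stable reduction for tame covers and on tame deformation theory --- and I would import them from \cite{ACV03,BR11} rather than reprove them; everything else, including part (c), is formal once these and Keel--Mori are in hand.
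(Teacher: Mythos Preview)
Your approach is essentially the paper's own: the paper simply cites \cite[Theorem 2.1.11]{Chen21}, and that result is in turn built on \cite{ACV03,BR11} exactly along the lines you sketch (properness via stable reduction for tame covers, smoothness via tame deformation theory, Keel--Mori for the coarse space, and miracle flatness \'etale-locally for part (b)). So your proposal is correct and matches the intended argument.

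One inaccuracy is worth flagging. You write that $\cAdm(G)$ is Deligne--Mumford ``because the automorphism group of an admissible $G$-cover is finite and, being a subquotient of $G$ of order dividing $|G|$, is moreover \'etale.'' The automorphism group of an object of $\cAdm(G)$ is \emph{not} a subquotient of $G$ in general: it already contains automorphisms of the base $1$-generalized elliptic curve, and more importantly, when $C$ is reducible the $G$-equivariant automorphisms of $C$ over $E$ can be strictly larger than $Z(G)$ --- the paper notes this explicitly in the remark at the end of \S\ref{ss_compactification}. Your conclusion is still correct (the automorphism groups are finite \'etale and the stack is Deligne--Mumford), but the justification should come from the general theory in \cite{ACV03} rather than from the incorrect claim about subquotients of $G$.
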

\begin{proof} See \cite[Theorem 2.1.11]{Chen21}.
\end{proof}

As in the smooth case, $Z(G)$ is a constant subgroup scheme of the automorphism group scheme of any admissible $G$-cover. Thus we can compactify $\cM(G)$ via the rigidifcation $\cAdm(G)\fs Z(G)$.

\begin{thm} Let $\ol{\cM(1)} := \cAdm(G)\fs Z(G)$. Then we have
\begin{enumerate}
\item $\ol{\cM(G)}$ is a smooth proper Deligne-Mumford stack of pure dimension 1 over $\bZ[1/|G|]$.
\item $\ol{\cM(G)}$ contains $\cM(G)$ as a dense open substack.
\item The map $\ol{\cM(G)}\lra\ol{\cM(1)}$ induced by $\ff$ is flat, proper, and quasi-finite.
\end{enumerate}
\end{thm}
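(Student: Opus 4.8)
The plan is to deduce all three assertions from the corresponding properties of $\cAdm(G)$, just established in the preceding theorem, by transporting them across the rigidification morphism
$$q : \cAdm(G)\lra \cAdm(G)\fs Z(G) =: \ol{\cM(G)},$$
in complete analogy with the smooth case carried out in \cite[\S2.5]{Chen21}. The first step is to record the structural properties of $q$ supplied by the general theory of rigidification \cite[\S5]{ACV03}, \cite[\S5]{Rom05}: because $Z(G)$ is a constant subgroup scheme of the inertia of $\cAdm(G)$, the rigidification $\ol{\cM(G)}$ exists as a Deligne-Mumford stack, the morphism $q$ is a $Z(G)$-gerbe --- hence faithfully flat, quasi-compact, surjective, proper, and of relative dimension $0$ --- its formation commutes with open immersions into $\cAdm(G)$, it induces an isomorphism on coarse spaces, and any morphism out of $\cAdm(G)$ on which $Z(G)$ acts trivially (as a subgroup of the automorphism group scheme) factors uniquely through $q$.

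For part (1) I would invoke descent along the faithfully flat quasi-compact morphism $q$: smoothness of $\cAdm(G)\to\Spec\bZ[1/|G|]$ descends to smoothness of $\ol{\cM(G)}\to\Spec\bZ[1/|G|]$, properness likewise (using that $q$ is proper and surjective), and purity of dimension $1$ is preserved because $q$ has relative dimension $0$; that $\ol{\cM(G)}$ is Deligne-Mumford is part of the rigidification package. For part (2), since rigidification commutes with open immersions, $\cAdm^0(G)\fs Z(G)$ is an open substack of $\ol{\cM(G)}$, and it is dense because $q$ is flat, hence open, and surjective; combined with the isomorphism $\cAdm^0(G)\fs Z(G)\cong\cM(G)$ from the smooth-case theorem above, this gives the desired dense open immersion $\cM(G)\hookrightarrow\ol{\cM(G)}$.

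For part (3), the key observation is that the forgetful map $\ff : \cAdm(G)\to\ol{\cM(1)}$ sends every $g\in Z(G)$, regarded as an automorphism of an admissible $G$-cover $\pi : C\to E$, to the identity of the base $1$-generalized elliptic curve $E$; hence $Z(G)$ acts trivially on $\ff$, and by the universal property of $q$ the map factors as $\ff = \ol{\ff}\circ q$ with $\ol{\ff} : \ol{\cM(G)}\to\ol{\cM(1)}$ the morphism in question. I would then propagate the three properties from $\ff$ to $\ol{\ff}$: flatness by fppf descent of flatness along $q$; quasi-finiteness because $q$ is surjective with $0$-dimensional (gerbe) fibres, so finiteness of the fibres of $\ff$ forces finiteness of the fibres of $\ol{\ff}$; and properness from $\ff$ proper together with $q$ proper and surjective, once separatedness and finite type of $\ol{\ff}$ are in hand, which also descend along $q$. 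As a sanity check, $\ol{\ff}$ induces on coarse spaces exactly the finite morphism $\Adm(G)\to\ol{M(1)}$ recorded in the preceding theorem.

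I expect the only genuinely non-formal point to be the descent of properness along the surjective proper gerbe $q$; the remaining steps are routine applications of the standard fppf and surjective-proper descent lemmas, for which I would cite \cite[\S5]{Rom05} and \cite{stacks}. A fully detailed write-up should simply mirror the smooth case in \cite[\S2.5]{Chen21}, where the analogous statements are proved in full.
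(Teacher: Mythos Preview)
Your proposal is correct and is precisely the argument underlying the paper's proof, which consists solely of the citation ``See \cite[Proposition 2.5.10]{Chen21}''; you have effectively reconstructed the content of that reference, down to citing the same rigidification sources \cite[\S5]{ACV03}, \cite[\S5]{Rom05} and noting that the write-up should mirror \cite[\S2.5]{Chen21}. One minor simplification: for properness of $\ol{\ff}$ in part (3) you need not invoke any descent at all, since once (1) is established $\ol{\cM(G)}$ is proper over $\bZ[1/|G|]$ and $\ol{\cM(1)}$ is separated, so $\ol{\ff}$ is automatically proper.
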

\begin{proof} See \cite[Proposition 2.5.10]{Chen21}.
\end{proof}

\begin{remark} Let $E$ be a non-smooth 1-generalized elliptic curve, and $\pi : C\ra E$ an admissible $G$-cover. Taking normalizations, we obtain a smooth admissible $G$-cover $\pi : C'\ra\bP^1$, only ramified above the three points: preimages of the node and origin of $E$. In short, the boundary points, or ``cusps'', of $\cAdm(G)$ correspond to three-point covers (or Belyi maps, or dessins d'enfant) $C'\ra\bP^1$, equipped with a $G$-equivariant bijection between the ramified fibers at $0,\infty\in\bP^1$. This dictionary is worked out in detail in \cite[\S4]{Chen21}. It would be very interesting to work out how this can be used to transport information between the worlds of three-point covers and that of noncongruence modular curves. Note that $C'$ may not be connected (equivalently, $C$ may not be irreducible). This implies that $\pi$ can sometimes admit automorphisms which do not come from elements of $G$; consequently, the map $\ol{\cM(G)}\ra\ol{\cM(1)}$ can fail to be representable.
\end{remark}

\subsection{Coarse schemes}
Recall that the coarse scheme of an algebraic stack $\cX$ is a map $c : \cX\ra X$ with $X$ a scheme which satisfies:
\begin{enumerate}
\item Any map $\cX\ra T$ with $T$ a scheme factors uniquely through 	$c$, and
\item For any algebraically closed field $k$, $c$ induces a bijection $\cX(k)/\cong\rightiso X(k)$.
\end{enumerate}
The coarse scheme, if it exists, is uniquely determined by (a). In the case of $\cM(G)$ over $\bZ[1/|G|]$, the coarse scheme $M(G)$ exists as a $\bZ[1/|G|]$-scheme and can be constructed as follows. For an integer $n$, recall that $\cM(n) := \cM((\bZ/n)^2)$. For an integer $n\ge 2$ dividing $|G|$, $\cN := \cM(G)\times_{\cM(1)}\cM(n^2)\lra\cM(G)$ is finite \'{e}tale over the scheme $\cM(n^2)$ and hence is itself a scheme. The projection to $\cM(G)$ is Galois with group $\GL_2(\bZ/n^2)$, and hence $\cM(G)$ can be recovered as the stack quotient $[\cN/\GL_2(\bZ/n^2)]$. The coarse scheme $M(G)$ can then be realized as the scheme quotient
$$M(G) \cong \cN/\GL_2(\bZ/n^2)$$

\begin{prop}\label{prop_coarse} Let $n\ge 2$ be an integer, and let $\cM$ be a stack finite \'{e}tale over $\cM(1)_{\bZ[1/n]}$. Then $\cM$ admits a coarse scheme, and we have
\begin{enumerate}
\item $M$ is smooth over $\bZ[1/n]$.
\item For a $\bZ[1/n]$-scheme $S$, we say that $M$ satisfies \emph{coarse base change} with respect to $S$ if $M_S := M\times_{\bZ[1/n]} S$ is the coarse scheme of $\cM_S$. Coarse base change is satified in any of the following conditions:
\begin{itemize}
\item $S$ is a regular Noetherian scheme,
\item $S$ is flat over $\bZ[1/n]$,
\item 6 is invertible on $S$.
\end{itemize}
\item $M$ is finite over the $j$-line $\Spec\bZ[1/n][j] \cong M(1)_{\bZ[1/n]}$.
\item The map $\cAdm^0(G)\ra\cM(G)$ induces an isomorphism on coarse schemes. In particular, it is a homeomorphism.
\end{enumerate}
\end{prop}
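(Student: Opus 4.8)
The plan is to establish the existence of the coarse scheme $M$ by the explicit quotient presentation sketched just above, and then to extract parts (c), (a) and (d) from that presentation, invoking the general machinery of coarse spaces and tame stacks for part (b).

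\emph{Setup and (c).} First I would take $m := n^2$, so that $\cM(m) := \cM((\bZ/m)^2)$ is a \emph{scheme} (as $m\ge 3$), finite \'etale over $\cM(1)_{\bZ[1/n]}$, with $\cM(m)\to\cM(1)$ Galois with group $\GL_2(\bZ/m)$. Setting $\cN := \cM\times_{\cM(1)}\cM(m)$, one checks that $\cN$ is a scheme (finite \'etale over $\cM(m)$), that $\cN\to\cM$ is Galois with group $\GL_2(\bZ/m)$, and hence that $\cM = [\cN/\GL_2(\bZ/m)]$; since $\cN$ is quasi-projective over $M(1)\cong\bA^1_{\bZ[1/n]}$, the geometric quotient $M := \cN/\GL_2(\bZ/m)$ exists as a scheme, and a short verification of the universal property shows $\cM\to M$ is a coarse moduli space. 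For (c), note that $\cN$ is finite over $\cM(m) = M(m)$, hence over $M(1)$; the $\GL_2(\bZ/m)$-invariant composite $\cN\to M(1)$ factors through $M$, and the resulting $M\to M(1)$ is separated and quasi-finite, and it is universally closed because it is dominated over $M(1)$ by the proper surjection $\cN\to M$ — so $M\to M(1)$ is finite.

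\emph{(a).} Since $\cN$ is finite \'etale over the smooth $\bZ[1/n]$-scheme $\cM(m)$, it is itself smooth over $\bZ[1/n]$, hence normal; so its quotient $M$ by the finite group $\GL_2(\bZ/m)$ is normal, and $M\to\Spec\bZ[1/n]$ is flat because every generic point of $M$ is the image of a generic point of $\cN$ and therefore dominates $\Spec\bZ[1/n]$. Applying part (b) to the geometric points $\Spec\Omega\to\Spec\bZ[1/n]$ (which are regular Noetherian), every geometric fibre of $M$ is the coarse space of the corresponding fibre of $\cM$; and the coarse space of a smooth Deligne--Mumford stack of pure dimension $1$ over an algebraically closed field $k$ is a smooth curve, since by the local structure of coarse spaces it is \'etale-locally $\Spec A^\Gamma$ with $\Gamma$ finite acting on a regular one-dimensional $k$-algebra $A$, and $A^\Gamma$ is then normal of dimension $1$ over the perfect field $k$, hence regular. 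Thus $M$ is flat over $\bZ[1/n]$ with smooth geometric fibres, hence smooth over $\bZ[1/n]$ by the fibrewise criterion.

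\emph{(b) and (d).} For (b) I would invoke the standard descent properties of coarse spaces: coarse-space formation always commutes with flat base change (Keel--Mori), giving the case of $S$ flat over $\bZ[1/n]$; if $6$ is invertible on $S$ then all automorphism groups of $\cM_S$ (which inject into those of $\cM(1)$, of orders $2,4,6$, since $\ff$ is representable) have order invertible on $S$, so $\cM_S$ is a tame stack and coarse-space formation commutes with arbitrary base change \cite{Ols16}; the remaining case, coarse base change with respect to an arbitrary regular Noetherian $S$, is proved in \cite{Chen21} via an explicit \'etale-local analysis of $\cM(1)$ near $j = 0,1728$ in characteristics $2$ and $3$. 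Finally, (d) is essentially a restatement of the theorem of \S\ref{ss_AdmG}: the forgetful map factors as $\cAdm^0(G)\to\cAdm^0(G)\fs Z(G)\rightiso\cM(G)$ with the first arrow inducing an isomorphism on coarse schemes, so $\cAdm^0(G)\to\cM(G)$ induces an isomorphism of coarse schemes; as the coarse map of a Deligne--Mumford stack is a homeomorphism on underlying topological spaces, $\cAdm^0(G)\to\cM(G)$ is then a homeomorphism. The one genuinely hard input in all of this is the wild (characteristic $2,3$) case of part (b); once $6$ has been inverted, everything is formal, so that is where I expect the real work to be.
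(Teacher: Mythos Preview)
Your approach is correct and structurally very close to the paper's: the same quotient presentation $M=\cN/\GL_2(\bZ/m)$, the same argument for (c) via domination by the finite $M(1)$-scheme $\cN$, and the same reduction of (d) to the rigidification theorem of \S\ref{ss_AdmG}.

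The one substantive difference is the logical order of (a) and (b). The paper proves (a) by directly invoking the general result in the Appendix of \cite{KM85} (p.~508--510) that the quotient of a smooth affine relative curve over a regular noetherian base by a finite group action is again smooth; it then observes that the regular-noetherian case of (b) follows from that same \cite{KM85} argument. You instead prove (a) via ``normal $+$ flat $+$ smooth geometric fibres'', which forces you to \emph{use} (b) (for $S=\Spec\Omega$) as an input to (a). Both routes ultimately rest on the same hard fact---that taking invariants of a smooth relative curve by a finite group stays smooth in the wild characteristics $2,3$---so the difference is cosmetic rather than mathematical. Note however that your attribution of the regular-noetherian case of (b) to \cite{Chen21} is off; the paper locates this (and you should too) in the \cite{KM85} Appendix, which is exactly the ``explicit local analysis near $j=0,1728$'' you describe.
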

\begin{proof} Part (a) is the consequence of a general result that the quotient of a smooth affine curve over a regular noetherian scheme $S$ by an $S$-linear action of a finite group is itself smooth. When $S$ is a field this follows from the more elementary fact that rings of invariants of integrally closed domains are themselves integrally closed. For the case where $S$ is regular noetherian, see the Appendix, ``Notes on Chapters 8 and 10'' on p508-510 of \cite{KM85}.

By our construction of coarse schemes, we find that the question of coarse base change is essentially equivalent to the question of whether quotients by finite group actions commute with base change. The latter question is quite interesting and rather subtle. It holds in great generality in the flat case \cite[0DTF]{stacks}, and in the tame case \cite[Proposition A7.1.3]{KM85} (this covers the case when 6 is invertible on $S$). Also see \cite[Proposition 8.1.6]{KM85}. For $S$ regular noetherian, this follows from the proof of part (a), though this uses the fact that $M$ is a curve.

Part (c) follows from the fact that the $M$ is dominated by the finite noetherian $M(1)$-scheme $\cM\times_{\cM(1)}\cM(n)$ \cite[Proposition 8.2.2]{KM85}. Part (d) is a consequence of the universal property of rigidifcation, see \cite[Proposition 2.5.10(a)]{Chen21}.
\end{proof}

\begin{remark} The proper stack $\cAdm(G)$ also admits a coarse scheme $\Adm(G)$, which is smooth over $\bZ[1/|G|]$ for the same reason as \ref{prop_coarse}(a). Its components are thus the smooth compactifications of the components of $M(G)$. See \cite[\S2]{Chen21}.	
\end{remark}

\section{Noncongruence modular curves as moduli of elliptic curves with $G$-structures}\label{section_noncongruence}

\subsection{The components of $\cM(G)_\bC$ as modular stacks}\label{ss_components}
For a finite group $G$, by Theorem \ref{thm_fet} the map $\ff : \cM(G)\ra\cM(1)$ is finite \'{e}tale, and hence the connected components of $\cM(G)_\bC$ are isomorphic to modular stacks $[\cH/\Gamma]$ for various finite index subgroups $\Gamma\le\SL_2(\bZ)$. In this section we describe these subgroups explicitly. We begin with a quick and rough description which is enough for most purposes.

\subsubsection{Combinatorial sketch of the decomposition $\cM(G)_\bC = \bigsqcup_{\Gamma}[\cH/\Gamma]$} \label{sss_combinatorial_decomposition}
Let $E_0$ be an elliptic curve over $\bC$. Let $x_0\in E_0^\circ(\bC)$ be a base point, and let $\Pi := \pi_1(E_0^\circ(\bC),x_0)$ be the topological fundamental group of its space of complex points, a free group of rank 2. Let $a : \Pi\ra\bZ^2$ be a surjection, identifying $\bZ^2$ with the abelianization of $\Pi$. By a theorem of Nielsen (see Theorem \ref{thm_MCG_actions}), $a$ induces an isomorphism
$$a_* : \Out^+(\Pi)\rightiso\SL_2(\bZ)$$
The group $\Out^+(\Pi)$ acts naturally on $\Epi^\ext(\Pi,G)$ by acting on $\Pi$, and through $a_*$ we obtain an action of $\SL_2(\bZ)$ on $\Epi^\ext(\Pi,G)$. Viewing $\Epi^\ext(\Pi,G)$ as the fiber $\ff^{-1}(E_0)\subset\cM(G)$ as in Proposition \ref{prop_local_structure_of_TG}, by the Galois correspondence we find that the $\SL_2(\bZ)$ orbits are in bijection with the connected components of $\cM(G)_\bC$. More precisely, the $\SL_2(\bZ)$ orbit of $\varphi$ corresponds to a component $\cM\subset\cM(G)_\bC$ which is isomorphic to $[\cH/\Gamma_\varphi]$, where $\Gamma_\varphi := \Stab_{\SL_2(\bZ)}(\varphi)$, and the quotient is via the right action \eqref{eq_right_action}. Different choices of $a$ or $\varphi$ (in the same $\SL_2(\bZ)$-orbit) lead to conjugate subgroups $\Gamma_\varphi\le\SL_2(\bZ)$.

\begin{defn} In the situation above, we say that the component $\cM\subset\cM(G)_\bC$ is \emph{uniformized} by the finite index subgroup $\Gamma_\varphi\le\SL_2(\bZ)$. Any two subgroups uniformizing $\cM$ are conjugate in $\SL_2(\bZ)$.
\end{defn}

\begin{remark} Recall that $\cH/\Gamma$ (quotient via the right action \eqref{eq_right_action}) is the \emph{mirror image} of the quotient by the usual left action by m\"{o}bius transformations \eqref{eq_left_action}, see \S\ref{sss_mirror}.	
\end{remark}

\begin{remark}\label{remark_computing} This description makes the geometry of $\cM(G)_\bC$ highly amenable to computation. Computing the components of $\cM(G)_\bC$ amounts to computing the stabilizers $\Gamma_\varphi$ of the $\Out^+(\Pi)\cong\SL_2(\bZ)$-action on the finite set $\Epi^\ext(\Pi,G)$, which is efficiently doable in a variety of computer algebra packages. Given $\Gamma_\varphi$, the ramification indices of $\cH/\Gamma_\varphi\ra\cH/\SL_2(\bZ)$ at points above 0, 1728, and $i\infty$ are exactly the cycles in the permutation images of $\spmatrix{1}{1}{-1}{0},\spmatrix{0}{1}{-1}{0},\spmatrix{1}{1}{0}{1}$ respectively on the coset space $\SL_2(\bZ)/\pm\Gamma_\varphi$ \cite[\S1.3]{ASD71}. From this, one can calculate the genus using the Riemann-Hurwitz formula, and in low degrees one can often compute equations for the modular curves via the Belyi map $\cH/\Gamma_\varphi\ra\cH/\SL_2(\bZ)$ \cite{SV14}. Some examples are discussed in \S\ref{section_examples}.
\end{remark}


\subsubsection{Components of $\cM(G)_\bC$ via Teichm\"{u}ller uniformization}\label{sss_Teichmuller_uniformization}
Here we give a more careful explanation the picture described in \S\ref{sss_combinatorial_decomposition}. Recall that the fiber of $\ff : \cM(G)_\bC\ra\cM(1)_\bC$ above $E_0$ is identified with the set of isomorphism classes of admissible $G$-covers of $E_0$. By the Galois correspondence, we have a bijection
$$\ff^{-1}(E)\cong \Epi^\ext(\Pi,G)$$
Let $S$ be the underlying topological space of $E_0$. We have a Teichm\"{u}ller uniformization
$$u_\cT : \cT(S)\stackrel{\Psi_{f_0}}{\lra}\cH\stackrel{u_\bE}{\lra}\cM(1)$$
where $\Psi_{f_0}$ is the isomorphism from \S\ref{sss_Teichmuller} associated to a fixed framing $f_0$ of $S$, and $u_\bE$ is the map given by the elliptic curve $\bE/\cH$ of \S\ref{sss_framings}. The map $u_\cT$ is a quotient map for the natural right action of $\Gamma(S)$ on $\cT(S)$, and maps $[E_0,\id_{E_0}]\in\cT(S)$ to $E_0\in\cM(1)$. Let $\varphi\in\Epi^\ext(\Pi,G)\cong\ff^{-1}(E_0)$, with corresponding admissible $G$-cover $\pi : C\ra E_0$, which we view as a cover
$$\pi : T\lra S$$
where $T$ is the underlying topological space of $C$. Let $\cM(\pi)\subset\cM(G)_\bC$ be the connected component containing $\pi$. The map $u_\cT$ factors through
\begin{eqnarray*}
u_\pi : \cT(S) & \lra & \cM(\pi) \\	
{[E,m]} & \mapsto & m\circ\pi : T\stackrel{\pi}{\lra} S\stackrel{m}{\lra} E
\end{eqnarray*}
so that $u_\pi$ is the universal cover of $\cM(\pi)$. Let $m_0 : S\ra E_0$ be the identity map at the level of topological spaces, and let $\Gamma_\pi\le\Gamma(S^\circ) = \Gamma(S)$ be the subgroup consisting of those $\alpha$ satisfying
\begin{equation}\label{eq_abuse}
m_0\circ\alpha\circ\pi\cong m_0\circ\pi	
\end{equation}
as $G$-covers of $E^\circ$, where here we treat $\alpha$ as a homeomorphism. Then $\cM(\pi)$ is isomorphic to the stack quotient $[\cT(S)/\Gamma_\pi]$. This picture can be transported to $\cH$ via the isomorphism $\Psi_{f_0} : \cT(S)\rightiso\cH$, which is equivariant relative to the isomorphism (see \S\ref{ss_topological_properties})
\begin{equation}\label{eq_isoms}
\Gamma(S)\cong\Gamma(S^\circ)\cong\Out^+(\Pi)\cong\SL(H_1(S,\bZ))\stackrel{(f_0^{-1})_*}{\lra}\SL_2(\bZ)
\end{equation}

\begin{prop} Under the isomorphisms \eqref{eq_isoms}, $\Gamma_\pi$ corresponds to the $\SL_2(\bZ)\cong\Out^+(\Pi)$-stabilizer $\Gamma_\varphi$ of $\varphi\in\Epi^\ext(\Pi,G)$.
\end{prop}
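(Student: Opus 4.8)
The plan is to reduce the statement to the (standard) fact that the monodromy dictionary of \S\ref{ss_admissible_covers_over_k}, namely the bijection $\{G\text{-covers of }S^\circ\}/\!\cong\;\rightiso\Epi^\ext(\Pi,G)$ sending $\pi^\circ := \pi|_{\pi^{-1}(E_0^\circ)}$ to $\varphi$, is equivariant for the tautological action of $\Gamma(S^\circ)$ on the left and the action of $\Out^+(\Pi)$ on the right, compatibly with the canonical isomorphism $\Gamma(S^\circ)\rightiso\Out^+(\Pi)$ of Theorem \ref{thm_MCG_actions}. Granting this, $\Gamma_\pi$ and $\Gamma_\varphi$ are the stabilizers of $\pi^\circ$ and $\varphi$ under corresponding actions, so they correspond, and the remaining transport through $\Out^+(\Pi)\cong\SL(H_1(S,\bZ))\cong\SL_2(\bZ)$ from \eqref{eq_isoms} is immediate. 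To set this up I would first note that since $\pi$ is recovered from $\pi^\circ$ by normalization (as in \S\ref{ss_admissible_covers_over_k}) and since, by Theorem \ref{thm_MCG_actions}, every class in $\Gamma(S^\circ)$ is represented by a homeomorphism of $S$ fixing the puncture, one may work entirely with the topological $G$-cover $\pi^\circ$ of $S^\circ$ and with $\alpha$ regarded as a self-homeomorphism of $S^\circ$; the condition \eqref{eq_abuse} depends only on its homotopy class, and under the identification $m_0=\id_S$ it simply reads ``$\alpha\circ\pi^\circ\cong\pi^\circ$ as $G$-covers of $S^\circ$.''

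The heart of the argument is then the computation of the monodromy class of $\alpha\circ\pi^\circ$. First I would observe that $\alpha\circ\pi^\circ$ is isomorphic, as a $G$-cover of $S^\circ$, to the pullback $(\alpha^{-1})^{*}\pi^\circ$ along the homeomorphism $\alpha^{-1}$ (an elementary check: the map $t\mapsto t$ identifies $T^\circ$ with the fibre product). Next, pullback of covers corresponds to precomposition of monodromy with the induced map on $\pi_1$: choosing a path from $x_0$ to $\alpha^{-1}(x_0)$ and a point of the fibre over $x_0$ produces an honest monodromy homomorphism of the form $\varphi\circ\beta$, where $\beta\in\Aut(\Pi)$ is a representative of the image $\ol{\alpha}^{-1}\in\Out^+(\Pi)$ of $\alpha^{-1}$ under $\Gamma(S^\circ)\to\Out^+(\Pi)$. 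Changing the path or the fibre point alters $\beta$ by an inner automorphism of $\Pi$ and $\varphi\circ\beta$ by conjugation in $G$ — and neither changes the class in $\Epi^\ext(\Pi,G)=\Epi(\Pi,G)/\Inn(G)$, because precomposition by $\inn_\gamma$ equals postcomposition by $\inn_{\varphi(\gamma)}$. Hence the monodromy class of $\alpha\circ\pi^\circ$ in $\Epi^\ext(\Pi,G)$ is well defined and equals $\varphi$ acted on by $\ol{\alpha}^{-1}$ via the $\Out^+(\Pi)$-action ``by acting on $\Pi$'' used in \S\ref{sss_combinatorial_decomposition}.

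Combining these, $\alpha\in\Gamma_\pi$ if and only if $\ol{\alpha}^{-1}$ — equivalently $\ol{\alpha}$, since the stabilizer of a point is a subgroup — lies in $\Stab_{\Out^+(\Pi)}(\varphi)$; transporting through the isomorphisms of \eqref{eq_isoms} carries $\Gamma_\pi$ onto the image of this stabilizer in $\SL_2(\bZ)$, which is by definition $\Gamma_\varphi$. I expect the only delicate point to be the bookkeeping in the middle paragraph: tracking basepoints and path choices, and pinning down the left/right and $\alpha$-versus-$\alpha^{-1}$ conventions so that the monodromy dictionary is read off correctly. But because both $\Gamma_\pi$ and $\Gamma_\varphi$ are \emph{defined as stabilizer subgroups}, any residual ambiguity of this kind — in particular whether one should write $\varphi\circ\beta$ or $\varphi\circ\beta^{-1}$ — is harmless, since a stabilizer is unchanged when an action is replaced by its inverse. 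The other routine input to double-check is that the Galois correspondence of \S\ref{ss_admissible_covers_over_k} genuinely identifies isomorphism classes of connected $G$-covers of $S^\circ$ with $\Epi^\ext(\Pi,G)$, with $\pi^\circ\mapsto\varphi$, which is exactly the normalization used in \S\ref{sss_Teichmuller_uniformization}.
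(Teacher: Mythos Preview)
Your proposal is correct and takes essentially the same approach as the paper's proof: both identify $m_0\circ\alpha\circ\pi$ with the pullback $(\alpha^{-1})^*\pi$, and then translate the isomorphism $(\alpha^{-1})^*\pi\cong\pi$ into the condition $\varphi\circ(\alpha^{-1})_* = \varphi$ in $\Epi^\ext(\Pi,G)$. Your version is more explicit about the basepoint and convention bookkeeping, but the underlying argument is the same.
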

\begin{proof} If $m : S\ra E$ is a marking, then $m\circ\pi\cong (m^{-1})^*\pi$. Since $m_0$ is the identity on topological spaces, $\alpha\in\Gamma_\pi$ if and only if $(\alpha^{-1})^*\pi\cong \pi$, or equivalently $\varphi\circ(\alpha^{-1})_* = \varphi$ as elements of $\Epi^\ext(\Pi,G)$, where $(\alpha^{-1})_*$ is the outer automorphism of $\Pi$ induced by $\alpha^{-1}$.	
\end{proof}
Thus $\Psi_{f_0}$ induces isomorphisms
$$\cM(\pi)\cong[\cT(S)/\Gamma_\pi]\cong[\cH/\Gamma_\varphi]$$
In particular, the coarse scheme $M(\pi)$ is the modular curve $\cH/\Gamma_\varphi$.

\begin{remark}\label{remark_topologically_equivalent} The stack $\cM(\pi)$ can be given an intrinsic description as follows. Let $\pi : C\ra E$ and $\pi' : C'\ra E'$ be admissible $G$-covers. We say that $\pi$ is topologically equivalent to $\pi'$ if there exists a $G$-equivariant homeomorphism $C\cong C'$ and a homeomorphism $E\cong E'$ compatible with the covering maps. Then $\cM(\pi)$ is the moduli space of all admissible $G$-covers topologically equivalent to $\pi$.
\end{remark}

\begin{remark}\label{remark_origami} This Teichm\"{u}ller uniformization is closely related to the construction of Teichm\"{u}ller curves in the context of Teichm\"{u}ller dynamics, and gives a nonlinear way to recover the symmetric space description of $\cH$, and the double-coset descriptions of modular curves. This will be explained further in \S\ref{sss_EM} below.
\end{remark}

\subsection{Universality of the moduli stacks $\cM(G)$}\label{ss_universality}
In \S\ref{ss_components} described how we may associate, to any group $G$, a collection of modular stacks $[\cH/\Gamma]$ which arise as the components of $\cM(G)_\bC$. By results of Asada and Ellenberg-McReynolds, essentially all modular stacks can be produced in this way. We first state their results, phrased in a way which is most relevant to our setting. At the end of the section we record some remarks on the original context of their results. We begin with the result of Asada.

\begin{thm}[\cite{Asa01}]\label{thm_asada} Let $\Pi$ be a free group of rank 2. For any finite index subgroup $\Gamma\le\Out^+(F_2)$, there exists a finite group $G$ and a surjection $\varphi : \Pi\lra G$ such that $\Gamma$ contains the $\Out^+(\Pi)$-stabilizer $\Gamma_\varphi$ of $\varphi$ in $\Epi^\ext(\Pi,G)$.
\end{thm}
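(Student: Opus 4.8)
The plan is to reduce the statement to a topological assertion about finite covers of the once-punctured torus, and then to build the required cover explicitly from the combinatorics of the coset space $\Out^+(\Pi)/\Gamma$, following Ellenberg--McReynolds \cite{EM12}; an alternative, more algebraic route via Asada's congruence-subgroup argument \cite{Asa01} is indicated at the end.

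\emph{Reduction.} Identifying $\Out^+(\Pi)$ with the mapping class group $\Gamma(S^\circ)$ via Theorem~\ref{thm_MCG_actions}, the group $\Out^+(\Pi)$ acts on the (finite, for each fixed index) set of conjugacy classes of finite-index subgroups of $\Pi$. I claim it suffices to produce one finite-index $H\le\Pi$ with
\[
\Stab_{\Out^+(\Pi)}([H]) := \{\,[\sigma] : \tilde\sigma(H)\text{ is conjugate to }H\,\} \subseteq \Gamma.
\]
Given such an $H$, let $N=\bigcap_{g\in\Pi}gHg^{-1}$ be its (finite-index, normal) core, put $G:=\Pi/N$, a $2$-generated finite group, and let $\varphi:\Pi\twoheadrightarrow G$ be the quotient. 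If $[\sigma]\in\Gamma_\varphi$, then $\varphi\circ\tilde\sigma^{-1}=c_{\bar g_0}\circ\varphi$ for some $\bar g_0\in G$; comparing kernels gives $\tilde\sigma(N)=N$, and lifting $\bar g_0$ to $g_0\in\Pi$, the automorphism $\beta:=\tilde\sigma^{-1}c_{g_0}^{-1}$ satisfies $\varphi\circ\beta=\varphi$, i.e.\ $\beta$ induces the identity on the coset set $\Pi/N$. Hence $\beta$ fixes every subgroup of $\Pi$ containing $N$, in particular $H$, so $\tilde\sigma(H)=g_0^{-1}Hg_0$ and $[\sigma]\in\Stab_{\Out^+(\Pi)}([H])\subseteq\Gamma$. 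Thus $\Gamma_\varphi\subseteq\Gamma$, and by \S\ref{sss_combinatorial_decomposition} the subgroup $\Gamma_\varphi$ uniformizes a component of $\cM(G)_\bC$, which is exactly what is claimed.

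\emph{Construction of $H$.} Write $\SL_2(\bZ)=\langle L,R\rangle$ with $L=\spmatrix{1}{1}{0}{1}$ and $R=\spmatrix{1}{0}{1}{1}$, and transport this along $\Out^+(\Pi)\cong\SL_2(\bZ)$ as in \S\ref{sss_combinatorial_decomposition}. A subgroup $\Gamma$ of index $d$ gives a transitive permutation representation $\phi:\Out^+(\Pi)\to\Sym(\Out^+(\Pi)/\Gamma)\cong\fS_d$, determined by the pair $(\phi(L),\phi(R))$. Realize this data geometrically as a square-tiled surface (origami) $Y\to S^\circ$: take $d$ unit squares, one per coset, and glue horizontal, resp.\ vertical, edges according to $\phi(L)$, resp.\ $\phi(R)$. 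When the result is connected it is a degree-$d$ connected cover of $S^\circ$; let $\psi:\Pi\to\fS_d$ be its monodromy homomorphism (well defined up to conjugacy), $H:=\psi^{-1}(\Stab_{\fS_d}(1))$, $N:=\ker\psi$, $G:=\psi(\Pi)\le\fS_d$ the monodromy group, and $\varphi:\Pi\twoheadrightarrow G$ the induced surjection (so $\ker\varphi=N$). The construction is arranged so that $\Stab_{\Out^+(\Pi)}([H])=\Gamma$ --- only the inclusion ``$\subseteq$'' is needed --- and then the reduction gives $\Gamma_\varphi\subseteq\Stab_{\Out^+(\Pi)}([H])=\Gamma$.

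\emph{The main obstacle, and an alternative.} All the content lies in producing $H$ with $\Stab_{\Out^+(\Pi)}([H])\subseteq\Gamma$, i.e.\ in arranging that the origami $Y$ carries \emph{no ``hidden'' mapping-class symmetries} beyond those forced by $\Gamma$: the naive square-tiled surface may have extra symmetries, and one must rigidify it --- for instance by grafting on a small, deliberately asymmetric decoration (additional sheets carrying markings preserved by no nontrivial ambient symmetry) --- while keeping the cover connected and the coset data intact. This is precisely the Ellenberg--McReynolds realization theorem (up to the standard normalizations, e.g.\ the r\^ole of $-I$). The alternative, due to Asada, is to prove directly that the ``congruence topology'' on $\Out^+(\Pi)$ pulled back along the embedding $\Out^+(\Pi)\hookrightarrow\Out(\widehat{\Pi})$ into the automorphism group of the profinite completion coincides with the full profinite topology; here the difficulty is intrinsic, since $\SL_2(\bZ)$ fails the classical congruence subgroup property, so abelian --- indeed even metabelian (cf.\ \S\ref{ss_metabelian}) --- quotients of $\Pi$ are nowhere near enough, and one must exploit both the full nonabelian structure of $\widehat{\Pi}$ and the fact that every finite-index subgroup of $\Pi$ is again free of finite rank.
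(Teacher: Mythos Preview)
The paper does not supply its own proof of this theorem; it records the statement and cites \cite{Asa01} (and points to \cite{BER11} for a group-theoretic proof). So there is no ``paper's proof'' to compare against --- the question is whether your sketch stands on its own.

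Your reduction step is correct and is the standard way to package the problem: finding $H\le\Pi$ with $\Stab_{\Out^+(\Pi)}([H])\subseteq\Gamma$ is enough, and passing to the core gives the required $\varphi$.

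The construction step, however, has a genuine gap. The origami you build by gluing $d$ squares according to $(\phi(L),\phi(R))$ has monodromy $\psi:\Pi\to\fS_d$ with $\psi(a)=\phi(L),\psi(b)=\phi(R)$, but there is \emph{no reason} its Veech group should be $\Gamma$, nor even contain or be contained in $\Gamma$. You are conflating two unrelated permutation actions: $\phi$ records how $L,R\in\SL_2(\bZ)\cong\Out^+(\Pi)$ act by \emph{left multiplication} on the coset space $\Out^+(\Pi)/\Gamma$, whereas $\Stab_{\Out^+(\Pi)}([H])$ records which \emph{outer automorphisms of $\Pi$} preserve the conjugacy class of $H$. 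These live on different groups ($\Out^+(\Pi)$ versus $\Pi$) and the identification of generators $L,R\leftrightarrow a,b$ is purely formal. Your own next paragraph concedes this (``may have extra symmetries''), so the ``Construction of $H$'' paragraph does not actually construct anything --- it names a candidate that fails and then defers to Ellenberg--McReynolds.

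Even taking EM as a black box, you dismiss its hypotheses ($\Gamma\le\Gamma(2)$, $-I\in\Gamma$) as ``standard normalizations''; the $\Gamma(2)$ restriction is indeed harmless (intersect), but $-I\notin\Gamma$ requires an extra argument, e.g.\ producing separately some $\varphi'$ with $-I\notin\Gamma_{\varphi'}$ (such as for $G=\SL_2(\bF_3)$, cf.\ the table in \S\ref{ss_noncongruence_examples}) and taking a fiber product. Your alternative route via Asada's congruence-subgroup property is exactly the original argument and is correctly summarized; since the paper itself simply cites that source, on this point you and the paper agree that the proof lives in \cite{Asa01,BER11}.
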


\begin{cor} For any finite index subgroup $\Gamma\le\SL_2(\bZ)$, the modular stack $[\cH/\Gamma]$ is dominated, as a cover of $\cM(1)_\bC$, by a component of $\cM(G)_\bC$ for some finite group $G$.
\end{cor}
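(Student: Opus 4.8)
The plan is to read off the statement from Asada's Theorem~\ref{thm_asada} via the combinatorial description of the components of $\cM(G)_\bC$ given in~\S\ref{sss_combinatorial_decomposition}. Fix an elliptic curve $E_0$ over $\bC$ and a base point $x_0\in E_0^\circ(\bC)$, and put $\Pi:=\pi_1(E_0^\circ(\bC),x_0)$, a free group of rank $2$. Fixing a surjection $a:\Pi\to\bZ^2$ onto the abelianization, Theorem~\ref{thm_MCG_actions} supplies the Nielsen isomorphism $a_*:\Out^+(\Pi)\rightiso\SL_2(\bZ)$, and by Proposition~\ref{prop_local_structure_of_TG} the fiber of the finite \'{e}tale map $\ff:\cM(G)_\bC\to\cM(1)_\bC$ over $E_0$ is $\Epi^\ext(\Pi,G)$ with its $\Out^+(\Pi)\cong\SL_2(\bZ)$-action.

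Given a finite index $\Gamma\le\SL_2(\bZ)$, set $\Gamma':=a_*^{-1}(\Gamma)\le\Out^+(\Pi)\cong\Out^+(F_2)$, again of finite index. First I would apply Theorem~\ref{thm_asada} to $\Gamma'$ to produce a finite group $G$ and a surjection $\varphi:\Pi\to G$ whose $\Out^+(\Pi)$-stabilizer $\Gamma_\varphi$ satisfies $\Gamma_\varphi\le\Gamma'$. By the Galois correspondence (Theorems~\ref{noohi_topological_galois} and~\ref{thm_Galois}) the $\SL_2(\bZ)$-orbit of $\varphi$ corresponds to a connected component $\cM_\varphi\subset\cM(G)_\bC$, and by~\S\ref{sss_combinatorial_decomposition} there is an isomorphism $\cM_\varphi\cong[\cH/a_*(\Gamma_\varphi)]$ over $\cM(1)_\bC\cong[\cH/\SL_2(\bZ)]$, where the quotient is taken with respect to the right action~\eqref{eq_right_action}.

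It remains to assemble the domination map. Since $\Gamma_\varphi\le\Gamma'$ we have $a_*(\Gamma_\varphi)\le a_*(\Gamma')=\Gamma$, so among the basic properties of modular stacks recalled above there is a natural unramified covering map $[\cH/a_*(\Gamma_\varphi)]\to[\cH/\Gamma]$ commuting with the projections down to $[\cH/\SL_2(\bZ)]\cong\cM(1)_\bC$; being a covering map between connected stacks it is automatically surjective. Precomposing with the isomorphism $\cM_\varphi\cong[\cH/a_*(\Gamma_\varphi)]$ exhibits $[\cH/\Gamma]$ as dominated, as a cover of $\cM(1)_\bC$, by the component $\cM_\varphi$ of $\cM(G)_\bC$, as desired.

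Because Theorem~\ref{thm_asada} is the substantive input and is taken as given, this argument has no genuine obstacle; the points that require attention are purely bookkeeping. One must keep the identification $\Out^+(\Pi)\cong\SL_2(\bZ)$ fixed throughout, so that the group-theoretic stabilizer $\Gamma_\varphi$ produced by Asada really is the subgroup uniformizing $\cM_\varphi$, and one must remember that all quotients appearing here use the right action~\eqref{eq_right_action}, so that the mirror-image issue of~\S\ref{sss_mirror} plays no role (it would only matter if one wished instead to dominate $\Gamma\bs\cH$, which one handles by applying the above to $\mf{s}(\Gamma)$). Finally, different choices of $E_0$, $x_0$, $a$, or of $\varphi$ inside its $\SL_2(\bZ)$-orbit replace $a_*(\Gamma_\varphi)$ by a conjugate subgroup, which is immaterial since domination is asserted only up to isomorphism of covers.
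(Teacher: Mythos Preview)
Your argument is correct and is precisely the intended derivation: the paper states this as an immediate corollary of Theorem~\ref{thm_asada} without separate proof, and you have simply made explicit the passage through the combinatorial description of \S\ref{sss_combinatorial_decomposition} and the Galois correspondence. Your bookkeeping remarks on the right action and the mirror-image non-issue are accurate and align with the paper's conventions.
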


It follows that every modular stack can be viewed as the moduli space of elliptic curves equipped with an equivalence class of $G$-structures, for some suitable notion of equivalence, whence the assertion of \S\ref{ss_level_structures_intro}. A group-theoretic proof of Asada's theorem is given in \cite[\S5]{BER11}, where they also show to construct, from $\Gamma$, a suitable $\varphi : \Pi\ra G$ such that $\Gamma\supset\Gamma_\varphi$.

The result of Asada is made more precise by a theorem of Ellenberg-McReynolds \cite[Theorem 1.2]{EM12}, which has the following interpretation in terms of $G$-structures:



\begin{thm}[{\cite{EM12}}] For any subgroup $\Gamma\le\Gamma(2) := \Ker(\SL_2(\bZ)\ra\SL_2(\bZ/2))$ containing $-I$, there is an integer $d$ and a transitive subgroup $G\le\fS_d$ such that the modular stack $[\cH/\Gamma]$ is isomorphic to a component of $\cM(G)_\bC/N$, where $N := N_{\fS_d}(G)/\Inn(G)$.
\end{thm}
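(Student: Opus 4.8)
The plan is to argue entirely on the combinatorial model of \S\ref{sss_combinatorial_decomposition}. Fix a reference elliptic curve $E_0/\bC$, write $\Pi := \pi_1(E_0^\circ(\bC),x_0)\cong F_2$, and use the resulting identification $\Out^+(\Pi)\cong\SL_2(\bZ)$. For a transitive $G\le\fS_d$ the group $N=N_{\fS_d}(G)/\Inn(G)$ acts on $\Epi^\ext(\Pi,G)$ by postcomposition, commuting with the $\Out^+(\Pi)$-action; so by the Galois correspondence and \S\ref{sss_combinatorial_decomposition} the components of $\cM(G)_\bC/N$ are the stacks $[\cH/\Gamma^N_\varphi]$, where for $\varphi\in\Epi^\ext(\Pi,G)$ one sets
\[
\Gamma^N_\varphi := \{\gamma\in\Out^+(\Pi)\ :\ \varphi\circ\gamma_*\ \text{is $N$-equivalent to}\ \varphi\}\ \le\ \SL_2(\bZ).
\]
Thus the theorem reduces to: given $\Gamma$, produce a triple $(d,G,\varphi)$ with $\Gamma^N_\varphi$ conjugate to $\Gamma$ in $\SL_2(\bZ)$. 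Since $\Gamma^N_\varphi\supseteq\Gamma_\varphi:=\Stab_{\SL_2(\bZ)}[\varphi]$, Asada's theorem (Theorem \ref{thm_asada}) already supplies a pair $(G_0,\varphi_0)$ with $\Gamma_{\varphi_0}\subseteq\Gamma$; what remains is (i) promoting this \emph{containment} to an \emph{equality}, and (ii) getting an isomorphism of \emph{stacks}, which forces the generic $\bZ/2$ of automorphisms of $[\cH/\Gamma]$ (present exactly because $-I\in\Gamma$) to be matched by a generic automorphism of $\cM(G)_\bC/N$. Point (ii) is what brings the hypotheses into play and dictates the construction.

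The mechanism for both (i) and (ii) is the hyperelliptic/pillowcase picture. Under $\Out^+(\Pi)\cong\SL_2(\bZ)$ the central element $-I$ corresponds to the elliptic involution $\iota$ of $E_0$, and $\SL_2(\bZ)$ descends, through $\PSL_2(\bZ)$, to an action on the quotient orbifold $E_0^\circ/\iota$: a $\bP^1$ carrying three $\bZ/2$-orbifold points (the images of the non-origin $2$-torsion) and one puncture $\ol O$, for which $\ol{\Gamma(2)}:=\Gamma(2)/\{\pm I\}$ is precisely the stabilizer of the three special points, and is free of rank $2$, canonically $\pi_1$ of a thrice-punctured sphere. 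A $\varphi\in\Epi^\ext(\Pi,G)$ whose associated cover $C\to E_0^\circ$ is $\iota$-invariant is exactly the pullback, along the double cover $E_0^\circ\to E_0^\circ/\iota$, of an orbifold cover of $E_0^\circ/\iota$, equivalently of a finite-index subgroup $H$ of $\Delta:=\pi_1^{\mathrm{orb}}(E_0^\circ/\iota)\cong\bZ/2\ast\bZ/2\ast\bZ/2$ (the side condition being that the restricted $\Delta$-action stays transitive). For such $\varphi$ one checks that $\Gamma^N_\varphi$ automatically contains $-I$, lies in $\Gamma(2)$ (once the three branch permutations are arranged to lie in distinct nontrivial conjugacy classes, so that the $2$-torsion is individually preserved), and equals the preimage in $\SL_2(\bZ)$ of $\Stab_{\ol{\Gamma(2)}}([H])$, where $\ol{\Gamma(2)}$ acts on finite-index $H\le\Delta$ — equivalently on the associated triples of involutions in $\fS_d$ — by the pure Hurwitz braid action. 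So the problem becomes: realize an arbitrary finite-index $\ol\Gamma\le\ol{\Gamma(2)}$ as $\Stab_{\ol{\Gamma(2)}}([H])$ for such an $H$, and then take $G$ to be the monodromy group of the pulled-back cover $C\to E_0^\circ$ (a transitive subgroup of $\fS_d$), with $N$ absorbing the relabeling ambiguity.

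For the realization I would use that $\ol{\Gamma(2)}\cong\pi_1(\bP^1\setminus\{0,1,\infty\})$, so that $\ol\Gamma$ of index $e$ corresponds to a connected degree-$e$ cover of $\bP^1\setminus\{0,1,\infty\}$, i.e. to a Hurwitz datum for covers of $\bP^1$ branched over four points; this is where one invokes the universality of such four-branch-point Hurwitz spaces (the elliptic analogue of which is Theorem \ref{thm_ACEM}). One converts this datum, via the canonical Legendre identification of $\bP^1\setminus\{0,1,\infty\}$ with $\bP^1_x\setminus\{0,1,\lambda\}=(E_\lambda/\iota)\setminus(2\text{-torsion}\setminus O)$, into a cover of $E_\lambda/\iota$ branched over $\{0,1,\lambda,\infty=\ol O\}$, and then pulls back along $E_\lambda\to E_\lambda/\iota$ to obtain an origami $C_\lambda\to E_\lambda^\circ$ branched only over $O$. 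For the pullback to be branched only over $O$ the monodromy at $0,1,\infty$ must be made involutive, which the naive $\ol\Gamma$-cover is not; following \cite{EM12} one instead replaces it by a suitably enlarged cover — larger degree, involutive branch permutations in distinct nontrivial classes, monodromy group $G$ a carefully chosen (typically alternating or symmetric) transitive subgroup of $\fS_d$ — arranged so that the pullback remains connected and the pure Hurwitz stabilizer of $[H]$ is \emph{exactly} $\ol\Gamma$. Then the component of $\cM(G)_\bC/N$ through $[C_\lambda]$ is $[\cH/\Gamma^N_\varphi]=[\cH/\Gamma]$, and the hyperelliptic involution $\iota$ provides the generic $\bZ/2$ that matches $-I\in\Gamma$, giving the claimed isomorphism of stacks.

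The main obstacle is this last point: upgrading Asada's containment to an exact equality of stabilizers. Asada's argument is not fine enough, and closing the gap is the substantive content of \cite{EM12}: it requires the explicit combinatorial construction of the enlarged four-branch-point cover, together with careful bookkeeping ensuring that neither the passage through the $\iota$-quotient nor the passage to $\cM(G)/N$ introduces accidental symmetries (which would enlarge the stabilizer) or disconnects the cover (which would alter the moduli problem). A secondary but genuine technical point is verifying the dictionary of the second paragraph — that $\iota$-equivariant covers of $E_0^\circ$ unramified away from $O$ correspond Veech-group-faithfully to orbifold covers of $E_0^\circ/\iota$, with the resulting subgroup landing in $\Gamma(2)$ — which is exactly where the hypotheses $\Gamma\le\Gamma(2)$ and $-I\in\Gamma$ are used.
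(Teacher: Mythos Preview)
The paper does not prove this theorem; it is stated with attribution to \cite{EM12} and immediately followed by corollaries. The nearest the paper comes to an argument is \S\ref{sss_EM}, which recalls the original Veech-group formulation (every finite-index $\Gamma\le\Gamma(2)$ containing $-I$ is the Veech group of a square-tiled surface) and explains, via the Teichm\"uller uniformization and the identification of $\cM(\pi)$ with the double quotient by the $G$-equivariant Veech group, why that statement translates into the moduli-theoretic one stated here.

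Your sketch is therefore not competing against a proof in this paper. That said, it is an accurate outline of the strategy of \cite{EM12}: the pillowcase reduction identifying $\iota$-equivariant covers of $E^\circ$ with orbifold covers of $\bP^1$ branched over four points, the role of $-I$ as the elliptic involution (which is why the hypotheses $\Gamma\le\Gamma(2)$ and $-I\in\Gamma$ enter), and the need to upgrade Asada's containment $\Gamma_\varphi\subseteq\Gamma$ to an exact equality $\Gamma^N_\varphi=\Gamma$. You correctly isolate this last step as the substantive content and correctly defer it to the explicit construction in \cite{EM12}; that construction is not reproduced in this paper either. Your route through four-point Hurwitz data and the paper's route through Veech groups of square-tiled surfaces are two languages for the same object, so there is no real divergence of approach to report.
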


Since conjugacy classes of subgroups of $\Pi$ correspond to isomorphism classes of admissible covers of $E$, we obtain the following geometric interpretation:

\begin{cor} For any subgroup $\Gamma\le\Gamma(2)$ containing $-I = \spmatrix{-1}{0}{0}{-1}$, there is an admissible cover $\pi : T\ra S$ of a pointed torus $S$ such that the modular curve $\cH/\Gamma$ is isomorphic to the coarse moduli space of covers of elliptic curves topologically equivalent to $\pi$.\footnote{If $S,S'$ are pointed tori and $\pi : T\ra S$, $\pi' : T'\ra S'$ are admissible covers, we say that $\pi,\pi'$ are topologically equivalent if there are homeomorphisms $S\ra S'$ and $T\ra T'$ which commute with the covering maps (see Remark \ref{remark_topologically_equivalent}).}
\end{cor}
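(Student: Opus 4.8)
The plan is to read this off from the theorem of Ellenberg--McReynolds proved just above, translating its conclusion about a component of $\cM(G)_\bC/N$ into the Teichm\"uller language of \S\ref{sss_Teichmuller_uniformization}. First I would apply that theorem to the given $\Gamma$: it produces an integer $d$, a transitive subgroup $G\le\fS_d$, a connected component $\cM$ of $\cM(G)_\bC/N$ (with $N = N_{\fS_d}(G)/\Inn(G)$), and an isomorphism of covers of $\cM(1)_\bC$,
\[
[\cH/\Gamma]\;\rightiso\;\cM .
\]
The hypotheses $\Gamma\le\Gamma(2)$ and $-I\in\Gamma$ are used only to invoke this theorem and play no further role.

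The second step is to identify what $\cM(G)_\bC/N$ parametrizes. Fixing an elliptic curve $E_0/\bC$ with $\Pi := \pi_1(E_0^\circ(\bC),x_0)$, Proposition \ref{prop_local_structure_of_TG} identifies the fiber of $\cM(G)_\bC$ over $E_0$ with $\Epi^\ext(\Pi,G)$, so the fiber of $\cM(G)_\bC/N$ over $E_0$ is $\Epi^\ext(\Pi,G)/N$. Using the permutation representation $G\hookrightarrow\fS_d$ and the stabilizer $\fS_{d-1} := \Stab_{\fS_d}(1)$, I would attach to the class of $\varphi\in\Epi^\ext(\Pi,G)$ the $\Pi$-conjugacy class of the index-$d$ subgroup $H_\varphi := \varphi^{-1}(\fS_{d-1})\le\Pi$; by the Galois correspondence this conjugacy class is the same datum as an isomorphism class of connected degree-$d$ covers of $E_0^\circ$, equivalently (passing to the relative normalization of $E_0$ in such a cover) an admissible cover $\pi : T\ra E_0$ branched only over $O$ whose monodromy group is conjugate to $G$ in $\fS_d$. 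The key point is that replacing $\varphi$ by $n\varphi n^{-1}$ with $n\in N_{\fS_d}(G)$ conjugates the associated $\fS_d$-valued monodromy by $n$ and so leaves this isomorphism class unchanged, while conversely two surjections onto $G$ producing isomorphic degree-$d$ covers have $\fS_d$-conjugate monodromy, hence differ by an element of $N_{\fS_d}(G)$ since both have image exactly $G$. Thus $\Epi^\ext(\Pi,G)/N$ is precisely the set of isomorphism classes of such $\pi$, and $\cM(G)_\bC/N$ is the moduli stack of elliptic curves equipped with a connected degree-$d$ admissible cover of $E^\circ$ with monodromy group $\cong G$ --- a ``mere cover'', with the $G$-action forgotten.

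Finally I would fix $\varphi$ representing the component $\cM$, take $S$ to be the pointed torus underlying $E_0$, and let $\pi : T\ra S$ be the admissible cover attached to $H_\varphi$. Running the Teichm\"uller argument of \S\ref{sss_Teichmuller_uniformization}, but with $\Epi^\ext(\Pi,G)$ replaced throughout by $\Epi^\ext(\Pi,G)/N$ and ``$G$-equivariant homeomorphism'' replaced by ``homeomorphism'' --- this last being exactly the notion of topological equivalence of covers in the footnote, cf.\ Remark \ref{remark_topologically_equivalent} --- gives $\cM\cong[\cT(S)/\Gamma_\pi]\cong[\cH/\Gamma_\varphi]$, where $\Gamma_\pi = \{\alpha\in\Gamma(S^\circ)\;:\;m_0\circ\alpha\circ\pi\cong m_0\circ\pi\ \text{as covers of }E_0^\circ\}$ is the $\Out^+(\Pi)\cong\SL_2(\bZ)$-stabilizer of the class of $\varphi$ in $\Epi^\ext(\Pi,G)/N$. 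By construction $\cM$ is the moduli stack of admissible covers of elliptic curves topologically equivalent to $\pi$, so its coarse scheme $M(\pi) = \cH/\Gamma_\varphi$ is the coarse moduli space appearing in the statement; composing the isomorphisms above and using that $\cH/\Gamma$ depends only on the $\SL_2(\bZ)$-conjugacy class of $\Gamma$ yields $\cH/\Gamma\cong M(\pi)$. The main obstacle is the second step: verifying carefully that the rigidification of $\cM(G)$ by $N$ corresponds, under the Galois correspondence, exactly to forgetting the $G$-action and keeping the associated degree-$d$ cover up to isomorphism, and hence that ``topologically equivalent to $\pi$'' matches the $N$-orbit relation on monodromy representations. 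This is routine bookkeeping but requires care in tracking the interplay between the Galois-theoretic and topological descriptions of covers.
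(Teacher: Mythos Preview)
Your proposal is correct and follows exactly the approach the paper intends: the paper's entire argument is the one-line remark preceding the corollary, ``Since conjugacy classes of subgroups of $\Pi$ correspond to isomorphism classes of admissible covers of $E$, we obtain the following geometric interpretation,'' and you have simply unpacked what this means --- that passing from $\cM(G)_\bC$ to $\cM(G)_\bC/N$ replaces the $G$-cover by its underlying degree-$d$ cover, so that the Teichm\"uller description of \S\ref{sss_Teichmuller_uniformization} reads off the coarse moduli space of covers topologically equivalent to a fixed $\pi$. Your careful bookkeeping in the second step (that $\Epi^\ext(\Pi,G)/N \cong \Epi(\Pi,G)/N_{\fS_d}(G)$ is exactly the set of isomorphism classes of degree-$d$ covers with monodromy image $G$) is the content the paper leaves implicit.
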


By Belyi's theorem, every smooth projective algebraic curve over $\Qbar$ is the compactification of a cover of $\cH/\Gamma(2)$, hence a modular curve. Thus it follows that

\begin{cor} Every algebraic curve over $\Qbar$ is isomorphic to a component of $\cM(G)_\Qbar/N$, where $G$ is an appropriate transitive subgroup of $\fS_d$ and $N := N_{\fS_d}(G)/\Inn(G)$.	
\end{cor}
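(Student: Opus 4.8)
The plan is to combine the Ellenberg–McReynolds theorem (the previous Corollary) with Belyi's theorem and the dictionary between finite-index subgroups of $\SL_2(\bZ)$ and (compactified) modular curves. First I would recall that by Belyi's theorem, any smooth projective curve $X$ over $\Qbar$ admits a finite morphism $X\ra\bP^1_\Qbar$ branched only over $\{0,1,\infty\}$; equivalently, $X$ minus the preimages of $\{0,1,\infty\}$ is a finite \'{e}tale cover of $\bP^1\setminus\{0,1,\infty\}$. Since $\cH/\Gamma(2)$ is isomorphic, as a curve over $\bQ$, to the $\lambda$-line $\bP^1\setminus\{0,1,\infty\}$ (with the usual modular interpretation classifying elliptic curves with full level $2$ structure), the open curve $X^\circ$ underlying $X$ is realized as a connected finite \'{e}tale cover of $\cH/\Gamma(2)$. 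By the Galois correspondence for $\cM(1)_\bC$ (Theorem \ref{noohi_topological_galois}, or rather the classical Riemann surface version), this cover is of the form $\cH/\Gamma\ra\cH/\Gamma(2)$ for some finite-index $\Gamma\le\Gamma(2)$, so that the compactification of $X^\circ$ — namely $X$ itself — is the smooth compactification of the modular curve $\cH/\Gamma$.

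Next I would address the normalization $-I\in\Gamma$. Since $-I$ acts trivially on $\cH$, we may freely replace $\Gamma$ by $\<\Gamma,-I\>$ without changing the modular curve $\cH/\Gamma$ (only its associated modular \emph{stack}), and $\<\Gamma,-I\>\le\SL_2(\bZ)$ still lies in $\<\Gamma(2),-I\>=\Gamma(2)$, since $-I\in\Gamma(2)$. Thus we may and do assume $-I\in\Gamma\le\Gamma(2)$.

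Now I would invoke the Ellenberg–McReynolds theorem exactly as stated in the excerpt: for this $\Gamma$ there is an integer $d$ and a transitive subgroup $G\le\fS_d$ such that the modular stack $[\cH/\Gamma]$ is isomorphic to a component of $\cM(G)_\bC/N$, where $N=N_{\fS_d}(G)/\Inn(G)$. Passing to coarse schemes (which for a connected finite \'{e}tale cover of $\cM(1)_\bC$ is the corresponding modular curve $\cH/\Gamma$, by \S\ref{sss_Teichmuller_uniformization} and Proposition \ref{prop_coarse}(d)), we get that the curve $\cH/\Gamma$ — hence its smooth compactification $X$ — is a component of the coarse scheme of $\cM(G)_\bC/N$. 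Finally, to get the statement over $\Qbar$ rather than merely $\bC$: the stack $\cM(G)$ and the action of the finite group $N$ are defined over $\bZ[1/|G|]$, hence over $\Qbar$; and the formation of coarse schemes commutes with the flat base change $\bC\ra\Qbar$ by Proposition \ref{prop_coarse}(b). Since $X$ is a curve over $\Qbar$ whose base change to $\bC$ is a component of $(\cM(G)/N)_\bC$, and components of a finite-type $\Qbar$-scheme are detected after base change to $\bC$, $X$ is (isomorphic to) a component of $\cM(G)_\Qbar/N$, as desired.

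The main obstacle here is essentially a bookkeeping one: one must be careful that the Ellenberg–McReynolds result produces the modular curve with the correct $\Qbar$-structure — i.e. that the isomorphism of the Corollary is compatible with the canonical rational structures on both sides coming from the $j$-line — and that the mirror-image subtlety of \S\ref{sss_mirror} (quotient by the right action \eqref{eq_right_action} versus the left action \eqref{eq_left_action}) does not change the isomorphism class of $X$ over $\Qbar$. By Proposition \ref{prop_mirror}, $\Gamma\bs\cH$ and $\cH/\Gamma$ are complex conjugates over the $\bQ$-structure of the $j$-line, so a priori one only gets $X$ or its complex conjugate $\ol{X}$; but since Belyi's theorem is symmetric (if $X\ra\bP^1$ is Belyi so is $\ol{X}\ra\bP^1$, via a conjugate Belyi map, and conjugation is an involution on the set of all curves over $\Qbar$), applying the argument to $\ol{X}$ in place of $X$ recovers $X$ itself. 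All other steps — Belyi, the identification $\cH/\Gamma(2)\cong\bP^1\setminus\{0,1,\infty\}$, the Galois correspondence, and coarse base change — are quoted directly from results stated earlier in the paper.
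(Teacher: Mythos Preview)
Your proposal is correct and follows essentially the same approach as the paper: Belyi's theorem realizes $X$ as (the compactification of) a modular curve $\cH/\Gamma$ with $\Gamma\le\Gamma(2)$, and then the Ellenberg--McReynolds theorem identifies this with a component of $\cM(G)/N$. The paper's proof is literally the one sentence preceding the Corollary; you have simply supplied the bookkeeping (the $-I$ adjustment, coarse base change along $\Qbar\hookrightarrow\bC$, and the mirror-image subtlety) that the paper leaves implicit.
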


\subsubsection{Remarks on Asada's theorem and congruence subgroup topologies} In \cite{Asa01}, Theorem \ref{thm_asada} is referred to as \emph{the congruence subgroup property of $\Out^+(\Pi)$}. In general, for any finitely generated group $P$ and normal subgroup $N\le P$, define
$$\Gamma[N] := \{\gamma\in\Aut(P)\;|\; \text{$\gamma(N) = N$ and $\gamma$ induces the identity on $P/N$}\}$$
These are finite index subgroups which correspond to the groups $\Gamma(n)$ and $\Gamma_1(n)$ in the case $P = \bZ^2$. The profinite topology induced by the groups $\Gamma(N)$ is called the \emph{congruence subgroup topology} on $\Aut(P)$, and the statement that every finite index subgroup contains some $\Gamma[N]$ is equivalent to the statement that the congruence subgroup topology $\tau_c$ on $\Aut(P)$ coincides with the full profinite topology $\tau_{pf}$; in this case we say that $\Aut(P)$ (and hence $\Out(P)$) \emph{has the congruence subgroup property}. The topology $\tau_c$ is natural in the sense that if $\what{P}$ denotes the profinite completion of $P$, then the closure of $\Aut(P)$ inside $\Aut(\what{P})$ (equipped with the compact-open topology) is isomorphic to the $\tau_c$-completion of $\Aut(P)$. From this perspective, the congruence subgroup property for $\Aut(P)$ is equivalent to the injectivity of the natural map $\what{\Aut(P)}\ra\Aut(\what{P})$ induced by the canonical inclusion $\Aut(P)\hookrightarrow\Aut(\what{P})$. See \cite{BER11} and \cite[\S4.4]{RZ10} for more details.

Similarly, we have analogous notions of congruence subgroups for finite index subgroups of $\Aut(P)$ and $\Out(P)$. The fact that $\SL_2(\bZ)$ is finite index in both $\Aut(\bZ^2)$ and $\Out(\Pi)$ leads to two notions of congruence subgroups, each leading to different notions of level structures and moduli interpretations, the former classical (abelian), the latter nonabelian. The theorem of Asada shows that $\SL_2(\bZ)$ has the congruence subgroup property in the latter sense. On the other hand, it is well known that $\SL_2(\bZ)$ does not have the congruence subgroup property in the former classical sense; this failure is equivalent to the fact that the map
$$\what{\SL_2(\bZ)}\lra\SL_2(\Zhat)$$
induced by the inclusion $\SL_2(\bZ)\hookrightarrow\SL_2(\Zhat)$ has a nontrivial kernel, called the \emph{congruence kernel}\footnote{By a result of Mel'nikov, this kernel is a free profinite group of countable rank \cite{Mel76}.}. We note that while $\what{\SL_2(\bZ)}$ is the fundamental group of $\cM(1)_\bC$, the modern adelic formulation of modular forms only sees the group $\SL_2(\Zhat)$. In \cite[Theorem 5.5.11]{Chen18}, it is shown that the unbounded denominators conjecture, now a theorem \cite{CDT21}, is equivalent to the statement that the congruence kernel of $\SL_2(\bZ)$ is exactly the image of the \'{e}tale fundamental group of a certain ring of bounded denominators power series.

\subsubsection{Remarks on Ellenberg-McReynold's theorem and Veech groups}\label{sss_EM}
\begin{thm}[{Ellenberg-McReynolds \cite[Theorem 1.2]{EM12}}] Every finite index subgroup of $\Gamma(2)$ containing $\{\pm I\}$ is the Veech group of a square-tiled surface.	
\end{thm}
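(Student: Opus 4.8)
The plan is to read the statement off from the realization of $[\cH/\Gamma]$ as a component of $\cM(G)_\bC/N$ recorded above --- which is the content of \cite[Theorem 1.2]{EM12} --- by matching that picture with the flat-geometric one. First I would set up the dictionary between the two meanings of ``square-tiled surface'': a translation surface glued from finitely many unit squares is the same datum as a connected finite topological cover $\pi:T\ra S$ of a pointed torus $S$ which is unramified over $S^\circ$, i.e.\ an admissible cover of a pointed torus in the sense of \S\ref{ss_admissible_covers_over_k}, once one equips $S$ with the standard flat structure $S=\bR^2/\bZ^2$ (marked point $O=0$) and pulls it back along $\pi$. Write $\cO_\pi=(T,\omega_\pi)$ for the resulting square-tiled surface, with Veech group $\SL(\cO_\pi)\le\SL_2(\bR)$, the group of derivatives of orientation-preserving affine automorphisms of $(T,\omega_\pi)$. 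This group always contains $-I$ (the elliptic involution lifts), and it is classical (Gutkin--Judge, Veech) that $\SL(\cO_\pi)$ is commensurable with $\SL_2(\bZ)$.

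The heart of the argument is to identify the Teichm\"{u}ller curve generated by $\cO_\pi$ with the coarse space $M(\pi)$ of the moduli of covers of elliptic curves topologically equivalent to $\pi$ (Remark \ref{remark_topologically_equivalent}, \S\ref{sss_Teichmuller_uniformization}), compatibly with the forgetful maps to $\cH/\SL_2(\bZ)$, and under this identification to match $\SL(\cO_\pi)$ with the group uniformizing $M(\pi)$. The mechanism is that the $\SL_2(\bR)$-action on the flat structure of $S$ is, via $\SL_2(\bR)/\SO(2)\cong\cH$, precisely the variation of the complex structure of the pointed torus --- i.e.\ motion in $\cT(S)$ --- so transporting the topologically fixed cover $\pi$ along this motion produces exactly the universal family over $\cM(\pi)$ that appears in the Teichm\"{u}ller uniformization of \S\ref{sss_Teichmuller_uniformization}. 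Consequently the Teichm\"{u}ller disk $\SL_2(\bR)\cdot\cO_\pi\cong\cH$ maps, under the map forgetting the flat structure, onto the uniformizing cover $\cT(S)\ra\cM(\pi)$ of that section, and the stabilizer of the disk --- namely $\SL(\cO_\pi)$ --- is carried to the group uniformizing $M(\pi)$, up to $\pm I$ (harmless, both groups containing it) and up to the mirror involution $\mf{s}$ of \S\ref{sss_mirror}, which accounts for the difference between the left m\"{o}bius action \eqref{eq_left_action} under which Teichm\"{u}ller curves are customarily written and the right action \eqref{eq_right_action} used here. Making this comparison precise --- in particular getting the orientation and mirror bookkeeping right --- is the step I expect to be the real work; I would rely on the existing treatments of origami Teichm\"{u}ller curves as Hurwitz spaces over modular curves (Lochak, Herrlich--Schmith\"{u}sen, M\"{o}ller), where exactly this comparison is carried out.

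Granting the dictionary, the theorem becomes bookkeeping. Let $\Gamma\le\Gamma(2)$ be a finite-index subgroup with $-I\in\Gamma$. Since $\mf{s}$ fixes $\Gamma(2)$ setwise and fixes $-I$, the group $\mf{s}(\Gamma)$ is again a finite-index subgroup of $\Gamma(2)$ containing $-I$, so the corollary to \cite[Theorem 1.2]{EM12} recorded above produces an admissible cover $\pi:T\ra S$ of a pointed torus with $M(\pi)\cong\cH/\mf{s}(\Gamma)$ over the $j$-line. By the previous paragraph $\cO_\pi$ is then a square-tiled surface whose Veech group is conjugate in $\SL_2(\bZ)$ to $\mf{s}(\mf{s}(\Gamma))=\Gamma$; in particular $\SL(\cO_\pi)$ genuinely lies in $\SL_2(\bZ)$, being conjugate to $\mf{s}$ of the group $\mf{s}(\Gamma)\le\SL_2(\bZ)$ uniformizing $M(\pi)$. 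Conjugating by a suitable element of $\SL_2(\bZ)$ --- equivalently, re-cutting $\cO_\pi$ into a sheared array of unit squares --- then makes the Veech group equal to $\Gamma$ exactly. (If in the comparison of the previous paragraph $\SL(\cO_\pi)$ turns out to equal the uniformizing group itself rather than its image under $\mf{s}$, one runs the same argument with target $\Gamma$ in place of $\mf{s}(\Gamma)$; since $\mf{s}$ preserves the class of finite-index subgroups of $\Gamma(2)$ containing $-I$, either convention realizes every such $\Gamma$.) The genuinely hard construction --- producing a cover whose Hurwitz space is exactly $[\cH/\Gamma]$ rather than merely dominating it, via a transitive permutation representation and the quotient by $N=N_{\fS_d}(G)/\Inn(G)$ --- is the part absorbed into the cited realization, which the present framework is meant to repackage.
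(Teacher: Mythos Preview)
The paper does not supply its own proof of this theorem; it is cited from \cite{EM12}. What the paper does in \S\ref{sss_EM} is set up the dictionary between the Veech-group formulation and the Hurwitz-space formulation, via the Proposition identifying $\cM(\pi)$ with $[\bC^\times\bs\GL_2^+(\bR)/\Gamma(C^\circ,\pi^*\mu)_G]$. Your proposal builds essentially the same dictionary, and your comparison of the Teichm\"uller disk with the uniformization of \S\ref{sss_Teichmuller_uniformization} is the content of that Proposition.

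The one point to flag is the direction of dependence. You invoke ``the corollary to \cite[Theorem 1.2]{EM12} recorded above'' --- the existence of an admissible cover $\pi$ with $M(\pi)\cong\cH/\Gamma$ --- to produce your square-tiled surface. But in the paper that corollary, and the $\cM(G)/N$ reformulation preceding it, are themselves presented as \emph{interpretations} of \cite[Theorem 1.2]{EM12}, which is already the Veech-group statement you are proving. So read literally your argument is circular: you are deducing the Veech-group form from a statement the paper derives from the Veech-group form via the same dictionary. Read instead as establishing the equivalence of the two formulations, your argument matches the paper's treatment, and your final sentence correctly acknowledges that the substantive construction lives in \cite{EM12}.
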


We explain these terms and how they relate to our setting. Let $E_0 := \bC/\langle 1,i\rangle$ be the ``square elliptic curve''. If one chooses a nonzero differential $\omega$ on $E_0$, integrating $\omega$ endows $E_0$ with a translation structure: an atlas $\mu = \mu_\omega$ of local charts whose transition functions are translations. We will identify $\bR^2\cong\bC$ using the basis $1,i\in\bC$, relative to which we will view the charts of $\mu$ as landing in $\bR^2$. If $\pi : C\ra E_0$ is a covering only branched above the origin, then the translation structure $\mu$ can be pulled back to a translation structure on $C$ with singularities at ramified points; the surface $C$ is seen to be tiled by translates of the square fundamental domains of $E_0$. In general a \emph{square-tiled surface} is a translation surface which can be obtained in this way. Let $C^\circ := C - \pi^{-1}(O)$. An affine diffeomorphism of the translation surface $(C^\circ,\pi^*\mu)$ is a diffeomorphism $f : C^\circ\ra C^\circ$ which locally on charts is given by $z\mapsto Az+b$, where $A\in\GL_2^+(\bR)$ and $b\in\bR^2$. The matrix $A$ is its \emph{derivative}. The \emph{Veech group} $\Gamma(C^\circ,\pi^*\mu)$ is the group of derivatives of all affine diffeomorphisms of $C^\circ$. Let $\Gamma(C^\circ,\pi^*\mu)_G$ be the subgroup of $G$-equivariant diffeomorphisms.

Let $S$ be the underlying topological space of $E_0$. For any flat structure $\nu$ on $S$ with associated complex structure $[\nu]$, the elliptic curve $(S,[\nu])$ marked by the identity map $\id : S\ra (S,[\nu])$ defines a point of $\cT(S)$. One easily checks that the map
\begin{equation}\label{eq_Teichm\"{u}ller}
\begin{array}{rcl}
\GL_2^+(\bR) & \lra & \cT(S) \\
A & \mapsto & [(S,[A\mu]),\id]	
\end{array}\quad\text{induces an isomorphism}\quad\bC^\times\bs\GL_2^+(\bR)\rightiso\cT(S)
\end{equation}
where $\bC^\times$ is identified with the subgroup $\SO_2(\bR)\cdot\bR_{>0}\le\GL_2^+(\bR)$. Suppose now that $\pi : C\ra E_0$ is an admissible $G$-cover, and $M(\pi)$ is its component in $M(G)_\bC$. Using the work of Schmithusen \cite{Sch05}, we have:

\begin{prop} Via \eqref{eq_Teichm\"{u}ller}, the Teichm\"{u}ller uniformization $\cT(S)\ra M(\pi)$ of \S\ref{sss_Teichmuller_uniformization} induces an isomorphism
\begin{equation}\label{eq_dcu}
[\bC^\times\bs\GL_2^+(\bR)/\Gamma(C^\circ,\pi^*\mu)_G]\rightiso \cM(\pi)	
\end{equation}
\end{prop}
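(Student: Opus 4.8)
The plan is to identify both sides of \eqref{eq_dcu} with quotients of $\cT(S)$ and to check the two $\Gamma(C^\circ,\pi^*\mu)_G$-actions match under the isomorphism \eqref{eq_Teichm\"{u}ller}. By \S\ref{sss_Teichmuller_uniformization} the Teichm\"{u}ller uniformization $u_\pi : \cT(S)\to\cM(\pi)$ realizes $\cM(\pi)\cong[\cT(S)/\Gamma_\pi]$, where $\Gamma_\pi\le\Gamma(S^\circ)=\Gamma(S)$ consists of the mapping classes $\alpha$ with $m_0\circ\alpha\circ\pi\cong m_0\circ\pi$ as $G$-covers. So it suffices to show: (i) the composite $\GL_2^+(\bR)\to\cT(S)\to\cM(\pi)$ is $\Gamma(C^\circ,\pi^*\mu)_G$-invariant and descends to the map in \eqref{eq_dcu}; and (ii) the resulting map $[\bC^\times\bs\GL_2^+(\bR)/\Gamma(C^\circ,\pi^*\mu)_G]\to\cM(\pi)$ is an isomorphism of stacks. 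Since \eqref{eq_Teichm\"{u}ller} is already an isomorphism $\bC^\times\bs\GL_2^+(\bR)\rightiso\cT(S)$, statement (ii) reduces to checking that, under this isomorphism, the right $\Gamma(C^\circ,\pi^*\mu)_G$-action on $\GL_2^+(\bR)$ (by precomposition with derivatives) corresponds to the right $\Gamma_\pi$-action on $\cT(S)$, via a group isomorphism $\Gamma(C^\circ,\pi^*\mu)_G\cong\Gamma_\pi$.

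The key step is this last group-level comparison, which is exactly the content of Schmith\"{u}sen's work \cite{Sch05}. Concretely, given an affine diffeomorphism $f : C^\circ\to C^\circ$ with derivative $A\in\GL_2^+(\bR)$ that is $G$-equivariant, it descends (since $\pi$ is a $G$-cover and $C/G\cong E_0$) to an affine diffeomorphism $\bar f$ of $(E_0^\circ,\mu)$ with the same derivative $A$; the isotopy class of $\bar f$ is a mapping class $\alpha\in\Gamma(S^\circ)=\Gamma(S)$, and because $f$ lifts $\bar f$ $G$-equivariantly one gets $m_0\circ\alpha\circ\pi\cong m_0\circ\pi$, i.e. $\alpha\in\Gamma_\pi$. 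Conversely, any $\alpha\in\Gamma_\pi$ is realized by an affine diffeomorphism of $E_0^\circ$ for a suitable flat structure in the appropriate $\GL_2^+(\bR)$-orbit, and the condition $\alpha\in\Gamma_\pi$ provides precisely the $G$-equivariant lift to $C^\circ$. One must check this assignment $f\mapsto\alpha$ is a well-defined homomorphism (independent of isotopy within affine diffeomorphisms fixing the derivative, which amounts to the kernel being trivial — translations of $E_0^\circ$ act trivially on $\cT(S)$ and their $G$-equivariant lifts are the deck transformations, which are the identity in $\Gamma(C^\circ,\pi^*\mu)_G$ after the usual normalization), and that its image is exactly $\Gamma_\pi$. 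Equivariance of \eqref{eq_Teichm\"{u}ller} under these matched actions then follows from the defining formula $A\mapsto[(S,[A\mu]),\id]$ together with the chain rule: precomposing a chart by an affine map with derivative $B$ replaces $A\mu$ by $AB\mu$, while the mapping class $\alpha$ acts on $[(S,[A\mu]),\id]$ by precomposing the marking, which by transport of structure is the same point as $[(S,[AB\mu]),\id]$.

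Finally one upgrades this to an isomorphism of \emph{stacks}, not just of coarse spaces. Both $[\bC^\times\bs\GL_2^+(\bR)/\Gamma(C^\circ,\pi^*\mu)_G]$ and $\cM(\pi)=[\cT(S)/\Gamma_\pi]$ are quotient stacks of the contractible space $\cT(S)\cong\bC^\times\bs\GL_2^+(\bR)$ by the discrete groups in question; an equivariant homeomorphism of the covers intertwining the group actions via a group isomorphism induces an isomorphism of the quotient stacks by the $2$-Yoneda description of quotient stacks (the stack structure is determined by the pair (space, group action)). The main obstacle I expect is the careful bookkeeping in the group comparison: pinning down the precise definitions of $\Gamma(C^\circ,\pi^*\mu)_G$ versus $\Gamma_\pi$ (base points, isotopy vs. homotopy, the role of $\pm I$ and of deck transformations of the cover) so that the map $f\mapsto\alpha$ is genuinely bijective, and verifying that $G$-equivariance of the affine diffeomorphism downstairs is equivalent to the $G$-cover-fixing condition \eqref{eq_abuse} upstairs. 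Once that dictionary is in place, the equivariance of \eqref{eq_Teichm\"{u}ller} and the passage to stack quotients are routine.
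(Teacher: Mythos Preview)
Your proposal is correct and follows essentially the same approach as the paper: both arguments hinge on the fact, due to Schmith\"{u}sen, that a ($G$-equivariant) affine diffeomorphism of $(C^\circ,\pi^*\mu)$ descends to one on $(E_0^\circ,\mu)$, giving the identification $\Gamma(C^\circ,\pi^*\mu)_G\cong\Gamma_\pi$ and hence the stack isomorphism via equivariance of \eqref{eq_Teichm\"{u}ller}. The paper's proof is terser, simply citing \cite[Prop.~2.6(3), Prop.~2.7]{Sch04} for the coarse and stacky statements respectively, whereas you have unpacked the same dictionary in more detail.
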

\begin{proof} To see the isomorphism at the level of coarse schemes, the key fact is that every affine diffeomorphism of $(C^\circ,\nu)$ descends to one on $(E^\circ,\nu)$ \cite[Prop 2.6(3)]{Sch04}. At the level of stacks, one should use an equivariant analog of \cite[Prop 2.7]{Sch04}.	
\end{proof}

The set of all flat structures on $S$ is the complement of the zero section of the cotangent bundle on $\cT(S)$, and is a torsor under $\GL_2^+(\bR)$. In the description above, we implicitly use the ``cotangential base point'' $\mu$ to identify this bundle with $\GL_2^+(\bR)$. In the context of Teichm\"{u}ller dynamics, the action of $\GL_2^+(\bR)$ describes a geodesic flow; the image of $M(\pi)$ in the moduli space of curves of genus $g(C)$ is an isometry relative to the hyperbolic metric on $M(\pi)$ and the Teichm\"{u}ller metric on the moduli space of curves, and is called a \emph{Teichm\"{u}ller curve}. Also see \cite{Chen17, HS07, HS09, Her12, Zor06}.

The isomorphisms \eqref{eq_Teichm\"{u}ller} and \eqref{eq_dcu} are thus nonlinear versions of the symmetric space descriptions of $\cH\cong\cT(S)$ and modular curves, which recover the classical picture when the Veech group is congruence. In addition to telling a nice story, this process of deforming $\mu$ by $\GL_2^+(\bR)$ gives a geometric way to interpolate between mapping classes in $\Gamma(S^\circ)$, similar to how one can interpolate between elements of $\SL_2(\bZ)$ using elements of $\SL_2(\bR)$. This can be used to give a geometric description of the Hecke operators $T_p$ on $\cM(\pi)$. This will be worked out in a future paper.

\subsection{Representability and fine moduli}
A stack $\cX$ is representable if it is isomorphic to a space, or equivalently if the coarse map $c : \cX\ra X$ is an isomorphism. If $\cX$ is a moduli stack, then we say that the representing space $X$ is a \emph{fine moduli space}. In \S\ref{ss_components} we described the components of $\cM(G)_\bC$ as modular stacks $[\cH/\Gamma]$, and hence the component is representable (by a scheme) if and only if $\Gamma$ acts freely on $\cH$, or equivalently if $\Gamma$ is torsion-free. In general, a necessary condition is that the objects of $\cX$ should have no nontrivial automorphisms. If $\cX$ is an algebraic stack admitting a coarse scheme then this condition is also sufficient \cite[04SZ,03XX]{stacks}. Since automorphisms of $\cM(G)$ are determined by automorphisms of elliptic curves, and an automorphism of a family elliptic curves is the identity if and only if it restricts to the identity on a single geometric fiber \cite[Cor 6.2]{MFK94}, we find that it $\cM(G)$ is representable if and only if its geometric points have no nontrivial automorphisms. This implies that representability is invariant under base change by any $S\ra\Spec\bZ[1/n]$.

In characteristic 0, automorphisms can be viewed as representing mapping classes of the underlying topological surface, and hence by the discussion of \S\ref{ss_components}, we find that a substack $\cM\subset\cM(G)_\bQ$ is representable \emph{if and only if} every component of $\cM_\bC$ is uniformized by a torsion-free subgroup $\Gamma$. In positive characteristic, an argument using Deuring's lifting theorem \cite[\S13.5]{Lang87} allows us to reduce to the characteristic 0 case. In particular, we have

\begin{thm} Let $A$ be a subring of $\bC$ in which $|G|$ is invertible, and let $\cM$ be a component of $\cM(G)_A$. Then $\cM$ is representable (by a scheme) if and only if every component of $\cM_\bC$ is uniformized by a torsion-free subgroup of $\SL_2(\bZ)$.
\end{thm}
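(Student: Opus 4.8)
The plan is to reduce representability to a statement about automorphism groups of geometric points, check that statement separately in characteristic $0$ and in characteristic $p$, and feed the positive-characteristic case back to the complex one by lifting.

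\emph{Step 1 (representability via geometric stabilizers).} As an open and closed substack of $\cM(G)_A$, $\cM$ is a smooth Deligne-Mumford stack admitting a coarse scheme $M$ (Proposition \ref{prop_coarse}). By \cite[04SZ,03XX]{stacks} the coarse map $\cM\to M$ is an isomorphism --- i.e.\ $\cM$ is representable --- if and only if every geometric point $\bar x\colon\Spec\Omega\to\cM$ has trivial automorphism group. An object of $\cM$ over $\Omega$ is a pair $(E/\Omega,\alpha)$, and an automorphism of it is an automorphism of $E/\Omega$ fixing $\alpha$, so $\Aut(\bar x)\le\Aut(E/\Omega)$; by the rigidity lemma for elliptic curves \cite[Cor 6.2]{MFK94} such an automorphism is trivial as soon as it is trivial on one geometric fibre. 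Hence the vanishing of all geometric stabilizers is intrinsic to the geometric points of $\cM$, and $\cM$ is representable if and only if $\cM\times_A\Spec\Omega$ is representable for every algebraically closed $\Omega$ admitting a ring map $A\to\Omega$ --- that is, for every such $\Omega$ of characteristic $0$ and every such $\Omega$ of characteristic $p\nmid|G|$.

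\emph{Step 2 (characteristic $0$, and the ``only if'' direction).} For $\Omega$ of characteristic $0$, automorphism groups and connected components are unchanged under the base change $\cM_{\overline\bQ}\to\cM_\Omega$: a hypothetical extra automorphism is already defined over a finitely generated subfield of $\Omega$, which one embeds into $\bC$. Thus $\cM_\Omega$ is representable if and only if $\cM_\bC$ is, and by \S\ref{ss_components} together with property (3) of $[\cH/\Gamma]$ recorded in \S\ref{ss_subgroups}, a component $[\cH/\Gamma]$ of $\cM_\bC$ is representable if and only if $\Gamma$ acts freely on $\cH$, equivalently if and only if $\Gamma$ is torsion-free (in $\SL_2(\bZ)$ the only involution is $-I$, which acts trivially on $\cH$, while every element of order $3,4,6$ fixes a point of $\cH$). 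In particular, if $\cM$ is representable then so is the base change $\cM_\bC$, forcing every uniformizing $\Gamma$ to be torsion-free; this gives the ``only if'' implication.

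\emph{Step 3 (characteristic $p$ by lifting --- the main obstacle).} Conversely, assume every component of $\cM_\bC$ is uniformized by a torsion-free subgroup; by Step 2 every characteristic-$0$ fibre of $\cM$ is representable, so it remains to treat $\cM_{\overline\bF_p}$ for $p\nmid|G|$. Suppose $(E_0/\overline\bF_p,\alpha_0)$ carries a nontrivial automorphism $\phi_0$ with $\phi_0^*\alpha_0=\alpha_0$. Deuring's lifting theorem \cite[\S13.5]{Lang87} --- equivalently, the fact that the locus in $\cM(1)$ of elliptic curves admitting an automorphism conjugate to $\phi_0$ is smooth over $\bZ[1/6|G|]$, so its $\overline\bF_p$-point lifts --- produces $(\widetilde E,\widetilde\phi)$ over a complete discrete valuation ring $R$ of characteristic $0$ with residue field $\overline\bF_p$. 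Since $\ff\colon\cM(G)\to\cM(1)$ is finite \'etale (Theorem \ref{thm_fet}), $\alpha_0$ extends uniquely to a $G$-structure $\widetilde\alpha$ on $\widetilde E/R$ (a section of a finite \'etale scheme over a henselian local ring through a rational point of the closed fibre extends uniquely), and $\widetilde\phi$ fixes $\widetilde\alpha$ because the sections $\widetilde\alpha$ and $\widetilde\phi\cdot\widetilde\alpha$ of $\ff^{-1}([\widetilde E])$ agree on the closed fibre. Base-changing to an algebraic closure of $\Frac R$ (of cardinality the continuum, hence embeddable in $\bC$) yields a characteristic-$0$ geometric point of $\cM(G)$ with nontrivial automorphism group. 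The delicate point --- and the one I expect to cost the most effort --- is to see that this contradicts Step 2, since a priori the resulting component of $\cM(G)_\bC$ need not be one of the components of $\cM_\bC$ itself. One argues this through the Stein factorization $\cAdm(G)\to\Spec B\to\Spec\bZ[1/|G|]$ of the smooth proper morphism $\cAdm(G)\to\Spec\bZ[1/|G|]$, where $B$ is finite \'etale over $\bZ[1/|G|]$: the connected components of the geometric fibres of $\cM(G)$ (equivalently of $\cAdm(G)$) are classified by $\Spec B$ compatibly with specialization, and all components of $\cM(G)_\bC$ lying over one connected component of $\Spec B$ are Galois-conjugate over $\bQ$, hence have identical ramification over the $\bQ$-rational elliptic points of $\cM(1)$ (cf.\ \S\ref{remark_computing}) and so are uniformized by simultaneously torsion-free subgroups. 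The component of $\cM(G)_\bC$ through our lifted point reduces mod $p$ to the component of $\cM_{\overline\bF_p}$ containing $(E_0,\alpha_0)$, hence lies over the same connected component of $\Spec B$ as $\cM$ does; the hypothesis therefore makes it representable, contradicting the nontriviality of $\widetilde\phi$. So no such $\phi_0$ exists, every $\cM_{\overline\bF_p}$ is representable, and by Step 1 $\cM$ is representable, completing the ``if'' direction.
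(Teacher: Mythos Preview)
Your approach is exactly the one the paper sketches (and defers to \cite[Theorem 3.5.3]{Chen18} for): reduce to trivial geometric stabilizers, handle characteristic $0$ by the Teichm\"{u}ller uniformization, and handle characteristic $p$ by Deuring's lifting theorem plus \'{e}tale lifting of the $G$-structure along $\ff$. Your identification of the component-matching subtlety in Step~3, and its resolution via the connected component $\cM''$ of $\cM(G)_{\bZ[1/|G|]}$ containing $\cM$, is correct and is more detail than the paper itself provides.

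There is, however, a small gap in the last step of that resolution. You argue that Galois-conjugate components of $\cM''_\bC$ ``have identical ramification over the $\bQ$-rational elliptic points of $\cM(1)$ and so are uniformized by simultaneously torsion-free subgroups.'' Ramification of the coarse map $\cH/\Gamma\to\cH/\SL_2(\bZ)$ over $j=0,1728$ detects only elements of $\Gamma$ of order $3,4,6$; it does not detect whether $-I\in\Gamma$ when $c_2=c_3=0$ (compare $\Gamma(2)$ with the Sanov subgroup). The clean fix is to bypass ramification entirely: $\Gamma$ is torsion-free if and only if $[\cH/\Gamma]$ is a scheme, and representability of a Deligne--Mumford stack is preserved under base change by field automorphisms, so if one component of $\cM''_\bC$ is representable then every Galois-conjugate component is. With that correction your argument is complete.
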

\begin{proof} See \cite[Theorem 3.5.3]{Chen18}.	
\end{proof}

\subsection{Arithmetic and geometry of noncongruence modular curves}\label{ss_arithmetic_geometry}
The arithmetic and geometry of $\cM(G)$ can be described in terms of two actions on the fundamental group of a punctured elliptic curve. The arithmetic is determined by the action of the absolute Galois group, and the geometry is determined by the action of the mapping class group. In this section we describe this framework. Here we work universally over $\bS = \Spec\bQ$.


Let $(E,O)$ be an elliptic curve over $\bQ$. The base change $E_\Qbar$ defines a geometric point of $\cM(1)$. Let $\Gamma_{E/\bQ} := \pi_1(\cM(1),E_\Qbar)$, $\Gamma_{E_\Qbar} := \pi_1(\cM(1),E_\Qbar)$, and $\Gal_\bQ := \Gal(\Qbar/\bQ)$. Then we have an exact sequence
\[\begin{tikzcd}
1\ar[r] & \Gamma_{E_\Qbar}\ar[r] & \Gamma_{E/\bQ}\ar[r] & \Gal_\bQ\ar[r]\ar[l,bend right=20,"E"'] & 1	
\end{tikzcd}\]
which is split by the $\bQ$-point corresponding to $E$, making $\Gamma_{E/\bQ}$ into a semidirect product $\Gamma_{E/\bQ}\cong\Gamma_{E_\Qbar}\rtimes\Gal_\bQ$. Let $x_0\in E^\circ(\bC)$, let $\Pi := \pi_1(E^\circ(\bC),x_0)$ be the topological fundamental group, and let $\Pi_\Qbar := \pi_1(E^\circ_\Qbar,x_0)$ be the \'{e}tale fundamental group, the profinite completion of $\Pi$. Let $\Pi_\bQ := \pi_1(E^\circ,x_0)$. Viewing $E_\Qbar$ as a geometric fiber of the universal elliptic curve $\cE\ra\cM(1)$, we have an exact sequence
$$1\lra\Pi_\Qbar\lra\pi_1(\cE,x_0)\lra\Gamma_{E/\bQ}\lra 1$$
which leads to an outer representation
$$\rho_{E^\circ/\bQ} : \Gamma_{E/\bQ}\cong\Gamma_{E_\Qbar}\rtimes\Gal_\bQ\lra\Out(\Pi_\Qbar)$$
From \S\ref{ss_combinatorial_structures}, we see that the fiber of the forgetful map $\ff : \cM(G)\ra\cM(1)$ is identified with
$$\ff^{-1}(E_\Qbar) = \Epi^\ext(\Pi_\Qbar,G) = \Epi^\ext(\Pi,G)$$
and by the Galois correspondence, the arithmetic structure of $\cM(G)$ is completely determined by the monodromy action of $\Gamma_{E/\bQ}\cong \Gamma_{E_\Qbar}\rtimes\Gal_\bQ$ on this set, which has two parts: the geometric part, coming from the action of the profinite mapping class group $\Gamma_{E_\Qbar}$, and the arithmetic part, coming from the action of the Galois group $\Gal_\bQ$.

\subsubsection{Geometric monodromy: Action of the mapping class group}\label{sss_geometric_monodromy}
The topological fundamental group of $\cM(1)^\tp\cong[\cH/\SL_2(\bZ)]\cong[\cT(E(\bC))/\Gamma(E(\bC))]$ is the mapping class group $\Gamma(E(\bC))\cong\Gamma(E^\circ(\bC))$. By the comparison theorem, this group embeds into the \'{e}tale fundamental group $\Gamma_{E_\Qbar}$, identifying the latter with the profinite completion of the former. The geometric monodromy action of $\Gamma_{E_\Qbar}$ on $\Pi_\Qbar$, and hence $\ff^{-1}(E_\Qbar)$ is induced by the natural outer action of $\Gamma(E^\circ(\bC))\subset\Gamma_{E_\Qbar}$ on the dense subgroup $\Pi\subset \Pi_\Qbar$. As described in \S\ref{ss_topological_properties}, this action induces an isomorphism $\Gamma(E^\circ(\bC))\rightiso\Out^+(\Pi)$.

Thus, the geometric monodromy action is just the tautological outer action of the special outer automorphism group of a free group of rank 2 on the free group. The induced actions on the sets $\Epi^\ext(\Pi,G)$ determines the geometry of $\cM(G)_\bC$, and remains a remarkably subtle problem. Even the most basic question of determining the set of connected components $\pi_0(\cM(G)_\bC)$ of $\cM(G)_\bC$ is quite difficult. For this, a first observation is that the components correspond to orbits of the monodromy action of the mapping class group $\Gamma(E^\circ(\bC))$, which preserves the ramification type. For us, this means

\begin{prop}\label{prop_higman} Let $a,b$ be a basis for the free group $\Pi$. The following equivalent statements are true:
\begin{enumerate}
	\item $\Out^+(\Pi)$ preserves the conjugacy class of the commutator $[b,a]$.\footnote{This can also be verified group theoretically on generators of $\Out^+(\Pi)$. One generating set is represented by the automorphisms $(a,b)\mapsto (a,ab)$ and $(a,b)\mapsto (b,a^{-1})$.}
	\item For any finite group $G$ and $\varphi\in\Epi^\ext(\Pi,G)$, $\varphi([b,a])$ is $G$-conjugate to $(\varphi\circ\gamma)([b,a])$ for any $\gamma\in\Out^+(\Pi)$.
	\item The Higman invariant is constant on connected components of $\cM(G)_\bC$.
\end{enumerate}
\end{prop}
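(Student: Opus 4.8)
The plan is to prove statement (a), then deduce (b) and (c), since these three are asserted to be equivalent and (a) is the purely group-theoretic core. First I would fix a basis $a,b$ for $\Pi$ and recall that, by Theorem \ref{thm_MCG_actions}, $\Out^+(\Pi)$ is generated by (the outer classes of) two elementary Nielsen transformations, for instance $\sigma : (a,b)\mapsto (a,ab)$ and $\tau : (a,b)\mapsto (b,a^{-1})$ (one should double-check this is a generating set, e.g.\ via \cite[Corollary N4]{MKS04} or the standard presentation of $\Out^+(F_2)\cong\SL_2(\bZ)$). It then suffices to verify that each generator sends the commutator $[b,a]=bab^{-1}a^{-1}$ to a conjugate of $[b,a]^{\pm 1}$, and in fact to $[b,a]$ itself up to conjugacy since we are in $\Out^+$ — the ``$+$'' being exactly what rules out the inversion. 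For $\tau$ one computes $[b,a]\mapsto [a^{-1},b]$, which is conjugate to $[b,a]$ (indeed $[a^{-1},b] = a^{-1}[b,a]a$ after a short manipulation). For $\sigma$ one computes $[b,a]\mapsto [ab,a] = ab\,a\,b^{-1}a^{-1}\,a^{-1}$, and a direct simplification shows this equals $a[b,a]a^{-1}$. Hence the conjugacy class of $[b,a]$ is fixed by a generating set, so by all of $\Out^+(\Pi)$, proving (a).

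Next I would derive (b) from (a). Given $\varphi\in\Epi^\ext(\Pi,G)$ and $\gamma\in\Out^+(\Pi)$, lift $\gamma$ to an honest automorphism $\tilde\gamma$ of $\Pi$; then $(\varphi\circ\gamma)([b,a])$ means $\varphi(\tilde\gamma([b,a]))$, which by (a) equals $\varphi(w[b,a]w^{-1})$ for some $w\in\Pi$, and this is $\varphi(w)\varphi([b,a])\varphi(w)^{-1}$, i.e.\ a $G$-conjugate of $\varphi([b,a])$. (The ambiguity of lifting $\gamma$ to $\Aut(\Pi)$ and the ambiguity of $\varphi$ within its $\Inn(G)$-class only change things by further conjugation, so the conjugacy class $\varphi([b,a])\in G/\text{conj}$ is well defined and depends only on the class of $\varphi$ in $\Epi^\ext$.) For (c): by \S\ref{sss_combinatorial_decomposition} the components of $\cM(G)_\bC$ are exactly the orbits of the $\Out^+(\Pi)\cong\SL_2(\bZ)$-action on $\Epi^\ext(\Pi,G)$, and the Higman invariant of the component containing $\varphi$ is the conjugacy class of $\varphi([b,a])$ (using that $[b,a]$ is a generator of inertia and the identification of the monodromy element around $O$ with the Higman invariant, as recalled just before the proposition). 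Statement (b) says precisely that this conjugacy class is constant along an $\Out^+(\Pi)$-orbit, hence along a connected component, giving (c).

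I do not expect a serious obstacle here; the proof is essentially the elementary calculation above plus bookkeeping. The one point requiring mild care is the sign/orientation issue: a priori a Nielsen transformation could send $[b,a]$ to a conjugate of its \emph{inverse}, which would correspond to replacing the inertia generator by one of opposite orientation — this is exactly what happens for the orientation-reversing elements of $\Out(\Pi)\cong\GL_2(\bZ)$ not lying in $\Out^+(\Pi)$. So the genuine content of (a) is that the index-two subgroup $\Out^+(\Pi)$ preserves the class on the nose, and one must make sure the chosen generators $\sigma,\tau$ really do lie in $\Out^+$ (they act on $\Pi^{\ab}\cong\bZ^2$ by matrices of determinant $+1$) and verify the explicit conjugations, rather than merely checking ``up to inversion''. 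The footnote in the paper already signals the intended verification on generators, so I would simply carry out these two small commutator computations carefully.
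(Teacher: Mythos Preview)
Your proposal is correct and follows precisely the approach the paper itself signals in the footnote to (a): verify preservation of the conjugacy class of $[b,a]$ on the two Nielsen generators $(a,b)\mapsto(a,ab)$ and $(a,b)\mapsto(b,a^{-1})$, then read off (b) and (c) via the orbit description of $\pi_0(\cM(G)_\bC)$ from \S\ref{sss_combinatorial_decomposition}. Your commutator computations are right (indeed $[ab,a]=a[b,a]a^{-1}$ and $[a^{-1},b]=a^{-1}[b,a]a$), and your remark about the orientation issue is exactly the point.

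The only minor difference is one of framing rather than content: the paper presents the proposition as a consequence of the sentence immediately preceding it, namely that the mapping class group $\Gamma(E^\circ(\bC))$ preserves the ramification type of covers, with the footnote offering the group-theoretic check as an alternative. So the paper's ``primary'' justification is the topological one (a homeomorphism of $E^\circ$ cannot change the local monodromy around the puncture), while you have carried out the ``also'' route in full. Both arguments are standard and equally short; yours has the advantage of being self-contained and not relying on the identification $\Gamma(E^\circ(\bC))\cong\Out^+(\Pi)$ from Theorem~\ref{thm_MCG_actions} except to know that the two given automorphisms generate.
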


One can gain an appreciation for the problem by considering it in the context of understanding the connected components of Hurwitz spaces in terms of discrete geometric invariants of covers. Let $H_{g,n}$ denote the moduli space of covers of genus $g$ curves with $n$ branch points. We summarize three foundational results in this area:

\begin{itemize}
\item Let $H_{0,n}^{sb}\subset H_{0,n}$ be the subspace classifying covers with \emph{simple branching}\footnote{This means that above every branch point there is exactly one ramification point, and that point has index 2. In some sense this is the generic case.}. In the late 1800's, Clebsch, Luroth, and Hurwitz showed that the components of $\cH_{0,n}^{sb}$ are classified completely by a single discrete invariant: the degree $d$. If $n$ is even and $n\ge 2d-2$ then there is exactly one component classifying covers of degree $d$. Otherwise, there are none (see \cite{Cleb1873, Hurw1891, Ful69}). Taking $d$ large enough so that every curve of genus $g$ admits a degree $d$ map to $\bP^1$ with simple branching over $n$ points, this led to the first proof of the connectedness of the moduli stack $\cM_g$ of curves of genus $g$.
\item Let $H_{g,n}(G)$ denote the moduli space classifying $G$-covers of genus $g$ curves with $n$ branch points. In 1988, Conway and Parker showed that for a fixed group $G$ and ramification data satisfying certain properties, then the subspace of $H_{0,n}(G)$ classifying covers with the given ramification data is \emph{connected} for $n\gg 0$ (depending on $G$). We call results of this type \emph{branch stabilization}, see \cite{FV91} and \cite[Proposition 3.4]{EVW16}. In \cite{EVW16}, Ellenberg, Venkatesh, and Westerland used this result to establish analogs of the Cohen-Lenstra heuristics for function fields. While the meat of their paper establishes a result on the \emph{stable} homology of Hurwitz spaces, the issue of connected components (equivalently, understanding the zeroth Betti number) played a crucial role in their analysis.
\item If $D$ is a closed oriented surface of genus $g$, any $G$-cover $\pi : C\ra D$ corresponds to a map $D\ra BG$, whence a map $H_2(D)\ra H_2(BG)\cong H_2(G)$. The image of the fundamental class in $H_2(D)$ determines an element of $H_2(G)$, called the \emph{Euler class} associated to $\pi$. In 2006, Dunfield and Thurston \cite{DT06} showed that for fixed $G$, the components of $H_{g,0}(G)$ are classified completely by the Euler class for $g\gg 0$ (depending on $G$). We call results of this type \emph{genus stabilization}.
\end{itemize}

\begin{remark} There are also results interpolating between branch and genus stabilization, see \cite{CLP16, Lonne18}.	
\end{remark}

\begin{remark} For $(g,n) = (0,3)$, $H_{0,3}$ is 0-dimensional, but one can still try to classify the components of $H_{0,3,\bQ}$. This is the classical problem of finding discrete invariants for Galois orbits of dessins d'enfants \cite{Schn97}.	
\end{remark}

In the context described above, the problem of understanding $\pi_0(\cM(G)_\bC)$ lies in the realm of what might be called ``monodromy stabilization'', about which much less is known. For this, one should restrict the problem to certain families of finite group $G$. We summarize some recent work in this direction:

\begin{itemize}
\item In \cite[Corollary 1.5]{Mcm05}, as a byproduct of his classification of Teichm\"{u}ller curves in genus 2, McMullen showed that for $n\ge 4$, the subspace of $\cM(\fA_n)_\bC/\Out(\fA_n)$ classifying covers with Higman invariant the class of either $(12)(34)$ or $(123)$ has at most two components.
\item In a forthcoming joint work with P. Deligne \cite{CD17}, we show that for certain metabelian groups $G$ satisfying a certain homological criterion, the components of $\cM(G)_\bC$ are entirely classified by the Higman invariant. This will be further discussed in \S\ref{ss_metabelian}.
\item In \cite{Chen21}, building on work of Bourgain, Gamburd and Sarnak \cite{BGS16arxiv}, the author shows that for $p$ sufficiently large, the substack of $\cM(\SL_2(\bF_p))/\Out(\SL_2(\bF_p)$ classifying covers with ramification index $2p$ is connected. This problem is related to the Diophantine geometry of the Markoff equation. This will be further discussed in \S\ref{ss_markoff}.
\end{itemize}

The methods used in the results summarized above varied greatly. McMullen's theorem was proven using techniques of Teichm\"{u}ller dynamics and additive number theory; the result on metabelian groups used mostly commutative and homological algebra, and the result on $\SL_2(\bF_p)$ involves a wide variety of techniques in Diophantine geometry over finite fields, the theory of character varieties, and an analysis of degenerations of $\SL_2(\bF_p)$-covers of elliptic curves.

\begin{remark} The combinatorial problem of understanding the $\Out^+(\Pi)$-orbits on $\Epi^\ext(\Pi,G)$ is related to the problem of understanding \emph{Nielsen equivalence classes} or \emph{$T$-systems} of generating pairs of finite groups. More generally, one can replace $\Pi$ with a free group $F_k$ of higher rank $k\ge 2$. Here, it is expected that if $k$ is larger than the size of a minimal generating set of $G$, then the action of $\Aut(F_k)$ on $\Epi^\ext(F_k,G)$ should be \emph{transitive} \cite[Conj 1]{Gar08}. This can be viewed as a combinatorial analog of branch/genus stabilization, and is known for solvable groups \cite{Dun70}, or if $k\ge \log_2(|G|)$ \cite[Cor 3.3]{Lub11}, but is wide open in general. The special case where $G$ is simple is Wiegold's conjecture \cite[Conj 6.1]{Lub11}, first raised in the 1970's.	 The problem of understanding the orbits in rank $k = 2$ is even more open; in \cite[p12]{Pak00}, Pak writes ``it is quite embarrassing how little is known about this problem.''
\end{remark}

\subsubsection{Arithmetic monodromy: Action of the absolute Galois group}\label{sss_arithmetic}

The action of $\Gal_\bQ$ on $\ff^{-1}(E_\Qbar)$ is induced by the outer representation $\rho_{E^\circ/\bQ}$, which can also be recovered from the homotopy exact sequence associated to $E^\circ/\bQ$:
\begin{equation}\label{eq_HES_E}
\begin{tikzcd}
1\ar[r] & \Pi_\Qbar\ar[r] & \Pi_\bQ\ar[r] & \Gal_\bQ\ar[r] & 1
\end{tikzcd}
\end{equation}
The action of $\Gal_\bQ$ on $\ff^{-1}(E_\Qbar)$ can be viewed as a piece of the outer Galois representation $\rho_{E^\circ/\bQ}|_{\Gal_\bQ}$, in the same way that the Galois action on the $n$-torsion $E[n]$ is a piece of the Galois action on the Tate module.


Recall that a smooth curve over $\Qbar$ is \emph{hyperbolic} if it is obtained by removing $n$ points from a smooth proper curve of genus $g$, where $2g-2+n > 0$. Thus, $E^\circ$ is a hyperbolic curve, and $\rho_{E^\circ/\bQ}$ satisfies the following properties, which apply to all outer Galois representations associated to hyperbolic curves:
\begin{prop}\label{prop_anabelian} Let $X$ be a smooth hyperbolic curve over a number field $K$. Let $\Pi_{X,\Qbar} := \pi_1(X,x_0)$ for some base point $x_0\in X$, and let $\rho_{X} : \Gal_K\ra\Out(\Pi_{X,\Qbar})$ be the outer representation induced by the homotopy exact sequence as in \eqref{eq_HES_E}.
\begin{enumerate}
\item $\rho_{X}$ is faithful.
\item If $Y$ is another smooth hyperbolic curve over $K$, the natural map
$$\Isom_K(X,Y)\rightarrow\Isom_{\Gal_K}(\Pi_{X,\Qbar},\Pi_{Y,\Qbar})/\Inn(\Pi_{Y,\Qbar})$$
is a bijection, where the right side is the subset of conjugacy classes of isomorphisms which are $\Gal_K$-equivariant.
\end{enumerate}
\end{prop}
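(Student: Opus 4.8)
The plan is to not reprove these assertions from scratch --- both are (known cases of) Grothendieck's anabelian conjecture for hyperbolic curves over number fields --- but to reduce them to the results of Tamagawa \cite{Tam97} and Mochizuki \cite{Moc96} after a small amount of formal bookkeeping that translates between the geometric fundamental group and the arithmetic one, and to flag that the genuine content is entirely contained in those cited theorems.

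For (b) I would first observe that, since $\Pi_{X,\Qbar}=\pi_1(X_\Qbar)$ is center-free (being the geometric fundamental group of a hyperbolic curve), the arithmetic fundamental group $\pi_1(X)$, sitting in
$$1\lra\Pi_{X,\Qbar}\lra\pi_1(X)\lra\Gal_K\lra 1,$$
is recovered as the pullback along $\rho_X$ of the tautological extension $1\to\Pi_{X,\Qbar}\to\Aut(\Pi_{X,\Qbar})\to\Out(\Pi_{X,\Qbar})\to 1$. Hence $\pi_1(X)$ together with its projection to $\Gal_K$ is determined, up to isomorphism over $\Gal_K$, by $\rho_X$, and giving a $\Gal_K$-equivariant outer isomorphism $\Pi_{X,\Qbar}\rightiso\Pi_{Y,\Qbar}$ carries exactly the same information as giving an outer isomorphism $\pi_1(X)\rightiso\pi_1(Y)$ lying over $\id_{\Gal_K}$. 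With this dictionary in hand, the claimed bijection becomes precisely the relative Isom-form of the Grothendieck conjecture for hyperbolic curves over $K$: the affine case over a number field is Tamagawa's theorem \cite{Tam97}, and the remaining (in particular proper) cases follow from Mochizuki's theorem \cite{Moc96}, a number field being sub-$p$-adic. Injectivity of the natural map is the soft direction --- it follows from center-freeness together with finiteness of $\Isom_K(X,Y)$ --- and can be recorded in a couple of lines; surjectivity is where the real content lies.

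For (a) I would point out that faithfulness of $\rho_X$ already holds in the basic case $X=\bP^1_\Qbar-\{0,1,\infty\}$, where it amounts to the injectivity $\Gal_\bQ\hookrightarrow\Out(\widehat{F_2})$ that is a standard consequence of Belyi's theorem, and that for an arbitrary affine hyperbolic curve over a number field --- in particular for $X=E^\circ$, the only case actually needed in this paper --- faithfulness is again due to Tamagawa \cite{Tam97} (and is quoted as such in the introduction), the proper case being part of the same anabelian package. I would explicitly \emph{not} try to give a slick self-contained deduction of (a) from (b): there does not appear to be one, since knowing that $\Isom_K(X,X)\rightiso\Out_{\Gal_K}(\Pi_{X,\Qbar})$ is a bijection does not by itself rule out a nontrivial element of $\Gal_K$ acting trivially up to inner automorphism on $\Pi_{X,\Qbar}$.

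The step I expect to be the genuine obstacle is, unsurprisingly, entirely absorbed into the cited theorems of Tamagawa and Mochizuki: the group-theoretic reconstruction of a hyperbolic curve from its arithmetic fundamental group --- via the recovery of decomposition groups of closed points, cuspidalization, and the additive and multiplicative structure of the function field, together with $p$-adic anabelian geometry at the places of $K$. Everything I would actually write down --- the center-freeness bookkeeping that passes between $\pi_1(X)$ and $\Pi_{X,\Qbar}$, and the translation of the statements into the form in which they appear in the literature --- is routine and deserves at most a few lines, so the write-up would consist mostly of precise citations.
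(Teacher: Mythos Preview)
Your treatment of (b) matches the paper's: both simply cite Tamagawa \cite{Tam97} for the affine case and Mochizuki \cite{Moc96} for the proper case, and your added bookkeeping about center-freeness of $\Pi_{X,\Qbar}$ to pass between the arithmetic and geometric formulations is a useful gloss that the paper omits.

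For (a), however, your citations are off in a way that hides a genuine subtlety. The paper attributes the affine case not to Tamagawa but to Matsumoto \cite[Theorem 2.1]{Mat94}, which in fact predates Tamagawa's anabelian theorem. More importantly, your claim that the proper case is ``part of the same anabelian package'' is incorrect: the paper explicitly flags the proper genus $\ge 2$ case as the \emph{most difficult} part of (a), and cites Hoshi--Mochizuki \cite[Theorem C]{HM11} --- a 2011 result, considerably later than the 1996--97 anabelian theorems. Faithfulness of $\rho_X$ for proper hyperbolic curves does not follow from Mochizuki's Isom-form result; knowing that $K$-isomorphisms correspond to $\Gal_K$-equivariant outer isomorphisms of $\Pi_{X,\Qbar}$ says nothing about whether $\Gal_K$ itself acts faithfully. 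You correctly observe this yourself when you decline to deduce (a) from (b), but then you do not follow through by locating the actual source for the proper case.

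The paper also supplies, in a footnote, short self-contained arguments for the special cases $(g,n)=(0,3)$ and $(1,1)$: the former via Belyi (which you mention), the latter via the observation that nonfaithfulness would force every component of $M(G)/\Out(G)$ to be defined over a proper subfield $L\subsetneq\Qbar$, contradicting the universality results of \S\ref{ss_universality}.
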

\begin{proof} Part (b) is due to the work of Tamagawa \cite{Tam97} (for affine curves), and Mochizuki \cite{Moc96} (for proper curves), see \cite{MNT01}. The most difficult part of part (a) is the case where $X$ is proper of genus $g\ge 2$, in which case it is a result of Hoshi and Mochizuki \cite[Theorem C]{HM11}. The affine case is \cite[Theorem 2.1]{Mat94}.\footnote{The proofs of the cases $(g,n) = (0,3)$ or $(1,1)$ can be sketched as follows. A nonfaithful action would imply that there is a proper subfield $L\subset\Qbar$ over which all $G$-covers $X$ are defined. If $(g,n) = (0,3)$, $X \cong\bP^1 - \{0,1,\infty\}$ and so this is impossible by Belyi's theorem. If $(g,n) = (1,1)$, this would imply that every component of $M(G)/\Out(G)$ has an $L$-rational point, and hence is defined over $L$. Since every algebraic curve over $\Qbar$ is dominated by a component of $M(G)/\Out(G)$, this would imply every algebraic curve over $\Qbar$ has an $L$-rational point, and is hence defined over $L$, which is false.}
\end{proof}

In particular, the anabelian representation $\rho_{E^\circ/\bQ}$ determines $E$ up to isomorphism\footnote{This is effective in the sense that $E$ can be algorithmically reconstructed from $\rho_{E^\circ/\bQ}$ \cite{Moc04}.}, and hence captures the full complexity of $E$ (as well as $\Gal_\bQ$). We note that the same holds for the $\Gal_\bQ$ action on the fundamental group of $\bP^1 - \{0,1,\infty\}$, which is also free profinite of rank 2. Compared to this situation, in the elliptic case the study of the representation $\rho_{E^\circ/\bQ}|_{\Gal_\bQ}$ (and hence $\Gal_\bQ$) can be naturally stratified via its relation to the abelianized representation $\rho_{E/\bQ} : \Gal_\bQ\ra\Aut(\Pi_\Qbar^\ab)\cong\GL_2(\Zhat)$ associated to the Tate module of $E$. The study of this representation has been famously fruitful, leading to a proof of Fermat's last theorem. It stands to reason that a study of various intermediate quotients may also be similarly fruitful. In \S\ref{ss_metabelian}, we describe a step in this direction by describing the metabelianization of $\rho_{E^\circ/\bQ}$, which remarkably is essentially isomorphic to the abelianization. It would be interesting to see what can be said in the 3-step solvable quotient.

In another direction, one might hope that the Galois action on certain restricted but highly nonabelian quotients of $\Pi_\Qbar$ may be amenable to study. In \S\ref{ss_markoff}, we will say a little about its action on the pro-$\SL_2(\bF_p)$ quotient. A key property that will be leveraged in all situations is the particularly simple $\Gal_\bQ$ action on the generator of inertia of $\Pi_\Qbar$:

\begin{lemma}[``Branch cycle lemma'']\label{lemma_BCL} Let $a,b$ be a positively oriented basis of $\Pi = \pi_1(E^\circ(\bC),x_0)$, and let $c$ be the image of $[b,a]$ in $\Pi_\Qbar = \pi_1(E^\circ_\Qbar,x_0)$ under the embedding $\Pi\hookrightarrow\Pi_\Qbar$ induced by the inclusion $\Qbar\hookrightarrow\bC$. Let $\chi : \Gal_\bQ\ra\Zhat^\times$ denote the cyclotomic character. Then there exists a $\gamma\in\Pi_\Qbar$ such that
$$\sigma(c) = \gamma c^{\chi(\sigma)}\gamma^{-1}\qquad\text{for all $\sigma\in\Gal_\Qbar$}$$
In particular, $\Gal_\bQ$ preserves the rational class\footnote{The rational class of a group element $g$ is the union of all conjugacy classes of $g^i$ where $i$ is coprime to the order $|g|$.} of the Higman invariant of $G$-covers of $E^\circ_\Qbar$.
\end{lemma}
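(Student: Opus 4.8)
The plan is to deduce this from the standard fact that the absolute Galois group acts on tame inertia at a rational point via the cyclotomic character -- Fried's ``branch cycle argument'' (\cite{FV91}); the only new point here is that $E^\circ$ is, near the puncture, no different from $\bP^1$ minus a rational point, since $O\in E(\bQ)$ is a smooth rational point. First I would identify $c$ with a generator of an inertia subgroup at $O$. Completing $E$ along $O$ gives $\what{\cO}_{E,O}\cong\bQ[[t]]$ with $t$ a uniformizer \emph{defined over $\bQ$} (possible precisely because $O$ is $\bQ$-rational and smooth), so the punctured formal neighbourhood $\Spec\Qbar((t))$ has tame fundamental group $\varprojlim_n\mu_n\cong\Zhat(1)$, with canonical topological generator the compatible system $(t^{1/n})_n$. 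A choice of tangential base point $\vec v$ at $O$ and an \'{e}tale path $\vec v\leadsto x_0$ produces a homomorphism $\pi_1^{\mathrm{tame}}(\Spec\Qbar((t)))\to\Pi_\Qbar$ whose image is such an inertia subgroup; under the comparison isomorphism $\Pi^\wedge\cong\Pi_\Qbar$ (Theorem \ref{thm_comparison}) and the normalization $(t^{1/n})_n\leftrightarrow\exp(2\pi i/n)$, the canonical generator maps to a conjugate of $c$. It is here that the orientation convention $a\cap b=+1$ enters: it is exactly what makes $[b,a]$ a \emph{positively} oriented loop about $O$, hence makes it match the chosen algebraic generator rather than its inverse.

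Next comes the local Galois computation: for $\sigma\in\Gal_\bQ$ one has $\sigma(t^{1/n})=\zeta_n^{\,k}\,t^{1/n}$ with $k\equiv\chi(\sigma)\pmod n$ for compatibly chosen $\zeta_n$, so $\sigma$ conjugates the inertia generator $t^{1/n}\mapsto\zeta_n t^{1/n}$ to its $\chi(\sigma)$-th power. Now transport this through the map to $\Pi_\Qbar$. The splitting of the homotopy exact sequence $\eqref{eq_HES_E}$ coming from the $\bQ$-point $E$ makes $\Gal_\bQ$ act on $\Pi_\Qbar$ as outer automorphisms, compatibly with its action on $\pi_1^{\mathrm{tame}}(\Spec\Qbar((t)))$ up to the conjugacy ambiguity inherent in the choice of path $\vec v\leadsto x_0$. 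Pulling the relation ``$\sigma(\text{generator})=\text{generator}^{\chi(\sigma)}$'' over therefore yields $\sigma(c)=\gamma\,c^{\chi(\sigma)}\,\gamma^{-1}$ in $\Pi_\Qbar$, with the conjugation absorbed into $\gamma$; if one wants a single $\gamma$ valid for all $\sigma$, take $\vec v$ itself as the base point, so that $\gamma$ is the fixed path-conjugation relating it to $x_0$.

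For the last assertion, let $\varphi\in\Epi^\ext(\Pi_\Qbar,G)$: the Higman invariant of the corresponding cover is the conjugacy class of $\varphi(c)$, and by functoriality of the $\Gal_\bQ$-action on $\Epi^\ext(\Pi_\Qbar,G)$ this class is sent by $\sigma$ to the class of $\varphi(\sigma^{\pm1}(c))$, which by the previous paragraph equals the class of $\varphi(c)^{\chi(\sigma)^{\pm1}}$. Writing $m=\ord(\varphi(c))$, the image of $\chi(\sigma)\in\Zhat^\times$ in $(\bZ/m)^\times$ is a unit, so $\varphi(c)^{\chi(\sigma)}$ is a power of $\varphi(c)$ by an integer coprime to $m$; hence it lies in the rational class of $\varphi(c)$, and $\Gal_\bQ$ preserves that rational class.

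The routine parts are the purely local statement (Galois acts on tame inertia at a rational point by $\chi$) and the closing elementary argument about rational classes. The main obstacle is the bookkeeping in the first two paragraphs: matching the topological loop $[b,a]$ with the algebraic inertia generator using the \emph{correct} root-of-unity normalization and orientation, and keeping track of which base-point and path choices contribute $\sigma$-dependent versus $\sigma$-independent conjugations, so that the cyclotomic character emerges with no spurious sign or inverse.
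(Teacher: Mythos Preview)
Your proposal is correct and follows exactly the approach the paper itself suggests: the paper's proof is simply a citation to \cite[Theorem 2.1.1]{Nak94}, noting that the statement holds for any generator of inertia at a puncture, while the subsequent Remark~\ref{remark_tangential} spells out precisely the tangential-base-point argument you give (completing along the $\bQ$-rational point $O$, identifying the tame fundamental group of $\Spec\Qbar\ls{t}$ with $\Zhat(1)$, and observing that with the tangential base point one may take $\gamma = 1$). Your discussion of the orientation/normalization bookkeeping and the derivation of the rational-class consequence are accurate elaborations of what the paper leaves implicit.
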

\begin{proof} The proposition holds more broadly for any generator of inertia at a puncture of any curve. See for example \cite[Theorem 2.1.1]{Nak94}.
\end{proof}

\begin{remark}\label{remark_tangential} The lemma is best understood using the theory of \emph{tangential base points}, which can be described as a map $t : \Spec\Qbar\ls{t}\ra E^\circ_\Qbar$ induced by a map
$$\Spec\bQ\ps{t}\cong\Spec\what{\cO}_{E,O}\lra E$$
The fundamental group of $\Spec\Qbar\ls{t}$ is isomorphic to the Tate twist $\Zhat(1)$ as a $\Gal_\bQ$-module\footnote{The Tate twist $\Zhat(1)$ denotes the $\Gal_\bQ$ module with underlying group $\Zhat$ and with $\Gal_\bQ$ action given by exponentiating by the cyclotomic character $\chi_\cyc : \Gal_\bQ\ra\Zhat^\times$. An algebraic closure of $\Qbar\ls{t}$ is given by the field $\Qbar\ls{t^{1/\infty}} := \varinjlim_n \Qbar\ls{t^{1/n}}$ of \emph{Puiseux series}. The fundamental group of $\Spec\Qbar\ls{t}$ is isomorphic to $\Gal(\Qbar\ls{t^{1/\infty}}/\Qbar\ls{t})$.}, and the image of a generator of $\Zhat(1)$ inside $\pi_1(E^\circ_\Qbar,t)$ is a generator of inertia at the puncture. Using the base point $t$, the element $\gamma$ in \ref{lemma_BCL} can be taken to be the identity.
\end{remark}

\subsubsection{Field of definition of components of $\cM(G)_\Qbar$}
The components of $\cM(G)_\bQ$ are typically not geometrically connected. For any such component $\cM$, the absolute Galois group $\Gal_\bQ$ acts transitively on the components of $\cM_\Qbar$. Via the Galois correspondence, the components of $\cM_\Qbar$ correspond to a $\Gal_\bQ$-orbit on the $\Epi^\ext(\Pi_\Qbar,G)/\Gamma_{E_\Qbar} = \Epi^\ext(\Pi,G)/\Out^+(\Pi)$. If a component $\cY\subset\cM(G)_\Qbar$ is the base change of a geometrically connected component over $\cM(G)_K$ for some number field $K$, we say that $\cY$ is \emph{defined over $K$}. In this sense there is a minimal field of definition, given by the fixed field of the $\Gal_\bQ$-stabilizer of the $\Gamma_{E_\Qbar}$-orbit corresponding to $\cY$. We call this field the \emph{field of definition of the component $\cY$}.


\begin{remark} We note that the field of definition of $\cY$ as a component of $\cM(G)_\Qbar$ will typically differ from the field of definition of $\cY$ as a curve. For example, the components of $\cM(n)_\Qbar$ have field of definition $\bQ(\zeta_n)$, but as curves they are in fact all defined over $\bQ$, see \cite[\S3.4]{BBCL20}.
\end{remark}

\section{A tour through some examples}\label{section_examples}
In this section we examine some examples of $\cM(G)_\bC$ for certain groups $G$. More data can be obtained from \cite[Appendix B]{Chen18}; or by contacting the author. In this section we work universally over $\bC$. By default, $\Pi$ will denote the topological fundamental group of a punctured elliptic curve: a free group of rank 2. 

\subsection{The dihedral groups $G = \D_6, \D_8, \D_{10}$}\label{ss_dihedral}
The table below describes the components of $\cM(G)_\bC$ for the dihedral groups $G = \D_6,\D_8,\D_{10}$.
$$\begin{array}{llllllllllll}
	|G| & G & m & d & c_2 & c_3 & -I & \text{cusp widths} & g & \text{Hig} & \text{AbsMon} & \text{c/nc}\\
	\hline
	6 & \D_6 & 1 & 3 & 1 & 0 & 1 & 1^12^1 & 0 & 3A & \fS_3 & \text{cng} \\
	8 & \D_8 & 1 & 6 & 0 & 0 & 1 & 2^3 & 0 & 2A & \fS_3 & \text{cng}\\
	10 & \D_{10} & 2 & 3 & 1 & 0 & 1 & 1^12^1 & 0  & 5A,5B & \fS_3 & \text{cng} \\
\end{array}$$
\textbf{Reading the table.} Each row describes the data associated with a certain conjugacy class of subgroup $\Gamma$ of $\SL_2(\bZ)$ (equivalently, isomorphism class of a component $\cM$ of $\cM(G)$, where isomorphism is as covers of $\cM(1)$). The field $m$ stands for ``multiplicity'', which is the number of isomorphic components of $\cM(G)$ that appear with the given characteristics. Thus, amongst the groups $G$ listed, $\cM(G)$ is connected for all of them except for $\cM(\D_{10})$, which has two isomorphic components.

The field $d$ describes the degree of the component over $\cM(1)$, or equivalently the index of the uniformizing subgroup of $\SL_2(\bZ)$. The field $c_2$ (resp. $c_3$) is the number of unramified points of the component $M$ over $M(1)$ of order 2 (resp. 3). The field $-I$ is 1 or 0, according to whether $-I\in\Gamma$ or $-I\notin\Gamma$ respectively. Thus the component is a (fine moduli) scheme (equivalently $\Gamma$ is torsion-free) if and only if the fields $c_2,c_3,-I$ are all zero. In general, the component is generically a scheme if and only if $-I\notin\Gamma$. In this case there are exactly $c_2$ stacky points with stabilizer $\bZ/4$, and $c_3$ stacky points with stabilizer $\bZ/3$ or $\bZ/6$. The field ``cusp widths'' describes the ramification indices of the map of coarse schemes $M\ra M(1)$ above the missing point at ``$j = \infty$''. It takes the format ``$\text{ramification index}^{\text{\# points with that index}}$''. The fields $(d,c_2,c_3,-I,\text{cusp widths})$ are together called the \emph{signature} of $\cM$ (or $\Gamma$).

The field $g$ denotes the genus of the component $M$ as an algebraic curve, which can be computed from the signature using the Hurwitz formula. The field ``Hig'' describes a label for the Higman invariant of covers parametrized by $\cM$ (see \S\ref{ss_admissible_covers_over_k}). Such a class will be labeled as a number followed by a letter, where the number denotes the order of any element in the class (equivalently, the ramification index), and the letter is an arbitrary label which distinguishes distinct conjugacy classes. We note that there is one value of the Higman invariant for each component of $\cM(G)$ isomorphic to $\cM$; there are cases where isomorphic components can parametrize covers with different Higman invariants (or even different ramification indices)\footnote{See the warning after Table 1 of \cite{Chen18}.}. All of this data was computed, using the computer algebra package GAP, from the subgroup $\Gamma$; see Remark \ref{remark_computing}.

The field AbsMon describes the ``absolute monodromy'' of $\cM$: This is the monodromy group of the image $\cM^\abs$ of $\cM$ inside $\cM(G)^\abs/\Out(G)$ as a cover of $\cM(1)$. This will be a list of entries of the form ``${_n}P$'', one for each minimal intermediate cover of $\cM(1)$, where $n$ denotes the degree of $\cM^\abs$ over this minimal cover, and $P$ is a description of the primitive permutation monodromy group of this minimal cover. If $n = 1$, it will be omitted. Thus, in the data for $\D_6,\D_8,\D_{10}$, the stacks $\cM^\abs$ are all degree 3 over $\cM(1)$ with monodromy group $\fS_3$.

Finally, the field c/nc shows ``cng'' if the subgroup is congruence, and ``ncng'' if the subgroup is noncongruence.

\textbf{Discussion of the cases $G = \D_6,\D_8,\D_{10}$}. The stack $\cM(\D_6)$ is connected and congruence of degree 3 over $\cM(1)$. In fact all index 3 subgroups of $\SL_2(\bZ)$ are congruence (the noncongruence subgroups of lowest index have index 7), and there are not so many: from the signature of $\cM$, one finds that its uniformizing subgroup $\Gamma$ is conjugate to the group $\Gamma_1(2)$ consisting of matrices congruent to $\spmatrix{1}{*}{0}{1}$ mod 2, which also uniformizes $\cM(\bZ/2)$. This is no accident, and comes from the map
$$\cM(\D_6)\ra\cM(\bZ/2)\cong[\cH/\Gamma_1(2)]$$
induced by the abelianization $\D_6\ra\bZ/2$. The map is explicitly given by sending a $\D_6$-cover $\pi : C\ra E$ to the $\bZ/2$-cover $C/\D_6'\ra E$, where $\D_6'$ is the derived subgroup. The fact that $\Gamma_1(2)$ simultaneously uniformizes $\cM(\D_6)$ and $\cM(\bZ/2)$ implies that this map is an \emph{isomorphism}. This implies that given a 2-isogeny of elliptic curves $E'\ra E$, there is a \emph{canonical} way to extend $E'\ra E$ to a $\D_6$-cover of $E$ only branched above the kernel of the isogeny; this holds over any base on which 6 is invertible.

The case of $\D_{10}$ is similar. In this case $\cM(\D_{10})$ has two (isomorphic) components, each uniformized by $\Gamma_1(2)$. The fact that they are isomorphic can be explained by the fact that $\Out(\D_{10})\cong\bZ/2$ acts transitively on them. The two components are in this case separated by the Higman invariant.

The case of $\D_8$ differs from the above because $\D_8^\ab\cong\bZ/2\times\bZ/2$, and hence the abelianization induces a map
$$\cM(\D_8)\ra \cM(2)\cong[\cH/\Gamma(2)]$$
which is an isomorphism, since both have degree 6 over $\cM(1)$. We have the more general result:

\begin{thm} For any integer $k\ge 3$, the stack $\cM(\D_{2k})$ has $\frac{\phi(k)}{2}$ components, each isomorphic to $[\cH/\Gamma_1(2)]$ if $k$ is odd, and $[\cH/\Gamma(2)]$ if $k$ is even. Each component is defined over $\bQ(\zeta_k+\zeta_k^{-1})$.
\end{thm}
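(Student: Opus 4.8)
The plan is to reduce the statement to an explicit computation of the $\Out^+(\Pi)$-action on $\Epi^\ext(\Pi, \D_{2k})$, using the combinatorial description of components from \S\ref{sss_combinatorial_decomposition}. Fix a basis $a, b$ of the free group $\Pi$ of rank $2$, inducing $\Out^+(\Pi) \cong \SL_2(\bZ)$. Write $\D_{2k} = \langle r, s \mid r^k = s^2 = 1,\ srs = r^{-1}\rangle$. A surjection $\varphi : \Pi \to \D_{2k}$ is determined by the pair $(\varphi(a), \varphi(b))$, and surjectivity forces at least one of these to be a reflection. The commutator $[b,a]$ maps to the Higman invariant; since $[\varphi(b),\varphi(a)]$ always lies in the cyclic subgroup $\langle r\rangle$ (the abelianization of $\D_{2k}$ is $2$-torsion or $\bZ/2 \times \bZ/2$, so commutators are rotations), the Higman invariant is the conjugacy class of some $r^j$, and by Proposition \ref{prop_higman} this $j$ (up to $j \leftrightarrow -j$) is constant on components. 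The first step is to enumerate the $\Out^+(\Pi)$-orbits: I would show that two generating pairs with the same value of $j$ (up to sign and up to the $\Out(\D_{2k})$-action that is already quotiented out in $\Epi^\ext$) are equivalent under $\Aut(\Pi)$, using the Nielsen transformations $(a,b) \mapsto (a, ab)$ and $(a,b)\mapsto (b, a^{-1})$ together with conjugation in $\D_{2k}$. This should give exactly one orbit for each pair $\{r^j, r^{-j}\}$ with $r^j$ a generator of $\langle r\rangle$ (ramification must use a generator so that the cover is connected over the cyclic part), i.e. $\gcd(j,k)=1$, yielding $\phi(k)/2$ components.

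Next I would identify the uniformizing subgroup $\Gamma_\varphi = \Stab_{\SL_2(\bZ)}(\varphi)$ for a representative $\varphi$. The key observation is that the kernel of the reduction $\varphi^{\ab}: \Pi \to \D_{2k}^{\ab}$ depends only on the parity of $k$: it is (the preimage of) index $2$ when $k$ is odd and index $4$ when $k$ is even. Since any automorphism of $\Pi$ stabilizing $\varphi$ must in particular stabilize $\varphi^{\ab}$, we get $\Gamma_\varphi \le \Gamma_1(2)$ (odd case) or $\Gamma_\varphi \le \Gamma(2)$ (even case) after suitably normalizing the basis. For the reverse inclusion, a degree count suffices: I would compute $[\SL_2(\bZ) : \Gamma_\varphi]$ directly as the size of the $\Out^+(\Pi)$-orbit of $\varphi$, which by the orbit enumeration equals the number of generating pairs with the fixed Higman value modulo $\Inn(\D_{2k})$ — and check this comes out to $3$ (odd $k$) or $6$ (even $k$), matching $[\SL_2(\bZ):\Gamma_1(2)] = 3$ and $[\SL_2(\bZ):\Gamma(2)] = 6$. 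Hence $\Gamma_\varphi = \Gamma_1(2)$ or $\Gamma(2)$ exactly, and each component is isomorphic to $[\cH/\Gamma_1(2)]$ or $[\cH/\Gamma(2)]$ via \S\ref{sss_Teichmuller_uniformization}. (This also reproves the congruence assertions of the table as the special cases $k = 3, 4, 5$.)

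For the field of definition, I would invoke \S\ref{sss_arithmetic}: the components of $\cM(\D_{2k})_\Qbar$ correspond to a $\Gal_\bQ$-orbit on $\Epi^\ext(\Pi,\D_{2k})/\Out^+(\Pi)$, i.e. on the set of admissible Higman values $\{r^j, r^{-j}\}$, $\gcd(j,k)=1$. By the branch cycle lemma (Lemma \ref{lemma_BCL}), $\sigma \in \Gal_\bQ$ sends the component with Higman invariant $\{r^j\}$ to the one with $\{r^{\chi(\sigma)j}\}$, where $\chi$ is the cyclotomic character. Therefore the $\Gal_\bQ$-stabilizer of the component $\{r^j,r^{-j}\}$ is the subgroup of $\Gal_\bQ$ acting on $\mu_k$ through $\pm 1$, whose fixed field is $\bQ(\zeta_k + \zeta_k^{-1})$, independently of $j$. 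I expect the main obstacle to be the orbit enumeration in the first step — verifying that Nielsen moves genuinely merge all generating pairs with a common Higman class into a single orbit, rather than leaving several — since this is the one place where a careful hands-on argument with the dihedral relations (rather than a formal invariance or degree count) is unavoidable; the degree count in the second step then serves as a useful consistency check on it.
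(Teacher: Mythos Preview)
Your approach is essentially correct for $k\not\equiv 0\pmod 4$, but has a genuine gap when $4\mid k$. The paper itself does not give a proof here, simply referring to \cite[Theorem 4.2.2]{Chen18}, so let me focus on where your argument breaks.

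The problem is your claim that the Higman invariant takes the values $r^j$ with $\gcd(j,k)=1$ and that it separates the orbits. A direct computation in $\D_{2k}$ shows that for \emph{any} generating pair the commutator is of the form $r^{2m}$ with $\gcd(m,k)=1$. When $k$ is odd, $2m$ runs through all units mod $k$ and your count goes through. But when $k$ is even, $r^{2m}$ has order $k/2$, not $k$; and when $4\mid k$, the map $m\mapsto 2m$ is two-to-one on units, so the Higman invariant takes only $\phi(k)/4$ values (up to $\pm$), not $\phi(k)/2$. Concretely for $k=8$: all $12$ elements of $\Epi^\ext(\Pi,\D_{16})$ have Higman class $\{r^2,r^6\}$, yet the theorem asserts two components of degree $6$ each. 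Your ``degree check'' would therefore return $12$, not $6$, and reveal the failure rather than confirm it. This is exactly the non-rigidity phenomenon flagged later in \S\ref{ss_metabelian}: $\D_{2k}$ is rigid only for $k\not\equiv 0\pmod 4$, and in the non-rigid case the components sitting over the same Higman class form a torsor under the group $\IOut_1(\D_{2k})$, which you have not accounted for.

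The same issue undermines your field-of-definition argument: since the Higman invariant for $4\mid k$ has order $k/2$, the branch cycle lemma only constrains the $\Gal_\bQ$-action through $(\bZ/(k/2))^\times$, which for $k=8$ gives $\bQ$ rather than the asserted $\bQ(\zeta_8+\zeta_8^{-1})=\bQ(\sqrt 2)$; and since both components share the same Higman class, Lemma~\ref{lemma_BCL} alone cannot decide whether Galois swaps them. A separate invariant---for instance a lift of the Higman class to a suitable central extension, or a direct analysis of the $\IOut_1$-torsor---is needed here. (One smaller point: $\Epi^\ext$ is the quotient by $\Inn(G)$, not $\Out(G)$; your parenthetical remark to the contrary should be dropped.)
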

\begin{proof} See \cite[Theorem 4.2.2]{Chen18}.
\end{proof}

\subsection{Metabelian level structures are congruence}\label{ss_metabelian}

The descriptions of $\cM(\D_6),\cM(\D_8),\cM(\D_{10})$ are special cases of the theory of $\cM(G)$ for metabelian $G$, which we describe here. In this section we work, by default, over $\bQ$.

Let $G$ be a finite group, and $\ff : \cM(G)\ra\cM(1)$ the forgetful map. Let $E$ be an elliptic curve over $\bQ$, and $x_0\in E^\circ(\bC)$. Recall:
$$\begin{array}{rclrcl}
\Gamma_{E/\bQ} & := & \pi_1(\cM(1),E_\Qbar) & \Pi_\bQ & := & \pi_1(E^\circ,x_0)\\
\Gamma_{E_\Qbar} & := & \pi_1(\cM(1)_\Qbar,E_\Qbar) & \Pi_\Qbar & := & \pi_1(E^\circ_\Qbar,x_0)\\
\Gamma(E^\circ(\bC)) & := & \text{mapping class group of $E^\circ(\bC)$} & \Pi & := & \pi_1(E^\circ(\bC),x_0)
\end{array}$$

The $\bQ$-point of $\cM(1)$ given by $E$ makes $\Gamma_{E/\bQ}$ into a semidirect product $\Gamma_{E/\bQ} \cong \Gamma_{E_\Qbar}\rtimes\Gal_\bQ$, and the monodromy action of $\Gamma_{E/\bQ}$ on fiber $\ff^{-1}(E_\Qbar) = \Epi^\ext(\Pi_\Qbar,G)$ is induced by the outer representation
$$\rho_{E^\circ/\bQ} : \Gamma_{E/\bQ}\cong \Gamma_{E_\Qbar}\rtimes\Gal_\bQ\lra\Out(\Pi_\Qbar)$$
from which we have actions of $\Gamma_{E_\Qbar}\cong\what{\Gamma(E^\circ(\bC))}\cong\what{\Out^+(\Pi)}$ and $\Gal_\bQ$. Let $\Pi_\Qbar^\meta$ be the maximal pro-metabelian quotient, and let $\rho_{E^\circ/\bQ}^\meta$ the associated outer representation on $\Pi_\Qbar^\meta$. If $G$ is metabelian, then
$$\ff^{-1}(E_\Qbar) = \Epi^\ext(\Pi_\Qbar,G) = \Epi^\ext(\Pi_\Qbar^\meta,G)$$
with monodromy actions induced by $\rho_{E^\circ/\bQ}^\meta$. Let $\Pi_\Qbar^\ab$ be the abelianization, which is isomorphic as a $\Gamma_{E/\bQ}$-module to the product of all $\ell$-adic Tate modules $\prod_\ell T_\ell(E)$. A main result is that the image of $\rho_{E^\circ/\bQ}^\meta$ in $\Out(\Pi_\Qbar^\meta)$ is \emph{isomorphic} to its image in $\Out(\Pi_\Qbar^\ab)$. Here is a precise statement.

\begin{thm}[C., Deligne \cite{CD17}]\label{thm_metabelian_congruence} In $\Pi_\Qbar$, there is an inertia subgroup $\cI$, isomorphic to $\Zhat$, which is canonical up to conjugation.\footnote{The inertia subgroup $\cI$ is associated to a $\bQ$-rational tangential base point of $E^\circ$, see Remark \ref{remark_tangential}.} Let $\Out(\Pi_\Qbar^\meta,\cI)$ be the group of outer automorphisms preserving the conjugacy class of $\cI$. Then the representation $\rho_{E^\circ/\bQ}$ maps $\Gamma_{E/\bQ}$ onto $\Out^+(\Pi_\Qbar^\meta,\cI)$, and $\Out(\Pi_\Qbar^\meta,\cI)$ maps isomorphically onto $\GL(\Pi_\Qbar^\ab)$ via the natural map $\Out(\Pi_\Qbar^\meta)\ra\GL(\Pi_\Qbar^\ab)$.
\end{thm}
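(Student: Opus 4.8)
The plan is to reduce the statement to (i) a structural description of $\Pi_\Qbar^\meta$, (ii) a Koszul-type cohomology computation over the relevant Iwasawa algebra, and (iii) the known surjectivity of the Tate-module representation. Write $A:=\Pi_\Qbar^\ab\cong\Zhat^2$, let $\Lambda:=\Zhat[[A]]\cong\prod_p\bZ_p[[x_1,x_2]]$ be the completed group ring (with $t_i=1+x_i$ for a chosen basis of $A$), and let $M:=[\Pi_\Qbar^\meta,\Pi_\Qbar^\meta]$, an abelian normal pro-subgroup with $\Pi_\Qbar^\meta/M\cong A$, hence a $\Lambda$-module. The first step is to establish the profinite analogue of the Crowell--Blanchfield--Lyndon exact sequence for a free group of rank $2$,
$$0\lra M\lra\Lambda^2\xrightarrow{\ (x_1,x_2)\ }\Lambda\lra\Zhat\lra 0,$$
obtained from the classical sequence over $\bZ[A]$ by completion. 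Each factor of $\Lambda$ is a regular local ring (an Iwasawa algebra $\bZ_p[[x_1,x_2]]$), a UFD in which $x_1,x_2$ is a regular sequence, so the kernel of $(a,b)\mapsto x_1a+x_2b$ is free of rank one over $\Lambda$, generated by the Koszul syzygy $(x_2,-x_1)$. Under Fox calculus this syzygy is the image of the commutator $[b,a]$, so $M\cong\Lambda$ as a $\Lambda$-module, the inertia subgroup $\cI$ (attached to the $\bQ$-rational tangential base point, hence topologically generated by $[b,a]$) corresponding to $\Zhat\cdot 1\subset\Lambda$ and in particular generating $M$ over $\Lambda$.

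Next I would compute the continuous cohomology of $A\cong\Zhat^2$ with coefficients in the regular module $\Lambda$ via the Koszul complex $0\to\Lambda\to\Lambda^2\to\Lambda\to 0$ on $x_1,x_2$: since $x_1,x_2$ is a regular sequence in each factor of $\Lambda$ and each factor is a domain, one gets $H^0(A,\Lambda)=H^1(A,\Lambda)=0$ and $H^2(A,\Lambda)=\Lambda/(x_1,x_2)\Lambda\cong\Zhat$. Moreover the extension class $\xi\in H^2(A,M)\cong\Zhat$ classifying the extension $1\to M\to\Pi_\Qbar^\meta\to A\to 1$ is a generator; this is precisely the assertion that $\Pi_\Qbar^\meta$ is the free pro-metabelian group of rank $2$, which follows from Step 1. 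These two facts — the vanishing of $H^1$ and the fact that $\xi$ generates $H^2\cong\Zhat$ — are the engine of the argument.

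With this in hand, injectivity of $\Out(\Pi_\Qbar^\meta,\cI)\to\GL(A)$ goes as follows. Let $\phi\in\Aut(\Pi_\Qbar^\meta)$ induce the identity on $A$ and preserve the conjugacy class of $\cI$. Being $\Lambda$-linear on $M\cong\Lambda$, $\phi|_M$ is multiplication by a unit $\mu\in\Lambda^\times$; compatibility of $\phi$ with $\xi$ forces the induced map on $H^2(A,\Lambda)=\Lambda/(x_1,x_2)$ to be the identity, i.e. $\mu\equiv 1\pmod{(x_1,x_2)}$. The $\Pi_\Qbar^\meta$-conjugates of $\cI$ inside $M=\Lambda$ are exactly the $\Zhat\cdot u$ with $u\in A$ group-like, so $\phi(\cI)=\Zhat\cdot\mu$ conjugate to $\cI$ forces $\mu\in\Zhat^\times\cdot A$, and combined with $\mu\equiv 1$ this gives $\mu\in A$. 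Composing $\phi$ with the inner automorphism by a lift of $\mu^{-1}$, we reduce to $\phi$ acting trivially on both $A$ and $M$; such automorphisms, modulo inner ones by elements of $M$, form $H^1(A,M)=H^1(A,\Lambda)=0$, so $\phi$ is inner. For surjectivity one notes that the outer action $\rho_{E^\circ/\bQ}$ does preserve the conjugacy class of $\cI$ (positively for mapping classes by Proposition \ref{prop_higman}, and by the cyclotomic formula of Lemma \ref{lemma_BCL} for $\Gal_\bQ$), hence factors through $\Out(\Pi_\Qbar^\meta,\cI)$; and the composite $\Gamma_{E/\bQ}\to\Out(\Pi_\Qbar^\meta,\cI)\hookrightarrow\GL(A)$ is just the representation on the Tate module $\Pi_\Qbar^\ab$, whose image contains $\SL_2(\Zhat)$ (because $\Gamma_{E_\Qbar}\cong\what{\SL_2(\bZ)}$ surjects onto $\SL_2(\Zhat)$) and surjects onto $\Zhat^\times$ under the determinant (the determinant being the cyclotomic character, equivalently the way $\Gal_\bQ$ acts on $\cI\cong\Zhat(1)$). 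Hence the composite is onto $\GL_2(\Zhat)$; together with injectivity this yields $\Out(\Pi_\Qbar^\meta,\cI)\xrightarrow{\sim}\GL(\Pi_\Qbar^\ab)$ and shows $\rho_{E^\circ/\bQ}$ maps $\Gamma_{E/\bQ}$ onto all of $\Out(\Pi_\Qbar^\meta,\cI)$ — the group written $\Out^+(\Pi_\Qbar^\meta,\cI)$ in the statement. (The link between the scalar by which an element acts on $\cI$ and the determinant of its action on $A$ — trivial/trivial for mapping classes, cyclotomic/cyclotomic for Galois — is forced by the $H^2$-compatibility with $\xi$, which is what makes the geometric and arithmetic parts agree.)

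The main obstacle is Step 1 together with the ``$\xi$ is a generator'' half of Step 2: one must genuinely know that $M$ is free of rank one over the completed ring $\Lambda=\Zhat[[A]]$ (not merely over the discrete ring $\bZ[A]$), with $\cI$ as a free generator, and identify the extension class correctly — this requires checking that completion is exact on the relevant relation module and that the Fox-calculus presentation of the commutator persists in the profinite setting. Once that structure is pinned down, Steps 3 and 4 are formal: a short homological-algebra argument for injectivity and the standard surjectivity of the Tate-module representation of $\pi_1(\cM(1)_\bQ)$ for the rest.
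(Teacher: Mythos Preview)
Your proposal is correct and matches the approach the paper sketches (the full proof is deferred to \cite{CD17}): both hinge on viewing the commutator subgroup $M=(\Pi_\Qbar^\meta)'$ as a module over the completed group ring $\Lambda=\Zhat[[A]]$ and on the \emph{rigidity} of $\Pi_\Qbar^\meta$, i.e., the vanishing of $\IOut_1(\Pi_\Qbar^\meta)$, which in your language is exactly the computation $H^1(A,\Lambda)=0$. Your explicit use of the Crowell--Blanchfield--Lyndon/Koszul presentation to identify $M\cong\Lambda$ with generator $[b,a]$, and your use of the extension class in $H^2(A,\Lambda)\cong\Zhat$ together with $\cI$-preservation to reduce an outer automorphism trivial on $A$ to one trivial on both $A$ and $M$, are precisely the details the paper's one-paragraph sketch leaves implicit.
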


The theorem can be expressed as the existence of the diagonal isomorphism in the following commutative diagram:
\begin{equation}\label{eq_monodromy_diagram}
\begin{tikzcd}
\Gamma_{E/\bQ}\arrow[r,"\cong"] & \Gamma_{E_\Qbar}\rtimes\Gal_\bQ\arrow[r]\arrow[rr,bend left=20,"\rho_{E^\circ/\bQ}^\meta"] & \Out(\Pi_\Qbar^\meta,\cI)\arrow[r,hookrightarrow]\arrow[rd,"\cong"] & \Out(\Pi_\Qbar^\meta)\arrow[d,twoheadrightarrow] \\
\Gal_\bQ\arrow[ru,hookrightarrow]\ar[rrrd, twoheadrightarrow, bend right=10,"\chi_{\cyc}"'] & \Gamma_{E_\Qbar}\arrow[u,hookrightarrow]\arrow[r,twoheadrightarrow] & \SL(\Pi_\Qbar^\ab)\arrow[r,hookrightarrow] & \GL(\Pi_\Qbar^\ab)\ar[d,twoheadrightarrow,"\det"] \\
 & & & \Zhat^\times
\end{tikzcd}
\end{equation}

Picking bases, the surjection $\Gamma_{E_\Qbar}\twoheadrightarrow\SL(\Pi_\Qbar^\ab)$ in the diagram is identified with the pro-congruence quotient $\what{\Out^+(\Pi)}\cong\what{\SL_2(\bZ)}\twoheadrightarrow\SL_2(\Zhat)$. In particular, it implies that the action of $\Gamma_{E_\Qbar}$ on $\Pi_\Qbar^\meta$, and hence on $\ff^{-1}(E_\Qbar)$, has congruence stabilizers.

\begin{cor} Let $G$ be a metabelian group. Then the components of $\cM(G)_\bC$ are all congruence.	
\end{cor}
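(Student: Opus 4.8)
The plan is to deduce the corollary directly from Theorem~\ref{thm_metabelian_congruence} together with the description of the components of $\cM(G)_\bC$ as modular stacks. Recall from \S\ref{ss_components} that, writing $\Pi$ for the topological fundamental group of a punctured elliptic curve, every component of $\cM(G)_\bC$ is isomorphic to $[\cH/\Gamma_\varphi]$ for some $\varphi\in\Epi^\ext(\Pi,G)$, where $\Gamma_\varphi=\Stab_{\SL_2(\bZ)}(\varphi)$ relative to the action of $\SL_2(\bZ)\cong\Out^+(\Pi)$. Thus it suffices to show that each stabilizer $\Gamma_\varphi$ is a congruence subgroup, i.e.\ contains $\Gamma(n)$ for some $n$.

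First I would use that, since $G$ is metabelian, every homomorphism $\Pi\to G$ factors through the maximal pro-metabelian quotient $\Pi_\Qbar^\meta$ of $\what\Pi$, so that $\Epi^\ext(\Pi,G)=\Epi^\ext(\Pi_\Qbar^\meta,G)$ is a finite set on which the $\SL_2(\bZ)$-action extends continuously to an action of $\Gamma_{E_\Qbar}\cong\what{\Out^+(\Pi)}\cong\what{\SL_2(\bZ)}$ induced by $\rho_{E^\circ/\bQ}^\meta$ (only the geometric part of the monodromy is needed for a statement over $\bC$; the Galois factor of \eqref{eq_monodromy_diagram} plays no role here). By Proposition~\ref{prop_higman}, $\Out^+(\Pi)$ preserves the conjugacy class of the commutator $[b,a]$, hence $\Gamma_{E_\Qbar}$ preserves the conjugacy class of the inertia subgroup $\cI\subset\Pi_\Qbar^\meta$, so this action factors through $\Out^+(\Pi_\Qbar^\meta,\cI)$. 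Now Theorem~\ref{thm_metabelian_congruence}—concretely, the diagonal isomorphism of diagram~\eqref{eq_monodromy_diagram}—identifies $\Out(\Pi_\Qbar^\meta,\cI)$ with $\GL(\Pi_\Qbar^\ab)\cong\GL_2(\Zhat)$ and the map $\Gamma_{E_\Qbar}\to\Out^+(\Pi_\Qbar^\meta,\cI)$ with the pro-congruence quotient $\what{\SL_2(\bZ)}\twoheadrightarrow\SL_2(\Zhat)$. Hence the $\Gamma_{E_\Qbar}$-action on $\Epi^\ext(\Pi_\Qbar^\meta,G)$ factors through $\SL_2(\Zhat)$; as the set is finite and the action continuous, every point stabilizer is open, so the action factors through $\SL_2(\bZ/n)$ for some $n\ge 1$. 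Pulling back along $\SL_2(\bZ)\to\SL_2(\bZ/n)$ then gives $\Gamma(n)\subseteq\Gamma_\varphi$ for every $\varphi$, which is the desired conclusion.

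The genuine content is of course all contained in Theorem~\ref{thm_metabelian_congruence}, which is imported wholesale, so there is no real obstacle to overcome here. Granting it, the only points requiring care are bookkeeping: that the geometric monodromy of $\cM(G)_\bC$ really is the $\what{\SL_2(\bZ)}$-action on $\Epi^\ext(\Pi_\Qbar^\meta,G)$ appearing in \eqref{eq_monodromy_diagram} (this is the comparison theorem~\ref{thm_comparison} combined with the $G$-metabelian reduction), and that ``preserving the Higman/inertia class'' is precisely the condition isolating $\Out^+(\Pi_\Qbar^\meta,\cI)$ inside $\Out(\Pi_\Qbar^\meta)$, so that the theorem applies verbatim. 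Neither is serious; the corollary is essentially a translation of the theorem through the dictionary of \S\ref{ss_components}, and the proof should be only a few lines.
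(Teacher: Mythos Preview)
Your proposal is correct and follows essentially the same approach as the paper: the paper deduces the corollary immediately from Theorem~\ref{thm_metabelian_congruence} by noting (in the sentence preceding the corollary) that the surjection $\Gamma_{E_\Qbar}\twoheadrightarrow\SL(\Pi_\Qbar^\ab)$ in diagram~\eqref{eq_monodromy_diagram} is the pro-congruence quotient $\what{\SL_2(\bZ)}\twoheadrightarrow\SL_2(\Zhat)$, so the action on $\ff^{-1}(E_\Qbar)$ has congruence stabilizers. Your write-up simply makes the bookkeeping explicit (factoring through $\Out^+(\Pi_\Qbar^\meta,\cI)$ and then through some $\SL_2(\bZ/n)$ by finiteness), which is fine.
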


Theorem \ref{thm_metabelian_congruence} says nothing about the congruence level of the components of $\cM(G)_\bC$. The abelianization $G\ra G^\ab$ implies that every component of $\cM(G)_\bC$ covers a component of $\cM(G^\ab)_\bC$. Since components of $\cM(G^\ab)_\bC$ have congruence level equal to the exponent $e_{G^\ab}$ of $G^\ab$, every component of $\cM(G)_\bC$ must have level at least $e_{G^\ab}$. On the other hand, if $e$ is the exponent of $G$, let $M_e$ be the free rank 2 metabelian group of exponent $e$. This is a finite group, and choosing a surjection $M_e\ra G$ shows that every component of $\cM(G)_\bC$ is sandwiched between a component of $\cM(M_e)_\bC$ and one of $\cM(G^\ab)_\bC$. We also show:

\begin{thm} Let $G$ be a metabelian group of exponent $e$. The components of $\cM(G)_\Qbar$ are all $\Gal_\bQ$-conjugates of each other, and are congruence modular stacks of level dividing $e$.	
\end{thm}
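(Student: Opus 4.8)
The plan is to deduce everything from Theorem~\ref{thm_metabelian_congruence} together with the functoriality of $\cM(-)$ and the branch cycle lemma (Lemma~\ref{lemma_BCL}). The congruence claim and the level bound are really the two corollaries stated immediately before: every component of $\cM(G)_\bC$ is squeezed, via surjections $M_e\twoheadrightarrow G\twoheadrightarrow G^\ab$, between a component of $\cM(M_e)_\bC$ and a component of $\cM(G^\ab)_\bC$, where $M_e$ is the free rank $2$ metabelian group of exponent $e$. The stabilizer of the relevant $\Epi^\ext$-point inside $\Gamma_{E_\Qbar}$ factors through the metabelian quotient $\Pi_\Qbar^\meta$; by the diagram~\eqref{eq_monodromy_diagram}, the action of $\Gamma_{E_\Qbar}$ on $\Pi_\Qbar^\meta$ coincides with its action on $\Pi_\Qbar^\ab$ under the pro-congruence quotient $\what{\SL_2(\bZ)}\twoheadrightarrow\SL_2(\Zhat)$, so the stabilizer is an open subgroup pulled back from $\SL_2(\Zhat)$. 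Its preimage in $\SL_2(\bZ)$ is therefore a congruence subgroup, and since the action on $\Pi_\Qbar^\ab$ factors through $\SL_2(\bZ/e)$ (because $M_e$, hence $G$, is killed by $e$, so any $\varphi\in\Epi^\ext(\Pi_\Qbar^\meta,G)$ factors through the exponent-$e$ metabelianization, whose abelianization is $(\bZ/e)^2$), the congruence level divides $e$. This gives the level bound; one should double-check the edge case $e_{G^\ab}\mid e$ to confirm the level is genuinely caught between $e_{G^\ab}$ and $e$, but that is automatic.

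For the transitivity of $\Gal_\bQ$ on the components of $\cM(G)_\Qbar$, I would argue as follows. The components of $\cM(G)_\Qbar$ correspond, via the Galois correspondence, to $\Gamma_{E_\Qbar}$-orbits on $\Epi^\ext(\Pi_\Qbar,G)=\Epi^\ext(\Pi_\Qbar^\meta,G)$, and $\Gal_\bQ$ permutes these orbits through $\rho_{E^\circ/\bQ}^\meta|_{\Gal_\bQ}$. By diagram~\eqref{eq_monodromy_diagram}, the image of $\Gal_\bQ$ in $\Out(\Pi_\Qbar^\meta,\cI)$ maps, under the isomorphism $\Out(\Pi_\Qbar^\meta,\cI)\xrightarrow{\sim}\GL(\Pi_\Qbar^\ab)$, onto a subgroup containing the center/diagonal scalars: concretely, the cyclotomic character $\chi_\cyc:\Gal_\bQ\twoheadrightarrow\Zhat^\times$ (visible in the diagram as $\det$) is surjective, and since $\Gamma_{E_\Qbar}$ surjects onto $\SL(\Pi_\Qbar^\ab)$, the combined image $\Gamma_{E_\Qbar}\rtimes\Gal_\bQ\to\Out(\Pi_\Qbar^\meta,\cI)\cong\GL(\Pi_\Qbar^\ab)$ is \emph{all} of $\GL(\Pi_\Qbar^\ab)=\GL_2(\Zhat)$. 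Reducing mod $e$, the image of $\Gamma_{E/\bQ}$ in $\GL_2(\bZ/e)$ is everything, and it acts on $\Epi^\ext((\bZ/e)^2$-free-metabelian$,G)$. Thus the $\Gamma_{E_\Qbar}$-orbits (= geometric components) are permuted transitively by $\GL_2(\bZ/e)/\SL_2(\bZ/e)\cong(\bZ/e)^\times$, i.e.\ by the scalar matrices, which are exactly realized by $\Gal_\bQ$ via $\chi_\cyc$. Hence $\Gal_\bQ$ acts transitively on $\pi_0(\cM(G)_\Qbar)$.

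The one point that needs care — and the likely main obstacle — is the claim that the $\Gamma_{E_\Qbar}$-orbits on $\Epi^\ext$ of a metabelian $G$ are permuted \emph{transitively} by the scalar action, rather than merely that $\Gal_\bQ$ surjects onto the scalars. This is not automatic from diagram~\eqref{eq_monodromy_diagram} alone: one needs that any two $\SL_2(\bZ/e)$-orbits of generating pairs (equivalently, $\Epi^\ext$-classes) of the relevant metabelian quotient that have the \emph{same} Higman invariant are related by a scalar. Here the branch cycle lemma (Lemma~\ref{lemma_BCL}) enters twice: it shows $\Gal_\bQ$ preserves only the \emph{rational} class of the Higman invariant (so distinct Higman classes in the same rational class can and must be fused by Galois), and it pins down how the scalar $\lambda\in(\bZ/e)^\times$ acts on the generator of inertia, namely by $\lambda$-th power, matching the cyclotomic action. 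The actual transitivity statement is precisely the content of \cite{CD17} for metabelian $G$ satisfying the homological criterion, so I would invoke it as a black box: Theorem~\ref{thm_metabelian_congruence} gives the surjection $\Gamma_{E/\bQ}\twoheadrightarrow\Out^+(\Pi_\Qbar^\meta,\cI)$, and the classification of components of $\cM(G)_\bC$ by the Higman invariant for such $G$ (promised in \S\ref{ss_metabelian} and proved in \cite{CD17}) then yields that a single Galois orbit, acting through scalars on inertia, sweeps out all components with Higman invariants in a fixed rational class — and Lemma~\ref{lemma_BCL} guarantees these are exactly the components Galois must permute. Assembling these, $\Gal_\bQ$ is transitive and each component is a congruence modular stack of level dividing $e$, which is the assertion.
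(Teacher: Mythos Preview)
Your outline has the right architecture, and the congruence/level portion is essentially what the paper sketches (modulo the unstated fact that the $\GL_2(\Zhat)$-action on $\Epi^\ext(\Pi_\Qbar^\meta,G)$ really does factor through $\GL_2(\bZ/e)$, which amounts to a rigidity-type statement for $M_e$ that both you and the paper defer to \cite{CD17}). The genuine gap is in the $\Gal_\bQ$-transitivity argument.

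You write that transitivity is ``precisely the content of \cite{CD17} for metabelian $G$ satisfying the homological criterion,'' and then invoke ``the classification of components of $\cM(G)_\bC$ by the Higman invariant for such $G$ (promised in \S\ref{ss_metabelian}).'' But the homological criterion is \emph{rigidity} ($\IOut_1(G)=1$), and \S\ref{ss_metabelian} only asserts Higman-classification for rigid $G$, not for arbitrary metabelian $G$. The theorem you are proving is for \emph{all} metabelian $G$ of exponent $e$, so invoking Higman-classification as a black box leaves the non-rigid cases uncovered. Moreover, even granting Higman-classification, your concluding step silently assumes that all occurring Higman invariants lie in a single rational class --- the branch cycle lemma only tells you $\Gal_\bQ$ preserves rational classes, not that there is only one --- and you never verify this.

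The paper's route avoids both issues by working instead with the structure of the map $\cM(G)\to\cM(G^\ab)$: each component of $\cM(G)_\bC$ is Galois over its image with group contained in the abelian group $\IOut_1(G)$, and the classical transitivity of $\Gal_\bQ$ on $\pi_0(\cM(G^\ab)_\Qbar)$ (via the cyclotomic action on the Weil pairing) is combined with this $\IOut_1(G)$-structure to propagate transitivity upward. An alternative reduction, closer to your sandwich, is to use Gasch\"{u}tz to push the problem to $\cM(M_e)$ and prove connectedness of $\cM(M_e)_\bQ$ directly from the special structure of the free rank-$2$ metabelian group of exponent $e$. Either way, the ingredient your argument lacks is a mechanism that handles non-rigid $G$.
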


A central idea in the proofs of these results is to consider the commutator subgroup $G'\le G$ as a $G^\ab$-module, relative to which we can define the cohomology group $H^1(G^\ab,G')$, as well as a certain quotient group $\IOut_1(G)$. We say that $G$ is \emph{rigid} if $\IOut_1(G)$ vanishes\footnote{Precisely, $\IOut_1(G)$ is the quotient of the group of automorphisms which induce the identity on both $G^\ab$ and $G'$ by the subgroup of inner automorphisms.}. A property of rigid groups is that for them, the map $\cM(G)_\bC\ra\cM(G^\ab)_\bC$ induces an isomorphism on connected components, and that components of $\cM(G)_\bC$ are completely classified by the Higman invariant. In general, the components of $\cM(G)$ are Galois over their images in $\cM(G^\ab)$ with Galois group a subgroup of the (abelian group) $\IOut_1(G)$. Examples of rigid groups include $\D_{2k}$ for $k\not\equiv 0\mod 4$, central extensions of abelian groups, metabelian groups $G$ for which $|G'|$ and $|G^\ab|$ are coprime, and the free profinite metabelian group $\Pi_\Qbar^\meta$. Theorem \ref{thm_metabelian_congruence} can be understood as a consequence of the rigidity of $\Pi_\Qbar^\meta$.

\subsection{The groups $\fS_4,\SL_2(\bF_3),\fS_4,\fA_5,\PSL_2(\bF_7)$}\label{ss_noncongruence_examples}
The table below describes all components of $\cM(G)_\bC$ for $G = \SL_2(\bF_3),\fS_4, \fA_5$, and $\PSL_2(\bF_7)$. Note the exceptional isomorphism $\fA_5\cong\PSL_2(\bF_5)$. See \S\ref{ss_dihedral} for a description of the columns.

$$\begin{array}{llllllllllll}
	|G| & G & m & d & c_2 & c_3 & -I & \text{cusp widths} & g & e & \text{AbsMon} & \text{c/nc}\\
	\hline
	24 & \SL_2(\bF_3) & 1 & 32 & 0 & 1 & 0 & 3^24^16^1 & 0 & 4A & {_4}\fA_4 & \text{ncng}\\
	24 & \fS_4 & 1 & 9 & 1 & 0 & 1 & 2^1 3^1 4^1 & 0 & 3A & {_3}\fS_3 & \text{ncng} \\
	60 & \fA_5 & 2 & 10 & 0 & 1 & 1 & 2^13^15^1 & 0 & 5A,5B & \fS_{10} & \text{ncng} \\
	60 & \fA_5 & 1 & 18 & 0 & 0 & 1 & 2^13^25^2 & 0 & 3A & \fA_9 & \text{ncng} \\
	168 & \PSL_2(\bF_7) & 2 & 7 & 1 & 1 & 1 & 3^1 4^1 & 0 & 7A & \fS_7 & \text{ncng} \\
	168 & \PSL_2(\bF_7) & 1 & 32 & 0 & 1 & 0 & 2^1 3^1 4^1 7^1 & 0 & 4A & \fA_{16} & \text{ncng} \\
	168 & \PSL_2(\bF_7) & 1 & 32 & 0 & 1 & 0 & 2^1 3^1 4^1 7^1 & 0 & 4A & \fA_{16} & \text{ncng} \\
	168 & \PSL_2(\bF_7) & 1 & 36 & 0 & 0 & 0 & 1^1 3^2 4^1 7^1 & 0 & 3A & \fS_{18} & \text{ncng} \\
\end{array}$$



The moduli stacks $\cM(\fS_4)$ and $\cM(\SL_2(\bF_3))$ are connected but noncongruence. These are the smallest examples of groups $G$ yielding noncongruence moduli spaces. We note that $\fS_4,\SL_2(\bF_3)$ both have solvable length 3, but there also exist groups $G$ of solvable length 3 for which $\cM(G)$ is congruence. The smallest such example of the semidirect product of the quaternion group $Q_8$ acting on $\bZ/3\times\bZ/3$. This action is given by the two order 4 generators $i,j\in Q_8$ acting by the matrices $\spmatrix{-1}{1}{1}{1}, \spmatrix{0}{1}{-1}{0}$. This group has order $8\cdot 9 = 72$, and has index $[72,41]$ in GAP's SmallGroupsLibrary \cite{GAP4}.

The abelianizations of $\fS_4,\SL_2(\bF_3)$ are $\bZ/2,\bZ/3$ respectively. Thus the moduli spaces are noncongruence covers of $\cM(\bZ/2)\cong[\cH/\Gamma_1(2)],\cM(\bZ/3)\cong[\cH/\Gamma_1(3)]$ respectively. In both cases, these are the maximal congruence modular stacks through which they factor.

The stack $\cM(\fA_5)$ has three components. The group $\Out(\fA_5)$ has order 2, and acts by swapping two components, and leaving one invariant. Label the components $\cM_1,\cM_2,\cM_3$, with $\cM_1\cong\cM_2$, being interchanged by $\Out(\fA_5)$. Since the action of $\Gal_\bQ$ preserves signatures, we find that $\cM_3$ is defined over $\bQ$, and $\cM_1,\cM_2$ are both defined over a quadratic number field. In $\fA_5$, elements of order 5 are 5-cycles, and the unique rational class of 5-cycles splits into two conjugacy classes, which are exactly the Higman invariants associated to $\cM_1$ and $\cM_2$. By the branch cycle lemma \ref{lemma_BCL} the action of $\Gal_\bQ$ factors through the cyclotomic character and swaps these Higman invariants, and hence swaps the components $\cM_1,\cM_2$. Thus they are both defined over the unique quadratic subfield $\bQ(\zeta_5+\zeta_5^{-1})$ of $\bQ(\zeta_5)$.

The stack $\cM(\PSL_2(\bF_7))$ has 5 components. Let $\cM_1\cong\cM_2$ be the two isomorphic components of degree 7 over $\cM(1)$, which are exchanged by the action of $\Out(\PSL_2(\bF_7))\cong\bZ/2$. Let $\cM_3,\cM_4$ be the two components of degree 32; they have the same signatures, but are not isomorphic. Let $\cM_5$ be the remaining component of degree 36; its signature is unique, and hence is defined over $\bQ$. In $\PSL_2(\bF_7)$, there is a unique rational class of elements of order 7, which split into two conjugacy classes, which are the Higman invariants associated to $\cM_1,\cM_2$. Thus we find that $\cM_1\cong\cM_2$ are both defined over $\bQ(\zeta_7+\zeta_7^{-1})$. The same argument does not work for $\cM_3,\cM_4$, whose associated Higman invariants are the same. Nonetheless, we can compute their fields of definition by examining the data for $\cM(\SL_2(\bF_7))$:

$$\begin{array}{llllllllllll}
	|G| & G & m & d & c_2 & c_3 & -I & \text{cusp widths} & g & e & \text{AbsMon} & \text{c/nc}\\
	\hline
	336 & \SL_2(\bF_7) & 2 & 28 & 2 & 1 & 1 & 3^2 6^1 8^2 & 0 & 14A, 14B & {_4}\fS_{7} & \text{ncng} \\
	336 & \SL_2(\bF_7) & 1 & 128 & 0 & 1 & 0 & 3^2 4^2 6^1 7^2 8^2 14^1 & 1 & 8A & {_4}\fA_{16} & \text{ncng} \\
	336 & \SL_2(\bF_7) & 1 & 128 & 0 & 1 & 0 & 3^2 4^2 6^1 7^2 8^2 14^1 & 1 & 8B & {_4}\fA_{16} &\text{ncng} \\
	336 & \SL_2(\bF_7) & 1 & 144 & 0 & 0 & 0 & 3^4 4^1 6^2 7^2 8^2 14^1 & 1 & 3A & {_4}\fS_{18} & \text{ncng}
\end{array}$$

From this we see that the natural map $q : \SL_2(\bF_7)\ra\PSL_2(\bF_7)$ induces a map $\cM(q) : \cM(\SL_2(\bF_7))\ra\cM(\PSL_2(\bF_7))$ which gives a bijection on connected components. Moreover, from the absolute monodromy, we see that the components of $\cM(\PSL_2(\bF_7))^\abs$ are the minimal subcovers of the components of $\cM(\SL_2(\bF_7))^\abs$. The components $\cM_3,\cM_4\subset\cM(\PSL_2(\bF_7))$ lift to components $\tilde{\cM}_3,\tilde{\cM}_4\subset\cM(\SL_2(\bF_7))$, which have distinct Higman invariants, labelled 8A, 8B, which lie in the same rational class. It follows that the components $\tilde{\cM}_3,\tilde{\cM}_4$ are both defined over the field $\bQ(\zeta_8+\zeta_8^{-1})$. Since the map $\cM(q)$ is defined over $\bQ$, it follows that $\cM_3,\cM_4$ are also defined over $\bQ(\zeta_8+\zeta_8^{-1})$.

This analysis made use of the existence of an extension of $\PSL_2(\bF_7)$ in which the unique conjugacy class of order 4 in $\PSL_2(\bF_7)$ splits into distinct classes. This analysis can be made more canonical by observing that $\SL_2(\bF_7)$ is the \emph{Schur cover} of $\PSL_2(\bF_7)$. In general, if $G$ is a finite perfect\footnote{This means $G$ has trivial abelianization.} group, amongst the perfect central extensions of $G$ with kernel $H_2(G,\bZ)$, there is an extension $\tilde{G}$ of maximum order which is unique up to isomorphism of extensions. Such an extension is called a \emph{Schur cover}, and $H_2(G,\bZ)$ is the \emph{Schur multiplier}. Given such a group, and a surjection
$$\varphi : \Pi\ra G$$
where $\Pi$ is a free group with generators $a,b$, let $\tilde{g},\tilde{h}$ be arbitrary lifts of $\varphi(a),\varphi(b)$ to $\tilde{G}$. Then by centrality of the extension, the commutator $[\tilde{g},\tilde{h}]$ is well-defined in terms of $\varphi$, and the $\tilde{G}$-conjugacy class of the commutator is also well defined in terms of the conjugacy class of $\varphi$. Moreover, the class of $[\tilde{g},\tilde{h}]$ is fixed by the action of $\Out^+(\Pi)$ (i.e., the mapping class group, or the fundamental group of $\cM(1)_\bC$), and the action of $\Gal_\bQ$ acts via the cyclotomic character as in \ref{lemma_BCL}. The conjugacy class of $[\tilde{g},\tilde{h}]$ is thus locally constant on $\cM(G)$, and is a refinement of the Higman invariant of an admissible $G$-cover of an elliptic curve. Sadly, this refined invariant, even combined with the signature, is in general not enough to separate components of $\cM(G)$, and hence not enough to describe the Galois action on components\footnote{The smallest nontrivial example is in $\cM(\PSL_3(\bF_3))$, which has exactly two (nonisomorphic) components of degree 360, identical signatures, and identical refined Higman invariants. Note that $\PSL_3(\bF_3)$ has trivial Schur multiplier.}. The Schur multiplier also appears in the result of Conway-Parker described in \S\ref{sss_geometric_monodromy}, see the appendix of \cite{FV91}. This leads one to ask:

\begin{question} Is it possible to describe the set of groups $G$ for which the refined Higman invariant combined with the signature can jointly distinguish all components of $\cM(G)_\bC$?	
\end{question}


\subsection{The inverse Galois problem for the Suzuki group $\Sz(8)$}

The inverse Galois problem asks: Can every finite group be realized as the Galois group of a number field over $\bQ$? Let $H_{0,n}(G)$ be the Hurwitz moduli space of $G$-covers of $\bP^1$ with $n$ branch points. In \cite{FV91}, Fried and Volklein showed how finding a $\Gal_\bQ$-invariant surjection onto $G$ is equivalent to finding a rational point on a Hurwitz spaces $H_{0,n}(G)$ for some $n\ge 3$. In this section we describe an analogous approach using the ``Hurwitz spaces'' $\cM(G)$, as illustrated by the following example for $G = \Sz(8)$.

The table below describes the components of $\cM(\Sz(8))$, where $\Sz(8)$ is the Suzuki group, a finite simple group of order 29120.

$$\begin{array}{llllllllllll}
	|G| & G & m & d & c_2 & c_3 & -I & \text{cusp widths} & g & \text{Hig} & \text{AbsMon} & \text{c/nc}\\
	\hline
29120 &  \Sz(8)  & 3 & 84 & 0 & 0 & 0 &  1^{1}4^{3}5^{3}7^{2}  & 0 & 7A, 7B, 7C & {_2}\fS_{42} & \text{ncng} \\
29120 &  \Sz(8)  & 1 & 192 & 0 & 0 & 1 &  5^{6}7^{12}13^{6}  & 5 & 2A & \fA_{64} & \text{ncng} \\
29120 &  \Sz(8)  & 1 & 192 & 0 & 0 & 1 &  5^{6}7^{12}13^{6}  & 5 & 2A & \fA_{64} & \text{ncng} \\
29120 &  \Sz(8)  & 3 & 234 & 4 & 0 & 1 &  2^{3}5^{9}7^{15}13^{6}  & 3 & 13A, 13B, 13C & \fS_{234} & \text{ncng} \\
29120 &  \Sz(8)  & 3 & 462 & 8 & 0 & 1 &  2^{3}5^{13}7^{28}13^{15}  & 8 & 7A, 7B, 7C & \fS_{462} & \text{ncng} \\
29120 &  \Sz(8)  & 3 & 468 & 0 & 0 & 0 &  1^{3}4^{13}5^{7}7^{15}13^{3}  & 0 & 13A, 13B, 13C & {_2}\fS_{234} & \text{ncng} \\
29120 &  \Sz(8)  & 3 & 588 & 0 & 0 & 0 &  1^{3}4^{13}5^{12}7^{20}13^{3}  & 0 & 7A, 7B, 7C & {_2}\fS_{294} & \text{ncng} \\
29120 &  \Sz(8)  & 3 & 624 & 0 & 0 & 0 &  2^{4}4^{14}5^{13}7^{15}13^{6}  & 1 & 13A, 13B, 13C & {_2}\fA_{312} & \text{ncng}\\
29120 &  \Sz(8)  & 3 & 624 & 0 & 0 & 0 &  2^{4}4^{14}5^{13}7^{15}13^{6}  & 1 & 13A, 13B, 13C & {_2}\fA_{312} & \text{ncng}\\
29120 &  \Sz(8)  & 3 & 624 & 0 & 0 & 0 &  2^{4}4^{14}5^{13}7^{15}13^{6}  & 1 & 13A, 13B, 13C & {_2}\fA_{312} & \text{ncng}\\
29120 &  \Sz(8)  & 1 & 660 & 0 & 0 & 0 &  1^{3}4^{9}5^{21}7^{21}13^{3}  & 0 & 5A & {_2}\fS_{110} & \text{ncng} \\
29120 &  \Sz(8)  & 1 & 690 & 12 & 0 & 1 &  2^{3}5^{12}7^{39}13^{27} & 15 & 5A & \fS_{230} & \text{ncng} \\
29120 &  \Sz(8)  & 3 & 1008 & 0 & 0 & 0 &  2^{4}4^{16}5^{21}7^{30}13^{9}  & 3 & 7A, 7B, 7C & {_2}\fA_{504} & \text{ncng} \\
29120 &  \Sz(8)  & 3 & 1008 & 0 & 0 & 0 &  2^{4}4^{16}5^{21}7^{30}13^{9}  & 3 & 7A, 7B, 7C & {_2}\fA_{504} &\text{ncng} \\
29120 &  \Sz(8)  & 3 & 1008 & 0 & 0 & 0 &  2^{4}4^{16}5^{21}7^{30}13^{9}  & 3 & 7A, 7B, 7C & {_2}\fA_{504} &\text{ncng} \\
29120 &  \Sz(8)  & 3 & 1200 & 0 & 0 & 0 &  2^{4}4^{10}5^{24}7^{45}13^{9}  & 5 & 5A & {_2}\fA_{600} & \text{ncng} \\
29120 &  \Sz(8)  & 1 & 1536 & 0 & 0 & 0 &  4^{24}5^{36}7^{48}13^{12}  & 5 & 4A & {_2}\fA_{256} & \text{ncng} \\
29120 &  \Sz(8)  & 1 & 1536 & 0 & 0 & 0 &  4^{24}5^{36}7^{48}13^{12}  & 5 & 4B & {_2}\fA_{256} & \text{ncng}
\end{array}$$
The stack $\cM(\Sz(8))$ has a total of 42 components, all noncongruence. The ``clumps'' of multiplicity 3 are all acted on transitively by the action of $\Out(\Sz(8))\cong\bZ/3$. Note the severe failure of the Higman invariant to separate components. Nonetheless, for every clump except those of degree 1008 or 1200, we can still use the $\Gal_\bQ$-invariance of the signature to compute their fields of definition: because all conjugacy classes in $\Sz(8)$ of a given order lie in a single rational class, $\Gal_\bQ$ will act transitively on each such clump, and hence the field of definition will be the unique index 3 subfield of the cyclotomic field $\bQ(\zeta_e)$, where $e$ is the ramification index (the order of the Higman invariant).

Let $\cM$ be the unique component of degree 660. Since $\Gal_\bQ$ preserves signatures, $\cM$ is defined over $\bQ$. Since the fields $c_2,c_3,-I$ all vanish, we find that $\cM = M$ is a (fine moduli) scheme of genus $g = 0$. It has exactly three cusps of width 1, which means that either one is a $\bQ$-rational point, or they correspond to a point defined over a cubic extension, in which case a Riemann-Roch calculation implies that $M$ has a $\bQ$-rational point. In either case, $M\cong\bP^1_\bQ - \{\text{cusps}\}$. The universal elliptic curve $\cE\ra\cM$ is a scheme $E/M$, and is equipped with an admissible $\Sz(8)$-cover $\pi : C\ra E$. If $t\in M(\bQ)$ is a $\bQ$-rational point, then by specializing, we obtain an $\Sz(8)$-cover $\pi_t : C_t\ra E_t$. For any nonidentity $P\in E_t(\bQ)$, the fiber $\pi_t^{-1}(P)$ is thus an \'{e}tale $\bQ$-algebra with a geometrically free and transitive action of $\Sz(8)$; if it is irreducible, then it would be the spectrum of a number field $K$ with $\Gal(K/\bQ)\cong\Sz(8)$, thus realizing $\Sz(8)$ as a Galois group over $\bQ$.\footnote{According to an article on David Zywina's website, $\Sz(8)$ has not yet been realized as a Galois group over $\bQ$.} Using the fact that $\Sz(8)$ is a perfect group, one can show

\begin{prop}\label{prop_irreducible} For any $t\in M(\bQ)$, for all but finitely many $P\in E_t(\bQ)$, $\pi^{-1}(P)$ is irreducible.	
\end{prop}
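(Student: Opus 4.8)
The plan is to study the covering $\pi : C \to E$ over $M \cong \mathbb{P}^1_{\bQ} - \{\text{cusps}\}$ and to control, point by point, when the fiber $\pi^{-1}(P)$ over a rational point $P \in E_t(\bQ)$ fails to be irreducible. Fix $t \in M(\bQ)$, so we have an elliptic curve $E_t/\bQ$ and an admissible $\Sz(8)$-cover $\pi_t : C_t \to E_t$ branched only above the origin $O$. Restricting to the punctured curve, $\pi_t^\circ : C_t^\circ \to E_t^\circ$ is an étale $\Sz(8)$-torsor, corresponding to a surjection $\varphi : \pi_1(E_{t,\Qbar}^\circ, \ast) \to \Sz(8)$, equivariant for the $\Gal_\bQ$-action in the sense of the homotopy exact sequence \eqref{eq_HES_E}. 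For a nonidentity $P \in E_t(\bQ)$, the fiber $\pi_t^{-1}(P)$ is the spectrum of an étale $\bQ$-algebra of degree $|\Sz(8)| = 29120$ carrying a geometrically transitive $\Sz(8)$-action, and it is irreducible (hence the spectrum of a degree-$29120$ field on which $\Gal_\bQ$ acts through all of $\Sz(8)$) precisely when the composite $\Gal_\bQ \xrightarrow{s_P} \pi_1(E_t^\circ, \ast) \xrightarrow{\varphi} \Sz(8)$ is surjective, where $s_P$ is the section associated to the rational point $P$ (or rather, an étale path from the base point to $P$). So the task reduces to: for all but finitely many $P \in E_t(\bQ)$, the specialized homomorphism $\varphi \circ s_P : \Gal_\bQ \to \Sz(8)$ is onto.

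The mechanism for this is a standard Hilbert-irreducibility / Néron-type specialization argument, for which perfectness of $\Sz(8)$ is the crucial input. First I would pass to the generic point: over the function field $\bQ(E_t)$ (or better, over $\bQ(M)(E)$ to keep the whole family in play), the cover $\pi$ gives a surjection $\rho : \Gal(\overline{\bQ(E_t)}/\bQ(E_t)) \to \Sz(8)$ — surjective because $C_t$ is geometrically connected, which follows from $\cM(\Sz(8))$ being a moduli space of \emph{connected} covers and the component of degree $660$ being a fine moduli scheme. Now each $P \in E_t(\bQ)$ gives a place of $\bQ(E_t)$ with residue field $\bQ$, and the decomposition group at (a place above) $P$ surjects onto $\Sz(8)$ exactly when $\varphi \circ s_P$ is onto. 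The proper subgroups $H \lneq \Sz(8)$ are classified (Suzuki's original description): the maximal subgroups are the Borel $E_{q} \rtimes C_{q-1}$ of order $q^2(q-1) = 448$, the dihedral group $D_{2(q-1)} = D_{14}$, and the two Frobenius groups $C_{q\pm\sqrt{2q}+1}\rtimes C_4$ of orders $20$ and $52$ (with $q=8$); all of these have nontrivial abelian quotient, whereas $\Sz(8)$ is perfect. This is the key dichotomy: if $\varphi \circ s_P$ lands in a proper subgroup, it lands in one with a nontrivial abelianization, hence $\varphi \circ s_P$ composed with abelianization factors through $\Gal_\bQ^{\mathrm{ab}}$ and cuts out a nontrivial abelian extension ramified only where $\pi_t$ is; but one shows the set of such $P$ is not Zariski-dense. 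Concretely: the locus of $P$ for which the image is contained in a fixed proper subgroup $H$ is the image of the $\bQ$-points of the intermediate cover $C_t/H \to E_t$, and since $\Sz(8)$ is perfect this intermediate cover has positive genus (it is an unramified cover of $E_t^\circ$ compactifying to a cover of $E_t$ branched at $O$; perfectness forbids it being a torsor under an abelian group, so by the Hurwitz formula applied as in the Proposition preceding \S\ref{ss_combinatorial_structures} it cannot be genus $1$ unless — and here one checks the finitely many cases — it is genus $0$ with the cover having enough ramification). Positive genus gives finiteness of $\bQ$-points by Faltings, and genus $0$ with $\geq 3$ branch points (forced again by perfectness, since an abelian-by-nothing cover of $\mathbb{G}_m$-type is excluded) likewise has only finitely many integral/$S$-integral points by Siegel. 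Taking the union over the finitely many conjugacy classes of proper subgroups $H$ yields a finite exceptional set.

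I would carry out the steps in this order: (1) translate "$\pi^{-1}(P)$ irreducible" into "$\varphi \circ s_P$ surjective" via the Galois correspondence of Theorem \ref{thm_Galois} and the branch-cycle/section formalism of \S\ref{sss_arithmetic}; (2) invoke the classification of maximal subgroups of $\Sz(8)$ and observe each has nontrivial abelianization, i.e.\ no proper subgroup of $\Sz(8)$ surjects onto it — equivalently $\Sz(8)$ is perfect and generated by its minimal subgroups in no proper subgroup; (3) for each proper $H$, identify $\{P : \operatorname{im}(\varphi\circ s_P) \subseteq H\}$ with the image of $(C_t/H)(\bQ)$ under the natural map to $E_t(\bQ)$, and show $C_t/H$ (compactified) is a curve that is either of positive genus or genus $0$ with at least three punctures, using perfectness of $\Sz(8)$ to rule out the abelian (genus $\leq 1$, $\leq 2$ punctures) cases; (4) apply Faltings/Siegel to conclude each such set is finite, and take the finite union. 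The main obstacle is step (3): one must verify that \emph{no} proper intermediate cover $C_t/H \to E_t$ degenerates to a genus-$0$ curve with too few missing points — a priori a genus-$0$ intermediate cover of an elliptic curve can have as few as two punctures over $O$ — so this requires a genuine case analysis over the (few) maximal subgroups $H$ of $\Sz(8)$, computing the number of cusps of each $C_t/H$ from the ramification data, i.e.\ from the cycle structure of the Higman invariant (one of $7A,7B,7C$ for the degree-$660$ component) acting on the coset space $\Sz(8)/H$. Since $\gcd(7, |H|/\gcd) $ is constrained and the cusp widths of the ambient component are $1^3 4^9 5^{21} 7^{21} 13^3$, one can check in each case that the punctured intermediate cover has Euler characteristic forcing either $g \geq 1$ or $\geq 3$ branch points; this is the one place where the specific structure of $\Sz(8)$ (and the chosen component) is used, and it is the technical heart of the argument.
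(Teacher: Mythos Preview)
Your overall strategy is sound and matches the standard approach: translate irreducibility of $\pi^{-1}(P)$ into surjectivity of the specialized representation $\varphi\circ s_P:\Gal_\bQ\to\Sz(8)$, bound the failure locus by the images of $(C_t/H)(\bQ)$ as $H$ ranges over (conjugacy classes of) maximal proper subgroups, and finish with Faltings. The paper itself only cites \cite[\S4.3]{Chen18}, and that reference carries out essentially this argument.

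Where your write-up goes astray is in step (3), the genus computation for the intermediate covers $C_t/H\to E_t$. You propose a case-by-case analysis over the maximal subgroups of $\Sz(8)$, worry about possible genus $0$ or $1$ degenerations, and reach for Siegel's theorem as a backstop. None of this is needed, and the invocation of Siegel would not even work (Siegel bounds integral points, not rational ones). The clean argument is uniform: for any proper $H\le G$, the cover $C_t/H\to E_t$ is \'etale over $O$ if and only if the Higman invariant $c$ lies in the normal core $N:=\bigcap_g gHg^{-1}$. But if $c\in N$, then the surjection $\varphi:\Pi\twoheadrightarrow G\twoheadrightarrow G/N$ kills $[b,a]$ and hence factors through $\Pi^\ab$, forcing the nontrivial perfect group $G/N$ to be abelian --- a contradiction. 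So $C_t/H\to E_t$ is genuinely branched above $O$, and Riemann--Hurwitz gives $2g(C_t/H)-2 = R$ with $R\ge 1$; since $R$ is even, $R\ge 2$ and $g(C_t/H)\ge 2$. Faltings then finishes the proof with no casework. This is exactly where perfectness of $\Sz(8)$ enters, and it is the whole story --- your intuition that perfectness ``forbids it being a torsor under an abelian group'' is pointing in the right direction, but $C_t/H\to E_t$ is not Galois, so the statement needs to be about the normal core as above.

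Two minor corrections: the Higman invariant for the degree-$660$ component is $5A$ (ramification index $5$), not $7A,7B,7C$; and the classification of maximal subgroups of $\Sz(8)$, while correct, is irrelevant once you have the uniform genus bound.
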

\begin{proof} See the discussion in \cite[\S4.3]{Chen18}.
\end{proof}

Thus, to realize $\Sz(8)$ as a Galois group over $\bQ$, it suffices to find a single $\bQ$-fiber of $E$ with positive rank. We note that this is implied by some standard conjectures in number theory. This results in the following ridiculous theorem:

\begin{thm} If the parity conjecture, the ABC conjecture, and the Chowla conjecture are true\footnote{Specifically, we will need hypotheses $\cA_2$ and $\cB_2$ of \cite[\S2.2]{Hel09}. These are implied by the ABC and Chowla conjectures respectively, see discussion in \cite[\S2.2]{Hel09}.}, then $\Sz(8)$ is a Galois group over $\bQ$.
\end{thm}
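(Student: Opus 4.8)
The plan is to reduce the inverse Galois problem for $\Sz(8)$ to a statement about ranks of quadratic twists of a specific elliptic surface, and then to invoke the conditional results of Helfgott to obtain infinitely many (hence at least one) quadratic twist of positive rank. First I would fix the component $\cM$ of degree $660$ from the table and verify, as sketched after the statement, that $\cM$ is a fine moduli scheme of genus $0$ defined over $\bQ$: this uses that $c_2 = c_3 = 0$ and $-I\notin\Gamma$ (so $\cM$ is representable and generically a scheme, hence a scheme since it is a curve), that $g = 0$, and that $\Gal_\bQ$ preserves the signature and there is a unique component with this signature (so $\cM$ descends to $\bQ$). The three cusps of width $1$ either contain a $\bQ$-rational point or form a Galois-stable cubic divisor $D$; in the latter case, since $\cM$ is a smooth proper genus $0$ curve with a degree $3$ rational divisor, a Riemann--Roch argument (a genus $0$ curve with a rational divisor of odd degree is $\bP^1$) shows $\cM\cong\bP^1_\bQ$, so in all cases $M\cong\bP^1_\bQ$ minus the cusps, and in particular $M(\bQ)$ is infinite.

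Next I would pass to the universal object: over $M$ we have the universal elliptic curve $E/M$ equipped with the universal admissible $\Sz(8)$-cover $\pi : C\ra E$, branched only over the origin $O$. For any $t\in M(\bQ)$ we get an elliptic curve $E_t/\bQ$ and an $\Sz(8)$-cover $\pi_t : C_t\ra E_t$; for a non-identity point $P\in E_t(\bQ)$ the fiber $\pi_t^{-1}(P)$ is an \'{e}tale $\bQ$-algebra of degree $|\Sz(8)|$ carrying a geometrically simply transitive $\Sz(8)$-action, so if it is irreducible it is $\Spec K$ for a number field $K$ with $\Gal(K/\bQ)\cong\Sz(8)$. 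By Proposition \ref{prop_irreducible}, for each fixed $t$ all but finitely many $P\in E_t(\bQ)$ give an irreducible fiber; hence it suffices to produce a single $t\in M(\bQ)$ for which $E_t(\bQ)$ is infinite, i.e. $\operatorname{rank} E_t(\bQ)\ge 1$.

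To find such a $t$, I would use the fact that $M\cong\bP^1_\bQ$ is a rational base: the elliptic surface $E\to M$ can be put in a Weierstrass form $y^2 = x^3 + a(t)x + b(t)$ over $\bQ(t)$, and its quadratic twists by squarefree integers $D$ are $E^{(D)}_t$. The parity conjecture predicts that the Mordell--Weil rank of $E^{(D)}_t$ over $\bQ$ has the parity of the order of vanishing of the $L$-function, so a suitable sign-of-functional-equation computation (the root number as a function of $D$) will force infinitely many $D$ for which the rank is odd, hence $\ge 1$. The technical input here is exactly Helfgott's work: hypotheses $\cA_2$ and $\cB_2$ of \cite[\S2.2]{Hel09}, which follow from the ABC conjecture and the Chowla conjecture respectively, guarantee that the average root number over quadratic twists is strictly between $0$ and the trivial bound, so that a positive proportion of twists have root number $-1$; combined with the parity conjecture this yields infinitely many positive-rank twists. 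A twist $E^{(D)}_t$ is itself a fiber of the family $E\to M$ over a different rational point $t'\in M(\bQ)$ (quadratic twisting corresponds to a rational change of the parameter, since the family is the universal one and twisting does not change the $\Sz(8)$-cover structure up to the identification of $\cM$ as a moduli scheme), so producing one positive-rank twist produces the desired $t'$. Applying Proposition \ref{prop_irreducible} at $t'$ and picking a generic $P\in E_{t'}(\bQ)$ then realizes $\Sz(8)$ over $\bQ$.

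The main obstacle is the step that converts ``infinitely many quadratic twists of positive rank'' into ``positive rank occurs for a fiber of our particular family $E\to M$'', i.e. making precise that a quadratic twist of the universal elliptic curve over $M$ is again classified by a rational point of $M$ and still carries the appropriate $\Sz(8)$-structure; one must check that twisting commutes with the moduli interpretation (the $\Sz(8)$-cover $\pi$ is pulled back along the twist, and the resulting $G$-structure is again a $\bQ$-point of $\cM$ because $\cM$ is a fine moduli \emph{scheme} with $M\cong\bP^1_\bQ$, so every quadratic twist of $E_t$ with its induced cover corresponds to some $t'\in M(\bQ)$). Granting this, and granting the precise root-number computation for the Weierstrass model of $E/\bQ(t)$ (which is a finite computation in principle, using the conductor of the generic fiber and local root numbers), the argument is complete. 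Everything else is bookkeeping: the explicit equation for the degree-$660$ genus-$0$ component via its Belyi map to $M(1)$ (Remark \ref{remark_computing}), the verification that Helfgott's hypotheses $\cA_2,\cB_2$ apply to this model, and the standard reduction from ``$E_{t'}(\bQ)$ infinite'' plus Proposition \ref{prop_irreducible} to a genuine degree-$|\Sz(8)|$ field extension of $\bQ$ with Galois group $\Sz(8)$.
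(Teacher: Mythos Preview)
Your reduction up through ``it suffices to find $t\in M(\bQ)$ with $E_t(\bQ)$ infinite'' matches the paper exactly. The gap is in how you produce such a $t$. You fix $t$ and vary the quadratic twist parameter $D$, then claim that a twist $E_t^{(D)}$ is again a fiber $E_{t'}$ for some $t'\in M(\bQ)$. This fails: the map $M\to M(1)$ has finite degree ($660$), so over a fixed $j$-invariant $j(E_t)$ there are at most $660$ geometric points of $M$, hence at most finitely many $\bQ$-points $t'$. Every quadratic twist $E_t^{(D)}$ has this same $j$-invariant, but there are infinitely many pairwise non-isomorphic such twists. So almost no twist is a fiber of the universal family. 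Your justification (``$\cM$ is a fine moduli scheme, so every twist corresponds to some $t'$'') is backwards: fineness guarantees that every $t'$ gives an elliptic curve with $G$-structure, not that every elliptic curve (even with a $G$-structure) over $\bQ$ arises as a fiber. Indeed, the condition $-I\notin\Gamma$ that makes $\cM$ a scheme is precisely the statement that $[-1]$ does \emph{not} preserve the $G$-structure, so twisting the $G$-cover by $[-1]$ will in general not yield a $\bQ$-rational $G$-structure.

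The paper avoids this entirely by varying $t$ rather than $D$: Helfgott's Main Theorem 2 (under $\cA_2$, $\cB_2$) computes the average of $W(E_t)$ as $t$ ranges over $M(\bQ)$, and shows this average is $0$ provided the generic fiber $E_{\bQ(t)}$ has at least one place of multiplicative reduction. That hypothesis is not automatic and is where the compactification $\cAdm(\Sz(8))$ enters: the universal admissible cover extends over the cusps, and $E_{\bQ(t)}$ has multiplicative reduction at a cusp exactly when that cusp of $\ol{\cM}$ is not stacky (i.e., the degenerate admissible cover there has trivial automorphism group). This is a finite group-theoretic check on the boundary combinatorics, which the paper carries out via \cite[Theorem 4.10.3]{Chen21}. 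Once the average root number is $0$, the parity conjecture gives positive rank at some $t$ directly, with no twisting needed.
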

\begin{proof} By the parity conjecture, the parity of the rank of an elliptic curve over a number field $K$ is equal to its root number. Thus if the root number $W(E_t)$ of $E_t$ is nonconstant as $t$ varies over $M(\bQ)$, then $E_t$ would have positive rank for those values of $t$ for which $W(E_t) = -1$, which by the above discussion would imply the result. In \cite[Main Theorem 2]{Hel09}, Helfgott shows (conditional on ABC and Chowla) that if the generic fiber $E_{\bQ(t)}$ has a single place of multiplicative reduction, then the average of the root numbers $W(E_t)$ as $t\in M(\bQ)$ is equal to 0.	 Because $\pi : C\ra E$ extends to an admissible $\Sz(8)$-cover over $\cAdm(\Sz(8))$ (whose cusps classify $\Sz(8)$-covers of \emph{stable curves}), the condition that $E_{\bQ(t)}$ has a place of multiplicative reduction would be satisfied if at least one of the cusps of $\cM\subset\cAdm(\Sz(8))$ is not a stacky point (i.e., has trivial automorphism group). This can be verified group-theoretically, using \cite[Theorem 4.10.3]{Chen21}.
\end{proof}

\begin{remark} Another approach to finding a $\bQ$-rational fiber of positive rank is by computing equations for the universal elliptic curve. This comes with its own challenges. Even the first step, computing the Belyi map $M\ra M(1)$ is nontrivial, see \cite{SV14}.
\end{remark}

\begin{remark} The approach of Fried and Volklein \cite{FV91} to the inverse Galois problem encounters many similar issues. For them, finding rational points on Hurwitz stacks was essential, a problem which happened to be easy for us in the case of $\cM\subset\cM(\Sz(8))$. After that, given a rational point, corresponding to a $G$-cover $X\ra\bP^1$, the Hilbert irreducibility theorem implies the existence of (infinitely many) irreducible fibers, each giving a realization of $G$ as a Galois group over $\bQ$. Proposition 
	\ref{prop_irreducible} is an analog of the Hilbert irreducibility theorem in the case of $G$-covers of elliptic curves only branched above one point, where $G$ is a perfect group.
\end{remark}

\subsection{Comparing noncongruence to nonabelian}\label{ss_noncongruence_nonabelian}

By the results of Asada and Ellenberg-McReynolds, every noncongruence modular curve appears as a quotient of a component of $\cM(G)_\bC$ for some $G$. In this section we discuss the question of understanding when components of $\cM(G)_\bC$ are noncongruence.

\begin{defn} Let $\Pi$ be a free group of rank 2, and let $G$ be a finite group.

\begin{itemize}
	\item We say that $G$ (or $\cM(G)_\bC$) is \emph{congruence} if all components of $\cM(G)_\bC$ are congruence. Combinatorially, this means that all $\Out^+(\Pi)\cong\SL_2(\bZ)$-stabilizers of $\Epi^\ext(\Pi,G)$ are congruence (see \S\ref{sss_combinatorial_decomposition}). Otherwise we say that $G$ (or $\cM(G)_\bC$) is noncongruence.
	\item We say that $G$ (or $\cM(G)_\bC$) is \emph{purely noncongruence} if all components of $\cM(G)_\bC$ are noncongruence, or equivalently, that all $\Out^+(\Pi)\cong\SL_2(\bZ)$-stabilizers of $\Epi^\ext(\Pi,G)$ are noncongruence.
	\item We say that a subgroup $\Gamma\le\SL_2(\bZ)$ is \emph{totally noncongruence} if it is noncongruence and its congruence closure\footnote{This is the intersection of all congruence subgroups containing $\Gamma$.} $\Gamma^c$ is the entirety of $\SL_2(\bZ)$. We say that a component $\cM\subset\cM(G)_\bC$ is \emph{totally noncongruence} if $\cM$ is uniformized by a totally noncongruence subgroup of $\SL_2(\bZ)$ (equivalently, $\cM\ra\cM(1)_\bC$ has no nontrivial intermediate congruence covers).
\end{itemize}
\end{defn}

Below we describe two criteria for components of $\cM(G)_\bC$ to be noncongruence. In \S\ref{sss_philosophy} we will describe a general philosophy and some questions.

\subsubsection{A criterion in terms of $G$}\label{sss_G_criterion}
\begin{defn}[Wohlfahrt level] For finite index $\Gamma\le\SL_2(\bZ)$, the \emph{level} of $\Gamma$ is the least common multiple of its cusp widths, or equivalently the least common multiple of the ramification indices of the map $\cH/\Gamma\ra\cH/\SL_2(\bZ)$ above the cusp $i\infty$ of $\cH/\SL_2(\bZ)$. 
\end{defn}
Since this is defined in terms of modular curves, this is really a property of $\pm\Gamma := \langle \Gamma,-I\rangle$. Recall that if $\Gamma$ is congruence, then its congruence level is the minimum positive integer $n\ge 1$ such that $\Gamma\supset\Gamma(n)$. If $-I\in\Gamma$, then these two notions of level agree for congruence $\Gamma$. In general, we have

\begin{thm}[Wohlfahrt, Kiming-Sch\"{u}tt-Verrill]\label{thm_WKSV} Let $\Gamma\le\SL_2(\bZ)$ be a finite index subgroup of level $l$. Then $\Gamma$ is congruence if and only if it contains $\Gamma(2l)$, in which case its congruence level is either $l$ or $2l$. If moreover $-I\in\Gamma$, then $\Gamma$ is congruence if and only if $\Gamma\supset\Gamma(l)$, and in this case $\Gamma$ has congruence level $l$.	
\end{thm}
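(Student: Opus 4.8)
The plan is to reduce the general statement to Wohlfahrt's original theorem, which handles the case $-I \in \Gamma$, and then bootstrap to arbitrary $\Gamma$ by passing to $\pm\Gamma := \langle \Gamma, -I\rangle$. Recall that the level $l$ of $\Gamma$ is by definition a property of $\pm\Gamma$, since cusp widths of $\cH/\Gamma$ only see the image of $\Gamma$ in $\PSL_2(\bZ)$. So the first step is to recall Wohlfahrt's theorem in the form: \emph{if $-I \in \Gamma$ and $\Gamma$ has level $l$, then $\Gamma$ is congruence iff $\Gamma \supseteq \Gamma(l)$, and then the congruence level is exactly $l$.} Wohlfahrt's proof hinges on the observation that $\Gamma(l)$ is generated (as a normal subgroup of $\SL_2(\bZ)$, together with $-I$) by the single parabolic element $\begin{psmallmatrix}1 & l \\ 0 & 1\end{psmallmatrix}$; hence a subgroup whose cusp widths all divide $l$ — that is, which contains a conjugate of $\begin{psmallmatrix}1 & l \\ 0 & 1\end{psmallmatrix}$ up to the cusp data, once one knows it is congruence of some level $n$ — must actually contain $\Gamma(l)$. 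The key intermediate fact needed is that a congruence subgroup of congruence level $n$ has level (in the Wohlfahrt sense) equal to $n$ when $-I$ is present; this follows from the transitivity of $\SL_2(\bZ/n)$ on primitive vectors and the computation that the cusp width at $i\infty$ for $\Gamma(n)$ is exactly $n$.

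The second step treats general $\Gamma$. Set $\Gamma^{\pm} := \langle\Gamma, -I\rangle$; then $\Gamma$ and $\Gamma^{\pm}$ have the same level $l$, and $\Gamma$ is congruence iff $\Gamma^{\pm}$ is congruence (since $\Gamma(n) \subseteq \Gamma$ implies $\Gamma(n) \subseteq \Gamma^{\pm}$, and conversely $\Gamma(n) \subseteq \Gamma^{\pm}$ gives $\Gamma(2n) \subseteq \Gamma$ because $-I \equiv I \bmod 2n$ forces $\Gamma(2n) \cap \Gamma^{\pm} \subseteq \Gamma$; this is exactly where the factor of $2$ enters). Applying Step 1 to $\Gamma^{\pm}$: $\Gamma$ is congruence iff $\Gamma^{\pm} \supseteq \Gamma(l)$ iff $\Gamma(l)$ normalized by $-I$ lands in $\Gamma$, i.e. iff $\Gamma \supseteq \Gamma(l) \cap \langle \Gamma, -I\rangle'$ — cleaner to phrase: $\Gamma^{\pm} \supseteq \Gamma(l)$ is equivalent to $\Gamma \supseteq \Gamma(l) \cap \Gamma^{\pm}$, and one checks $\Gamma(2l) \subseteq \Gamma(l)\cap\Gamma^{\pm}$ always, while $\Gamma(2l) \subseteq \Gamma^{\pm}$ combined with $-I$-equivariance forces $\Gamma(2l) \subseteq \Gamma$. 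This yields: $\Gamma$ congruence $\iff \Gamma \supseteq \Gamma(2l)$. For the congruence level statement, once $\Gamma \supseteq \Gamma(2l)$, the congruence level $n$ divides $2l$ and is a multiple of the level, which equals $l$ for $\Gamma^{\pm}$; a short case analysis (whether $-I \in \Gamma$ or not) pins $n$ down to $l$ or $2l$, with $n = l$ forced in the $-I \in \Gamma$ case by Step 1.

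The main obstacle I anticipate is the careful bookkeeping around the sign $-I$: one must be precise about the distinction between "level" (a $\PSL$-invariant, defined via cusp widths) and "congruence level" (the minimal $n$ with $\Gamma(n) \subseteq \Gamma$, which genuinely sees $\pm I$), and show the factor-of-$2$ ambiguity is sharp — e.g. exhibit or cite a $\Gamma$ with $-I \notin \Gamma$, level $l$, congruence level $2l$ (the standard example being related to $\Gamma_1(l)$-type groups). The rest is an assembly of Wohlfahrt's argument (which I would cite rather than reprove) with the elementary two-step filtration by $\langle \Gamma, -I\rangle$. I would also note that the Kiming–Schütt–Verrill refinement is precisely the sharpening of Wohlfahrt's bound from "some congruence level" to the explicit alternative "$l$ or $2l$", so the citation structure is: Wohlfahrt for the $-I \in \Gamma$ half, \cite{KSV11} for the precise bound in general.
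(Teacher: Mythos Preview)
The paper does not actually prove this theorem: its ``proof'' gives the Sanov example $\Gamma = \langle\spmatrix{1}{2}{0}{1},\spmatrix{1}{0}{2}{1}\rangle$ to illustrate that congruence level $2l$ genuinely occurs, and otherwise simply refers to \cite{Woh63} and \cite{KSV11}. So there is no argument in the paper to compare against; the question is whether your sketch is itself correct.

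Your Step~2 contains a genuine error. You assert that $\Gamma$ is congruence if and only if $\Gamma^{\pm}$ is congruence, justified by ``$\Gamma(n)\subseteq\Gamma^{\pm}$ gives $\Gamma(2n)\subseteq\Gamma$ because $-I\equiv I\bmod 2n$''. The congruence $-I\equiv I\bmod 2n$ is false for $n\ge 2$, and more seriously, the implication $\Gamma^{\pm}$ congruence $\Rightarrow$ $\Gamma$ congruence is \emph{false}. Indeed, take any noncongruence character $\psi:\Gamma(4)\to\bZ/2$ (these exist: $\Gamma(4)$ is free of rank~5, so has 31 nontrivial $\bZ/2$-characters, while only 7 factor through the congruence quotient $\Gamma(4)/\Gamma(8)\cong(\bZ/2)^3$). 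Setting $\Gamma := \ker\psi\,\cup\,(-I)(\Gamma(4)\setminus\ker\psi)$ gives $\pm\Gamma = \pm\Gamma(4)$ congruence, but $\Gamma$ noncongruence. Exhibiting such noncongruence lifts of projective congruence groups is precisely the point of \cite{KSV11}, and the paper itself alludes to this phenomenon in \S\ref{ss_subgroups}. Since your chain of equivalences routes the key direction through this false step, the argument as written cannot be repaired by minor adjustments.

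The correct argument uses the congruence hypothesis on $\Gamma$ more directly, via a squaring trick. Level $l$ means every $\SL_2(\bZ)$-conjugate of $\spmatrix{1}{l}{0}{1}$ lies in $\pm\Gamma$; squaring (and using $((-I)g)^2=g^2$) shows every conjugate of $\spmatrix{1}{2l}{0}{1}$ lies in $\Gamma$ itself. Now use that $\Gamma$ is congruence: $\Gamma\supseteq\Gamma(N)$ for some $N$, which we may take divisible by $2l$. Wohlfahrt's lemma --- that the normal closure of $\spmatrix{1}{d}{0}{1}$ in $\SL_2(\bZ/N)$ is the full kernel of $\SL_2(\bZ/N)\to\SL_2(\bZ/d)$ --- then gives $\Gamma\supseteq\Gamma(2l)$. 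This is essentially how \cite{KSV11} proceeds; your overall architecture (Wohlfahrt for $-I\in\Gamma$, then bootstrap) is right, but the bootstrap must go through parabolic elements rather than through the false equivalence.
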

\begin{proof} For example, the ``Sanov'' subgroup $\Gamma := \langle\spmatrix{1}{2}{0}{1},\spmatrix{1}{0}{2}{1}\rangle$ is index 2 inside $\Gamma(2) = \langle\Gamma,-I\rangle$. It has level 2 but congruence level 4. See \cite{Woh63} and \cite{KSV11} for a proof of this theorem.
\end{proof}

\begin{cor}\label{cor_congruence_closure} The congruence closure $\Gamma^c$ of $\Gamma$ is generated by $\Gamma$ and $\Gamma(2l)$.
\end{cor}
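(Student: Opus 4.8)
The plan is to establish Corollary \ref{cor_congruence_closure} by proving the two inclusions $\Gamma^c\subseteq\langle\Gamma,\Gamma(2l)\rangle$ and $\langle\Gamma,\Gamma(2l)\rangle\subseteq\Gamma^c$ separately, where $l$ denotes the (Wohlfahrt) level of $\Gamma$. Throughout I would use only the definition of congruence subgroup from the introduction (a subgroup is congruence iff it contains some $\Gamma(n)$), Theorem \ref{thm_WKSV}, and the elementary monotonicity of cusp widths under inclusion of finite-index subgroups.

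For the inclusion $\Gamma^c\subseteq\langle\Gamma,\Gamma(2l)\rangle$, I would observe that $\langle\Gamma,\Gamma(2l)\rangle$ contains $\Gamma(2l)$, hence is congruence by definition, and it obviously contains $\Gamma$. Since $\Gamma^c$ is by definition the intersection of \emph{all} congruence subgroups of $\SL_2(\bZ)$ containing $\Gamma$, it is contained in this particular member of the family.

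For the reverse inclusion $\langle\Gamma,\Gamma(2l)\rangle\subseteq\Gamma^c$, it suffices to check $\Gamma(2l)\subseteq\Gamma^c$, as $\Gamma\subseteq\Gamma^c$ is immediate. First I would note that if $\Gamma\subseteq\Delta$ then each cusp width of $\Delta$ divides a cusp width of $\Gamma$ (conjugate a parabolic generator through a matrix $\sigma\in\SL_2(\bZ)$ carrying $\infty$ to the cusp in question), so the level $l^c$ of $\Gamma^c$ divides $l$. Now $\Gamma^c$ is congruence and has level $l^c$, so Theorem \ref{thm_WKSV} yields $\Gamma(2l^c)\subseteq\Gamma^c$; since $2l^c\mid 2l$ we have $\Gamma(2l)\subseteq\Gamma(2l^c)\subseteq\Gamma^c$, completing the argument and hence giving $\Gamma^c=\langle\Gamma,\Gamma(2l)\rangle$.

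I do not expect any genuine obstacle here: the corollary is essentially a formal consequence of Theorem \ref{thm_WKSV} once one records that the level only decreases (divides) when a subgroup is enlarged. The only point requiring a word of care is that last monotonicity statement, which I would either cite from the standard references (e.g.\ \cite{Woh63,KSV11}) or dispatch in a single line of conjugation as indicated above.
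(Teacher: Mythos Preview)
Your proof is correct and uses the same two ingredients as the paper (monotonicity of the level under inclusion, and Theorem \ref{thm_WKSV}). The only organizational difference is that the paper applies Theorem \ref{thm_WKSV} to every congruence $\Delta\supset\Gamma$ to deduce $\Gamma(2l)\subset\Delta$ and then intersects, whereas you apply it once to $\Gamma^c$ itself; the paper's version has the tiny advantage of not needing to first observe that $\Gamma^c$ is congruence (a finite intersection since only finitely many subgroups contain the finite-index $\Gamma$), but this is a triviality and both arguments are essentially the same.
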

\begin{proof} Let $\Delta$ be any congruence subgroup containing $\Gamma$ of level $d$. Then $d\mid l$, and hence $\Delta\supset\Gamma(2d)\supset\Gamma(2l)$. This shows that $\Gamma^c\supset\langle\Gamma,\Gamma(2l)\rangle$, and hence $\Gamma^c = \langle\Gamma,\Gamma(2l)\rangle$.	
\end{proof}

For a subgroup $\Gamma\subset\SL_2(\bZ)$, we can compare $\Gamma$ to the principal congruence subgroup $\Gamma(n)$ via the commutative diagram with exact rows (c.f. \cite{Sch12}):
\begin{equation}\label{eq_congruence_deficiency}
\begin{tikzcd}
	1\ar[r] & \Gamma(n)\ar[r]\ar[d,dash,"f_n"] & \SL_2(\bZ)\ar[r,"\pr_n"]\ar[d,dash,"d"] & \SL_2(\bZ/n)\ar[r]\ar[d,dash,"e_n"] & 1 \\
	1\ar[r] & \Gamma(n)\cap\Gamma\ar[r] & \Gamma\ar[r] & \pr_n(\Gamma)\ar[r] & 1
\end{tikzcd}
\end{equation}
Here the vertical arrows are subgroup inclusions, labeled by their respective indices. We always have $d = e_nf_n$, and Corollary \ref{cor_congruence_closure} implies that for a subgroup $\Gamma$ of level $l$ and congruence closure $\Gamma^c$, $f_{2l} = [\Gamma:\Gamma^c]$ and $e_{2l} = [\SL_2(\bZ):\Gamma^c]$. We will refer to $f_{2l}$ as the congruence deficiency, and $e_{2l}$ as the congruence degree. Thus, $\Gamma$ is congruence if and only if it has congruence deficiency 1 and congruence degree $d$, and totally noncongruence if and only if it has congruence degree 1 and congruence deficiency $d > 1$.\footnote{In \cite{Sch12}, Schmithusen called $f_l$ the congruence deficiency of $\Gamma$, and used the ideas above to prove that certain subgroups coming from square-tiled surfaces are noncongruence. In light of Theorem \ref{thm_WKSV}, we feel that $f_{2l}$ better deserves that name.} These ideas lead to the following noncongruence criterion \cite[Theorem 4.4.10]{Chen18}.

\begin{thm}\label{thm_noncongruence_criterion_A} Let $\Pi$ be a free group of rank 2, with generators $a,b$. Let $G$ be a nontrivial finite group generated by $x,y,z$ satisfying $xyz = 1$. Suppose the orders $|x|,|y|,|z|$ satisfy the following property:
\begin{itemize}
\item[$(*)$] The integers $|x|,|y|,|z|$ are pairwise coprime.
\end{itemize}
Let $\SL_2(\bZ)$ act on $\Epi^\ext(\Pi,G)$ via the isomorphism $\Out^+(\Pi)\cong\SL_2(\bZ)$ given by abelianization. Then the $\SL_2(\bZ)$-stabilizer of the surjection $\varphi_{x,y} : \Pi\ra G$ sending $a,b\mapsto x,y$ is \emph{totally noncongruence}. In particular, $G$ is noncongruence.
\end{thm}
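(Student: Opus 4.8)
The plan is to exploit the interplay between the cusp width data of the modular curve uniformized by $\Gamma_\varphi := \Stab_{\SL_2(\bZ)}(\varphi_{x,y})$ and the Wohlfahrt--Kiming--Sch\"utt--Verrill characterization (Theorem \ref{thm_WKSV}, Corollary \ref{cor_congruence_closure}) of congruence subgroups. The key point is that the hypothesis $(*)$ — that $|x|, |y|, |z|$ are pairwise coprime — forces the Wohlfahrt level of $\Gamma_\varphi$ to be very small while simultaneously forcing the image $\pr_{2l}(\Gamma_\varphi)$ in $\SL_2(\bZ/2l)$ to be all of $\SL_2(\bZ/2l)$; combining these via the diagram \eqref{eq_congruence_deficiency} gives congruence degree $1$, which is exactly the definition of totally noncongruence.

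First I would identify the cusp widths of $\cH/\Gamma_\varphi\to\cH/\SL_2(\bZ)$ concretely via Remark \ref{remark_computing}: choosing the generators $a,b$ of $\Pi$ so that $\varphi_{x,y}$ sends $a\mapsto x$, $b\mapsto y$, the parabolic elements $\spmatrix{1}{1}{0}{1}$, $\spmatrix{1}{1}{-1}{0}$, $\spmatrix{0}{1}{-1}{0}$ act on $\Epi^\ext(\Pi,G)$ through the corresponding Nielsen transformations of $(x,y)$, and their cycle structures at $\varphi_{x,y}$ are governed by the orders of $x$, $y$, and $[y,x]$ (equivalently $z^{\pm1}$, since $z = (xy)^{-1}$ and $xyz=1$; note one must be a little careful to match the Higman-invariant/commutator conventions of \S\ref{ss_admissible_covers_over_k} with the three-point presentation $xyz=1$, but this is routine). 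Thus the three cusp widths at $\varphi_{x,y}$ are exactly $|x|$, $|y|$, $|z|$ (or divisors thereof, which only helps), and the Wohlfahrt level $l$ of $\Gamma_\varphi$ divides $\lcm(|x|,|y|,|z|) = |x|\,|y|\,|z|$ by coprimality. Next I would feed this into Corollary \ref{cor_congruence_closure}: the congruence closure $\Gamma_\varphi^c$ equals $\langle \Gamma_\varphi, \Gamma(2l)\rangle$, so the congruence degree $e_{2l} = [\SL_2(\bZ):\Gamma_\varphi^c]$ is the index of the image of $\Gamma_\varphi$ in $\SL_2(\bZ/2l)$. To show this image is everything, I would use that $2l \mid 2|x|\,|y|\,|z|$ and the CRT decomposition of $\SL_2(\bZ/2l)$ into its prime-power parts: the element of $\Gamma_\varphi$ coming from the cusp of width $|x|$ maps (under $\varphi_{x,y}$) to something of order $|x|$ in $G$ but acts on $\Epi^\ext$ fixing $\varphi_{x,y}$ only after $|x|$ steps, so the corresponding parabolic matrix has order divisible by $|x|$ modulo $2l$ — and since $\gcd(|x|,|y||z|)=1$, reducing modulo the $|y||z|$-part kills this parabolic's contribution there, forcing $\pr(\Gamma_\varphi)$ to contain a full unipotent subgroup in each prime-power factor. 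Since $\SL_2(\bZ/p^k)$ is generated by its two unipotent subgroups (upper and lower triangular), and the coprimality of $|x|,|y|,|z|$ lets us locate parabolics of each of the two required shapes in each factor, one concludes $\pr_{2l}(\Gamma_\varphi) = \SL_2(\bZ/2l)$, i.e.\ congruence degree $1$. By Theorem \ref{thm_WKSV} congruence degree $1$ with $d>1$ (which holds since $G$ is nontrivial, so $\cM(G)\to\cM(1)$ has degree $>1$ on this component, forcing $\Gamma_\varphi\subsetneq\SL_2(\bZ)$) means $\Gamma_\varphi$ is totally noncongruence; in particular it is noncongruence, so $G$ is noncongruence.

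The main obstacle I anticipate is the step showing $\pr_{2l}(\Gamma_\varphi) = \SL_2(\bZ/2l)$ — more precisely, verifying that the parabolic elements of $\Gamma_\varphi$ attached to the three cusps genuinely generate the full group modulo $2l$ rather than merely a large subgroup. The coprimality hypothesis $(*)$ is doing real work here: it guarantees that modulo each prime power $p^k \,\|\, 2l$, at most one of the three cusp parabolics can have trivial image, so the other two survive; one then needs that two ``independent'' parabolics of the right slopes generate $\SL_2(\bZ/p^k)$, which is classical but needs the slopes to be distinct mod $p$, and tracking the slopes through the Nielsen action requires care. Handling the prime $p=2$ (hence the factor of $2$ in $2l$ from Wohlfahrt's theorem) is the delicate sub-case, since $\SL_2(\bZ/2)$ and $\SL_2(\bZ/4)$ are small and the generation statement is tighter there. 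This is precisely the content of \cite[Theorem 4.4.10]{Chen18}, and I would follow that argument for the detailed case analysis.
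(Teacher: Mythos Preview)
Your high-level strategy matches the paper's exactly: show that $\Gamma_\varphi$ surjects onto $\SL_2(\bZ/2l)$ (congruence degree $1$) by exhibiting enough parabolics and using the CRT decomposition $\SL_2(\bZ/2l)\cong\bigoplus_i\SL_2(\bZ/p_i^{r_i})$ together with the fact that each factor is generated by an upper and a lower unipotent. But the execution has a real gap.

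The matrices $\spmatrix{1}{1}{-1}{0}$ and $\spmatrix{0}{1}{-1}{0}$ are \emph{elliptic}, of orders $6$ and $4$; they are not parabolic, and their cycle lengths on $\Epi^\ext(\Pi,G)$ are bounded by $3$ and $2$ (in $\PSL_2$), having nothing to do with $|y|$ or $|z|$. So ``the three cusp widths at $\varphi_{x,y}$ are $|x|,|y|,|z|$'' is not meaningful: only one of your three matrices contributes a cusp, and hence only one cusp width is attached to the point $\varphi_{x,y}$. Relatedly, the claim that the Wohlfahrt level $l$ divides $|x||y||z|$ is unjustified --- and, as the paper's argument shows, unnecessary.

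What you are missing is much simpler than tracking slopes of conjugated parabolics. The standard generators $T=\spmatrix{1}{1}{0}{1}$ and $S=\spmatrix{1}{0}{1}{1}$ act via the Nielsen moves $(a,b)\mapsto(a,ab)$ and $(a,b)\mapsto(ab,b)$ (up to convention), so one reads off directly that
\[
\spmatrix{1}{|x|}{0}{1},\ \spmatrix{1}{0}{|y|}{1}\in\Gamma_{\varphi_{x,y}},\qquad
\spmatrix{1}{|z|}{0}{1},\ \spmatrix{1}{0}{|x|}{1}\in\Gamma_{\varphi_{z,x}},\qquad
\spmatrix{1}{|y|}{0}{1},\ \spmatrix{1}{0}{|z|}{1}\in\Gamma_{\varphi_{y,z}}.
\]
The order-$3$ element $\spmatrix{0}{-1}{1}{-1}$ cycles $\varphi_{x,y}\to\varphi_{y,z}\to\varphi_{z,x}$, so these three stabilizers are conjugate. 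Now for any prime $p\mid 2l$, pairwise coprimality forces $p$ to miss at least two of $|x|,|y|,|z|$; the stabilizer indexed by that pair then contains an upper and a lower unipotent with exponents invertible mod $p^{r}$, hence its image contains the normal summand $\SL_2(\bZ/p^{r})$. Because the three stabilizers are conjugate and the summand is normal, \emph{all three} images contain it. This handles every prime, including $p=2$, uniformly --- there is no delicate case analysis.
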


\begin{remark} As will be evident from the proof, the theorem has some room for flexibility. If the pairwise gcd's in $(*)$ are not too large compared to the index of $\Gamma_{\varphi_{x,y}}$, then we can still show that $\Gamma_{\varphi_{x,y}}$ is noncongruence, though maybe not totally noncongruence.
\end{remark}

\begin{remark} The condition $(*)$ is equivalent to the statement that the gcd $\delta$ of the set $\{|x||y|,|x||z|,|y||z|\}$ is 1. If $G$ is abelian, then all three pairwise least common multiples of $|x|,|y|,|z|$ are equal to the exponent $e(G)$, and hence the gcd $\delta$ would be divisible by $e(G)$ in this case. Thus $\delta$ can be viewed as a measure of nonabelianness of $G$ relative to the surjection $\varphi_{x,y}$, and the theorem links this nonabelianness to the noncongruenceness of the stabilizer $\Gamma_{\varphi_{x,y}}$.	
\end{remark}

\begin{proof} Define $\varphi_{z,x}$ and $\varphi_{y,z}$ analogously to $\varphi_{x,y}$. We note that $\varphi_{x,y},\varphi_{z,x},\varphi_{y,z}$ lie in the same $\SL_2(\bZ)\cong\Out^+(\Pi)$-orbit, and hence their stabilizers $\Gamma_{\varphi_{x,y}},\Gamma_{\varphi_{z,x}},\Gamma_{\varphi_{y,z}}$ are conjugate. Further observe that
\begin{equation}\label{eq_parabolic}
\spmatrix{1}{|x|}{0}{1},\spmatrix{1}{0}{|y|}{1}\in\Gamma_{\varphi_{x,y}},\quad\spmatrix{1}{|z|}{0}{1},\spmatrix{1}{0}{|x|}{1}\in\Gamma_{\varphi_{z,x}},\quad\text{and}\quad \spmatrix{1}{|y|}{0}{1},\spmatrix{1}{0}{|z|}{1}\in\Gamma_{\varphi_{y,z}}
\end{equation}

For ease of notation, let $\Gamma_1,\Gamma_2,\Gamma_3$ be these three subgroups. Let $l$ denote their common level.

Since $G$ is nontrivial, condition $(*)$ implies that $|x|,|y|,|z|$ are not all identical, and hence the $\Gamma_i$'s are proper subgroups of $\SL_2(\bZ)$. Thus, it suffices to show that the congruence degree $e_{2l}$ of \eqref{eq_congruence_deficiency} is equal to 1 for any (equivalently, all) of $\Gamma_1,\Gamma_2,\Gamma_3$; equivalently, we want to show that $\Gamma_i$ surjects onto $\SL_2(\bZ/2l)$ for some (equivalently all) $i$. Let $2l = \prod p_i^{r_i}$ be the prime factorization, with corresponding direct \emph{sum} decomposition
$$\SL_2(\bZ/l)\cong \bigoplus_{i}\SL_2(\bZ/p_i^{r_i})$$
Since each summand $\SL_2(\bZ/p_i^{r_i})$ is generated by the images of $\spmatrix{1}{1}{0}{1},\spmatrix{1}{0}{1}{1}$ mod $p_i^{r_i}$, \eqref{eq_parabolic} and condition $(*)$ implies that each summand is contained in the image of at least one of $\Gamma_1,\Gamma_2,\Gamma_3$. On the other hand, since the summands are normal and the $\Gamma_i$'s are all conjugate, the image of any $\Gamma_i$ contains every summand, and hence surjects onto $\SL_2(\bZ/2l)$, as desired.
\end{proof}

\begin{cor} Let $G$ be any of the following groups.
\begin{itemize}
\item $\fA_n$ for $n\ge 5$,
\item $\PSL_2(\bF_p)$ for $p\ge 5$ prime,
\item $\PSL_2(\bF_{2^p})$ for $p$ prime,
\item $\PSL_2(\bF_{3^p})$ for $p$ any odd prime,
\item $\Sz(2^p)$ for $p$ any odd prime,
\item $\PSL_3(\bF_3)$.
\end{itemize}
Then $G$ satisfies the hypothesis of Theorem \ref{thm_noncongruence_criterion_A}. In particular, $\cM(G)_\bC$ has a totally noncongruence component, and any finite 2-generated extension of $G$ is noncongruence.
\end{cor}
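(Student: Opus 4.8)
The plan is to verify, for each group $G$ on the list, that it admits a triple of generators $x,y,z$ with $xyz=1$ whose orders $|x|,|y|,|z|$ are pairwise coprime, i.e. that hypothesis $(*)$ of Theorem \ref{thm_noncongruence_criterion_A} holds. Once this is done, the conclusion is immediate: Theorem \ref{thm_noncongruence_criterion_A} produces a totally noncongruence $\SL_2(\bZ)$-stabilizer $\Gamma_{\varphi_{x,y}}$, hence a totally noncongruence component of $\cM(G)_\bC$ (via the dictionary of \S\ref{sss_combinatorial_decomposition}), and then for any finite $2$-generated extension $\tilde{G}$ with a surjection $q:\tilde{G}\twoheadrightarrow G$, the functoriality of \S\ref{ss_functoriality} gives a surjective map $\cM(\tilde{G})_\bC\to\cM(G)_\bC$; since a cover dominating a noncongruence cover of $\cM(1)$ is itself noncongruence (a congruence modular stack has only congruence subcovers), $\tilde{G}$ is noncongruence as well.

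So the entire content is a case-by-case search for a suitable generating triple. For $\fA_n$, $n\ge 5$: one wants permutations $x,y$ with $xy=z^{-1}$ where $|x|,|y|,|z|$ are pairwise coprime and $\langle x,y\rangle=\fA_n$; the standard generating pairs for $\fA_n$ (e.g. an $n$-cycle or $(n-1)$-cycle together with a $3$-cycle, adjusted by parity, or an $n$-cycle times a $2$-cycle type construction in the even case) can be arranged so that, say, $|x|=n$ or $n-2$, $|y|=3$, and $|z|$ is a prime coprime to both — this is a small finite check using the classical $2$-generation of $\fA_n$ by elements of coprime order, and one should simply cite or reproduce the relevant elementary fact. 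For $\PSL_2(\bF_p)$ with $p\ge5$: take $x$ a unipotent element of order $p$, $y$ an element of order dividing $(p-1)/\gcd(2,p-1)$ or $(p+1)/\gcd(2,p-1)$ — one of $p-1,p+1$ is $\equiv 2\pmod 4$, giving an element of odd order in the respective torus — and one checks that $x$ together with a suitable such $y$ generates $\PSL_2(\bF_p)$ and that $z=(xy)^{-1}$ can be taken of order coprime to $p|y|$; this is exactly the kind of computation done in the $\SL_2(\bF_p)$ literature (cf. the references around \S\ref{ss_markoff}). The families $\PSL_2(\bF_{2^p})$, $\PSL_2(\bF_{3^p})$, $\Sz(2^p)$ are handled identically using the known subgroup structure: the relevant torus orders are $2^p\pm1$ and (for Suzuki) $2^p\pm1$ and $2^p\pm 2^{(p+1)/2}+1$, whose pairwise gcd's with $2^p$ and with each other can be made trivial by choosing $p$ prime (resp. odd prime) — here the primality hypotheses in the statement are exactly what forces the coprimality. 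Finally $\PSL_3(\bF_3)$ (order $5616=2^4\cdot3^3\cdot13$) is a single finite group: exhibit explicit matrices generating it with $|x|=13$, $|y|$ a power of $2$ or $3$ coprime to $13$, and $|z|$ coprime to both — a one-line GAP verification.

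The main obstacle is not conceptual but organizational: one must, uniformly across the families, pin down a generating \emph{pair} (not merely two elements of coprime order) and simultaneously control the order of the product. The cleanest route is to invoke the known explicit generating triples for these groups from the inverse Galois / Hurwitz-space literature — for $\PSL_2$ and Suzuki groups these are classical, and for $\fA_n$ they follow from elementary permutation combinatorics — rather than re-deriving them. I would structure the proof as: (1) recall the reduction to checking $(*)$ and the functoriality argument for extensions; (2) a lemma listing, for each $G$ on the list, an explicit triple $(x,y,z)$ with $xyz=1$, $\langle x,y\rangle=G$, and $|x|,|y|,|z|$ pairwise coprime, with the verification deferred to the cited sources plus a short table of orders; (3) apply Theorem \ref{thm_noncongruence_criterion_A}. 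The only genuinely delicate point is the Suzuki case, where one must be careful that the three "maximal torus" orders $q-1$, $q+r+1$, $q-r+1$ (with $q=2^p$, $r=2^{(p+1)/2}$) are genuinely pairwise coprime for $p$ an odd prime — this follows since any common prime divisor would have to divide $|Sz(q)|=q^2(q-1)(q^2+1)$ in a constrained way, and the factorization $q^2+1=(q+r+1)(q-r+1)$ together with $\gcd(q-1,q^2+1)=1$ handles it.
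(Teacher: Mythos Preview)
Your proposal is correct and follows essentially the same strategy as the paper for $\fA_n$ and $\PSL_2(\bF_p)$: one simply exhibits explicit generating pairs with the required coprimality and cites the relevant literature. For the remaining four families, however, the paper takes a more economical route. Rather than verifying the torus-order coprimalities and the generation condition case by case (which, as you note, becomes delicate for the Suzuki groups), the paper observes that $\PSL_2(\bF_{2^p})$, $\PSL_2(\bF_{3^p})$, $\Sz(2^p)$, and $\PSL_3(\bF_3)$ are precisely the \emph{minimal} nonabelian finite simple groups not already covered --- those whose every proper subgroup is solvable --- and cites Thompson's $N$-group paper \cite[\S3]{Thom68}, where suitable generating triples with pairwise coprime orders are produced as part of that classification. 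The minimality also makes the generation check nearly automatic: any two elements whose orders force the subgroup they generate to be nonsolvable must generate all of $G$. Your approach has the virtue of being self-contained and making the arithmetic of the torus orders explicit; the paper's approach buys brevity and a uniform conceptual reason (Thompson's list) for why these particular families, with these particular primality hypotheses on the exponent, appear together.
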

\begin{proof} The cases of $\fA_n$ and $\PSL_2(\bF_p)$ can be done by explicitly choosing generating pairs, see \cite[\S4.4]{Chen18}. The remaining cases follows from Thompson's results on minimal finite simple groups, see \cite[\S3]{Thom68}.
\end{proof}

\subsubsection{A monodromic criterion}
Noncongruence modular stacks can also be detected from their monodromy. For finite index $\Gamma\le\SL_2(\bZ)$, let $C(\Gamma) := \cap_{\gamma\in\SL_2(\bZ)}\gamma\Gamma\gamma^{-1}$ be the \emph{normal core} -- the largest normal subgroup of $\Gamma$. The monodromy group of $[\cH/\Gamma]\ra[\cH/\SL_2(\bZ)]$ is the quotient
$$\Mon(\Gamma) := \SL_2(\bZ)/C(\Gamma)$$
If $\Gamma$ is congruence, it contains $\Gamma(n)$ for some $n\ge 1$, and as a result the monodromy group is a quotient of $\SL_2(\bZ)/\Gamma(n)\cong\SL_2(\bZ/n)\cong\prod_{p^r\mid\mid n}\SL_2(\bZ/p^r)$, where we say $p^r\mid\mid n$ if $p^r\mid n$ and $p^{r+1}\nmid n$. Since $\SL_2(\bZ/p^r)$ is an extension of $\SL_2(\bZ/p)$ by a $p$-group, it follows that the simple composition factors of $\Mon(\Gamma)$ are either abelian or of the type $\PSL_2(\bZ/p)$. This can be made more a little more precise in terms of the level:

\begin{thm}\label{thm_monodromic_criterion} Let $\Gamma\le\SL_2(\bZ)$ be a finite index subgroup of (Wohlfahrt) level $l$. If $\Gamma$ is congruence, then the simple composition factors of $\Mon(\Gamma)$ are isomorphic to either
\begin{itemize}
\item $\bZ/p$ for some prime $p\mid 6l$, or
\item $\PSL_2(\bF_p)$ for some prime $p\ge 5$ with $p\mid l$.
\end{itemize}
\end{thm}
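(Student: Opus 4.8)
The plan is to bootstrap from the congruence case. Suppose $\Gamma\le\SL_2(\bZ)$ is congruence of Wohlfahrt level $l$. By Theorem \ref{thm_WKSV}, $\Gamma\supset\Gamma(2l)$, so the monodromy group $\Mon(\Gamma) = \SL_2(\bZ)/C(\Gamma)$ is a quotient of $\SL_2(\bZ)/\Gamma(2l)\cong\SL_2(\bZ/2l)$; indeed $C(\Gamma)\supset\Gamma(2l)$ since $\Gamma(2l)$ is normal in $\SL_2(\bZ)$ and contained in $\Gamma$. Hence the simple composition factors of $\Mon(\Gamma)$ are among those of $\SL_2(\bZ/2l)$. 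First I would use the Chinese Remainder Theorem decomposition $\SL_2(\bZ/2l)\cong\prod_{p^r\|2l}\SL_2(\bZ/p^r)$, reducing to listing the composition factors of each $\SL_2(\bZ/p^r)$.

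Next I would analyze $\SL_2(\bZ/p^r)$ for each prime $p\mid 2l$ (equivalently $p\mid 6l$ once we observe $2\mid 2l$ and separately account for $p=2,3$, which arise from the elliptic points as well). The kernel of $\SL_2(\bZ/p^r)\to\SL_2(\bZ/p)$ is a $p$-group, contributing only composition factors $\bZ/p$. So it remains to treat $\SL_2(\bZ/p) = \SL_2(\bF_p)$: for $p\ge 5$ its composition factors are $\bZ/2$ (the center $\{\pm I\}$) and $\PSL_2(\bF_p)$, which is simple; for $p = 2$, $\SL_2(\bF_2)\cong\fS_3$ has factors $\bZ/2,\bZ/3$; for $p = 3$, $\SL_2(\bF_3)$ is solvable with factors $\bZ/2,\bZ/3$. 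Assembling these across all $p\mid 2l$, every nonabelian simple composition factor is $\PSL_2(\bF_p)$ with $p\ge 5$ and $p\mid 2l$, hence $p\mid l$ (as $p$ is odd), and every abelian simple factor is $\bZ/p$ with $p\mid 2l$. The mild subtlety about whether to write $6l$ or $2l$ in the first bullet is cosmetic: the extra primes $2,3$ are already covered by $2\mid 2l$ and, if $3\nmid 2l$, one still harmlessly allows $\bZ/3$; stating $p\mid 6l$ is the cleanest uniform bound and I would simply note that $\bZ/2,\bZ/3$ may appear regardless because of the orders of the standard generators $\spmatrix{1}{1}{-1}{0}$ and $\spmatrix{0}{1}{-1}{0}$.

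Finally, to pass from composition factors of $\SL_2(\bZ/2l)$ to those of the quotient $\Mon(\Gamma)$, I would invoke the Jordan--Hölder theorem: the composition factors of any quotient of a finite group $H$ form a sub-multiset of those of $H$. This gives the claimed list. The only genuine content is the elementary structure theory of $\SL_2(\bZ/p^r)$ recalled above, together with Theorem \ref{thm_WKSV}; there is no serious obstacle. The one point requiring a little care is bookkeeping the primes $2$ and $3$ — making sure the statement is not falsified by small-prime phenomena such as $\SL_2(\bF_2)\cong\fS_3$ or the non-simplicity of $\PSL_2(\bF_2)$ and $\PSL_2(\bF_3)$ — which is why the theorem restricts the nonabelian factors to $p\ge 5$ and absorbs $2,3$ into the abelian bullet via the divisor $6l$.
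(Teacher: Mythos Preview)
Your argument is essentially identical to the paper's: invoke Theorem \ref{thm_WKSV} to get $\Gamma\supset\Gamma(2l)$, use the CRT decomposition $\SL_2(\bZ/2l)\cong\prod_{p^r\|2l}\SL_2(\bZ/p^r)$, and read off the composition factors from the filtration of each $\SL_2(\bZ/p^r)$ through $\SL_2(\bF_p)$.

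One small correction to your bookkeeping: you write that ``every abelian simple factor is $\bZ/p$ with $p\mid 2l$'' and then call the passage from $2l$ to $6l$ ``cosmetic.'' It is not. Since $2\mid 2l$ always, the factor $\SL_2(\bF_2)\cong\fS_3$ is always present in $\SL_2(\bZ/2l)$, and it contributes a genuine $\bZ/3$ composition factor even when $3\nmid 2l$. This is precisely why the statement needs $p\mid 6l$ rather than $p\mid 2l$; you already recorded the relevant fact about $\SL_2(\bF_2)$, you just didn't carry it through to the conclusion. The remark about orders of the standard generators is beside the point here --- the $\bZ/3$ is forced by the structure of $\SL_2(\bZ/2l)$ itself.
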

\begin{proof} If $\Gamma$ is congruence, then by Theorem \ref{thm_WKSV}, $\Gamma\supset\Gamma(2l)$. The statement follows from the above discussion.	
\end{proof}

\subsubsection{A general philosophy}\label{sss_philosophy}


In Proposition \ref{prop_abelian_congruence}, we saw that abelian groups are congruence. This was generalized to metabelian groups in \S\ref{ss_metabelian}. Guided by these results, Theorem \ref{thm_noncongruence_criterion_A}, and computed data, in \cite{Chen18}, the author adopted the philosophy that highly nonabelian groups should tend to be highly noncongruence.

At the time of writing of this article, the components of $\cM(G)$ and $\cM(G)^\abs := \cM(G)/\Out(G)$ have been computed for all finite simple groups of order up to $|\J_1| = 175560$, where $\J_1$ denotes the Janko group.\footnote{The data for noncongruence $G$ of order $\le 29120$ can be found in \cite[Appendix B]{Chen18}.} All such components are close to being totally noncongruence, in the sense that their uniformizing subgroups have congruence closures of index $\le 3$ inside $\SL_2(\bZ)$ (c.f.  \S\ref{sss_unfull}). Other than 4 components of $\cM(\PSU_3(\bF_4))$ which are congruence, of degree 1 over $\cM(1)$, and 2 components of $\cM(\PSU_3(\bF_5))$ which are congruence of degree 3 over $\cM(1)$, the rest are all noncongruence. Given this, it is natural to propose the following refinement of the philosophy:
\begin{equation}\label{eq_philosophy}
\parbox{\linewidth-5em}{The components of $\cM(G)$ for highly nonabelian groups $G$ should be uniformized by subgroups of low congruence degree.}
\end{equation}

It is unclear how to best formulate a precise, sharp statement of the philosophy. One attempt might proceed as follows. For a modular stack $\cM$, let $e_\cM$ be the congruence degree of a uniformizing subgroup for $\cM$.

\begin{question} Does there exist a universal upper bound on $e_\cM$ as $\cM$ ranges over all components of $\cM(G)$ for nonabelian finite simple groups $G$?
\end{question}

A positive answer would be a step towards making \eqref{eq_philosophy} precise, though it still doesn't explain the empirical rarity of congruence components for simple groups $G$. Towards this, the only conjecture the author is prepared to make at the moment is:

\begin{conj} Nonabelian finite simple groups are noncongruence.	
\end{conj}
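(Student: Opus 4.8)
The plan is to apply Theorem~\ref{thm_noncongruence_criterion_A}, so that the entire problem reduces to a question in finite group theory: showing that every nonabelian finite simple group $G$ admits a generating triple $x,y,z$ with $xyz = 1$ whose orders $|x|,|y|,|z|$ are pairwise coprime. By the theorem, such a triple produces a totally noncongruence $\SL_2(\bZ)$-stabilizer $\Gamma_{\varphi_{x,y}}$, hence a noncongruence component of $\cM(G)_\bC$, which is exactly the assertion that $G$ is noncongruence. So the real content is the group-theoretic statement, and the natural route is the classification of finite simple groups: one goes family by family (alternating groups, the sixteen families of groups of Lie type, the twenty-six sporadics) and in each case exhibits generators of pairwise coprime order, i.e.\ a so-called \emph{$(p,q,r)$-generation} with $p,q,r$ pairwise coprime primes (or prime powers). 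Triples of elements of coprime prime order generating $G$ are closely related to the classical problem of $(p,q,r)$-triangle generation, about which there is a large literature.

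The key steps, in order, would be: (1) invoke Theorem~\ref{thm_noncongruence_criterion_A} to reduce to the coprime-order generating-triple problem; (2) dispatch the alternating groups $\fA_n$ for $n\ge 5$ by explicit construction of an $n$-cycle (or near-$n$-cycle) together with a short cycle of coprime length whose product is again short, as is already done for $\fA_n$ in \cite[\S4.4]{Chen18} and as appears in the corollary to Theorem~\ref{thm_noncongruence_criterion_A}; (3) handle the groups of Lie type. For the rank-one families $\PSL_2(q)$, $\PSU_3(q)$, $\Sz(q)$, ${}^2G_2(q)$ one can use the abundant supply of elements of order dividing $q-1$, $q+1$, $p$ and the various cyclotomic factors, whose pairwise gcds one controls by elementary number theory (this is where Thompson's analysis of minimal simple groups, cited in the corollary, already gives the relevant families); for higher-rank groups one appeals to known results on $(2,3,7)$- and more generally $(p,q,r)$-generation of finite simple groups (work of Liebeck--Shalev, Lucchini--Menegazzo--Morigi, and others) combined with the fact that Zsygmondy primes provide elements whose orders are mutually coprime; (4) finish off the twenty-six sporadic groups by a finite check, reading generating triples off the ATLAS or from existing $(p,q,r)$-generation tables. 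Step (3) is where one must be careful, because the precise statement needed --- pairwise coprime \emph{orders}, not merely distinct primes --- may occasionally force the use of composite orders, in which case one falls back on the ``room for flexibility'' remark after Theorem~\ref{thm_noncongruence_criterion_A}: it suffices that the pairwise gcds of $|x|,|y|,|z|$ be small relative to the index of $\Gamma_{\varphi_{x,y}}$ for the congruence-degree argument to still force congruence degree $1$.

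The main obstacle I expect is not any single family but the uniformity of the argument: one wants a clean statement of the form ``every nonabelian finite simple group has a generating triple of pairwise coprime element orders'', and while this is surely true and essentially known to experts via CFSG plus Zsygmondy-prime arguments, assembling a complete and correct case-by-case verification --- especially reconciling the literature on $(p,q,r)$-\emph{triangle} generation (which fixes the orders to be primes and the product to be trivial) with the variant needed here --- is the laborious part. A secondary subtlety is the small-rank groups of Lie type in small characteristic, where the cyclotomic factors of $|G|$ share common factors and one may need composite orders together with the flexibility remark; these are precisely the cases (such as $\PSL_2(\bF_4)\cong\fA_5$, $\PSU_3(\bF_4)$, $\PSU_3(\bF_5)$) that the computed data in \S\ref{ss_noncongruence_nonabelian} flags as the closest to being congruence, so they deserve the most care. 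Once the group theory is in hand, the passage back to noncongruence modular curves via Theorem~\ref{thm_noncongruence_criterion_A} is immediate and requires no further work.
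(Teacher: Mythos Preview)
The statement you are attempting to prove is explicitly a \emph{conjecture} in the paper, not a theorem: the author writes that this is ``the only conjecture the author is prepared to make at the moment,'' and provides no proof. So there is nothing in the paper to compare your argument against; rather, you are proposing a resolution of an open problem.

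Your strategy---reduce via Theorem~\ref{thm_noncongruence_criterion_A} to the existence, for every nonabelian finite simple $G$, of a generating triple $(x,y,z)$ with $xyz=1$ and pairwise coprime orders---is exactly the approach the paper uses to establish the \emph{partial} results in the corollary following that theorem. The author clearly had this reduction in mind, verified it for the families listed there (alternating groups, $\PSL_2$ over prime fields and small prime-power fields, Suzuki groups $\Sz(2^p)$, $\PSL_3(\bF_3)$), and then stopped, stating the general case as a conjecture. That is strong circumstantial evidence that the remaining group-theoretic input is not routine.

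The genuine gap in your proposal is step (3): you assert that the coprime-order generating triple exists for all groups of Lie type, appealing to ``known results on $(2,3,7)$- and more generally $(p,q,r)$-generation'' together with Zsygmondy primes. But the existing literature on triangle generation does not directly deliver what you need. Results of Liebeck--Shalev and others typically show that \emph{most} triples of given orders generate, or that $G$ is $(2,3,r)$-generated for suitable $r$, but $2$ and $3$ are not coprime to the orders one would want for the third element in many small-rank cases. Your fallback to the ``room for flexibility'' remark is not free either: that remark requires the pairwise gcds to be small \emph{relative to the index of} $\Gamma_{\varphi_{x,y}}$, and you give no mechanism to control that index. The cases $\PSU_3(\bF_4)$ and $\PSU_3(\bF_5)$, which the paper singles out as having congruence components of degree $1$ and $3$, show that the index can be extremely small for some $\varphi$; one would need to argue that a \emph{different} $\varphi$ has large index, which is a separate and nontrivial claim. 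In short, your outline is a reasonable research plan, but it is not a proof: the group-theoretic statement you reduce to is itself, as far as the paper and the cited literature indicate, open.
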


Since subgroups of noncongruence subgroups $\Gamma\le\SL_2(\bZ)$ are themselves noncongruence, the property of a finite group $G$ being noncongruence is stable under group extensions. Thus this conjecture would imply that any finite 2-generated extension of a nonabelian finite simple group is noncongruence. For solvable groups, it has been checked that all 2-generated groups of solvable length $\ge 4$ and order $\le 255$ are noncongruence\footnote{There are 60 such groups, see \cite[Appendix B1]{Chen18}.}. It is an interesting question to ask if a lower bound on solvable degree can guarantee noncongruence.

\begin{remark} In \cite[Conjecture 4.4.1]{Chen18}, the author conjectured that nonabelian finite simple groups were \emph{purely} noncongruence. The smallest counterexample $G = \PSU_3(\bF_4)$ was found in 2021, see \S\ref{ss_burau}.
\end{remark}

\subsection{Congruence components of $\cM(\PSU_3(\bF_q))$ and the Burau representation}\label{ss_burau}

Recall that over a finite field $\bF_q$, up to isometry there is a unique nondegenerate hermitian form on $\bF_{q^2}^n$ relative to the involution $a\mapsto a^q$ \cite[Corollary 10.4]{Grove02}. Let $\U_3(\bF_q)\le\GL_3(\bF_{q^2})$ be the group of isometries of any such form. Let $\SU_3(\bF_q)\le\U_3(\bF_q)$ be the subgroup of elements of determinant 1, and let $\PSU_3(\bF_q)$ be the quotient of $\SU_3(\bF_q)$ by the center. Below we present the 33 components of $\cM(\PSU_3(\bF_4))_\bC$:

$$\begin{array}{llllllllllll}
	|G| & G & m & d & c_2 & c_3 & -I & \text{cusp widths} & g & \text{Hig} & \text{AbsMon} & \text{c/nc}\\
	\hline
62400 & \PSU_3(\bF_4) & 4 & 1    & 1 & 1 & 1 & 1^1                                                     & 0 & 5A,B,C,D & \fS_1 & \text{cng} \\
62400 & \PSU_3(\bF_4) & 4 & 9    & 1 & 0 & 1 & 1^1 3^1 5^1                                             & 0 & 15A,B,C,D & \fA_9 & \text{ncng}\\
62400 & \PSU_3(\bF_4) & 4 & 13   & 1 & 1 & 1 & 3^1 5^2                                                 & 0 & 13A,B,C,D & \fA_{13} & \text{ncng}\\
62400 & \PSU_3(\bF_4) & 4 & 184  & 0 & 8 & 0 & 1^2 3^2 4^2 10^2 13^2 15^2                              & 0 & 5A,B,C,D  & {_2}\fA_{96} & \text{ncng}\\
62400 & \PSU_3(\bF_4) & 1 & 2048 & 0 & 4 & 0 & 3^{20} 4^{16} 5^{32} 10^{24} 13^{20} 15^{16}            & 21& 4A & \fA_{512} & \text{ncng}\\
62400 & \PSU_3(\bF_4) & 1 & 2048 & 0 & 4 & 0 & 3^{20} 4^{16} 5^{32} 10^{24} 13^{20} 15^{16}            & 21& 4A & \fA_{512} &\text{ncng}\\
62400 & \PSU_3(\bF_4) & 4 & 2880 & 0 & 6 & 0 & 3^{30} 4^{20} 5^{26} 10^{26} 13^{40} 15^{24}            & 36& 10A,B,C,D & {_2}\fA_{1440} & \text{ncng}\\
62400 & \PSU_3(\bF_4) & 2 & 3280 & 0 & 5 & 0 & 1^2 2^{14} 3^{29} 4^{24} 5^{53} 10^{36} 13^{34} 15^{24} & 28& 5E,5F & \fA_{1640} & \text{ncng}\\
62400 & \PSU_3(\bF_4) & 4 & 3816 & 0 & 0 & 0 & 1^2 3^{42} 4^{30} 5^{28} 10^{36} 13^{50} 15^{34}        & 49& 15A,B,C,D & {_2}\fA_{1908} & \text{ncng}\\
62400 & \PSU_3(\bF_4) & 4 & 4368 & 0 & 9 & 0 & 3^{51} 4^{38} 5^{24} 10^{44} 13^{53} 15^{42}            & 54& 13A,B,C,D & {_2}\fA_{4368} & \text{ncng}\\
62400 & \PSU_3(\bF_4) & 1 & 5328 & 0 & 0 & 0 & 1^2 2^{12} 3^{56} 4^{40} 5^{72} 10^{58} 13^{50} 15^{48} & 54& 3A & \fS_{1332} & \text{ncng}
\end{array}$$

Here, for Higman invariants, for the sake of compactness we have writen ``5A,B,C,D'' to mean ``5A,5B,5C,5D''. One can check that each group of multiplicity $m>1$ is acted upon transitively by $\Out(\PSU_3(\bF_4))\cong\bZ/4$. From the data, we can deduce the fields of definition of every component other than the two of degree 2048; these two can only be said to be defined over a quadratic number field.\footnote{A further computation shows that these two components are mirror images in the sense of \S\ref{sss_mirror}; By Proposition \ref{prop_mirror}, this implies that they are complex conjugates of each other, though this says very little about their fields of definition.}

Let $\Pi$ be the fundamental group of a punctured elliptic curve. The component of degree 1 corresponds to a surjection $\varphi\in\Epi^\ext(\Pi,G)$ which is fixed by $\Out^+(\Pi)\cong\SL_2(\bZ)$. In fact, one can check that $\varphi$ is even $\Out(\Pi)$-invariant, and hence its kernel is a characteristic subgroup of $\Pi$ -- we say that $\varphi$ is \emph{characteristic}. In a forthcoming work joint with Alexander Lubotzky and Pham Huu Tiep, we will explain how this surjection can be constructed from the Burau representation of the braid group $B_4$. We sketch the main points below.

Recall that the Braid group $B_n$ is generated by $\sigma_1,\sigma_2,\ldots,\sigma_{n-1}$ subject to the relations
\begin{itemize}
\item $[\sigma_i,\sigma_j] = 1$	if $|i-j| \ge 2$, and
\item $\sigma_i\sigma_{i+1}\sigma_i = \sigma_{i+1}\sigma_i\sigma_{i+1}$ if $1\le i\le n-2$.
\end{itemize}
It's center $Z(B_n)$ is infinite cyclic, generated by $(\sigma_1\sigma_2\cdots\sigma_{n-1})^n$. For $n = 4$, $B_4$ contains a \emph{normal} free subgroup $F := \langle \sigma_1\sigma_3^{-1}, \sigma_2\sigma_1\sigma_3^{-1}\sigma_2^{-1}\rangle$ on which the conjugation action of $B_4$ induces an isomorphism
$$B_4/Z\rightiso\Aut^+(F)$$
This implies that if $\rho : B_4\lra G$ is any homomorphism, then the class of $\rho|_F : F\ra \rho(F)$ modulo $\Aut(\rho(F))$ is fixed by $\Aut^+(F)$: we say that $\rho|_F$ is $\Aut^+(F)$-invariant. In other words, $\rho|_F$ is almost characteristic. We apply this to the Burau representation of $B_4$, which is a representation
$$\rho_\Burau : B_4\lra\GL_3(\bZ[q,q^{-1}])$$
The image of $\rho_\Burau$ is contained in the special unitary group relative to a certain Hermitian form on $\bZ[q,q^{-1}]^3$ \cite{Sto10, Ven14}. In \cite{CLT23}, we show that

\begin{thm}[{C., Lubotzky, Tiep \cite{CLT23}}] Let $q = p^d$ be a prime power with $q\ge 7$. Then there exist specializations of $\rho_\Burau$ which realize $\SU_3(\bF_q)$ and $\SL_3(\bF_q)$ as characteristic quotients of $F$. Moreover, $\SU_3(\bF_4)$ and $\SU_3(\bF_5)$ can also be attained in this way. The remaining cases $\SL_3(\bF_5),\SL_3(\bF_4),\SL_3(\bF_3),\SL_3(\bF_2)$, and $\SU_3(\bF_3),\SU_3(\bF_2)$ are not characteristic quotients of $F$.
\end{thm}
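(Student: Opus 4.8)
The theorem asserts that various small-rank unitary and linear groups over small finite fields arise (or fail to arise) as characteristic quotients of the free group $F \le B_4$ via specializations of the Burau representation. The plan is to analyze the specialized Burau representations group-theoretically, splitting into a positive direction (construct the quotients) and a negative direction (obstruct them).

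\textbf{Setup and strategy.} First I would fix the Burau representation $\rho_\Burau : B_4 \lra \GL_3(\bZ[q,q^{-1}])$ in its reduced unitary form, recording explicitly the Hermitian form it preserves over $\bZ[q,q^{-1}]$, and compute the image of the free normal subgroup $F = \langle \sigma_1\sigma_3^{-1}, \sigma_2\sigma_1\sigma_3^{-1}\sigma_2^{-1}\rangle$. Specializing $q \mapsto \zeta$ for $\zeta$ a generator of $\bF_{q'^2}^\times$ (where $q' = p^d$) produces a homomorphism $\rho_\zeta : B_4 \lra \GL_3(\bF_{q'^2})$ landing in $\U_3(\bF_{q'})$; projecting and restricting gives $\overline{\rho_\zeta}|_F : F \ra \PSU_3(\bF_{q'})$ (or its $\SU$/$\SL$ variants depending on the form's signature after specialization). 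The key structural input, already recorded in the excerpt, is that $B_4/Z \cong \Aut^+(F)$, so the $\Aut(\overline{\rho_\zeta(F)})$-class of $\overline{\rho_\zeta}|_F$ is $\Aut^+(F)$-invariant; the extra step needed to get a genuinely \emph{characteristic} quotient (invariance under all of $\Aut(F)$, including an orientation-reversing element) is to exhibit a complex-conjugation-type symmetry $q \mapsto q^{-1}$ of the Burau representation realizing the missing outer automorphism, compatibly with the specialization.

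\textbf{Positive direction.} For $q' = p^d \ge 7$, and for the sporadic cases $\SU_3(\bF_4), \SU_3(\bF_5)$, I would show $\overline{\rho_\zeta}|_F$ is surjective onto $\PSU_3(\bF_{q'})$ (resp. the relevant group) for suitable choices of $\zeta$. The natural tool is a classification of subgroups of $\PSU_3(\bF_{q'})$ containing the image of the braid generators' Burau matrices — i.e. identify the subgroup generated by the two specialized generators of $F$, ruling out all proper subgroups (reducible subgroups, imprimitive subgroups, subfield subgroups $\PSU_3(\bF_{q''})$, and the finitely many exceptional subgroups) by Mitchell–Hartley-type classifications of subgroups of $\PSU_3$ and $\PSL_3$. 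Here one uses the explicit eigenvalue/trace data of the Burau matrices as functions of $\zeta$ (orders of elements, determinants, fixed-space dimensions) to eliminate each class; the lower bound $q' \ge 7$ should be exactly what is needed to defeat the subfield and exceptional cases. Then one confirms characteristicity using the conjugation symmetry above, and checks that $\SL_3(\bF_{q'})$ is obtained by a different (non-unitary-looking, or differently-signed) specialization.

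\textbf{Negative direction and main obstacle.} The remaining assertions — that $\SL_3(\bF_q)$ for $q \in \{2,3,4,5\}$ and $\SU_3(\bF_q)$ for $q \in \{2,3\}$ are \emph{not} characteristic quotients of $F$ — are the hard part. One must rule out \emph{every} surjection $F \twoheadrightarrow G$ (for these specific $G$) whose kernel is $\Aut(F)$-invariant, not merely those coming from Burau. The natural approach is: a characteristic quotient $G$ of $F = F_2$ corresponds to an $\Aut^+(F_2) \cong \SL_2(\bZ)$-fixed point of $\Epi^\ext(F_2,G)$ that is moreover fixed by the full $\Out(F_2)$ (equivalently $\GL_2(\bZ)$); by the Galois correspondence of \S\ref{ss_components}, these correspond to degree-$1$ components of $\cM(G)_\bC$ stable under the $\GL_2$-action. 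So the task reduces to a \emph{finite computation}: enumerate $\Epi^\ext(F_2,G)/\Inn$ for each of the six groups, let $\SL_2(\bZ)$ act, and check that no orbit is a fixed point (or that fixed points are not $\GL_2$-stable) — this is exactly the kind of computation described in Remark \ref{remark_computing}, feasible since $|G| \le |\SL_3(\bF_5)| = 372000$. The conceptual obstacle is making this robust rather than purely machine-verified: I expect one wants a uniform reason, e.g. that for these small $G$ the minimal-index two-generator subgroups realizing the right Higman invariant all have nontrivial $\SL_2(\bZ)$-orbit, or that the Schur-cover refinement of the Higman invariant (as in \S\ref{ss_noncongruence_examples}) is forced to be non-rational, contradicting the branch cycle lemma \ref{lemma_BCL} which any characteristic quotient must satisfy. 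The main obstacle, then, is organizing the negative cases into a clean invariant-theoretic obstruction rather than a brute-force orbit enumeration; absent that, the fallback is the explicit GAP computation, which is rigorous but unilluminating.
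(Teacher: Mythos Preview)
The paper does not contain a proof of this theorem. It is a survey article, and the result is stated as a citation from the external reference \cite{CLT23}; the surrounding text only records the setup (the isomorphism $B_4/Z\cong\Aut^+(F)$, the unitarity of $\rho_\Burau$, and the observation that any restriction $\rho|_F$ of a $B_4$-representation is automatically $\Aut^+(F)$-invariant modulo $\Aut(\rho(F))$). There is therefore nothing in this paper to compare your proposal against.

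That said, your outline is broadly reasonable and consistent with the setup the paper does provide. Your positive direction---specialize $q$ to a suitable element of $\bF_{q'^2}^\times$, then use the maximal-subgroup classification for $\PSL_3$/$\PSU_3$ to rule out proper images---is the natural strategy, and your observation that one must separately handle the orientation-reversing automorphism of $F$ (via the $q\mapsto q^{-1}$ symmetry of Burau) correctly identifies the gap between ``$\Aut^+(F)$-invariant'' and ``characteristic''. For the negative direction, your reduction to checking that $\Epi^\ext(F_2,G)$ has no $\Out(F_2)$-fixed points is exactly right, and for groups of the sizes in question (the largest being $|\SL_3(\bF_5)|=372000$) this is a finite machine computation of the type the paper already relies on elsewhere. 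Whether \cite{CLT23} gives a more conceptual obstruction for the negative cases, or simply reports the computation, cannot be determined from this survey.
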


Let $G$ be either $\PSL_3(\bF_p)$ or $\PSU_3(\bF_p)$ where $p$ is prime. Then $\Out(G)$ has order at most 6, and hence the component of $\cM(G)$ corresponding to a characteristic quotient $F\ra G$ have degree at most 6 over $\cM(1)$. Since all subgroups of index $\le 6$ are congruence, it follows that for $p\ge 7$, $\cM(G)$ has a congruence component. The theorem also implies that $\cM(G)^\abs$ has a component of degree 1 if $G = \PSL_3(\bF_q)$ or $\PSU_3(\bF_q)$, where $q\ge 7$ is a prime power.

\subsection{$\SL_2(\bF_p)$, connected components, and Markoff triples}\label{ss_markoff}
When $G$ is a finite simple group of Lie type, the set $\Epi^\ext(\Pi,G)$ can be given an algebraic structure via the theory of character varieties. In this section we explain how this algebraic structure can be leveraged in the situation $G = \SL_2(\bF_p)$ to better understand the $\Out^+(\Pi)$-orbits on $\Epi^\ext(\Pi,\SL_2(\bF_p))$, and correspondingly the components of $\cM(\SL_2(\bF_p))$. We will also explain how it relates to the Diophantine properties of the Markoff equation.

Let $\cG$ denote an affine algebraic group over a ring $R$. The \emph{representation variety} for $\cG$-representations of $\Pi$ is the scheme $\Hom(\Pi,\cG)$ whose set of $S$-points is exactly the set of representations $\Hom(\Pi,\cG(S))$ for any scheme $S$. Since $\Pi$ is free of rank 2, $\Hom(\Pi,\cG)$ is isomorphic to the product $\cG\times\cG$. The action of $\cG$ by inner automorphisms on $\Hom(\Pi,\cG)$ corresponds to the diagonal action on $\cG\times\cG$, and the GIT quotient
$$X_{\cG}(\Pi) := \Hom(\Pi,\cG)\git\cG$$
is called the \emph{character variety} for $\cG$-representations of $\Pi$. For a modern treatment of character varieties over $\bC$, see \cite{LS17,Sik12} and the references therein.

Suppose now that $\cG = \SL_{2,R}$ over a ring $R$. Let $A[\Pi]_R$ denote the affine ring of $\Hom(\Pi,\SL_{2,R})$, then the character variety $X_{\SL_{2,R}} := \Hom(\Pi,\SL_{2,R})\git\SL_{2,R}$ is the spectrum of the ring of invariants $A[\Pi]_R^{\SL_{2,R}}$. Let $a,b$ be generators of $\Pi$. By classical invariant theory over $\bC$, it is known that $A[\Pi]_\bC^{\SL_{2,\bC}}$ is the polynomial ring generated by the trace functions
$$x := \tr\varphi(a),\quad y := \tr\varphi(b),\quad z := \tr\varphi(AB)$$
This was further extended to the case of general rings $R$ in \cite{BH95}. More precisely, we have
\begin{thm} Let $R$ be any ring. Let $\Tr : \Hom(\Pi,\SL_{2,R})\ra\bA^3_R$ be the map defined on $A$-valued points for various $R$-algebras $A$ by sending $\varphi : \Pi\ra\SL_2(A)$ to $\tr\varphi(a),\tr\varphi(b),\tr\varphi(c)$. Then $\Tr$ induces an isomorphism, also denoted $\Tr$:
$$\Tr : X_{\SL_{2,R}}\rightiso\bA^3_R = \Spec R[x,y,z]$$
\end{thm}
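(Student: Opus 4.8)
The plan is to prove the equivalent statement that the canonical ring homomorphism $R[x,y,z]\to A[\Pi]_R^{\SL_{2,R}}$ is an isomorphism. Since $\Hom(\Pi,\SL_{2,R})\cong\SL_{2,R}\times_R\SL_{2,R}$ is affine, its GIT quotient $X_{\SL_{2,R}}$ is the spectrum of the invariant ring $A[\Pi]_R^{\SL_{2,R}}$, and the functions $x=\tr\varphi(a)$, $y=\tr\varphi(b)$, $z=\tr\varphi(ab)$ are conjugation-invariant, so $\Tr$ is exactly the morphism on spectra attached to this map. I would establish surjectivity and injectivity of the ring map separately; essentially all the content is one input from classical invariant theory, everything else being formal.

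For \emph{surjectivity} the key input is the first fundamental theorem of invariant theory for $\SL_2$ acting by simultaneous conjugation on a pair of matrices, in its integral, characteristic-free form: over $\bZ$ (hence over any $R$ by base change) the ring $A[\Pi]_R^{\SL_{2,R}}$ is generated by the trace functions $\tr w(A,B)$, where $w$ runs over words in the universal matrices $A,B$. Over a field of characteristic zero this is the Fricke--Vogt theorem and is elementary; over an arbitrary commutative ring it is essentially the theorem of Brumfiel--Hilden \cite{BH95} (it is also a special case of Donkin's work on invariants of several matrices, which additionally supplies the base-change compatibility). Granting it, I would invoke Cayley--Hamilton over $R$ — which for $g\in\SL_2$ reads $g+g^{-1}=(\tr g)\,I$ — to obtain the fundamental trace identity $\tr(uv)+\tr(uv^{-1})=\tr(u)\tr(v)$, and then, by induction on word length (using also $\tr g^{-1}=\tr g$ and $\tr g^2=(\tr g)^2-2$), rewrite every $\tr w(A,B)$ as a polynomial with $\bZ$-coefficients in $x,y,z$. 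This gives $R[x,y,z]\twoheadrightarrow A[\Pi]_R^{\SL_{2,R}}$; in particular $A[\Pi]_{\bZ}^{\SL_{2,\bZ}}=\bZ[x,y,z]$.

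For \emph{injectivity}, i.e.\ algebraic independence of $x,y,z$, I would first treat a field $k$. There $A[\Pi]_k$ is an integral domain, since $\SL_2\times\SL_2$ is geometrically integral, hence so is the invariant ring, and its Krull dimension is $\dim(\SL_2\times\SL_2)-3=3$: a pair $(A,B)$ of regular semisimple elements lying in distinct maximal tori has finite centralizer in $\SL_2$, so the generic orbit is $3$-dimensional. A surjection from $k[T_1,T_2,T_3]$ onto a $3$-dimensional domain has zero kernel, so $x,y,z$ are algebraically independent over $k$. The case of general $R$ then follows from the case $R=\bZ$: the equality $A[\Pi]_{\bZ}^{\SL_{2,\bZ}}=\bZ[x,y,z]$ shows that $A[\Pi]_{\bZ}/\bZ[x,y,z]$ is $\bZ$-torsion-free (if $nf\in\bZ[x,y,z]$ for $f\in A[\Pi]_{\bZ}$ and $n\ne 0$, then $f$ is $\SL_2$-invariant by torsion-freeness of $A[\Pi]_{\bZ}$, hence $f\in\bZ[x,y,z]$), so $\bZ[x,y,z]\hookrightarrow A[\Pi]_{\bZ}$ remains injective after $\otimes_{\bZ}R$. (Alternatively, to avoid the trace bookkeeping: an explicit computation realizes every triple over an algebraically closed field by some pair $(A,B)$, so $\Tr$ is surjective on points; combining this with the dimension count, with the fact that two absolutely irreducible $\SL_2$-representations with the same character are conjugate, and with normality of the GIT quotient of the smooth scheme $\SL_2\times\SL_2$, Zariski's main theorem yields the isomorphism.)

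The \textbf{main obstacle} is precisely the cited input to the surjectivity step: the integral, characteristic-free form of the first fundamental theorem together with compatibility of invariant formation with base change. In characteristic $0$ this is classical, but in positive and mixed characteristic it genuinely requires either the good-filtration/tilting-module machinery (Donkin, Mathieu) or the explicit analysis of Brumfiel--Hilden; once that hurdle is cleared, assembling the two halves gives $A[\Pi]_R^{\SL_{2,R}}=R[x,y,z]$, hence $\Tr\colon X_{\SL_{2,R}}\rightiso\bA^3_R=\Spec R[x,y,z]$.
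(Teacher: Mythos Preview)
Your proposal is correct and follows the same line as what the paper invokes: the paper does not give an argument here but simply cites \cite[Theorem 5.2.1]{Chen21}, having already noted in the preceding paragraph that the result is classical over $\bC$ and was extended to arbitrary rings by Brumfiel--Hilden \cite{BH95}. Your sketch unpacks exactly this---surjectivity from the integral first fundamental theorem plus the Fricke trace identities, injectivity from a dimension count---and you correctly flag that the genuine content (the characteristic-free FFT and base-change compatibility of invariants) is what is being imported from \cite{BH95} or Donkin.
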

\begin{proof} See \cite[Theorem 5.2.1]{Chen21}.	
\end{proof}
In particular, formation of the character variety commutes with base change, and hence it suffices to work with $X_{\SL_2} := X_{\SL_{2,\bZ}}\cong\bA^3_\bZ$. The natural \emph{right} action of $\Aut(\Pi)$ on the representation variety induces a right action of $\Out(\Pi)$ on the character variety $X_{\SL_2}\cong\bA^3_\bZ$, where it is given by remarkably simple polynomials:
\begin{equation}\label{eq_explicit_action}\begin{array}{rcl}
r : (a,b) & \mapsto & (a^{-1},b) \\
s : (a,b) & \mapsto & (b,a) \\
t : (a,b) & \mapsto & (a^{-1},ab) \\
\end{array}\quad\stackrel{\Tr_*}{\longrightarrow}\quad
\begin{array}{rcl}
R_3 : (x,y,z) & \mapsto & (x,y,xy-z) \\
\tau_{12} : (x,y,z) & \mapsto & (y,x,z) \\
\tau_{23} : (x,y,z) & \mapsto & (x,z,y) \\
\end{array}\end{equation}
Note that $\Aut(\Pi)$ is generated by $r,s,t$. 
Since the automorphisms of $\bA^3$ in the right side of the table act on the \emph{left}, $\Tr_*$ is an \emph{anti-homomorphism}.

The trace of the Higman invariant defines a map $\tau : X_{\SL_2}\ra\bA^1_\bZ$ sending $\varphi$ to $\tr\varphi([a,b])$. The induced map $T : \bA^3_\bZ\ra\bA^1_\bZ$ fits into a commutative diagram
\[\begin{tikzcd}
	X_{\SL_2}\ar[r,"\Tr"]\ar[rd,"\tau"] & \bA^3_\bZ\ar[d,"T"] \\
	 & \bA^1_\bZ
\end{tikzcd}\]
In coordinates, $T$ is given by $T(x,y,z) = x^2 + y^2 + z^2 - xyz - 2$. Because $\Out(\Pi)$ preserves the set of conjugacy classes of $[a,b]^{\pm 1}$, it preserves the fibers of the map $T$.\footnote{Note that a matrix in $\SL_2$ has the same trace as its inverse.}
The slice $T^{-1}(-2)\subset\bA^3_\bZ\cong X_{\SL_2}$ is described by the \emph{Markoff equation}\footnote{This equation first appeared in the work of Markoff \cite{Mar79,Mar80} on Diophantine approximation, but has since appeared in a variety of other contexts (see \cite{Bom07}).}
$$\bX : x^2 + y^2 + z^2 - xyz = 0$$
Through its interpretation as a subvariety of $X_{\SL_2}$, it is shown in \cite[Prop 5.2.17]{Chen21} that $\Tr$ induces a bijection
$$\Epi(\Pi,\SL_2(\bF_p))_{\tau = -2}/\GL_2(\bF_p)\rightiso \bX^*(p) := \bX(\bF_p) - \{(0,0,0)\}$$
The action of $\GL_2(\bF_p)$ on $\SL_2(\bF_p)$ induces the full action of $\Aut(\SL_2(\bF_p))$, and hence the left hand side is the fiber of
$$\cM_p := \cM(\SL_2(\bF_p))_{\bC,\tau = -2}/\Out(\SL_2(\bF_p))$$
over $\cM(1)$, where $\cM(\SL_2(\bF_p))_{\bC,\tau = -2}$ denotes the open and closed substack of $\cM(\SL_2(\bF_p))_\bC$ parametrizing covers whose Higman invariants have trace $-2\in\bF_p$.\footnote{The moduli interpretation for the quotient $\cM_p$ is described in \S\ref{sss_absolute_moduli}.} Thus, the connectivity of $\cM_p$ is equivalent to the transitivity of the $\Out^+(\Pi)$-action on $\bX^*(p)$. Since the natural reduction map
$$\bX(\bZ)\ra\bX(\bF_p)$$
is $\Out(\Pi)$-equivariant, the transitivity on $\bX^*(p)$ implies that this reduction map is \emph{surjective}. In this case we say $\bX$ satisfies \emph{strong approximation at $p$}. We can view ``strong approximation'' as a measure of abundance of the integral points on $\bX$.\footnote{Note that $\bX$ lies in the ``critical range'', where the number of variables equals the dimension, where the Lang-Vojta and Manin conjectures do not predict anything about the behavior of its integral points. This is analogous to the critical range occupied by elliptic curves in the context of the Diophantine geometry of curves.} A conjecture of Baragar, reaffirmed by Bourgain, Gamburd, and Sarnak, states:

\begin{conj}[{Baragar \cite{Bar91}, Bourgain, Gamburd, Sarnak \cite{BGS16arxiv}}]\label{conj_BBGS} The Markoff equation $\bX$ satisfies strong approximation at all primes $p$. Equivalently, $\Out(\Pi)$ acts transitively on $\bX^*(p)$ for all primes $p$.	
\end{conj}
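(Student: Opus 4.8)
The strategy is to translate the conjecture into the connectedness of an algebraic curve and then attack connectedness through the arithmetic of its smooth compactification. Via the bijection $\Epi(\Pi,\SL_2(\bF_p))_{\tau=-2}/\GL_2(\bF_p)\rightiso\bX^*(p)$ recalled above, together with the identification of the left-hand side with the geometric fiber of $\cM_p$ over $\cM(1)$, transitivity of the $\Out^+(\Pi)\cong\SL_2(\bZ)$-action on $\bX^*(p)$ is exactly the statement that $\cM_p$ (equivalently its coarse curve $M_p$) is connected. Since $\cM(\SL_2(\bF_p))$ is finite \'{e}tale over $\cM(1)_{\bZ[1/p]}$ by Theorem \ref{thm_fet}, $\cM_p$ is a smooth curve over $\bZ[1/p]$, and by the theory of admissible covers (\S\ref{ss_compactification}) it extends to a smooth proper Deligne--Mumford stack $\overline{\cM_p}$ over $\bZ[1/p]$. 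The number of geometric connected components is then locally constant on $\Spec\bZ[1/p]$, so I would fix an auxiliary prime $\ell\ne p$ and count components over $\overline{\bF_\ell}$, where Frobenius is available as an extra tool.

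The first step is the cusp combinatorics. The boundary $\overline{\cM_p}\setminus\cM_p$ consists of cusps, which by the dictionary of \S\ref{ss_compactification} correspond to $\SL_2(\bF_p)$-Galois covers of $\bP^1$ branched only over $\{0,1,\infty\}$; restricting to the $\tau=-2$ slice, the cusps are the orbits of $\langle\spmatrix{1}{1}{0}{1}\rangle$ on $\bX^*(p)$ under the explicit action \eqref{eq_explicit_action}. I would reprove the Bourgain--Gamburd--Sarnak cusp classification: there is one ``maximal'' cusp of width $\asymp p^2$, a $\bQ$-rational point of $\overline{M_p}$, together with a controlled collection of ``small'' cusps whose widths divide $2p$ and whose fields of definition are abelian over $\bQ$ --- the latter pinned down by the branch cycle lemma \ref{lemma_BCL} and the constancy of the Higman invariant on components (Proposition \ref{prop_higman}). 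This describes how $\Gal_\bQ$, and after reduction Frobenius, permutes cusps, and in particular shows that a Galois-stable set of cusps must either contain the maximal cusp or consist entirely of small cusps closed under the cyclotomic action.

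The core step is a counting argument. Every connected component of $\overline{\cM_p}$ contains a cusp; suppose for contradiction there are at least two. A component containing the maximal cusp has degree $\gg p^2$ over $\cM(1)$, so any second component $\cM'$ meets only small cusps, forcing strong constraints on its cusp widths, hence on its Wohlfahrt level, hence (Theorem \ref{thm_monodromic_criterion}, Corollary \ref{cor_congruence_closure}) on the simple composition factors of its monodromy. I would combine this with a lower bound on $|\cM'|$ --- the algebraic avatar of the BGS expansion estimate, obtained here from the Frobenius action on the cohomology of the reduction $\overline{\cM_p}_{\overline{\bF_\ell}}$ and the near-simplicity and known representation theory of $\SL_2(\bF_p)$ --- to contradict $|\bX^*(p)| = p^2 + O(p)$ for all $p$ above an explicit bound; the finitely many remaining primes would be dispatched by direct computation of the Markoff graph mod $p$.

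The hard part is precisely the last step for \emph{every} prime: Bourgain, Gamburd, and Sarnak's sieve and expander methods already give transitivity for $p$ outside a thin exceptional set, and the difficulty is to close that set. The point of routing everything through the proper smooth model $\overline{\cM_p}/\bZ[1/p]$ is that it supplies rigidity unavailable to purely combinatorial arguments: the rationality of the maximal cusp, the cyclotomic rigidity of the Higman invariant, and the $\ell$-adic monodromy together rule out a Galois-stable union of small components, and a finer analysis of the permutation module $\bF_\ell[\bX^*(p)]$ as an $\SL_2(\bZ)$-module is what would remove the last possibilities. Making that analysis unconditional and uniform in $p$ is the crux, and is exactly where the conjecture has resisted proof.
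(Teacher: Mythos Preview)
The statement you were asked to prove is a \emph{conjecture}, and the paper does not prove it. It is presented as open, followed by a summary of partial progress: Theorem~\ref{thm_BGS} (Bourgain--Gamburd--Sarnak) shows transitivity outside a thin exceptional set $\bE_\bgs$; Theorem~\ref{thm_congruence} gives the divisibility $p\mid|\text{orbit}|$, which combined with BGS yields transitivity for $p\gg 0$; and the Eddy--Fuchs--Litman--Martin--Tripeny result makes ``$p\gg 0$'' explicit ($p>3.448\cdot 10^{392}$). The full conjecture --- closing the gap for all primes --- remains open. Your own final paragraph concedes this, so what you have written is not a proof but a narrative around the existing partial results.

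Beyond that framing issue, two steps in your strategy do not hold up. First, your ``cusp combinatorics'' step asserts a single maximal cusp of width $\asymp p^2$. Cusp widths are the cycle lengths of the parabolic $\spmatrix{1}{1}{0}{1}$ on the fiber $\bX^*(p)$; for covers with Higman invariant of order $2p$ these widths are bounded by quantities on the order of $p$ (compare the cusp-width columns in the paper's tables, e.g.\ for $\PSL_2(\bF_7)$ or $\SL_2(\bF_7)$), not $p^2$. There is no single cusp whose width alone forces a component to have degree $\gg p^2$, so the dichotomy ``component contains the maximal cusp or only small cusps'' collapses. Second, reducing mod an auxiliary prime $\ell\ne p$ and invoking Frobenius buys nothing for connectedness: the number of geometric components of a smooth proper stack over $\bZ[1/|G|]$ is already constant along the base, so the $\overline{\bF_\ell}$-count equals the $\bC$-count, and Frobenius merely permutes a set whose cardinality is the very thing in question. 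The genuine input that actually constrains small orbits --- the congruence of Theorem~\ref{thm_congruence} --- comes from the degree of a line bundle on $\ol{\cM_p}$ over $\bC$, not from reduction mod $\ell$, and even that only yields the conjecture for large $p$.
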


By the explicit equations \eqref{eq_explicit_action}, this action of $\Out^+(\Pi)$ on $\bX^*(p)$ is more amenable to study than its abstract combinatorial action on equivalence classes of surjections $\Pi\ra\SL_2(\bF_p)$. In \cite{BGS16arxiv}, Bourgain, Gamburd, and Sarnak used tools from arithmetic geometry to study this action. They were able to show:

\begin{thm}[Bourgain, Gamburd, Sarnak]\label{thm_BGS} Let $\bE_\bgs$ denote the ``exceptional'' set of primes for which $\Out(\Pi)$ fails to act transitively on $\bX^*(p)$.
\begin{enumerate}
\item For any $\epsilon > 0$, $\#\{p\in\bE_\bgs\;|\; p\le x\} = O(x^\epsilon)$,
\item For every prime $p$, there is a large orbit $\cC(p)$ such that $|\bX^*(p) - \cC(p)|\le p^\epsilon$ for $p$ large.
\item Every orbit has cardinality\footnote{This was subsequently improved to $\gg (\log p)^{7/9}$ in \cite{KMSV20}.}
 $\gg (\log p)^{1/3}$.
\end{enumerate}
\end{thm}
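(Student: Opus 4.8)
The statement is the theorem of Bourgain, Gamburd, and Sarnak, and the plan is to reproduce the skeleton of their argument \cite{BGS16arxiv}, using the trace model supplied above: by the theorem following \eqref{eq_explicit_action} the fiber of $\cM_p\to\cM(1)_\bC$ is identified with $\bX^*(p)$ and the monodromy action with the action of $\Out(\Pi)$ through the polynomial automorphisms \eqref{eq_explicit_action}; write $\Gamma$ for the resulting group, generated by the three Vieta involutions $R_1,R_2,R_3$ (where $R_i$ replaces the $i$-th coordinate $w$ by $w'w''-w$, $w',w''$ being the other two) together with the coordinate permutations. Transitivity of $\Gamma$ on $\bX^*(p)$ is exactly Conjecture \ref{conj_BBGS}, and parts (a)--(c) quantify its failure. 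The first ingredient is the conic structure. Fixing the third coordinate at $z_0$, the involutions $R_1,R_2$ preserve $C_{z_0}:=\{(x,y,z_0)\in\bX^*(p)\}$, and $R_1R_2$ acts on $C_{z_0}$ as a \emph{linear} map of $\bA^2_{\bF_p}$ lying in $\SL_2(\bF_p)$ with trace $z_0^2-2$; hence every $\langle R_1R_2\rangle$-orbit in $C_{z_0}$ has size $\ell(z_0)$, the multiplicative order of this element, and $C_{z_0}$ (of cardinality $p+O(1)$) is a union of $\approx p/\ell(z_0)$ such orbits. One has $\ell(z_0)=p$ precisely for $z_0=\pm 2$ (the ``parabolic'' conics), $\ell(0)=2$, and otherwise $\ell(z_0)\mid p-1$ or $\ell(z_0)\mid p+1$ according as $z_0^2-4$ is a nonzero square or a nonsquare; the same holds in the $x$- and $y$-directions.

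Next one exhibits a ``seed'' orbit $\cC(p)$ of size $\gg p$. For $p\equiv 1\pmod 4$ take the \emph{cage}, the set of points with a vanishing coordinate; on $C_{z=0}$ one has $x^2+y^2=0$, a union of two lines, so the cage has $\asymp p$ points. An explicit, $p$-independent sequence of moves shows the cage lies in a single $\Gamma$-orbit $\cC(p)$: each parabolic conic $C_{z=\pm 2}$ (and its permutations) is essentially a single $\langle R_i,R_j\rangle$-orbit of size $\asymp p$, and these meet both the cage and one another, welding everything together. For $p\equiv 3\pmod 4$ the cage and the parabolic conics are empty over $\bF_p$, so one substitutes the conics carrying the longest rotation orbits (of order a large divisor of $p+1$) and again checks that they meet across the three directions. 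In all cases $|\cC(p)|\gg p$.

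The crux is the complement estimate. Every conic $C_{z_0}$ with $z_0\neq 0$ meets $\cC(p)$ --- for $p\equiv 1\pmod4$ in the cage points $(0,y,z_0)$, $(y,0,z_0)$ with $y^2=-z_0^2$ --- and therefore contributes at least $\ell(z_0)$ points to $\cC(p)$; running this in all three coordinate directions, and on images under the $R_i$, one finds that a point outside $\cC(p)$ must have \emph{all} of its conic-orbits short in every direction reachable from it. The naive bound $\sum_{z_0}(|C_{z_0}|-\ell(z_0))$ is only $\asymp p^2/\log\log p$, so by itself this yields merely positive density; the real content of \cite{BGS16arxiv} is to combine the three directions with additive-combinatorial input --- ``$\ell$ short'' forces the coordinates into small subgroups of $\bF_p^\times$ or $\bF_{p^2}^\times$, which are then pinned down by divisor bounds and a large-sieve/sum--product analysis --- to conclude $|\bX^*(p)\setminus\cC(p)|\le p^{o(1)}$, which is (b), and that the set of primes with $\cC(p)\neq\bX^*(p)$ has counting function $O(x^\epsilon)$, which is (a). This interplay is the main obstacle: everything up to ``positive density'' is elementary, and passing from that to ``all but $p^{o(1)}$'' is where the deep estimates enter.

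Finally, part (c) is proved separately and more softly. An orbit of size $N$ produces, on composing iterated rotations, nontrivial multiplicative relations among $O(\log N)$ elements of the tori appearing in Step 1; an elementary counting/dimension argument then forces $N\gg(\log p)^{1/3}$. (This lower bound was subsequently improved to $(\log p)^{7/9}$ in \cite{KMSV20}.) The translation back to $\cM_p$ is formal: transitivity of $\Gamma$ on $\bX^*(p)$ is connectedness of the substack $\cM_p\subset\cM(\SL_2(\bF_p))_{\bC,\tau=-2}/\Out(\SL_2(\bF_p))$, and the orbit/component sizes match under the bijection of the theorem following \eqref{eq_explicit_action}.
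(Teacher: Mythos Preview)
The paper does not prove this theorem; it is stated as a result of Bourgain, Gamburd, and Sarnak and attributed to \cite{BGS16arxiv}, so there is no ``paper's own proof'' to compare against. Your proposal is a sketch of the actual BGS argument, and it captures the architecture correctly: the rotation structure on the conics $C_{z_0}$ with $\langle R_1R_2\rangle$ acting by an element of order $\ell(z_0)$, the construction of a large seed orbit $\cC(p)$ via the cage and parabolic conics (with the necessary modification for $p\equiv 3\pmod 4$), the passage from ``positive density'' to ``all but $p^{o(1)}$'' via sum--product/sieve input, and the separate, softer argument for part (c).

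That said, as written it is an outline and not a proof, and a couple of assertions are overstated. The claim that ``every conic $C_{z_0}$ with $z_0\neq 0$ meets $\cC(p)$'' is not literally true in general and is not how BGS proceed; the real mechanism is the \emph{middle game}, where one shows that any point with at least one coordinate of large rotation order can be connected to $\cC(p)$, and then the \emph{end game} bounds the set of points all of whose reachable coordinates have small order. Your description of part (c) (``an elementary counting/dimension argument'') is also too vague to be checkable: the BGS argument here hinges on a specific rigidity property of the Markoff dynamics (no small orbit can be trapped, because iterated moves generate enough independent torus elements), and one should at least name the ingredient. If you intend this as a proof rather than a summary, these two places need to be filled in with the actual statements from \cite{BGS16arxiv}.
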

Note that if the logarithmic lower bound in (c) could be promoted to a polynomial lower bound, then together with (b), we would have proven the conjecture for all but finitely many primes $p$. A polynomial lower bound was supplied in \cite{Chen21}:

\begin{thm}[C. \cite{Chen21}]\label{thm_congruence} Every $\Out^+(\Pi)$-orbit on $\bX^*(p)$ has cardinality $\equiv 0\mod p$.
\end{thm}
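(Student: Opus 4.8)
The plan is to exploit the explicit polynomial action \eqref{eq_explicit_action} of $\Out^+(\Pi)$ on $\bX^*(p)$ together with the observation that the generators $R_3,\tau_{23}$ (and their conjugates by coordinate permutations) act on each coordinate slice by \emph{affine-linear} maps in the remaining variable. Concretely, fix an orbit $\cO\subset\bX^*(p)$. For a point $(x_0,y_0,z_0)\in\cO$, the composite ``Vieta involution'' $V_3:(x,y,z)\mapsto(x,y,xy-z)$ together with the permutations $\tau_{12},\tau_{23}$ generates a group acting on $\bX^*(p)$; the first step is to isolate, for a fixed choice of two coordinates $(x_0,y_0)$, the set of $z$ such that $(x_0,y_0,z)\in\bX(\bF_p)$. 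Since $\bX$ is defined by $x^2+y^2+z^2-xyz=0$, which is quadratic in $z$, for generic $(x_0,y_0)$ there are exactly two such values $z_0,z_0'=x_0y_0-z_0$, swapped by $V_3$. The key is to produce, inside the orbit, a point whose stabilizer in $\Out^+(\Pi)$ contains a subgroup acting on the slice through that point as a cyclic group of order $p$ — a ``parabolic'' or unipotent element — so that the orbit-stabilizer theorem forces $p\mid\#\cO$.

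The mechanism for producing such a unipotent element is the following: the $\Out^+(\Pi)$-action on $X_{\SL_2}\cong\bA^3$ preserves the fibration by the trace map $T$, hence preserves $\bX = T^{-1}(-2)$, and it is known classically (work of Horowitz, and the Markoff/Vieta dynamics literature) that the subgroup generated by two commuting Vieta involutions composed with a transposition contains elements of infinite order whose reduction mod $p$ has order related to $p$. More usefully, I would argue directly at the level of representations: a surjection $\varphi:\Pi\to\SL_2(\bF_p)$ with $\tr\varphi([a,b])=-2$ means $\varphi([a,b])$ is a nontrivial unipotent (or $-1$ times unipotent, but the trace $-2$ pins it down up to sign); the Dehn twist along the boundary curve, i.e. the mapping class $\alpha$ with $\alpha_*(a)=a,\ \alpha_*(b)=[a,b]\cdot b$ (or similar), acts on $\varphi$ by $\varphi\mapsto\varphi\circ\alpha_*^{-1}$, and iterating $\alpha$ multiplies $\varphi(b)$ by powers of the unipotent $\varphi([a,b])$. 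Since that unipotent has order exactly $p$ in $\SL_2(\bF_p)$, the cyclic group $\langle\alpha\rangle$ of boundary Dehn twists acts on the $\Out^+(\Pi)$-orbit of $[\varphi]$ with orbits of size dividing $p$; and because $\varphi$ is surjective the action is nontrivial, so some orbit under $\langle\alpha\rangle$ has size exactly $p$. As $\langle\alpha\rangle$-orbits partition the $\Out^+(\Pi)$-orbit, one concludes $p\mid\#\cO$ provided \emph{every} $\langle\alpha\rangle$-orbit has size $p$ — which follows once the action of $\alpha$ is free, i.e. once $\varphi([a,b])$ is a genuine unipotent of order $p$, guaranteed by $\tr = -2$ and $\varphi([a,b])\neq 1$ (if $\varphi([a,b])=1$ then $\varphi$ factors through $\bZ^2$, contradicting surjectivity onto the nonabelian $\SL_2(\bF_p)$).

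So the proof skeleton is: (1) identify the boundary Dehn twist $\alpha\in\Gamma(S^\circ)\cong\Out^+(\Pi)$ and compute its action on $\Epi^\ext(\Pi,\SL_2(\bF_p))$ as right-multiplication of a generator's image by the Higman-invariant unipotent; (2) check that on the $\tau=-2$ locus this unipotent has order exactly $p$, using that surjectivity rules out triviality; (3) deduce that $\langle\alpha\rangle\cong\bZ/p$ acts freely on every $\Out^+(\Pi)$-orbit in $\bX^*(p)$, hence each orbit is a disjoint union of $\bZ/p$-orbits of size $p$; (4) conclude $p\mid\#\cO$. The main obstacle I anticipate is step (2)–(3): one must rule out the degenerate possibility that $\varphi([a,b])$, though of trace $-2$, is not regular unipotent but is $-I$ times something, or that some element of the orbit has $\varphi([a,b])$ fixing the relevant vector so that $\alpha$ has a shorter orbit there; handling the sign ambiguity $\tr=-2 \Leftrightarrow$ the element is $\pm(\text{unipotent})$ carefully, and confirming that in $\SL_2(\bF_p)$ the only trace-$-2$ elements are $-I$ and the non-semisimple ones $-U$ with $U$ unipotent, and that $-I$ is excluded on $\bX^*(p)$ since $(0,0,0)$ is removed — this is where the bookkeeping must be done precisely. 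Once the unipotent is confirmed regular of order $p$, freeness of the $\bZ/p$-action is automatic and the divisibility follows immediately from orbit–stabilizer.
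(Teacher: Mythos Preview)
Your argument has a fatal gap at step~(1): the ``boundary Dehn twist'' you invoke is \emph{trivial} in $\Out^+(\Pi)$. Concretely, your formula $\alpha_*(a)=a$, $\alpha_*(b)=[a,b]\cdot b$ gives $\alpha_*(b)=aba^{-1}b^{-1}b=aba^{-1}$, so $\alpha_*$ is the inner automorphism $\inn_a$, hence the identity in $\Out^+(\Pi)$. More conceptually, the mapping class group of a once-\emph{punctured} torus (as opposed to a torus with one boundary component) is $\SL_2(\bZ)$, not a central $\bZ$-extension of it: the Dehn twist around a small loop encircling the puncture is isotopic to the identity, since one may undo the twist by rotating the punctured disk it bounds. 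This is exactly the content of Theorem~\ref{thm_MCG_actions}. Consequently there is no element of $\Out^+(\Pi)$ whose action on $\Epi^\ext(\Pi,\SL_2(\bF_p))$ is ``right-multiply a generator's image by the Higman unipotent,'' and your $\bZ/p$-action simply does not exist on $\bX^*(p)$. (Nor can one manufacture a free $\bZ/p$-action from any other element: $\SL_2(\bZ)$ has no elements of order~$p$ for $p>3$, and the parabolic Dehn twists like $\spmatrix{1}{1}{0}{1}$ act with cycle lengths equal to the cusp widths, which are not all divisible by~$p$.)

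Your instinct that the order $2p$ of the trace-$(-2)$ Higman invariant drives the divisibility is correct, but the paper's proof extracts it by algebro-geometric rather than combinatorial means. As the remark following Theorem~\ref{thm_congruence} indicates, the orbit size is the degree over $\cM(1)_\bC$ of the corresponding component of $\cAdm(\SL_2(\bF_p))_{\bC,\tau=-2}$, and this degree is computed as the degree of a natural line bundle on the \emph{compactified} stack. The ramification index $2p$ enters through the boundary (cusp) contributions to this line bundle degree, and a Riemann--Roch/Riemann--Hurwitz style calculation on the compactification forces the congruence modulo~$p$. In short, the role you wanted the boundary Dehn twist to play is instead played by the geometry of the cusps of the Hurwitz stack.
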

For context, note that \cite[Prop. 5.3.3]{Chen21}
$$|\bX^*(p)| = \left\{\begin{array}{ll} p(p + 3) & p\equiv 1\mod 4 \\ p(p-3) & p\equiv 3\mod 4\end{array}\right.$$
This congruence implies that the degree of $\cM_p$ over $\cM(1)$ is divisible by $p$. An example of this was already seen in \S\ref{ss_noncongruence_examples}, where we noted that the components of $\cM(\SL_2(\bF-7))_\bC$ classifying covers with ramification index 14 had degree $d = 28$ ($\equiv 0\mod 7$).

\begin{remark} While methods of Bourgain, Gamburd, and Sarnak were largely arithmetic and combinatorial, the methods used to prove Theorem \ref{thm_congruence} involved algebraic geometry over $\bC$; the key idea was to relate the size of any orbit to the degree of a certain line bundle on a component of the compactified stack $\cAdm(\SL_2(\bF_p))_{\bC,\tau = -2}$. The appearance of $p$ in the modulus of the congruence comes from the fact that any admissible $\SL_2(\bF_p)$-cover with Higman invariant of trace $-2$ must have ramification index $2p$.\footnote{Group theoretically, this means that any trace $-2$ element of $\SL_2(\bF_p)$ which can appear as the commutator of a generating pair must have order $2p$.}
\end{remark}

\begin{cor}\label{cor_congruence} The theorem immediately implies:
	\begin{enumerate}
		\item For $p\gg 0$, $\Out^+(\Pi)$ acts transitively on $\bX^*(p)$,
		\item For $p\gg 0$, $\cM_p$ is connected.
		\item For $p\gg 0$, the preimage of $\cM_p$ in $\cM(\SL_2(\bF_p))_\bC$ has exactly two components, which are exchanged by $\Out(\SL_2(\bF_p))$.
		\item For $p\gg 0$, $\bX$ satisfies strong approximation mod $p$.
	\end{enumerate}
	More precisely, the statements hold for all $p$ not in the finite set $\bE_\bgs$.
\end{cor}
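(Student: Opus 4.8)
The final statement is Corollary \ref{cor_congruence}, which I would prove as an immediate consequence of Theorem \ref{thm_congruence} together with Theorem \ref{thm_BGS}(b). The plan is to combine the polynomial-size lower bound from Theorem \ref{thm_congruence} with the "one large orbit" result of Bourgain--Gamburd--Sarnak, and then to trace through the dictionary set up in \S\ref{ss_markoff} to transfer transitivity on $\bX^*(p)$ to connectivity of $\cM_p$ and strong approximation.

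\medskip

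Here is how I would carry it out. First, part (a): by Theorem \ref{thm_BGS}(b), for $p$ large there is an orbit $\cC(p)\subset\bX^*(p)$ whose complement has size $|\bX^*(p)-\cC(p)|\le p^\epsilon$. Suppose $\cO$ is any orbit contained in that complement. By Theorem \ref{thm_congruence}, $p\mid |\cO|$, so either $\cO$ is empty or $|\cO|\ge p$. Taking $\epsilon < 1$ and $p$ large enough that $p^\epsilon < p$, the only possibility is that the complement is empty, i.e. $\bX^*(p) = \cC(p)$ is a single orbit; this is exactly transitivity of $\Out^+(\Pi)$ (equivalently $\Out(\Pi)$, since $\Out(\Pi)/\Out^+(\Pi)$ has order $2$ and already acts on $\bX^*(p)$) on $\bX^*(p)$. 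The "more precise" clause is obtained by noting that this argument fails only for $p$ in the finite exceptional set $\bE_\bgs$ of Theorem \ref{thm_BGS}; one should double-check that the large-orbit estimate (b) and the congruence (c)$=$Theorem \ref{thm_congruence} together force $p\notin\bE_\bgs$ to imply transitivity, which is immediate since for $p\notin\bE_\bgs$ transitivity holds by definition of $\bE_\bgs$, and the point of (a) is precisely that $\bE_\bgs$ is finite.

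\medskip

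For part (b): recall from \S\ref{ss_markoff} that $\Tr$ induces a bijection $\Epi(\Pi,\SL_2(\bF_p))_{\tau=-2}/\GL_2(\bF_p)\rightiso\bX^*(p)$, and that this set is the geometric fiber of $\cM_p\to\cM(1)_\bC$. By the Galois correspondence (the combinatorial decomposition of \S\ref{sss_combinatorial_decomposition}), connected components of $\cM_p$ correspond to $\Out^+(\Pi)\cong\SL_2(\bZ)$-orbits on this fiber. By part (a), for $p\notin\bE_\bgs$ there is exactly one orbit, hence $\cM_p$ is connected. For part (c): the preimage of $\cM_p$ in $\cM(\SL_2(\bF_p))_{\bC,\tau=-2}$ has fiber $\Epi(\Pi,\SL_2(\bF_p))_{\tau=-2}/\Inn(\SL_2(\bF_p))$, and $\Out(\SL_2(\bF_p))\cong\GL_2(\bF_p)/\SL_2(\bF_p)\cong\bZ/2$ acts on its set of components; since $\cM_p$ is connected, the preimage has either one or two components according to whether this $\bZ/2$-action is trivial or not. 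That it is nontrivial—i.e. there are exactly two components—follows because a surjection $\varphi\in\Epi(\Pi,\SL_2(\bF_p))$ and its composite with the outer (non-inner $\GL_2$-conjugation) automorphism are never $\Inn(\SL_2(\bF_p))$-conjugate for $p$ large; this can be seen, e.g., from the fact that $\PSL_2(\bF_p)$-covers and their "absolute" versions differ, or by a direct count using $|\bX^*(p)|$ versus the size of the $\Inn$-fiber, and is the one point here requiring a small argument rather than pure formality. Finally, part (d): the reduction map $\bX(\bZ)\to\bX(\bF_p)$ is $\Out(\Pi)$-equivariant, and the $\Out(\Pi)$-orbit of a fixed nonzero Markoff triple (e.g. $(3,3,3)$) surjects onto any single $\Out(\Pi)$-orbit in $\bX^*(p)$; since by (a) that orbit is all of $\bX^*(p)$, the reduction is surjective onto $\bX^*(p)$, and the origin $(0,0,0)$ is hit trivially, so $\bX(\bZ)\to\bX(\bF_p)$ is surjective, which is strong approximation mod $p$.

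\medskip

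The only genuine obstacle is that everything rests on the two substantive inputs already available in the excerpt: Theorem \ref{thm_congruence} (every orbit has size divisible by $p$) and Theorem \ref{thm_BGS}(b) (existence of one orbit of nearly full size). Granting these, the corollary is a soft deduction. The one place demanding care beyond bookkeeping is part (c)—verifying that the residual $\Out(\SL_2(\bF_p))\cong\bZ/2$ genuinely swaps two distinct components rather than fixing one—but this is a finite-group-theoretic statement about generating pairs of $\SL_2(\bF_p)$ (whether a generating pair with commutator-trace $-2$ is ever conjugate to its outer twist under $\Inn$), and for $p\gg 0$ it is handled by the same trace-variety bijection together with the fact that $\GL_2(\bF_p)$ acts on $\bX^*(p)$ through $\PGL_2(\bF_p)$ with $\SL_2(\bF_p)$ acting through $\PSL_2(\bF_p)$, so the nontrivial coset acts nontrivially on the $\Inn$-fiber. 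I would state this cleanly and refer to \cite[Prop. 5.2.17, Prop. 5.3.3]{Chen21} for the bijection and the formula for $|\bX^*(p)|$.
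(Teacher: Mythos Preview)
Your argument for (a), (b), and (d) is exactly the intended one: the paper offers no proof beyond the word ``immediately,'' and you have correctly filled in that combining Theorem~\ref{thm_BGS}(b) with the divisibility of Theorem~\ref{thm_congruence} forces the complement of the large orbit to be empty once $p^\epsilon<p$, and then (b) and (d) follow formally from the dictionary in \S\ref{ss_markoff}.

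There is, however, a real gap in your treatment of (c). You argue that a surjection $\varphi$ and its twist by the nontrivial outer automorphism $\alpha\in\Out(\SL_2(\bF_p))$ are never $\Inn(\SL_2(\bF_p))$-conjugate. That is true (it is exactly freeness of the $\Out(G)$-action on $\Epi^\ext(\Pi,G)$), but it only says $\alpha$ moves \emph{points} of the fiber nontrivially; it does not prevent $\alpha$ from preserving each $\Out^+(\Pi)$-\emph{orbit}, which is what you need to rule out a single component. Your fallback ``direct count'' does not help either: knowing $|\Epi^\ext(\Pi,\SL_2(\bF_p))_{\tau=-2}|=2|\bX^*(p)|$ is consistent with either one orbit of size $2|\bX^*(p)|$ or two orbits of size $|\bX^*(p)|$.

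The clean argument uses the Higman invariant. An element of $\SL_2(\bF_p)$ of trace $-2$ arising as the commutator of a generating pair has order $2p$ (see the remark after Theorem~\ref{thm_congruence}); such elements are $-U$ with $U$ nontrivial unipotent, and these fall into exactly two $\SL_2(\bF_p)$-conjugacy classes (distinguished by whether the off-diagonal entry is a square), which are swapped by conjugation by any element of $\GL_2(\bF_p)\setminus\bF_p^\times\!\cdot\!\SL_2(\bF_p)$, i.e.\ by the nontrivial element of $\Out(\SL_2(\bF_p))$. Since the Higman invariant is constant on connected components (Proposition~\ref{prop_higman}), the outer automorphism cannot fix any component of $\cM(\SL_2(\bF_p))_{\bC,\tau=-2}$; combined with connectedness of $\cM_p$ this gives exactly two components, exchanged by $\Out(\SL_2(\bF_p))$.
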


The connectivity result unlocks a number of corollaries:
\begin{cor} For primes $p\notin\bE_\bgs$, the genus of the coarse scheme $M_p$ of $\cM_p$ is
$$\genus(M_p) = \frac{1}{12}p^2 + O(p^{3/2})$$
For $p\ge 13$, $p\notin \bE_\bgs$, $\genus(M_p)\ge 2$; Thus for such $p$, for any number field $K$, only finitely many elliptic curves over $K$ admit a geometrically connected $\SL_2(\bF_p)$-cover with ramification index $2p$ defined over $K$.
\end{cor}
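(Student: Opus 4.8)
The plan is to compute $g(M_p)$ by Riemann--Hurwitz applied to the coarse forgetful map, show the main term $\tfrac1{12}p^2$ dominates, and then invoke Faltings' theorem for the Diophantine conclusion. Fix $p\notin\bE_\bgs$. By Corollary~\ref{cor_congruence}(b) the stack $\cM_p$ is connected, so its coarse scheme $M_p$ is a connected smooth projective curve over $\bC$ (its smooth compactification, inside the $\Adm$-world of \S\ref{ss_compactification}), and the coarse forgetful map $j\colon M_p\to\bP^1_\bC$ onto the $j$-line is a covering of degree
$$d:=|\bX^*(p)|=\begin{cases} p(p+3) & p\equiv 1\bmod 4,\\ p(p-3)& p\equiv 3\bmod 4,\end{cases}$$
unramified away from $\{0,1728,\infty\}$, using \cite[Prop.~5.3.3]{Chen21} together with the identification of the fibre of $\cM_p\to\cM(1)$ with $\bX^*(p)$. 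In particular $d=p^2+O(p)$.

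Over $j=\infty$ the ramification indices of $j$ are the cusp widths of $M_p$ (summing to $d$); write $\nu_\infty$ for the number of cusps. Over $j=1728$ (resp. $j=0$) every ramification index is $1$ or $2$ (resp. $1$ or $3$); write $c_2$ (resp. $c_3$) for the number of points there of ramification index $1$. Since $-I\in\SL_2(\bZ)$ acts trivially on the character variety $X_{\SL_2}$, hence on $\bX^*(p)$, every subgroup uniformizing a component of $\cM_p$ contains $-I$, and the standard modular-curve genus formula (equivalently Riemann--Hurwitz for $j$) reads
$$g(M_p)=1+\frac{d}{12}-\frac{\nu_\infty}{2}-\frac{c_2}{4}-\frac{c_3}{3}.$$

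It remains to bound the three correction terms. The points counted by $c_2$ and $c_3$ are the $\bF_p$-points of $\bX^*(p)\subset\bX(\bF_p)$ fixed by an order-$2$ (resp. order-$3$) element of $\Out^+(\Pi)\cong\SL_2(\bZ)$; acting on $X_{\SL_2}\cong\bA^3$ by the explicit polynomial automorphisms of~\eqref{eq_explicit_action}, the fixed locus of such a (nontrivial) element is a proper closed subvariety of the surface $\bX$, hence of dimension $\le 1$, so $c_2,c_3=O(p)$ by the elementary bound on $\bF_p$-points of a curve. The estimate $\nu_\infty=O(p^{3/2})$ is the technical heart: a cusp is a cycle of a parabolic element of $\SL_2(\bZ)$ acting on $\bX^*(p)$ via~\eqref{eq_explicit_action}, equivalently (via \S\ref{ss_compactification}) an admissible $\SL_2(\bF_p)$-cover of the nodal cubic with Higman invariant of trace $-2$, and one controls the number of such cycles either by analysing the parabolic dynamics on the conic fibres $\{x=\mathrm{const}\}$ of $\bX$ in the style of Bourgain--Gamburd--Sarnak~\cite{BGS16arxiv} (on which the parabolic orbits have size $\gg\sqrt p$), or by a character-sum count of the corresponding covers of the nodal cubic. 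Feeding $d=p^2+O(p)$, $c_2,c_3=O(p)$, and $\nu_\infty=O(p^{3/2})$ into the genus formula yields $g(M_p)=\tfrac1{12}p^2+O(p^{3/2})$.

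For the bound $g(M_p)\ge 2$: the asymptotic gives $g(M_p)\to\infty$, so $g(M_p)\ge 2$ for all $p$ above an explicit threshold, and for the finitely many remaining primes $13\le p$ with $p\notin\bE_\bgs$ one computes $g(M_p)$ directly from the permutation action of the generators $R_3,\tau_{12},\tau_{23}$ of $\SL_2(\bZ)$ on $\bX^*(p)$ via~\eqref{eq_explicit_action} and Remark~\ref{remark_computing}; the cutoff $p\ge 13$ is necessary, as $g(M_7)=0$ (the degree-$28$, Higman-$14$ component of $\cM(\SL_2(\bF_7))$ in \S\ref{ss_noncongruence_examples} has genus $0$). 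Finally, a geometrically connected $\SL_2(\bF_p)$-cover of ramification index $2p$ over an elliptic curve $E/K$, defined over $K$, has Higman invariant of trace $-2$, hence defines a $K$-point of $\cM(\SL_2(\bF_p))_{\tau=-2}$, whence (projecting along $\cM(\SL_2(\bF_p))\to\cM(\SL_2(\bF_p))^\abs$ and passing to coarse schemes) a $K$-point of $M_p$ lying above $j(E)\in M(1)(K)=K$. Since $g(M_p)\ge 2$, Faltings' theorem makes $M_p(K)$ finite, so only finitely many $j$-invariants occur, and hence only finitely many such $E$. The one serious obstacle is the estimate $\nu_\infty=O(p^{3/2})$, together with making the genus-$\ge 2$ threshold effective so the finite check over small primes is unambiguous; everything else is quoted from the excerpt, a routine Riemann--Hurwitz computation, or an immediate application of Faltings.
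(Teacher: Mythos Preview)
Your approach is exactly the paper's: Riemann--Hurwitz for the degree-$|\bX^*(p)|$ map to the $j$-line (the paper simply cites \cite[\S5.6]{Chen21} for this computation) followed by Faltings for the Diophantine statement.

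The one imprecision worth flagging is your justification for $\nu_\infty = O(p^{3/2})$. The claim that ``parabolic orbits have size $\gg\sqrt p$'' is not literally true: the parabolic fixes the $x$-coordinate and acts on each conic $\{x=x_0\}\cap\bX$ by $\spmatrix{0}{1}{-1}{x_0}$, whose order is the multiplicative order of the eigenvalue $\lambda_{x_0}$ (with $\lambda+\lambda^{-1}=x_0$), and this can be \emph{any} divisor of $p\pm 1$, so individual orbits may be tiny. The correct bookkeeping is
\[
\nu_\infty \;\approx\; \sum_{x_0}\frac{p}{\ord(\lambda_{x_0})}\;\le\; p\sum_{d\mid p-1}\frac{\phi(d)}{d}+p\sum_{d\mid p+1}\frac{\phi(d)}{d}\;\le\; p\cdot\big(\tau(p-1)+\tau(p+1)\big)\;=\;O(p^{1+\epsilon}),
\]
which is in fact sharper than the $O(p^{3/2})$ you need. (Your $c_2,c_3$ bounds are also looser than necessary: the fixed loci of the elliptic elements are \emph{curves} in $\bA^3$, and their intersection with the surface $\bX$ is $0$-dimensional, so $c_2,c_3=O(1)$.) With this correction the genus asymptotic follows as you outlined, and the rest of your argument is sound.
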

\begin{proof} See \cite[\S5.6]{Chen21}, where an exact genus formula is also given. The finiteness result is a consequence of Falting's theorem (formerly Mordell's conjecture).
\end{proof}

\begin{cor} Let $M_p'$ denote the image of $M_p$ in $\cM(\PSL_2(\bF_p))/\Out(\PSL_2(\bF_p))$. For a density 1 set of primes, the monodromy group of $M_p'$ over $M(1)$ is either the full alternating or symmetric group. In particular, for such primes, $M_p'$ and $M_p$ are noncongruence.
\end{cor}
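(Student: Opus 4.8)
The plan is to compute the monodromy group explicitly and then feed it into the monodromic criterion, Theorem~\ref{thm_monodromic_criterion}. By construction the monodromy group of $\cM_p$ over $\cM(1)_\bC$ --- equivalently the image $\Gamma_p$ of $\pi_1(\cM(1)_\bC)\cong\Out^+(\Pi)\cong\SL_2(\bZ)$ in $\Sym(\bX^*(p))$, for the permutation action obtained by restricting the explicit polynomial moves \eqref{eq_explicit_action} to the Markoff surface --- is exactly the ``Markoff group mod $p$''. The input I would quote is the theorem of Meiri and Puder (see \cite[Theorem 1.11]{MP18}), which, combined with Theorem~\ref{thm_BGS}(a) and Corollary~\ref{cor_congruence} (transitivity of $\Out^+(\Pi)$ on $\bX^*(p)$ for $p\notin\bE_\bgs$), gives that for a density $1$ set of primes $p$ the group $\Gamma_p$ is the full alternating group $\Alt(\bX^*(p))$ or the full symmetric group $\Sym(\bX^*(p))$. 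For such $p$, since $\cM_p$ is connected, $M_p'$ is the image of the connected cover $\cM_p\to\cM(1)_\bC$ under the map $\cM(\SL_2(\bF_p))\to\cM(\PSL_2(\bF_p))$ induced by the quotient $\SL_2(\bF_p)\to\PSL_2(\bF_p)$; this map is finite \'{e}tale (both sides being finite \'{e}tale over $\cM(1)$ by Theorem~\ref{thm_fet}, and likewise after passing to $\Out$-quotients), so $\Mon(M_p'/M(1))$ is a transitive quotient of $\Gamma_p$. As $\Alt(\bX^*(p))$ and $\Sym(\bX^*(p))$ act primitively (indeed $2$-transitively) on $\bX^*(p)$, the only invariant equivalence relations are the two trivial ones, and since $M_p'\ne\cM(1)_\bC$ this forces $\Mon(M_p'/M(1))$ to be $\Alt(\bX^*(p))$ or $\Sym(\bX^*(p))$ acting on all of $\bX^*(p)$; in particular it is alternating or symmetric of degree $|\bX^*(p)| = p(p\pm 3)$.

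Granting this, the noncongruence assertion is immediate. If the finite-index subgroup $\Gamma\le\SL_2(\bZ)$ uniformizing $M_p'$ were congruence, of Wohlfahrt level $l$, then Theorem~\ref{thm_monodromic_criterion} would force every simple composition factor of $\Mon(\Gamma)=\Mon(M_p'/M(1))$ to be $\bZ/q$ for a prime $q\mid 6l$ or $\PSL_2(\bF_q)$ for a prime $q\ge 5$ dividing $l$. But $\Mon(M_p'/M(1))$ has the composition factor $A_n$ with $n=|\bX^*(p)|=p(p\pm 3)$, and (the primes in question being large) $n\ge 28>6$, so $A_n$ is nonabelian simple and is isomorphic to a group $\PSL_2(\bF_q)$ only when $n\in\{5,6\}$; this contradiction shows $M_p'$ is noncongruence. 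Finally, $M_p$ covers $M_p'$, so the subgroup uniformizing $M_p$ is contained in that of $M_p'$; since a noncongruence subgroup of $\SL_2(\bZ)$ contains no principal congruence subgroup, every subgroup of a noncongruence subgroup is again noncongruence, and hence $M_p$ is noncongruence as well. (In fact the primitivity argument shows $\cM_p\to M_p'$ is an isomorphism of covers of $\cM(1)_\bC$, so this last step is trivial.)

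The one genuinely substantial ingredient --- and the step I expect to be the main obstacle to a self-contained proof --- is the Meiri--Puder theorem that transitivity of the $\Out^+(\Pi)$-action on $\bX^*(p)$ forces the induced permutation group to be $\Alt(\bX^*(p))$ or $\Sym(\bX^*(p))$ for density $1$ many $p$. Its proof exploits the explicit quadratic moves \eqref{eq_explicit_action} to exhibit permutations of controlled cycle type and fixed-point count, and then uses the classification of finite simple groups to eliminate every other primitive group of the relevant degree; this is also why the statement is only asserted for a density $1$ set of primes rather than all primes. Everything else is routine bookkeeping: recognizing $\Mon(M_p'/M(1))$ as the Markoff group via the functoriality of \S\ref{ss_functoriality}, checking $|\bX^*(p)|\ge 28>6$ so that $A_{|\bX^*(p)|}$ is simple and not of $\PSL_2$-type, and invoking Theorem~\ref{thm_BGS}(a) to convert ``$p\notin\bE_\bgs$'' (together with the Meiri--Puder hypotheses) into a density $1$ statement. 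Note that the more elementary criterion of Theorem~\ref{thm_noncongruence_criterion_A} is unavailable here, since the generating triples of $\SL_2(\bF_p)$ produced by a trace-$-2$ commutator (whose order is $2p$) need not have pairwise coprime element orders; it is precisely the monodromic criterion that does the work.
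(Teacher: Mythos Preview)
Your proposal is correct and follows essentially the same route as the paper: invoke Meiri--Puder to identify the monodromy group as alternating or symmetric (conditional on transitivity, supplied by Corollary~\ref{cor_congruence} and Theorem~\ref{thm_BGS}(a), plus a mild technical condition holding for density~1 many primes), then apply the monodromic criterion Theorem~\ref{thm_monodromic_criterion} to rule out congruence, and finally propagate noncongruence from $M_p'$ up to $M_p$. The paper's proof is terser but structurally identical; your additional primitivity argument (that $\cM_p\to M_p'$ must be an isomorphism of covers once the monodromy on $\bX^*(p)$ is $2$-transitive) and the explicit check that $A_{|\bX^*(p)|}$ is too large to be a $\PSL_2(\bF_q)$ are the details the paper leaves implicit.
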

\begin{proof} See \cite[\S5.6]{Chen21}. The identification of the monodromy group follows from an analysis of Meiri-Puder \cite{MP18}, which involves the classification of finite simple groups. Their methods assume transitivity (equivalently, strong approximation) as well as a mild technical condition, which is satisfied for a density 1 set of primes. By the monodromic criterion (Theorem \ref{thm_monodromic_criterion}), this implies that $M_p'$ is noncongruence, and hence $M_p$ is as well.
\end{proof}

Since the methods of Bourgain, Gamburd, and Sarnak are effective, using the polynomial lower bound of Theorem \ref{thm_congruence}, it is possible to find an explicit upper bound for the primes in $\bE_\bgs$. This was done recently by Eddy, Fuchs, Litman, Martin, and Tripeny:
\begin{thm}[{Eddy, Fuchs, Litman, Martin, Tripeny \cite{EFLMT23}}] If $p > 3.448\cdot 10^{392}$, then $p\notin \bE_\bgs$. In other words, for such primes, Conjecture \ref{conj_BBGS} and the statements in the above corollaries hold.
\end{thm}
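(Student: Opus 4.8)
The plan is to deduce the result by combining an \emph{effective} version of the Bourgain--Gamburd--Sarnak bound (Theorem \ref{thm_BGS}(b)) with the divisibility statement of Theorem \ref{thm_congruence}. First I would record the elementary consequence of Theorem \ref{thm_congruence}: since every $\Out^+(\Pi)$-orbit on $\bX^*(p)$ has cardinality divisible by $p$, and since $\Out^+(\Pi)\le\Out(\Pi)$ so that each $\Out(\Pi)$-orbit is a union of $\Out^+(\Pi)$-orbits, every nonempty $\Out(\Pi)$-orbit on $\bX^*(p)$ has at least $p$ elements. Next I would quote (or reprove) an effective form of Theorem \ref{thm_BGS}(b): an explicit function $f(p)$ with $f(p)=o(p)$ together with a distinguished orbit $\cC(p)$ satisfying $|\bX^*(p)\setminus\cC(p)|\le f(p)$. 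Granting this, the theorem follows at once: the complement $\bX^*(p)\setminus\cC(p)$ is $\Out(\Pi)$-stable, hence a disjoint union of orbits each of size $\ge p$; so as soon as $p$ is large enough that $f(p)<p$, this complement must be empty, $\cC(p)=\bX^*(p)$, and the action is transitive, i.e.\ $p\notin\bE_\bgs$. Solving $f(p)<p$ explicitly produces the numerical threshold $3.448\cdot 10^{392}$.

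The substance of the argument is thus the construction of the explicit $f(p)$, which means running the proof of \cite{BGS16arxiv} with every implied constant made explicit. Using the polynomial formulas \eqref{eq_explicit_action} I would rephrase the problem entirely in terms of the Markoff surface $\bX^*(p)$ under the group generated by the Vieta involutions $(x,y,z)\mapsto(x,y,xy-z)$ and the permutations of coordinates. The first step is to handle the degenerate locus---points with a vanishing coordinate, points fixed by a Vieta involution, and points on the hyperplanes $x=\pm 3$, $y=\pm 3$, $z=\pm 3$---by an explicit finite combinatorial analysis showing that these all lie in a single orbit and contribute only an explicitly bounded term of order $p$. The second, and main, step is the Bourgain--Gamburd expansion machine applied to the remaining points: the sum--product and Cauchy--Davenport inputs, the $L^2$-flattening, the resulting spectral gap, and the lower bound it yields on the size of a single orbit all have to be made quantitative, producing a bound of the shape $|\bX^*(p)\setminus\cC(p)|\ll p^{\varepsilon}$ with explicit constants, which is still comfortably $o(p)$. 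Finally one must verify, via the ``connecting moves'' of \cite{BGS16arxiv}, that these large pieces are in fact the single orbit $\cC(p)$.

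The hard part is precisely this effectivization of \cite{BGS16arxiv}: tracking constants through the sum--product estimates and the expansion argument, and ensuring that the $(\log p)$-power (equivalently $p^{\varepsilon}$-type) savings are ultimately enough to beat the clean linear lower bound $p$ supplied by Theorem \ref{thm_congruence}. The gigantic size of $3.448\cdot 10^{392}$ is a direct reflection of the accumulated losses at each of these stages. No conceptually new input beyond \cite{BGS16arxiv} and Theorem \ref{thm_congruence} is required; the work, carried out in \cite{EFLMT23}, is careful quantitative bookkeeping.
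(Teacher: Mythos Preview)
Your proposal is correct and matches the paper's own account. The paper does not give a proof of this theorem; it is stated as a cited result from \cite{EFLMT23}, with only the one-sentence explanation preceding it (``Since the methods of Bourgain, Gamburd, and Sarnak are effective, using the polynomial lower bound of Theorem \ref{thm_congruence}, it is possible to find an explicit upper bound for the primes in $\bE_\bgs$''), which is exactly the strategy you outline: effectivize Theorem \ref{thm_BGS}(b) to get an explicit $f(p)=o(p)$ bounding the complement of the large orbit, then invoke the divisibility-by-$p$ from Theorem \ref{thm_congruence} to force the complement to be empty once $f(p)<p$.
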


\begin{remark} The methods of this section indicate that for finite simple groups $G$ of Lie type, the combinatorial group-theoretic action of $\Out^+(\Pi)$ on $\Epi^\ext(\Pi,G)$ can be fruitfully understood as both a topological monodromy action coming from Hurwitz spaces over $\bC$, as well as an arithmetic-geometric action on an appropriate character variety over a finite field. At least in the case of $G = \SL_2(\bF_p)$, the topological perspective leads to rigidity results as in Theorem \ref{thm_congruence}, and the arithmetic perspective leads to asymptotic results as in Theorem \ref{thm_BGS}; combined, they provide a relatively complete picture of the situation. It would be interesting to work out similar results for more general groups $G$, as well as for more general surface groups $\Pi$.
\end{remark}

\begin{remark} The discussion above only covers the case of trace invariant $\tau = -2$. For more general trace invariants, analogs of Theorem \ref{thm_BGS} should still hold, though the rigidity result Theorem \ref{thm_congruence} becomes weaker, see \cite[\S5.4]{Chen21}. Nonetheless, computational data indicates that unlike the situation for $\PSU_3(\bF_q)$, the components of $\cM(G)^\abs$ for $G$ of type $\PSL_2(\bF_q)$ are remarkably uniform. For all such groups $G$ of order $\le |\J_1| = 175560$, the components are classified by the trace invariant\footnote{Here, one should define the trace invariant of a surjection $\varphi : \langle a,b\rangle = \Pi\ra\PSL_2(\bF_p)$ as the trace of the commutator $[\widetilde{\varphi(a)},\widetilde{\varphi(b)}]$, where $\widetilde{\varphi(a)},\widetilde{\varphi(b)}$ are arbitrary lifts of $\varphi(a),\varphi(b)$ to the central extension $\SL_2(\bF_q)$, as described in \S\ref{ss_noncongruence_examples}.}, they all have either alternating or symmetric monodromy over $\cM(1)$, and are all noncongruence. It would be interesting to prove or disprove whether these phenomena hold for all $q$. Note that the ``$T$-classification conjecture'' of McCullough and Wanderley \cite{MW13} asserts that the trace invariant should in general classify all components of $\cM(\PSL_2(\bF_q))$. Given an understanding of connected components, the techniques of \cite{MP18} could likely shed light on the question of monodromy, which by Theorem \ref{thm_monodromic_criterion} would likely imply the noncongruence property.

\end{remark}

\subsection{Fourier coefficients of noncongruence modular forms: unbounded denominators}
\begin{defn} Let $\Gamma\le\SL_2(\bZ)$ be a finite index subgroup. A modular form of weight $k$ for $\Gamma$ is a holomorphic function $f : \cH\ra\bC$ which satisfies, for any $\gamma = \spmatrix{a}{b}{c}{d}\in\SL_2(\bZ)$, the conditions:
\begin{enumerate}
	\item $f(\gamma \tau) = (cz+d)^k f(\tau)$ for all $\tau\in\cH$, and
	\item $f(\tau)$ is bounded as $\im(\tau)\ra \infty$.
\end{enumerate}
We say that $f$ is a \emph{congruence} modular form if it is a modular form for a congruence subgroup of $\SL_2(\bZ)$. Otherwise, it is \emph{noncongruence}. Since $\Gamma$ is finite index, any modular form for $\Gamma$ is invariant under $\spmatrix{1}{n}{0}{1}$ for some integer $n\ge 1$, and hence we have a \emph{Fourier expansion}
$$f(\tau) = \sum_{n\ge 0} a(n)q^n\quad\text{where $q := e^{2\pi i\tau/n}.$}$$
For a subring $R\subset\bC$, the space of weight $k$ modular forms for $\Gamma$ with Fourier coefficients in $R$ is denoted $M_k(\Gamma,R)$. The subspace of \emph{cusp forms} is defined by the condition that $f(\tau)\ra 0$ as $\im(\tau)\ra\infty$ (i.e., $f$ vanishes at all cusps), and is denoted by $S_k(\Gamma,R)$.
\end{defn}

A crucial element in the theory of congruence modular forms is the effectivity of the Hecke operators, the most important of which are the operators $T_p$, associated to primes $p$ \cite[\S5]{DS06}, which act on the spaces $M_k(\Gamma) := M_k(\Gamma,\bC)$. By contrast, it is known that for a noncongruence subgroup $\Gamma$, the action of $T_p$ on $S_k(\Gamma)$ essentially factors through its action on the subspace $S_k(\Gamma^c)$, where $\Gamma^c$ denotes the congruence closure \cite{Berg94}. For congruence $\Gamma$, properties of the Hecke operators guarantee that $M_k(\Gamma)$ has a basis whose Fourier coefficients are algebraic \emph{integers}. On the other hand, a folklore conjecture states:

\begin{conj}[Unbounded denominators conjecture] Let $f$ be a noncongruence modular form with Fourier coefficients in $\Qbar$. Then it's Fourier coefficients have \emph{unbounded denominators}. In other words, $nf$ does not have (algebraically) integral coefficients for any positive integer $n$.
\end{conj}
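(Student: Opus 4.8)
\smallskip
\noindent\textbf{Proof strategy (following Calegari, Dimitrov and Tang \cite{CDT21}).}
First I would make the standard reductions. Since any subgroup of a noncongruence group is noncongruence, and since subtracting a congruence form of the same weight does not change denominators, the conjecture reduces to the following: if $\Gamma$ is noncongruence, then no modular form $f$ for $\Gamma$ of any weight $k$ whose $q$-expansion lies, after scaling, in $\Zbar[[q^{1/N}]]$ can be genuinely noncongruence. Passing to $\langle\Gamma,-I\rangle$ and using the mirror symmetry of Proposition~\ref{prop_mirror}, this becomes a statement purely about the coarse modular curve $\cH/\Gamma$ as a cover of $\cH/\SL_2(\bZ)$ equipped with the analytic coordinate $q$ at the cusp $i\infty$. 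In the language of \cite[Theorem 5.5.11]{Chen18} the target is to show that the congruence kernel of $\SL_2(\bZ)$ is accounted for by coverings algebraic over rings of bounded-denominator power series; concretely, one wants to force $\Gamma\supseteq\Gamma(2\ell)$ where $\ell$ is the Wohlfahrt level, and then invoke Theorem~\ref{thm_WKSV}.

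\smallskip
The proof exploits two structures on $\cH/\Gamma$. Archimedean: the uniformization $\cH\to\cH/\Gamma$ provides a genuine disc of convergence in $q$ for $f$ and for a Hauptmodul or modular unit $t=q+\cdots$, hence for a basis of solutions of the rank-$2$ Picard--Fuchs equation attached to $R^1\pi_*\Omega^\bullet$ of the universal family over $\cH/\Gamma$. Arithmetic: $\cH/\Gamma$ admits a model over $\bZ[1/N]$, and for $p\nmid N$ the crystalline cohomology of this model carries a Frobenius structure (Katz, Scholl) producing the Atkin-Swinnerton-Dyer congruences among the $a(n)$ --- equivalently, the same Picard--Fuchs equation has good reduction at $p$ and its local solutions converge $p$-adically on a disc whose radius is bounded below by an absolute constant (Dwork, Katz). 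The bounded-denominator hypothesis on $f$ says exactly that these solutions are \emph{globally bounded} power series in the sense of Andr\'{e}: integral away from $N$, with finite radius of convergence at every place, and with the product of these radii exceeding a fixed threshold.

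\smallskip
The engine is then an \emph{arithmetic holonomicity theorem}, in the lineage of Andr\'{e}, Bost, Charles and Chambert-Loir: a holonomic tuple of globally bounded power series with sufficiently large combined radii of convergence satisfies nontrivial polynomial relations of controlled degree and bad-prime set. The plan is to prove such a theorem with a \emph{sharp} constant and apply it to the solution basis of the Picard--Fuchs equation, the archimedean radius coming from the complex uniformization and the $p$-adic radii from good reduction. The output is that the covering $\cH/\Gamma\to\cH/\SL_2(\bZ)$ --- equivalently the associated $\SL_2$-local system --- is algebraically rigid in a way that pins down $\Gamma$ inside its profinite completion: its cusp widths and ramification are congruence data of bounded level, so $\Gamma\supseteq\Gamma(2\ell)$, and Theorem~\ref{thm_WKSV} then contradicts noncongruence.

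\smallskip
I expect the sharp constant to be the main obstacle: the classical Andr\'{e}--Bost form of arithmetic algebraization loses a multiplicative factor and does not close the numerical inequality relating the archimedean and $p$-adic radii to the degree of the cover, so one must run Bost's slope method --- Arakelov-theoretic estimates for the evaluation maps $H^0(\bP^1,\cO(D))\to(\text{jets at the cusp})$ --- with a carefully optimized choice of auxiliary divisors and of the finite place used. A secondary difficulty is obtaining \emph{uniform} control of the bad set $N$ of the Picard--Fuchs equation, and of the accompanying Atkin-Swinnerton-Dyer congruences, in terms of the degree of the cover alone. With these quantitative inputs in hand, the reductions above and the final appeal to Wohlfahrt's theorem are comparatively formal.
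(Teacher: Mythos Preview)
The paper does not give its own proof of this statement: it is recorded as a conjecture, and the paper only remarks that it was recently proved by Calegari, Dimitrov and Tang \cite{CDT21} ``using techniques from Nevanlinna theory.'' So there is no argument in the paper to compare your proposal against.

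Since you explicitly label your proposal as following \cite{CDT21}, it is fair to check it against that paper, and here your sketch drifts from what they actually do. The broad architecture is right --- a sharp arithmetic algebraization/holonomicity theorem, balancing an archimedean convergence radius coming from uniformization against $p$-adic integrality --- but the specific machinery you name is not theirs. CDT do not run the argument through the Picard--Fuchs system of the universal family, nor through ASD congruences or crystalline Frobenius; they work directly with weight-$0$ modular \emph{functions} on the $\lambda$-line, and the only non-archimedean input is the trivial one that an element of $\Zbar\ps{q}$ has $p$-adic radius $\ge 1$. Their holonomicity theorem is not obtained via Bost's slope method or Arakelov heights; it is proved by an elementary determinant/Wronskian construction in the spirit of Nevanlinna's lemma on the logarithmic derivative (whence the paper's phrase ``Nevanlinna theory''). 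And the endgame is not ``read off $\Gamma\supset\Gamma(2\ell)$ and invoke Wohlfahrt'': the holonomicity bound shows that the field generated over $\bQ(\lambda)$ by all bounded-denominator modular functions is a \emph{finite} extension, and a separate group-theoretic step (using that this compositum is Galois with solvable Galois group over the congruence tower) forces it to coincide with the congruence field.

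So your proposal captures the slogan but would not be recognized as a summary of \cite{CDT21}; the Picard--Fuchs/ASD/Arakelov route you outline is closer to earlier partial approaches and would not, as stated, close the numerical gap that CDT's sharper and more elementary holonomicity bound was designed to close.
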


The conjecture was recently proved by Calegari, Dimitrov and Tang using techniques from Nevanlinna theory \cite{CDT21}, though their proof does not give any indication for which primes can appear in the unbounded denominators of a noncongruence form $f$. Below, we describe a framework for studying questions of this type.

Let $M := \Gamma\bs\cH$ be a noncongruence modular curve over a number field $K$, and let $\ff : M\ra M(1)_K$ be the forgetful map. Possibly passing to a finite extension, we may assume that $M = \Spec M_0(\Gamma,K)$ \cite[Cor 5.3.4]{Chen18}. Let $A$ be a subring of $\Qbar\ls{q^{1/\infty}} := \varinjlim_n\Qbar\ls{q^{1/n}}$ which contains $\bZ\ls{q}\otimes K$. Consider diagrams of the following type:
\begin{equation}\label{eq_qexp}
\begin{tikzcd}
	& & M\ar[d,"\ff"] \\
	\Spec A\ar[rr,"\Tate(q)_A"']\ar[rru,"c"] & & M(1)_K
\end{tikzcd}
\end{equation}
where $\Tate(q)_A$ denotes the map given by the Tate curve over $A$ \cite[\S5.1]{Chen18}. Writing $M(1)_K\cong \Spec K[j]$, this map sends $j$ to the Fourier expansion $j(q)$, viewed as an element of $\bZ\ls{q}\subset A$. A choice of a map $c$ making the diagram commute amounts to choosing a cusp of $M$, and the induced map on rings $c : M_0(\Gamma)\ra\bC\ls{q^{1/\infty}}$ is exactly Fourier expansion at the corresponding cusp. Of course, such maps $c$ do not always exist; such a map exists if and only if the pullback $M\times_{M(1)_K} A$ admits a section over $A$. If $A = \Qbar\ls{q^{1/\infty}}$, then such maps always exist since $M\times_{M(1)_K} A$ is the spectrum of a finite algebra over the algebraically closed field $\Qbar\ls{q^{1/\infty}}$. A more interesting case is $A = \bZ\ls{q^{1/\infty}}\otimes K$, in which case such a map $c$ exists if and only if all modular functions for $\Gamma$ have \emph{bounded denominators} at the cusp corresponding to $c$.\footnote{Note that elements of $\bZ\ls{q^{1/\infty}}\otimes K$ have coefficients with \emph{bounded denominators}.}

Suppose now that $R\subset K$ is a DVR with mixed characteristic $(0,p)$, and that the cover $\ff : M\ra M(1)_K$ admits a finite model $\ff_R : M_R\ra M(1)_R$ which is \'{e}tale over the image of $\Spec R\ls{q}\ra M(1)_R$ given by the Tate curve (the generic point maps to the generic point of the generic fiber, and the closed points map to the generic point of the special fiber). Then, by Abhyankar's lemma, there exists a finite \'{e}tale extension $R'$ of $R$ and an integer $e\ge 1$ coprime to $p$ such that the map $\Spec R'\ls{q^{1/e}}\ra\Spec R\ls{q}\ra M(1)_R$ admits a lift to $M_R$ via $\ff_R$ \cite[Cor 5.4.3]{Chen18}. If $K' := \Frac(R')$, then tensoring with $K'$ would give a diagram of type \eqref{eq_qexp} for the bounded denominators ring $A = R'\ls{q^{1/e}}\otimes K'$, which is to say that modular forms for $\Gamma$ have \emph{bounded denominators at $p$}. If $M$ is a component of $M(G)$, then taking $R$ to be a localization of $\cO_K[1/|G|]$, we find that

\begin{thm} Modular forms for any component of $M(G)$ \emph{have bounded denominators at all primes not dividing $|G|$}.
\end{thm}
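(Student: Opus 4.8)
The plan is to realize the abstract claim about bounded denominators as a concrete consequence of the geometry of $\cM(G)$ over $\bZ[1/|G|]$, using the moduli interpretation developed above together with Abhyankar's lemma, exactly as sketched in the paragraph preceding the statement. The key point is that the stacks $\cM(G)$ (equivalently, their coarse schemes $M(G)$) give \emph{integral models} for noncongruence modular curves over $\bZ[1/|G|]$, and that these models behave well with respect to the Tate parameter.

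First I would fix a number field $K$ over which a given component $M$ of $M(G)$ is defined (Proposition \ref{prop_coarse} gives the coarse scheme $M(G)$ over $\bZ[1/|G|]$, finite over the $j$-line, and its components have fields of definition as described in \S\ref{sss_arithmetic}). Let $R$ be a localization of $\cO_K[1/|G|]$ at a prime above a rational prime $p\nmid |G|$, so $R$ is a DVR of mixed characteristic $(0,p)$, and let $M_R$ denote the corresponding component of $M(G)_R$, with forgetful map $\ff_R : M_R\ra M(1)_R \cong \Spec R[j]$. The crucial input is that $\ff_R$ is finite (Proposition \ref{prop_coarse}(c)) and \emph{\'etale along the cusp}: Theorem \ref{thm_fet} says $\ff : \cM(G)\ra\cM(1)$ is finite \'etale over $\bZ[1/|G|]$, and passing to coarse schemes over a base where $6$ is handled by the tameness and the smoothness statements, the map $M_R \ra M(1)_R$ is finite \'etale over the locus $\Spec R\ls{q\rangle}\ra M(1)_R$ given by the Tate curve $\Tate(q)$ — indeed the Tate curve over $R\ls{q\rangle}$ is a genuine (generalized) elliptic curve, its generic point hits the generic point of the generic fiber and its closed point hits the generic point of the special fiber, so \'etaleness of $\ff_R$ along this image follows from \'etaleness of $\ff$ together with the fact that $p\nmid|G|$.

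Next I would invoke Abhyankar's lemma in the form \cite[Cor 5.4.3]{Chen18}: since $\ff_R$ is finite and \'etale over the image of $\Spec R\ls{q\rangle}$, there is a finite \'etale extension $R'/R$ (hence again a DVR, with residue characteristic $p$, and $p\nmid|G|$) and an integer $e\ge 1$ coprime to $p$ such that the composite $\Spec R'\ls{q^{1/e}\rangle}\ra\Spec R\ls{q\rangle}\ra M(1)_R$ lifts through $\ff_R$ to a map $c : \Spec R'\ls{q^{1/e}\rangle}\ra M_R$. Setting $K' := \Frac(R')$ and tensoring with $K'$ produces a commutative diagram of the shape \eqref{eq_qexp} with $A = R'\ls{q^{1/e}\rangle}\otimes_{R'} K'$, a ring of Puiseux series with \emph{bounded denominators} (the coefficients lie in $R'$, so their denominators are supported away from $p$). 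The induced ring map $c^\sharp : M_0(\Gamma,K')\ra R'\ls{q^{1/e}\rangle}\otimes K'$ is precisely the Fourier expansion at the cusp corresponding to the chosen lift, and since its image lies in $R'\ls{q^{1/e}\rangle}\otimes K'$, every modular function, hence (after the usual passage from weight $k$ forms to weight $0$ functions by dividing by a fixed congruence form of weight $k$ with unit-integral $q$-expansion at $p$) every modular form for $\Gamma$ of any weight has bounded denominators at $p$. Finally, running over all primes $p\nmid|G|$ and all components of $M(G)$, and noting that the choice of auxiliary extension $R'$ and ramification index $e$ is always $p$-locally invertible, gives the statement.

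The main obstacle is the passage between the stack $\cM(G)$ and its coarse scheme $M(G)$ at primes dividing $6$ but not $|G|$: coarse base change and the property ``the coarse map along the Tate curve locus is still finite \'etale'' is subtle precisely at $2$ and $3$ (this is the content of Proposition \ref{prop_coarse}(b) and the warnings there). One must check that over $R$ — which is flat over $\bZ[1/|G|]$ — coarse base change holds (it does, by the flatness criterion \cite[0DTF]{stacks}), so that $M_R$ really is the coarse scheme of $\cM(G)_R$ and inherits finiteness and \'etaleness over the $j$-line away from the elliptic points; and crucially the Tate curve lands in the \emph{smooth, non-stacky} locus of $\cM(1)$ near $q=0$ (its special fiber is a N\'eron polygon but the generic fiber near the cusp is an honest elliptic curve with generically trivial automorphisms once a level structure or the mild rigidification is imposed), so that the lift $c$ factors through the scheme $M_R$ rather than merely the stack. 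Once these coarse-scheme technicalities are in place, the argument is a clean application of Theorem \ref{thm_fet} plus Abhyankar's lemma; the only arithmetic ingredient specific to the statement is that $p\nmid|G|$, which guarantees both the \'etaleness and the tameness needed for Abhyankar.
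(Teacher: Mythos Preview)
Your proposal is correct and follows essentially the same approach as the paper: the argument sketched in the paragraph immediately preceding the theorem (integral model over $\bZ[1/|G|]$, \'etaleness of $\ff_R$ over the Tate curve locus, Abhyankar's lemma to obtain the lift $c$ into $R'\ls{q^{1/e}}$, whence bounded denominators) is exactly what the paper invokes via \cite[Thm~5.4.1]{Chen18}. Your additional remarks on coarse base change at primes dividing~$6$ are a reasonable elaboration of a point the paper leaves implicit.
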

\begin{proof} See \cite[Thm 5.4.1]{Chen18}.	
\end{proof}

In general, this result is far from sharp. In \cite{FF20}, Fiori and Franc showed that modular forms for the degree 7 component of $M(\PSU_2(\bF_7))$ \emph{only have unbounded denominators at $p = 7$}.\footnote{The corresponding subgroup is conjugate to Fiori and Franc's ``$G_1$'' \cite{FF20}.} In particular, even though $|\PSU_2(\bF_7)| = 168$ is divisible by $6$, modular forms for this component do not have unbounded denominators at 2 or 3. They also showed the same result for the unique component $M\subset M(\PSL_3(\bF_4))/\Out(\PSL_3(\bF_4))$ with degree 7 over $M(1)$, even though $|\PSL_3(\bF_4)| = 20160 = 2^6 \cdot 3^2 \cdot 5\cdot 7$ is also divisible by $2,3$, and $5$.\footnote{The corresponding subgroup is conjugate to ``$H_1$'' in the notation of \cite{FF20}.} Interestingly, in both cases the component parametrizes admissible covers with ramification index $e = 7$. One is led to ask:

\begin{question} What is the relationship between the primes of unbounded denominators and the ramification indices of the parametrized objects?
\end{question}

The analysis above links unbounded denominators to the behavior of suitable integral models of the cover $\ff : M\ra M(1)$. If $\ff$ has good reduction at $p$, one easily deduces bounded denominators at $p$. In general, if the reduction of $\ff$ is \'{e}tale above the generic point of a suitable component of the special fiber of an integral model, then Abhyankar's lemma would again imply bounded denominators.

\begin{bibdiv}
\begin{biblist}

\bib{ACV03}{article}{
      author={Abramovich, Dan},
      author={Corti, Alessio},
      author={Vistoli, Angelo},
       title={Twisted bundles and admissible covers},
        date={2003},
        ISSN={0092-7872},
     journal={Comm. Algebra},
      volume={31},
      number={8},
       pages={3547\ndash 3618},
         url={https://doi.org/10.1081/AGB-120022434},
        note={Special issue in honor of Steven L. Kleiman},
      review={\MR{2007376}},
}

\bib{AJP16}{incollection}{
      author={A'Campo, Norbert},
      author={Ji, Lizhen},
      author={Papadopoulos, Athanase},
       title={On {G}rothendieck's construction of {T}eichm\"{u}ller space},
        date={2016},
   booktitle={Handbook of {T}eichm\"{u}ller theory. {V}ol. {VI}},
      series={IRMA Lect. Math. Theor. Phys.},
      volume={27},
   publisher={Eur. Math. Soc., Z\"{u}rich},
       pages={35\ndash 69},
      review={\MR{3618185}},
}

\bib{Asa01}{article}{
      author={Asada, Mamoru},
       title={The faithfulness of the monodromy representations associated with certain families of algebraic curves},
        date={2001},
        ISSN={0022-4049},
     journal={J. Pure Appl. Algebra},
      volume={159},
      number={2-3},
       pages={123\ndash 147},
         url={https://doi.org/10.1016/S0022-4049(00)00056-6},
      review={\MR{1828935}},
}

\bib{ASD71}{inproceedings}{
      author={Atkin, A. O.~L.},
      author={Swinnerton-Dyer, H. P.~F.},
       title={Modular forms on noncongruence subgroups},
        date={1971},
   booktitle={Combinatorics ({P}roc. {S}ympos. {P}ure {M}ath., {V}ol. {XIX}, {U}niv. {C}alifornia, {L}os {A}ngeles, {C}alif., 1968)},
   publisher={Amer. Math. Soc., Providence, R.I.},
       pages={1\ndash 25},
      review={\MR{0337781}},
}

\bib{Bar91}{book}{
      author={Baragar, Arthur},
       title={The {M}arkoff equation and equations of {H}urwitz},
   publisher={ProQuest LLC, Ann Arbor, MI},
        date={1991},
         url={http://gateway.proquest.com.ezproxy.cul.columbia.edu/openurl?url_ver=Z39.88-2004&rft_val_fmt=info:ofi/fmt:kev:mtx:dissertation&res_dat=xri:pqdiss&rft_dat=xri:pqdiss:9204827},
        note={Thesis (Ph.D.)--Brown University},
      review={\MR{2686830}},
}

\bib{BBCL20}{article}{
      author={Bell, Renee},
      author={Booher, Jeremy},
      author={Chen, William~Y.},
      author={Liu, Yuan},
       title={Tamely ramified covers of the projective line with alternating and symmetric monodromy},
        date={2022},
        ISSN={1937-0652},
     journal={Algebra Number Theory},
      volume={16},
      number={2},
       pages={393\ndash 446},
         url={https://doi.org/10.2140/ant.2022.16.393},
      review={\MR{4412578}},
}

\bib{BER11}{article}{
      author={Bux, Kai-Uwe},
      author={Ershov, Mikhail~V.},
      author={Rapinchuk, Andrei~S.},
       title={The congruence subgroup property for {${\rm Aut}\,F_2$}: a group-theoretic proof of {A}sada's theorem},
        date={2011},
        ISSN={1661-7207},
     journal={Groups Geom. Dyn.},
      volume={5},
      number={2},
       pages={327\ndash 353},
         url={https://doi-org.ezproxy.cul.columbia.edu/10.4171/GGD/130},
      review={\MR{2782176}},
}

\bib{Berg94}{article}{
      author={Berger, Gabriel},
       title={Hecke operators on noncongruence subgroups},
        date={1994},
     journal={Comptes rendus de l'Acad{\'e}mie des sciences. S{\'e}rie 1, Math{\'e}matique},
      volume={319},
      number={9},
       pages={915\ndash 919},
}

\bib{BGS16arxiv}{article}{
      author={Bourgain, Jean},
      author={Gamburd, Alexander},
      author={Sarnak, Peter},
       title={Markoff surfaces and strong approximation: 1},
        date={2016},
      eprint={1607.01530},
}

\bib{BH95}{book}{
      author={Brumfiel, G.~W.},
      author={Hilden, H.~M.},
       title={{${\rm SL}(2)$} representations of finitely presented groups},
      series={Contemporary Mathematics},
   publisher={American Mathematical Society, Providence, RI},
        date={1995},
      volume={187},
        ISBN={0-8218-0416-2},
         url={https://doi-org.ezproxy.cul.columbia.edu/10.1090/conm/187},
      review={\MR{1339764}},
}

\bib{Bom07}{article}{
      author={Bombieri, Enrico},
       title={Continued fractions and the {M}arkoff tree},
        date={2007},
        ISSN={0723-0869},
     journal={Expo. Math.},
      volume={25},
      number={3},
       pages={187\ndash 213},
         url={https://doi.org/10.1016/j.exmath.2006.10.002},
      review={\MR{2345177}},
}

\bib{BR11}{article}{
      author={Bertin, Jos\'{e}},
      author={Romagny, Matthieu},
       title={Champs de {H}urwitz},
        date={2011},
        ISSN={0249-633X},
     journal={M\'{e}m. Soc. Math. Fr. (N.S.)},
      number={125-126},
       pages={219},
         url={https://doi.org/10.24033/msmf.437},
      review={\MR{2920693}},
}

\bib{BS15}{article}{
      author={Bhatt, Bhargav},
      author={Scholze, Peter},
       title={The pro-\'{e}tale topology for schemes},
        date={2015},
        ISSN={0303-1179},
     journal={Ast\'{e}risque},
      number={369},
       pages={99\ndash 201},
      review={\MR{3379634}},
}

\bib{CD17}{article}{
      author={Chen, William~Y.},
      author={Deligne, Pierre},
       title={Arithmetic monodromy actions on pro-metabelian fundamental groups of once-punctured elliptic curves},
        date={2017},
     journal={arXiv preprint: 1710.05532, Latest version: {\url{www.williamyunchen.com}}},
}

\bib{CDT21}{article}{
      author={Calegari, Frank},
      author={Dimitrov, Vesselin},
      author={Tang, Yunqing},
       title={The unbounded denominators conjecture},
        date={2021},
     journal={arXiv preprint arXiv:2109.09040},
}

\bib{CH85}{article}{
      author={Coombes, Kevin},
      author={Harbater, David},
       title={Hurwitz families and arithmetic {G}alois groups},
        date={1985},
        ISSN={0012-7094,1547-7398},
     journal={Duke Math. J.},
      volume={52},
      number={4},
       pages={821\ndash 839},
         url={https://doi.org/10.1215/S0012-7094-85-05243-3},
      review={\MR{816387}},
}

\bib{Chen17}{incollection}{
      author={Chen, Dawei},
       title={Teichm\"{u}ller dynamics in the eyes of an algebraic geometer},
        date={2017},
   booktitle={Surveys on recent developments in algebraic geometry},
      series={Proc. Sympos. Pure Math.},
      volume={95},
   publisher={Amer. Math. Soc., Providence, RI},
       pages={171\ndash 197},
      review={\MR{3727500}},
}

\bib{Chen18}{article}{
      author={Chen, William~Yun},
       title={Moduli interpretations for noncongruence modular curves},
        date={2018},
        ISSN={0025-5831},
     journal={Math. Ann.},
      volume={371},
      number={1-2},
       pages={41\ndash 126},
         url={https://doi-org.ezproxy.cul.columbia.edu/10.1007/s00208-017-1575-6},
      review={\MR{3788845}},
}

\bib{Chen21}{article}{
      author={Chen, William~Y},
       title={Nonabelian level structures, nielsen equivalence, and markoff triples},
        date={2024},
     journal={Annals of Mathematics},
      volume={199},
      number={1},
       pages={301\ndash 443},
}

\bib{Cleb1873}{article}{
      author={Clebsch, A.},
       title={Zur {T}heorie der {R}iemann'schen {F}l\"{a}che},
        date={1873},
        ISSN={0025-5831},
     journal={Math. Ann.},
      volume={6},
      number={2},
       pages={216\ndash 230},
         url={https://doi.org/10.1007/BF01443193},
      review={\MR{1509816}},
}

\bib{CLP16}{article}{
      author={Catanese, Fabrizio},
      author={L\"{o}nne, Michael},
      author={Perroni, Fabio},
       title={Genus stabilization for the components of moduli spaces of curves with symmetries},
        date={2016},
        ISSN={2313-1691},
     journal={Algebr. Geom.},
      volume={3},
      number={1},
       pages={23\ndash 49},
         url={https://doi-org.ezproxy.cul.columbia.edu/10.14231/AG-2016-002},
      review={\MR{3455419}},
}

\bib{CLT23}{article}{
      author={Chen, William~Y.},
      author={Lubotzky, Alex},
      author={Tiep, Pham~Huu},
       title={Finite simple characteristic quotients of the free group of rank 2},
        date={2023},
      eprint={2308.14302},
}

\bib{CSFLT}{book}{
      editor={Cornell, Gary},
      editor={Silverman, Joseph~H.},
      editor={Stevens, Glenn},
       title={Modular forms and {F}ermat's last theorem},
   publisher={Springer-Verlag, New York},
        date={1997},
        ISBN={0-387-94609-8; 0-387-98998-6},
         url={https://doi.org/10.1007/978-1-4612-1974-3},
        note={Papers from the Instructional Conference on Number Theory and Arithmetic Geometry held at Boston University, Boston, MA, August 9--18, 1995},
      review={\MR{1638473}},
}

\bib{Dar04}{incollection}{
      author={Darmon, Henri},
       title={A fourteenth lecture on {Fermat}'s {Last} {Theorem}},
    language={English},
        date={2004},
   booktitle={Number theory. papers from the 7th conference of the canadian number theory association, university of montreal, montreal, qc, canada, may 19--25, 2002},
   publisher={Providence, RI: American Mathematical Society (AMS)},
       pages={103\ndash 115},
}

\bib{DM69}{article}{
      author={Deligne, P.},
      author={Mumford, D.},
       title={The irreducibility of the space of curves of given genus},
        date={1969},
        ISSN={0073-8301},
     journal={Inst. Hautes \'{E}tudes Sci. Publ. Math.},
      number={36},
       pages={75\ndash 109},
         url={http://www.numdam.org/item?id=PMIHES_1969__36__75_0},
      review={\MR{262240}},
}

\bib{DS06}{book}{
      author={Diamond, Fred},
      author={Shurman, Jerry},
       title={A first course in modular forms},
      series={Graduate Texts in Mathematics},
   publisher={Springer-Verlag, New York},
        date={2005},
      volume={228},
        ISBN={0-387-23229-X},
      review={\MR{2112196}},
}

\bib{DT06}{article}{
      author={Dunfield, Nathan~M.},
      author={Thurston, William~P.},
       title={Finite covers of random 3-manifolds},
        date={2006},
        ISSN={0020-9910},
     journal={Invent. Math.},
      volume={166},
      number={3},
       pages={457\ndash 521},
         url={https://doi-org.ezproxy.cul.columbia.edu/10.1007/s00222-006-0001-6},
      review={\MR{2257389}},
}

\bib{Dun70}{inproceedings}{
      author={Dunwoody, M.~J.},
       title={Nielsen transformations},
        date={1970},
   booktitle={Computational {P}roblems in {A}bstract {A}lgebra ({P}roc. {C}onf., {O}xford, 1967)},
   publisher={Pergamon, Oxford},
       pages={45\ndash 46},
      review={\MR{0260852}},
}

\bib{EFLMT23}{article}{
      author={Eddy, Jillian},
      author={Fuchs, Elena},
      author={Litman, Matthew},
      author={Martin, Daniel~E.},
      author={Tripeny, Nico},
       title={Connectivity of {M}arkoff mod-p graphs and maximal divisors},
        date={2025},
        ISSN={0024-6115,1460-244X},
     journal={Proc. Lond. Math. Soc. (3)},
      volume={130},
      number={2},
       pages={Paper No. e70027},
         url={https://doi.org/10.1112/plms.70027},
      review={\MR{4866040}},
}

\bib{EM12}{article}{
      author={Ellenberg, Jordan~S.},
      author={McReynolds, D.~B.},
       title={Arithmetic {V}eech sublattices of {${\rm SL}(2,\bold Z)$}},
        date={2012},
        ISSN={0012-7094},
     journal={Duke Math. J.},
      volume={161},
      number={3},
       pages={415\ndash 429},
         url={https://doi-org.ezproxy.cul.columbia.edu/10.1215/00127094-1507412},
      review={\MR{2881227}},
}

\bib{EVW16}{article}{
      author={Ellenberg, Jordan~S.},
      author={Venkatesh, Akshay},
      author={Westerland, Craig},
       title={Homological stability for {H}urwitz spaces and the {C}ohen-{L}enstra conjecture over function fields},
        date={2016},
        ISSN={0003-486X},
     journal={Ann. of Math. (2)},
      volume={183},
      number={3},
       pages={729\ndash 786},
         url={https://doi-org.ezproxy.cul.columbia.edu/10.4007/annals.2016.183.3.1},
      review={\MR{3488737}},
}

\bib{FF20}{article}{
      author={Fiori, Andrew},
      author={Franc, Cameron},
       title={The unbounded denominator conjecture for the noncongruence subgroups of index $7$},
        date={2020},
     journal={arXiv preprint arXiv:2007.01336},
}

\bib{FM11}{book}{
      author={Farb, Benson},
      author={Margalit, Dan},
       title={A primer on mapping class groups},
      series={Princeton Mathematical Series},
   publisher={Princeton University Press, Princeton, NJ},
        date={2012},
      volume={49},
        ISBN={978-0-691-14794-9},
      review={\MR{2850125}},
}

\bib{Ful69}{article}{
      author={Fulton, William},
       title={Hurwitz schemes and irreducibility of moduli of algebraic curves},
        date={1969},
        ISSN={0003-486X},
     journal={Ann. of Math. (2)},
      volume={90},
       pages={542\ndash 575},
         url={https://doi-org.ezproxy.cul.columbia.edu/10.2307/1970748},
      review={\MR{260752}},
}

\bib{FV91}{article}{
      author={Fried, Michael~D.},
      author={V\"{o}lklein, Helmut},
       title={The inverse {G}alois problem and rational points on moduli spaces},
        date={1991},
        ISSN={0025-5831},
     journal={Math. Ann.},
      volume={290},
      number={4},
       pages={771\ndash 800},
         url={https://doi-org.ezproxy.cul.columbia.edu/10.1007/BF01459271},
      review={\MR{1119950}},
}

\bib{GAP4}{manual}{
       title={{GAP -- Groups, Algorithms, and Programming, Version 4.12.2}},
organization={The GAP~Group},
        date={2022},
         url={\url{https://www.gap-system.org}},
}

\bib{Gar08}{article}{
      author={Garion, Shelly},
       title={Connectivity of the product replacement algorithm graph of {${\rm PSL}(2,q)$}},
        date={2008},
        ISSN={1433-5883},
     journal={J. Group Theory},
      volume={11},
      number={6},
       pages={765\ndash 777},
         url={https://doi-org.ezproxy.cul.columbia.edu/10.1515/JGT.2008.048},
      review={\MR{2466906}},
}

\bib{SGA1}{book}{
      author={Grothendieck, Alexander},
      author={Raynaud, Michel},
       title={Rev\^etements \'etales et groupe fondamental {(SGA 1)}},
      series={Lecture notes in mathematics},
   publisher={Springer-Verlag},
        date={1971},
      volume={224},
}

\bib{Grove02}{book}{
      author={Grove, Larry~C.},
       title={Classical groups and geometric algebra},
      series={Graduate Studies in Mathematics},
   publisher={American Mathematical Society, Providence, RI},
        date={2002},
      volume={39},
        ISBN={0-8218-2019-2},
         url={https://doi.org/10.1090/gsm/039},
      review={\MR{1859189}},
}

\bib{Hain08}{article}{
      author={Hain, Richard},
       title={Lectures on moduli spaces of elliptic curves},
        date={2008},
     journal={arXiv preprint arXiv:0812.1803},
}

\bib{Hel09}{misc}{
      author={Helfgott, H.~A.},
       title={On the behaviour of root numbers in families of elliptic curves},
        date={2009},
}

\bib{Her12}{article}{
      author={Herrlich, Frank},
       title={Introduction to origamis in {T}eichm{\"u}ller space},
        date={2012},
     journal={Strasbourg Master Class on Geometry},
      volume={18},
       pages={233},
}

\bib{HLV}{article}{
      author={Hoffman, Jerome},
      author={Long, Ling},
      author={Verrill, Helena},
       title={On $\ell$-adic representations for a space of noncongruence cuspforms},
        date={2012},
     journal={Proceedings of the American Mathematical Society},
      volume={140},
      number={5},
       pages={1569\ndash 1584},
}

\bib{HM11}{article}{
      author={Hoshi, Yuichiro},
      author={Mochizuki, Shinichi},
       title={On the combinatorial anabelian geometry of nodally nondegenerate outer representations},
        date={2011},
        ISSN={0018-2079},
     journal={Hiroshima Math. J.},
      volume={41},
      number={3},
       pages={275\ndash 342},
         url={http://projecteuclid.org/euclid.hmj/1323700038},
      review={\MR{2895284}},
}

\bib{HOPS18}{article}{
      author={Harbater, David},
      author={Obus, Andrew},
      author={Pries, Rachel},
      author={Stevenson, Katherine},
       title={Abhyankar's conjectures in {G}alois theory: current status and future directions},
        date={2018},
        ISSN={0273-0979},
     journal={Bull. Amer. Math. Soc. (N.S.)},
      volume={55},
      number={2},
       pages={239\ndash 287},
         url={https://doi-org.ezproxy.cul.columbia.edu/10.1090/bull/1594},
      review={\MR{3777018}},
}

\bib{HS07}{article}{
      author={Herrlich, Frank},
      author={Schmith{\"u}sen, Gabriela},
       title={On the boundary of teichm{\"u}ller disks in teichm{\"u}ller and in schottky space},
        date={2007},
     journal={Handbook of Teichm{\"u}ller theory},
      volume={1},
       pages={293\ndash 349},
}

\bib{HS09}{incollection}{
      author={Herrlich, Frank},
      author={Schmith\"{u}sen, Gabriela},
       title={Dessins d'enfants and origami curves},
        date={2009},
   booktitle={Handbook of {T}eichm\"{u}ller theory. {V}ol. {II}},
      series={IRMA Lect. Math. Theor. Phys.},
      volume={13},
   publisher={Eur. Math. Soc., Z\"{u}rich},
       pages={767\ndash 809},
         url={https://doi.org/10.4171/055-1/19},
      review={\MR{2516744}},
}

\bib{Hurw1891}{article}{
      author={Hurwitz, A.},
       title={Ueber {R}iemann'sche {F}l\"{a}chen mit gegebenen {V}erzweigungspunkten},
        date={1891},
        ISSN={0025-5831},
     journal={Math. Ann.},
      volume={39},
      number={1},
       pages={1\ndash 60},
         url={https://doi.org/10.1007/BF01199469},
      review={\MR{1510692}},
}

\bib{IT92}{book}{
      author={Imayoshi, Y.},
      author={Taniguchi, M.},
       title={An introduction to {T}eichm\"{u}ller spaces},
   publisher={Springer-Verlag, Tokyo},
        date={1992},
        ISBN={4-431-70088-9},
         url={https://doi.org/10.1007/978-4-431-68174-8},
        note={Translated and revised from the Japanese by the authors},
      review={\MR{1215481}},
}

\bib{Katz81}{incollection}{
      author={Katz, Nicholas~M.},
       title={Crystalline cohomology, {D}ieudonn\'{e} modules, and {J}acobi sums},
        date={1981},
   booktitle={Automorphic forms, representation theory and arithmetic ({B}ombay, 1979)},
      series={Tata Inst. Fund. Res. Studies in Math.},
      volume={10},
   publisher={Springer-Verlag, Berlin-New York},
       pages={165\ndash 246},
      review={\MR{633662}},
}

\bib{Kib14}{article}{
      author={Kibelbek, Jonas},
       title={On {A}tkin and {S}winnerton-{D}yer congruences for noncongruence modular forms},
        date={2014},
     journal={Proceedings of the American Mathematical Society},
      volume={142},
      number={12},
       pages={4029\ndash 4038},
}

\bib{KM85}{book}{
      author={Katz, Nicholas~M},
      author={Mazur, Barry},
       title={Arithmetic moduli of elliptic curves},
   publisher={Princeton University Press},
        date={1985},
      volume={108},
}

\bib{KMSV20}{article}{
      author={Konyagin, Sergei~V},
      author={Makarychev, Sergey~V},
      author={Shparlinski, Igor~E},
      author={Vyugin, Ilya~V},
       title={On the structure of graphs of markoff triples},
        date={2020},
     journal={The Quarterly Journal of Mathematics},
      volume={71},
      number={2},
       pages={637\ndash 648},
}

\bib{KSV11}{article}{
      author={Kiming, Ian},
      author={Sch{\"u}tt, Matthias},
      author={Verrill, Helena~A},
       title={Lifts of projective congruence groups},
        date={2011},
     journal={Journal of the London Mathematical Society},
      volume={83},
      number={1},
       pages={96\ndash 120},
}

\bib{Lonne18}{incollection}{
      author={L\"{o}nne, Michael},
       title={Branch stabilisation for the components of {H}urwitz moduli spaces of {G}alois covers},
        date={2020},
   booktitle={Galois covers, {G}rothendieck-{T}eichm\"{u}ller {T}heory and {D}essins d'{E}nfants},
      series={Springer Proc. Math. Stat.},
      volume={330},
   publisher={Springer, Cham},
       pages={181\ndash 204},
         url={https://doi.org/10.1007/978-3-030-51795-3_9},
      review={\MR{4166931}},
}

\bib{Lang87}{incollection}{
      author={Lang, Serge},
       title={Elliptic functions},
        date={1987},
   booktitle={Elliptic functions},
   publisher={Springer},
       pages={5\ndash 21},
}

\bib{LL11}{article}{
      author={Li, Wen-Ching~Winnie},
      author={Long, Ling},
       title={Fourier coefficients of noncongruence cuspforms},
        date={2012},
     journal={Bulletin of the London Mathematical Society},
      volume={44},
       pages={591\ndash 598},
}

\bib{LL14}{incollection}{
      author={Li, Wen-Ching~Winnie},
      author={Long, Ling},
       title={Atkin and {S}winnerton-{D}yer congruences and noncongruence modular forms},
        date={2014},
   booktitle={Algebraic number theory and related topics 2012},
      series={RIMS K\^{o}ky\^{u}roku Bessatsu, B51},
   publisher={Res. Inst. Math. Sci. (RIMS), Kyoto},
       pages={269\ndash 299},
      review={\MR{3408206}},
}

\bib{LLY}{article}{
      author={Li, Wen-Ching~Winnie},
      author={Long, Ling},
      author={Yang, Zifeng},
       title={On {A}tkin--{S}winnerton-{D}yer congruence relations},
        date={2005},
     journal={Journal of Number Theory},
      volume={113},
      number={1},
       pages={117\ndash 148},
}

\bib{LMB00}{book}{
      author={Laumon, G\'{e}rard},
      author={Moret-Bailly, Laurent},
       title={Champs alg\'{e}briques},
      series={Ergebnisse der Mathematik und ihrer Grenzgebiete. 3. Folge. A Series of Modern Surveys in Mathematics [Results in Mathematics and Related Areas. 3rd Series. A Series of Modern Surveys in Mathematics]},
   publisher={Springer-Verlag, Berlin},
        date={2000},
      volume={39},
        ISBN={3-540-65761-4},
      review={\MR{1771927}},
}

\bib{LS17}{article}{
      author={Lawton, Sean},
      author={Sikora, Adam~S.},
       title={Varieties of characters},
    language={English},
        date={2017},
        ISSN={1386-923X},
     journal={Algebr. Represent. Theory},
      volume={20},
      number={5},
       pages={1133\ndash 1141},
}

\bib{Lub11}{incollection}{
      author={Lubotzky, Alexander},
       title={Dynamics of {${\rm Aut}(F_N)$} actions on group presentations and representations},
        date={2011},
   booktitle={Geometry, rigidity, and group actions},
      series={Chicago Lectures in Math.},
   publisher={Univ. Chicago Press, Chicago, IL},
       pages={609\ndash 643},
      review={\MR{2807845}},
}

\bib{Manin99}{book}{
      author={Manin, Yuri~I.},
       title={Frobenius manifolds, quantum cohomology, and moduli spaces},
      series={American Mathematical Society Colloquium Publications},
   publisher={American Mathematical Society, Providence, RI},
        date={1999},
      volume={47},
        ISBN={0-8218-1917-8},
         url={https://doi.org/10.1090/coll/047},
      review={\MR{1702284}},
}

\bib{Mar79}{article}{
      author={Markoff, Andrey},
       title={Sur les formes quadratiques binaires ind{\'e}finies},
        date={1879},
     journal={Mathematische Annalen},
      volume={15},
      number={3-4},
       pages={381\ndash 406},
}

\bib{Mar80}{article}{
      author={Markoff, A.},
       title={Sur les formes quadratiques binaires ind\'{e}finies},
        date={1880},
        ISSN={0025-5831},
     journal={Math. Ann.},
      volume={17},
      number={3},
       pages={379\ndash 399},
         url={https://doi-org.ezproxy.cul.columbia.edu/10.1007/BF01446234},
      review={\MR{1510073}},
}

\bib{Mat94}{article}{
      author={Matsumoto, Makoto},
       title={Galois representations on profinite braid groups on curves},
        date={1996},
     journal={J. reine angew. Math},
      volume={474},
       pages={169\ndash 219},
}

\bib{Mcm03}{article}{
      author={McMullen, Curtis~T.},
       title={Billiards and {T}eichm\"{u}ller curves on {H}ilbert modular surfaces},
        date={2003},
        ISSN={0894-0347},
     journal={J. Amer. Math. Soc.},
      volume={16},
      number={4},
       pages={857\ndash 885},
         url={https://doi.org/10.1090/S0894-0347-03-00432-6},
      review={\MR{1992827}},
}

\bib{Mcm05}{article}{
      author={McMullen, Curtis~T.},
       title={Teichm\"{u}ller curves in genus two: discriminant and spin},
        date={2005},
        ISSN={0025-5831},
     journal={Math. Ann.},
      volume={333},
      number={1},
       pages={87\ndash 130},
         url={https://doi-org.ezproxy.cul.columbia.edu/10.1007/s00208-005-0666-y},
      review={\MR{2169830}},
}

\bib{Mel76}{inproceedings}{
      author={Mel'nikov, OV},
       title={The congruence kernel of the group $\text{SL}_2(\mathbb{Z})$},
        date={1976},
   booktitle={Dokl. akad. nauk sssr},
      volume={228},
       pages={1034\ndash 1036},
}

\bib{MFK94}{book}{
      author={Mumford, D.},
      author={Fogarty, J.},
      author={Kirwan, F.},
       title={Geometric invariant theory},
     edition={Third},
      series={Ergebnisse der Mathematik und ihrer Grenzgebiete (2) [Results in Mathematics and Related Areas (2)]},
   publisher={Springer-Verlag, Berlin},
        date={1994},
      volume={34},
        ISBN={3-540-56963-4},
         url={https://doi-org.ezproxy.cul.columbia.edu/10.1007/978-3-642-57916-5},
      review={\MR{1304906}},
}

\bib{MKS04}{book}{
      author={Magnus, Wilhelm},
      author={Karrass, Abraham},
      author={Solitar, Donald},
       title={Combinatorial group theory},
     edition={second},
   publisher={Dover Publications, Inc., Mineola, NY},
        date={2004},
        ISBN={0-486-43830-9},
        note={Presentations of groups in terms of generators and relations},
      review={\MR{2109550}},
}

\bib{Moc04}{incollection}{
      author={Mochizuki, Shinichi},
       title={The absolute anabelian geometry of hyperbolic curves},
        date={2004},
   booktitle={Galois theory and modular forms},
      series={Dev. Math.},
      volume={11},
   publisher={Kluwer Acad. Publ., Boston, MA},
       pages={77\ndash 122},
         url={https://doi.org/10.1007/978-1-4613-0249-0_5},
      review={\MR{2059759}},
}

\bib{Moc96}{article}{
      author={Mochizuki, Shinichi},
       title={The profinite grothendieck conjecture for closed hyperbolic curves over number fields},
        date={1996},
     journal={Journal of Mathematical Sciences-University of Tokyo},
      volume={3},
      number={3},
       pages={571\ndash 628},
}

\bib{MP18}{article}{
      author={Meiri, Chen},
      author={Puder, Doron},
       title={The {M}arkoff group of transformations in prime and composite moduli},
        date={2018},
        ISSN={0012-7094},
     journal={Duke Math. J.},
      volume={167},
      number={14},
       pages={2679\ndash 2720},
         url={https://doi.org/10.1215/00127094-2018-0024},
        note={With an appendix by Dan Carmon},
      review={\MR{3859362}},
}

\bib{MW13}{article}{
      author={Mccullough, Darryl},
      author={Wanderley, Marcus},
       title={Nielsen equivalence of generating pairs of $\text{SL}(2, q)$},
        date={2013},
     journal={Glasgow Mathematical Journal},
      volume={55},
      number={03},
       pages={481\ndash 509},
}

\bib{Nak94}{article}{
      author={Nakamura, Hiroaki},
       title={Galois rigidity of pure sphere braid groups and profinite calculus},
        date={1994},
     journal={Journal of Mathematical Sciences of the University of Tokyo},
       pages={71\ndash 136},
}

\bib{Noo04}{article}{
      author={Noohi, Behrang},
       title={Fundamental groups of algebraic stacks},
        date={2004},
     journal={Journal of the Institute of Mathematics of Jussieu},
      volume={3},
      number={01},
       pages={69\ndash 103},
}

\bib{Noo05}{article}{
      author={Noohi, Behrang},
       title={Foundations of topological stacks {I}},
        date={2005},
     journal={arXiv preprint math},
        ISSN={0503247/},
}

\bib{MNT01}{incollection}{
      author={Nakamura, Hiroaki},
      author={Tamagawa, Akio},
      author={Mochizuki, Shinichi},
       title={The {G}rothendieck conjecture on the fundamental groups of algebraic curves {[translation of {S}\={u}gaku 50 (1998), no. 2, 113--129; {MR}1648427 (2000e:14038)]}},
        date={2001},
      volume={14},
       pages={31\ndash 53},
        note={Sugaku Expositions},
      review={\MR{1834911}},
}

\bib{Ols16}{book}{
      author={Olsson, Martin},
       title={Algebraic spaces and stacks},
      series={American Mathematical Society Colloquium Publications},
   publisher={American Mathematical Society, Providence, RI},
        date={2016},
      volume={62},
        ISBN={978-1-4704-2798-6},
         url={https://doi.org/10.1090/coll/062},
      review={\MR{3495343}},
}

\bib{Pak00}{incollection}{
      author={Pak, Igor},
       title={What do we know about the product replacement algorithm?},
        date={2001},
   booktitle={Groups and computation, {III} ({C}olumbus, {OH}, 1999)},
      series={Ohio State Univ. Math. Res. Inst. Publ.},
      volume={8},
   publisher={de Gruyter, Berlin},
       pages={301\ndash 347},
      review={\MR{1829489}},
}

\bib{Rom05}{article}{
      author={Romagny, Matthieu},
       title={Group actions on stacks and applications},
        date={2005},
        ISSN={0026-2285},
     journal={Michigan Math. J.},
      volume={53},
      number={1},
       pages={209\ndash 236},
         url={https://doi.org/10.1307/mmj/1114021093},
      review={\MR{2125542}},
}

\bib{RZ10}{book}{
      author={Ribes, Luis},
      author={Zalesskii, Pavel},
       title={Profinite groups},
     edition={Second},
      series={Ergebnisse der Mathematik und ihrer Grenzgebiete. 3. Folge. A Series of Modern Surveys in Mathematics [Results in Mathematics and Related Areas. 3rd Series. A Series of Modern Surveys in Mathematics]},
   publisher={Springer-Verlag, Berlin},
        date={2010},
      volume={40},
        ISBN={978-3-642-01641-7},
         url={https://doi-org.ezproxy.cul.columbia.edu/10.1007/978-3-642-01642-4},
      review={\MR{2599132}},
}

\bib{Sch04}{article}{
      author={Schmith\"{u}sen, Gabriela},
       title={An algorithm for finding the {V}eech group of an origami},
        date={2004},
        ISSN={1058-6458},
     journal={Experiment. Math.},
      volume={13},
      number={4},
       pages={459\ndash 472},
         url={http://projecteuclid.org.ezproxy.cul.columbia.edu/euclid.em/1109106438},
      review={\MR{2118271}},
}

\bib{Sch05}{article}{
      author={Schmith{\"u}sen, Gabriela},
       title={Veech groups of origamis},
        date={2005},
     journal={Doktorarbeit, Universit{\"a}t Karlsruhe (TH)},
}

\bib{Sch85}{article}{
      author={Scholl, Anthony~J},
       title={Modular forms and de {R}ham cohomology; {A}tkin-{S}winnerton-{D}yer congruences},
        date={1985},
     journal={Inventiones Mathematicae},
      volume={79},
      number={1},
       pages={49\ndash 77},
}

\bib{Sch88}{article}{
      author={Scholl, A.~J.},
       title={The {$l$}-adic representations attached to a certain noncongruence subgroup},
        date={1988},
        ISSN={0075-4102},
     journal={J. Reine Angew. Math.},
      volume={392},
       pages={1\ndash 15},
         url={https://doi.org/10.1515/crll.1988.392.1},
      review={\MR{965053}},
}

\bib{Schn97}{incollection}{
      author={Schneps, Leila},
       title={The {G}rothendieck-{T}eichm\"{u}ller group {$\widehat{\rm GT}$}: a survey},
        date={1997},
   booktitle={Geometric {G}alois actions, 1},
      series={London Math. Soc. Lecture Note Ser.},
      volume={242},
   publisher={Cambridge Univ. Press, Cambridge},
       pages={183\ndash 203},
         url={https://doi-org.ezproxy.cul.columbia.edu/10.1017/CBO9780511666124},
      review={\MR{1483118}},
}

\bib{Sik12}{article}{
      author={Sikora, Adam~S.},
       title={Character varieties},
    language={English},
        date={2012},
        ISSN={0002-9947},
     journal={Trans. Am. Math. Soc.},
      volume={364},
      number={10},
       pages={5173\ndash 5208},
}

\bib{Sil09}{book}{
      author={Silverman, Joseph~H.},
       title={The arithmetic of elliptic curves},
     edition={Second},
      series={Graduate Texts in Mathematics},
   publisher={Springer, Dordrecht},
        date={2009},
      volume={106},
        ISBN={978-0-387-09493-9},
         url={https://doi-org.ezproxy.cul.columbia.edu/10.1007/978-0-387-09494-6},
      review={\MR{2514094}},
}

\bib{stacks}{article}{
      author={{Stacks Project Authors}, The},
       title={{The Stacks Project}},
        date={2022},
     journal={Available at \url{stacks.math.columbia.edu}},
}

\bib{Sto10}{article}{
      author={Stoimenow, A.},
       title={Non-conjugate braids with the same closure link from density of representations},
        date={2010},
        ISSN={0021-7824},
     journal={J. Math. Pures Appl. (9)},
      volume={94},
      number={5},
       pages={470\ndash 496},
         url={https://doi.org/10.1016/j.matpur.2010.08.003},
      review={\MR{2732925}},
}

\bib{SV14}{incollection}{
      author={Sijsling, J.},
      author={Voight, J.},
       title={On computing {B}elyi maps},
        date={2014},
   booktitle={Num\'{e}ro consacr\'{e} au trimestre ``{M}\'{e}thodes arithm\'{e}tiques et applications'', automne 2013},
      series={Publ. Math. Besan\c{c}on Alg\`ebre Th\'{e}orie Nr.},
      volume={2014/1},
   publisher={Presses Univ. Franche-Comt\'{e}, Besan\c{c}on},
       pages={73\ndash 131},
      review={\MR{3362631}},
}

\bib{Sza09}{book}{
      author={Szamuely, Tam{\'a}s},
       title={Galois groups and fundamental groups},
   publisher={Cambridge university press},
        date={2009},
      volume={117},
}

\bib{Tam97}{article}{
      author={Tamagawa, Akio},
       title={The {G}rothendieck conjecture for affine curves},
        date={1997},
        ISSN={0010-437X},
     journal={Compositio Math.},
      volume={109},
      number={2},
       pages={135\ndash 194},
         url={https://doi.org/10.1023/A:1000114400142},
      review={\MR{1478817}},
}

\bib{Thom68}{article}{
      author={Thompson, John~G.},
       title={Nonsolvable finite groups all of whose local subgroups are solvable},
        date={1968},
        ISSN={0002-9904},
     journal={Bull. Amer. Math. Soc.},
      volume={74},
       pages={383\ndash 437},
         url={https://doi.org/10.1090/S0002-9904-1968-11953-6},
      review={\MR{230809}},
}

\bib{Ven14}{article}{
      author={Venkataramana, T.~N.},
       title={Image of the {B}urau representation at {$d$}-th roots of unity},
        date={2014},
        ISSN={0003-486X},
     journal={Ann. of Math. (2)},
      volume={179},
      number={3},
       pages={1041\ndash 1083},
         url={https://doi.org/10.4007/annals.2014.179.3.4},
      review={\MR{3171758}},
}

\bib{Wiles95}{article}{
      author={Wiles, Andrew},
       title={Modular elliptic curves and {Fermat}'s {Last} {Theorem}},
    language={English},
        date={1995},
        ISSN={0003-486X},
     journal={Ann. Math. (2)},
      volume={141},
      number={3},
       pages={443\ndash 551},
         url={semanticscholar.org/paper/3c9a025495b699d77984187e14899ee495145d28},
}

\bib{Sch12}{article}{
      author={Weitze-Schmith{\"u}sen, Gabriela},
       title={The deficiency of being a congruence group for {V}eech groups of origamis},
        date={2013},
     journal={International Mathematics Research Notices},
      volume={2015},
       pages={rnt268},
}

\bib{Woh63}{article}{
      author={Wohlfahrt, Klaus},
      author={others},
       title={An extension of {F. Klein}'s level concept},
        date={1964},
     journal={Illinois Journal of Mathematics},
      volume={8},
      number={3},
       pages={529\ndash 535},
}

\bib{YZ22}{article}{
      author={Yu, Jiu-Kang},
      author={Zhang, Lei},
       title={A comparison theorem for the pro-\'{e}tale fundamental group},
        date={2022},
     journal={arXiv preprint arXiv:2206.01383},
}

\bib{Zor06}{incollection}{
      author={Zorich, Anton},
       title={Flat surfaces},
        date={2006},
   booktitle={Frontiers in number theory, physics, and geometry. {I}},
   publisher={Springer, Berlin},
       pages={437\ndash 583},
         url={https://doi-org.ezproxy.cul.columbia.edu/10.1007/978-3-540-31347-2_13},
      review={\MR{2261104}},
}

\end{biblist}
\end{bibdiv}

\end{document}